\let\mathcal\mathscr
\DeclareMathOperator{\Gal}{Gal}
\DeclareMathOperator{\spl}{spl}
\newtheorem{theorem}{Theorem}
\newtheorem{lemma}[theorem]{Lemma}
\newtheorem{proposition}[theorem]{Proposition}
\newtheorem{corollary}[theorem]{Corollary}
\newtheorem*{theoremA1}{Theorem A.1}
\theoremstyle{definition}
\newtheorem{definition}[theorem]{Definition}
\newtheorem{remark}[theorem]{Remark}
\newtheorem{example}[theorem]{Example}
\numberwithin{theorem}{section}
\numberwithin{equation}{section}
\numberwithin{table}{section}
\newcommand\EE{\mathbb{E}}
\newcommand\FF{\mathbb{F}}
\newcommand\ZZ{\mathbb{Z}}
\newcommand\NN{\mathbb{N}}
\newcommand\QQ{\mathbb{Q}}
\newcommand\RR{\mathbb{R}}
\newcommand\CC{\mathbb{C}}
\newcommand{\OO}{\mathcal{O}}
\newcommand{\vt}{{\mathbf{t}}}
\newcommand{\vx}{\mathbf{x}}
\newcommand{\vy}{\mathbf{y}}
\newcommand{\vc}{\mathbf{c}}
\newcommand{\ideals}{\mathcal{I}_K}
\newcommand{\idealsL}{\mathcal{I}_L}
\newcommand{\id}[1]{\mathfrak{#1}}
\newcommand{\ppp}{\id{p}}
\newcommand{\qqq}{\id{q}}
\newcommand{\PPP}{\id{P}}
\newcommand{\aaa}{\id{a}}
\newcommand{\bbb}{\id{b}}
\newcommand{\ccc}{\id{c}}
\newcommand{\ddd}{\id{d}}
\newcommand{\w}{\id{w}}
\newcommand{\norm}{\mathfrak{N}}
\newcommand{\where}{\ :\ }
\newcommand{\vecnorm}[1]{\lVert #1 \rVert}
\DeclareMathOperator{\vol}{vol}
\newcommand{\abs}[1]{\left|#1\right|}
\newcommand{\vb}{\mathbf{b}}
\newcommand{\va}{\mathbf{a}}
\newcommand{\vK}{\mathbf{K}}
\newcommand{\vC}{\mathbf{C}}
\newcommand{\mmm}{\mathfrak{m}}
\newcommand\one{\mathbf{1}}
\newcommand\cP{\mathcal{P}}
\newcommand{\cA}{\mathcal{A}}
\newcommand{\cD}{\mathcal{D}}
\newcommand{\cX}{\mathcal{X}}
\newcommand{\vw}{\boldsymbol{\omega}}
\newcommand{\vh}{\mathbf{h}}
\newcommand{\cF}{\mathcal{F}}
\newcommand{\cC}{\mathcal{C}}
\newcommand{\fS}{\mathfrak{S}}
\DeclareMathOperator\lcm{lcm}
\DeclareMathOperator\HRH{HRH}
\DeclareMathOperator\ident{id}
\DeclareMathOperator\Li{Li}
\DeclareMathOperator\Disc{Disc}
\DeclareMathOperator\Frob{Frob}
\DeclareMathOperator\poly{poly}
\DeclareMathOperator\Lip{Lip}
\DeclareMathOperator\Sn{S}
\newcommand\ab{{\operatorname{ab}}}
\DeclareMathOperator\Cinf{C^\infty}
\newcommand{\RmZ}{{\RR/\ZZ}}
\DeclareMathOperator\mods{mod*}
\DeclareMathOperator\PGL{PGL}
\newcommand\Ia{{\operatorname{Ia}}}
\newcommand\Ib{{\operatorname{Ib}}}
\newcommand\II{{\operatorname{II}}}
\definecolor{dblackcolor}{rgb}{0.0,0.0,0.0}
\definecolor{dbluecolor}{rgb}{0.01,0.02,0.7}
\definecolor{dgreencolor}{rgb}{0.2,0.4,0.0}
\definecolor{dgraycolor}{rgb}{0.30,0.3,0.30}
\begin{document}

\title[
Linear constellations in primes with arithmetic restrictions
]
{
Linear constellations in primes with arithmetic restrictions
}

\author{Christopher Frei}
\address{
Technische Universit\"at Graz\\
Institut f\"ur Analysis und Zahlentheorie\\
Kopernikusgasse 24/II\\
A-8010 Graz\\
Austria
}
\email{frei@math.tugraz.at}

\author{Magdal\'ena Tinkov\'a\\\ \\ With an appendix by\\ Christopher Frei, Joachim K\"onig and Magdal\'ena Tinkov\'a}
\address{Faculty of Information Technology, Czech Technical University in Prague, Th\'akurova 9, 160 00 Praha 6, Czech Republic}
\email{tinkova.magdalena@gmail.com}

\address{Department of Mathematics Education, Korea National University of Education, Cheongju 28173, South Korea}
\email{jkoenig@knue.ac.kr}

\begin{abstract}
  We prove analogues of the theorem of Green and Tao on linear
  constellations in primes, in which the primes under consideration
  are restricted by certain arithmetic conditions. Our first main
  result is conditional upon Hooley's Riemann hypothesis and imposes
  the extra condition that the primes have prescribed primitive
  roots. Our second main result is unconditional and imposes the extra
  condition that the primes have prescribed Artin symbols in given
  Galois number fields. In the appendix we present an application of
  the second result in inverse Galois theory.
\end{abstract}

\subjclass[2020]{11P32 
  (11A07, 
   11R45, 
   11B30, 
   11R32)} 

\maketitle

\setcounter{tocdepth}{1}
\tableofcontents

\markright{\sc{Linear constellations in primes with arithmetic restrictions}}
\markleft{\sc{Christopher Frei and Magdal\'ena Tinkov\'a}}

\section{Introduction}

In this paper, we are interested in linear constellations in primes that satisfy certain arithmetic restrictions.

\subsection{Primes with prescribed primitive roots}\label{subsec:preli_artin_primes}
For our first main result, we require the primes to have prescribed primitive roots. For example, are there three-term arithmetic progressions in the primes whose common difference is one less than a prime, and such that all involved primes have $5$ as a primitive root? In other words, we are looking for integers $n_1,n_2$ such that all of the affine-linear forms
\begin{equation}\label{eq:comp_2_example}
  n_1,n_2,n_1+n_2-1,n_1+2n_2-2
\end{equation}
simultaneously take values in the set of primes with primitive root $5$. This is a constellation of complexity $2$ in the sense of Green and Tao. Their celebrated result \cite{MR2680398}, together with \cite{MR2877066,MR2877065} and the work \cite{MR2950773} of Green, Tao and Ziegler, gives asymptotics for all constellations of finite complexity in unrestricted primes, implying in particular the existence of infinitely many solutions $(n_1,n_2)$ to our question if one ignores the primitive root condition.

However, if $5$ is a primitive root for $p>2$, then it is in particular a quadratic non-residue modulo $p$, so by quadratic reciprocity $p\equiv 2,3\bmod 5$. As the simultaneous congruences
\begin{equation*}
  n_1,n_2,n_1+n_2-1,n_1+2n_2-2 \equiv 2,3 \bmod 5
\end{equation*}
have no solution, we conclude that there are no $(n_1,n_2)\in\ZZ$ for which all of \eqref{eq:comp_2_example} are primes
with primitive root $5$. If $5$ is replaced by, say, $7$, then there are no congruence obstructions and indeed a quick search yields the solution $(n_1,n_2)=(41,67)$. But are there infinitely many solutions?

Currently, the existence of infinitely many primes with primitive root $a$ is not known unconditionally for any value of $a$. \emph{Artin's conjecture} predicts an asymptotic formula, which Hooley \cite{MR207630} has famously proved assuming the following version of the Riemann hypothesis.

\begin{definition}[Hooley's Riemann hypothesis]
  For $a\in\ZZ\smallsetminus\{0\}$, we let $\HRH(a)$ denote the following
  proposition: for all squarefree $k\in\NN$, the Dedekind zeta function of the
  number field $\QQ(\mu_k,\sqrt[k]{a})$ satisfies the Riemann
  hypothesis.
\end{definition}

Here, $\mu_k$ denotes the group of $k$-th roots of unity in $\CC$.
For background and history surrounding Artin's conjecture and Hooley's result,
we recommend Moree's survey \cite{MR3011564}.
Our first main result establishes, conditionally on $\HRH(7)$, the existence of infinitely many solutions $(n_1,n_2)$ as above. In fact, we allow arbitrary primitive roots, arbitrary finite-complexity systems of affine-linear forms, and obtain an asymptotic formula analogous to the theorem of Green and Tao for unrestricted primes.

In order to set up and motivate the precise statement of our result,
we need to introduce some notation. Let $a\in\ZZ$ such that $a\neq -1$ and $a$ is
not a perfect square, let $q\in\NN$ and $b\in\ZZ$. Conditionally upon
$\HRH(a)$ and building upon Hooley's work, Lenstra \cite{Lenstra1977}
has shown an asymptotic formula for the number of primes up to $N$ and
congruent to $b\bmod q$, for which $a$ is a primitive root:
\begin{equation}\label{eq:artin_prime_asymptotic}
   \#\left\{p\leq N\where p\equiv b\bmod q,\ \FF_p^\times=\langle a\rangle\right\} =
   \delta(a,b,q)\frac{N}{\log N}+o_{a,q}\left(\frac{N}{\log N}\right).
\end{equation}
The leading constant $\delta(a,b,q)$ in this formula is defined as follows. For 
$q\in\NN$ and squarefree $k\in\NN$, we consider the number fields
\begin{align}
  F(q,k,a) &:= \QQ(\mu_q,\mu_k,\sqrt[k]{a})\quad\text{ and }\quad G(k,a):= F(1,k,a) = \QQ(\mu_k,\sqrt[k]{a}).\label{eq:def_Gka}
\end{align}
Fix a generator $\zeta_q:=e^{2\pi i/q}$ of $\mu_q$. For any $b\in \ZZ$ with $\gcd(b,q)=1$, we let $\sigma_b\in\Gal(\QQ(\mu_q)/\QQ)$ denote the automorphism with $\sigma_b(\zeta_q)=\zeta_q^b$. With
\begin{equation}\label{eq:def_eta}
\eta(a,b,k,q) :=
           \begin{cases}
             \frac{1}{[F(q,k,a):\QQ]},&
             \begin{aligned}
               &\text{ if $\gcd(b,q)=1$ and $\sigma_b$ fixes $\QQ(\mu_q)\cap G(k,a)$},
           \end{aligned}
\\
             0,&\text{ otherwise,}
           \end{cases}
         \end{equation}
which is by Chebotarev's density theorem equal to the density of primes congruent to $b$ modulo
$q$ and splitting completely in $G(k,a)$,
the leading constant in \eqref{eq:artin_prime_asymptotic} is defined by the absolutely
convergent series
\begin{equation*}
  \delta(a,b,q):=\sum_{k=1}^\infty\mu(k)\eta(a,b,k,q),
\end{equation*}
were $\mu(k)$ is the M\"obius function.
In this paper, we will always work with a more explicit formula for $\delta(a,b,q)$ due to Moree \cite{MR2490093}, which will be reviewed in \S\ref{sec:delta}. Introducing a corresponding modified von Mangoldt function
\begin{equation}\label{eq:def_artin_von_mangoldt}
  \Lambda_a(n):=
  \begin{cases}
    \log p, &\text{ if }n=p^e\text{ with $e\in\NN$,\   $p$ prime and }\FF_p^\times=\langle a\rangle,\\
    0, &\text{ otherwise},
  \end{cases}
\end{equation}
one can phrase \eqref{eq:artin_prime_asymptotic} equivalently for $b\in\{0,\ldots,q-1\}$ as
\begin{equation}\label{eq:artin_mangoldt_asymptotic}
  \sum_{\substack{n\in [N]}}\Lambda_a(b+nq) = q\delta(a,b,q)N+o_{a,q}(N),
\end{equation}
where we have used the notation $[N]=\{1,\ldots,N\}$. Our main result extends this from $\psi(n)=b+qn$ to arbitrary finite complexity\footnote{Recall the characterisation of \emph{finite complexity} from \cite[Lemma 1.6]{MR2680398}: no two of the forms $\psi_i$ have linearly dependent linear parts.
} systems of affine-linear forms
$\Psi=(\psi_1,\ldots,\psi_t):\ZZ^s\to\ZZ^t$ as in \cite[Definition 1.1 and Definition 1.5]{MR2680398}. For $N\in\NN$, the \emph{size of $\Psi$ relative to $N$} is defined as
\begin{equation*}
  \vecnorm{\Psi}_N:=\sum_{i=1}^t\sum_{j=1}^s|\dot \psi_i(e_j)| + \sum_{i=1}^t\left|\frac{\psi_i(0)}{N}\right|,
\end{equation*}
where $\dot\psi_i=\psi_i-\psi_i(0)$ is the linear part of $\psi_i$ and $e_1,\ldots,e_s$ is the standard basis of $\ZZ^s$. Let
$\va=(a_1,\ldots,a_t)\in\ZZ^t$ such that no $a_i$ is equal to $-1$ or
a perfect square. For any $q\in\NN$, we define the density
\begin{equation}\label{eq:def_sigma}
  \sigma_{\va,\Psi}(q) := \EE_{n\in(\ZZ/q\ZZ)^s}\prod_{i\in[t]}\frac{q\delta(a_i,\psi_i(n),q)}{\delta(a_i,0,1)}=q^{t-s}\sum_{n\in(\ZZ/q\ZZ)^s}\prod_{i\in[t]}\frac{\delta(a_i,\psi_i(n),q)}{\delta(a_i,0,1)}.
\end{equation}
Moreover, we let $\Delta_{a_i}$ denote the discriminant of the quadratic field $\QQ(\sqrt{a_i})$ and define
\begin{equation*}
  \cD_\va := \lcm(|\Delta_{a_i}|\where 1\leq i\leq t).
\end{equation*}
With this setup in place, we can state our first main result, a version of \cite[Main Theorem]{MR2680398} with prescribed primitive roots, conditional upon the minimal version of GRH that is currently needed to even establish the existence of infinitely many primes with these prescribed primitive roots.

\begin{theorem}\label{thm:main_artin}
  Let $s,t,L,N\in\NN$. Let $\va=(a_1,\ldots,a_t)\in\ZZ^t$ such that no $a_i$ is equal to $-1$ or a perfect square, and assume that $\HRH(a_i)$ holds for all $1\leq i\leq t$. Let $\Psi=(\psi_1,\ldots,\psi_t):\ZZ^s\to\ZZ^t$ be a system of affine-linear forms of finite complexity with size $\vecnorm{\Psi}_N\leq L$. Let $X\subseteq [-N,N]^s$ be a convex set. Then
  \begin{equation*}
    \sum_{n\in X\cap\ZZ^s}\prod_{i\in[t]}\Lambda_{a_i}(\psi_i(n)) = \vol(X\cap\Psi^{-1}(\RR_+^t))\fS(\va,\Psi)+o_{s,t,\va,L}(N^s),
  \end{equation*}
  where
  \begin{equation*}        \fS(\va,\Psi):=\Big(\prod_{i\in[t]}\delta(a_i,0,1)\Big)\sigma_{\va,\Psi}(\cD_{\va})\prod_{p\nmid\cD_{\va}}\sigma_{\va,\Psi}(p).
  \end{equation*}
Moreover, for each prime $p\nmid\cD_\va$, we have $\sigma_{\va,\Psi}(p)=1+O_{s,t,L,\va}(p^{-2})$, and thus the infinite product $\prod_{p\nmid\cD_\va}\sigma_{\va,\Psi}(p)$ converges absolutely. 
\end{theorem}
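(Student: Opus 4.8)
The plan is to follow the Green--Tao strategy of transferring the problem to pseudorandom majorants, which requires that we have good control over the mean of $\Lambda_{a_i}$ along arithmetic progressions and, more generally, along the affine-linear forms $\psi_i$. The first step is to reduce the theorem to the case where one replaces $\Lambda_{a_i}$ by a suitably $W$-tricked normalisation: fix a slowly growing parameter $w=w(N)$, let $W=\prod_{p\le w}p$, and decompose each $n$ in a residue class modulo $W$. On each such class the local factors $\delta(a_i,\psi_i(n),W)$ are essentially constant, and the main term in the theorem can be written as a sum over residue classes of the archimedean volume factor times these local densities; matching this with $\fS(\va,\Psi)$ uses the explicit formula for $\delta(a_i,b,q)$ due to Moree (reviewed in the paper) to see that the Euler product factorises as claimed, so it suffices to prove a ``$W$-tricked'' asymptotic with main term $\vol(X\cap\Psi^{-1}(\RR_+^t))$ (plus a negligible error).

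The second step, which I expect to be the heart of the matter, is to construct for each $a_i$ a pseudorandom measure $\nu_i\colon[N]\to\RR_{\ge 0}$ dominating (a normalisation of) $\Lambda_{a_i}$ on the $W$-tricked progression, satisfying the linear forms condition and correlation condition of \cite{MR2680398} with the relevant finite complexity. The natural candidate is a truncated divisor sum of Goldston--Y{\i}ld{\i}r{\i}m type adapted to the primitive-root condition; one restricts to primes $p$ with $\FF_p^\times=\langle a_i\rangle$ by inserting the characteristic function of this condition via the inclusion--exclusion over the fields $G(k,a_i)$, i.e. via $\sum_{k}\mu(k)\one_{p\text{ splits completely in }G(k,a_i)}$ truncated at $k\le R$ for a small power $R=N^{\gamma}$. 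Verifying the linear forms and correlation conditions then reduces, after opening the divisor sums, to counting primes (equivalently, to prime-counting in the fields $\QQ(\mu_\ell,\sqrt[\ell]{a_i})$) in short intervals and arithmetic progressions with power-saving error terms; this is exactly where $\HRH(a_i)$ is used, precisely as in Hooley's and Lenstra's work, to obtain the effective Chebotarev/PNT estimates with the required uniformity in the conductor. The finite-complexity hypothesis on $\Psi$ enters, as in \cite{MR2680398}, to guarantee that the relevant systems of linear forms appearing in these conditions are non-degenerate.

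The third step is to apply the Green--Tao--Ziegler transference principle (the generalised von Neumann theorem together with the inverse and counting theorems from \cite{MR2680398,MR2950773}) to replace each normalised $\Lambda_{a_i}$ by its ``expected'' value $1$ on the $W$-tricked class, incurring an error $o(1)$; summing the resulting main terms over the residue classes modulo $W$ and over $k$ recovers $\vol(X\cap\Psi^{-1}(\RR_+^t))\fS(\va,\Psi)$ after undoing the reductions of step one. Finally, for the last assertion of the theorem, one estimates $\sigma_{\va,\Psi}(p)$ for $p\nmid\cD_\va$ directly from the definition \eqref{eq:def_sigma}: for such $p$ the quantities $\delta(a_i,\psi_i(n),p)$ are controlled by the local behaviour of the forms and differ from the ``generic'' value by $O_{\va}(1/p)$ terms, and the finite-complexity condition ensures that the ``diagonal'' contributions that could produce a $1/p$ discrepancy cancel in the average $\EE_{n\in(\ZZ/p\ZZ)^s}$, leaving $\sigma_{\va,\Psi}(p)=1+O_{s,t,L,\va}(p^{-2})$; absolute convergence of $\prod_{p\nmid\cD_\va}\sigma_{\va,\Psi}(p)$ is then immediate. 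The main obstacle is the construction and verification of the pseudorandom majorant $\nu_i$: one must handle the interaction between the Goldston--Y{\i}ld{\i}r{\i}m sieve weights and the family of Artin-type field conditions uniformly, keeping the error terms under control via $\HRH$, and this is the step that genuinely goes beyond \cite{MR2680398,MR2950773}.
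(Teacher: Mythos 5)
There is a genuine gap, and it lies exactly where your plan declares victory. You locate all the difficulty in constructing a pseudorandom majorant adapted to the primitive-root condition, and then assert that the transference principle lets you ``replace each normalised $\Lambda_{a_i}$ by its expected value $1$ \dots incurring an error $o(1)$''. That replacement is not a consequence of having a majorant: the generalised von Neumann theorem only converts the counting problem into the statement that the $W$-tricked, mean-subtracted function is small in the Gowers norm $U^{u+1}[N]$, and proving that smallness (Proposition \ref{prop:gowers_estimate_artin} in this paper) is the heart of the matter. In the original Green--Tao--Ziegler argument this step rests on the identity $\Lambda(n)=-\sum_{d\mid n}\mu(d)\log d$, which funnels everything into the M\"obius--nilsequence orthogonality of \cite{MR2877066}; no analogue of this identity exists for $\Lambda_a$, so the GTZ computation of the main term does not carry over, and your plan offers no substitute. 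The paper's actual route is to prove non-correlation of $\Lambda_{a,b,W}$ with nilsequences directly: first replace the constant mean by the model $\phi(W)\delta(a,b,W)\Lambda_{b,W}(n)$ (a constant model fails because of progressions like $n\equiv 0 \bmod b$), then use Hooley's method under $\HRH(a)$ (Theorem \ref{thm:hooley}) to reduce to the Chebotarev--von Mangoldt functions $\Lambda_{G(k,a),\{\ident\},b,W}$ with $p_+(k)\leq w(N)$, and finally prove a new orthogonality theorem for these (Theorems \ref{thm:W_tricked_chebotarev_mangoldt_with_nilsequences_nonconst_avg} and \ref{thm:ideal_von_mangoldt_equidist_nilsequences}) by passing to ideal von Mangoldt functions in a subfield, running a Vaughan-type decomposition, and controlling nilsequences along ideal norms by Kane-style lattice point counting. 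None of this is present in, or replaceable by, your majorant verification; HRH must be deployed at the level of these nilsequence correlations, not merely in a linear-forms condition.

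Moreover, the bespoke majorant you propose is both unnecessary and problematic. Unnecessary, because $\Lambda'_{a_i,b_i,W}\leq\Lambda'_{b_i,W}$, so the paper simply reuses the Green--Tao majorant for the ordinary $W$-tricked von Mangoldt function (Lemma \ref{lem:pseudorandom_majorant}); problematic, because verifying the linear forms condition for Goldston--Y{\i}ld{\i}r{\i}m weights twisted by the splitting conditions in the fields $G(k,a_i)$ across several linear forms would require power-saving correlation estimates for ideal-counting-type functions, which the paper explicitly flags as out of reach. Your step one (matching the local factors via Moree's formula and almost-multiplicativity of $\delta$) and your sketch of $\sigma_{\va,\Psi}(p)=1+O_{s,t,L,\va}(p^{-2})$ are in line with what the paper does (Lemmas \ref{lem:almost_multiplicativity} and \ref{lem:euler_factor_bounds}), but as it stands the proposal is missing the central Gowers-uniformity input and therefore does not prove the theorem.
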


Note that, assuming $\HRH(a_i)$, the quotient
$\delta(a_i,\psi_i(n),q)/\delta(a_i,0,1)$ appearing as the summand in
\eqref{eq:def_sigma} can can be interpreted as the conditional
probability that a random prime with primitive root $a_i$ is congruent
to $\psi_i(n)$ modulo $q$.
Our densities
$\sigma_{\va,\Psi}(q)$ are thus in perfect analogy with the densities
\begin{equation}\label{eq:def_beta_p}
\beta_{\Psi, q}=\beta_q:=\EE_{n\in(\ZZ/q\ZZ)^s}\prod_{i\in[t]}\frac{q\one_{\gcd(\psi_i(n),q)=1}}{\phi(q)}
=
q^{t-s}\sum_{n\in(\ZZ/p\ZZ)^s}\prod_{i\in[t]}\frac{\one_{\gcd(\psi_i(n),q)=1}}{\phi(q)}
\end{equation}
for unrestricted primes in \cite[(1.6)]{MR2680398}, where $\phi(q)$ denotes Euler's totient and for any proposition $P$ we write $\one_{P}=1$ if $P$ is true and $\one_P=0$ otherwise. Here,
$\one_{\gcd(\psi_i(n),q)=1}/\phi(q)$ can be seen as the probability
that a random prime is congruent to $\psi_i(n)\bmod q$ by the prime number theorem in arithmetic progressions.
Contrary to \cite{MR2680398}, our factor $\fS(\Psi,\va)$ does not factor
completely as an Euler product, hence the need to introduce the
quantity $\cD_\va$. Moreover, our formula features an additional
global factor $\prod_{i=1}^t\delta(a_i,0,1)$, where of course
$\delta(a_i,0,1)$ should be seen as the probability that a random
prime has primitive root $a_i$. Both of these phenomena were already
observed in a special case in \cite{MR4201547}, see Example
\ref{ex:vinogradov} below.

\begin{remark}\label{rmk:artin}
  In typical applications, the convex set $X$ will satisfy
  $\vol(X\cap\Psi^{-1}(\RR_+^t))\asymp N^s$. It may happen that
  $\fS(\va,\Psi)=0$ and thus the main term in Theorem
  \ref{thm:main_artin} vanishes. In this case, there may be no,
  finitely many, or infinitely many $n\in\ZZ^s$ for which each
  $\psi_i(n)$ is prime with primitive root $a_i$.

  If $\sigma_{\va,\Psi}(q)=0$ for some
  $q\in \{\cD_\va\}\cup\{p\ :\ p\nmid \cD_\va,\ p\ll_{s,t,L,\va}1\}$, then for
  each $c\in(\ZZ/q\ZZ)^s$ there is some $i\in [t]$ with
  $\delta(a_i,\psi_i(c),q)=0$. From Moree's explicit description of
  $\delta(a,b,q)$, see \S\ref{sec:delta}, it is straightforward to see that
  then every prime congruent to $\psi_i(c)\bmod q$ with primitive root $a_i$
  must lie in the set $\{2, \gcd(\psi_i(c),q)\}$. Taking all possible
  candidates for all $c$, we end up with a finite collection of cases, in each
  of which one of the forms $\psi_i(n)$ has to take a fixed value. In each
  case, one can use linear algebra to either show that no solutions exist, or
  express the system in $s-1$ variables. However, this new system might have
  infinite complexity, so Theorem \ref{thm:main_artin} might not apply any
  longer. See the end of \S\ref{sec:prog_example} for an example.
\end{remark}

\subsection{Primes with prescribed Artin symbols}
Our second main result is unconditional. Here, instead of prescribed primitive roots, we require the primes to have prescribed Artin symbols $[K/\QQ,p]$ for some finite Galois extensions $K/\QQ$. Recall that this Artin symbol is defined for primes $p$ unramified in $K$ and yields a conjugacy class in the Galois group $\Gal(K/\QQ)$, the class of Frobenius elements of prime ideals $\ppp$ of the ring of integers of $K$ lying above $p$. For any such conjugacy class $C$, we define the \emph{Chebotarev-von Mangoldt function}
\begin{equation}\label{def:lambda_K_C}
  \Lambda_{K,C}(n) :=
  \begin{cases}
    \log p,&\text{ if }n=p^e\text{ with $e\in\NN$, $p$ prime, unramified in $K$ with } [K/\QQ,p]=C,\\
    0,&\text{ otherwise.}
  \end{cases}
\end{equation}
For $q\in\NN$, $b\in\ZZ$ with $\gcd(b,q)=1$, recall that $\sigma_b\in\Gal(\QQ(\mu_q)/\QQ)$ is the automorphism with $\zeta_q\mapsto\zeta_q^b$. For any conjugacy class $C\subseteq\Gal(K/\QQ)$, let
\begin{equation}\label{eq:def_eta_K_C}
  \eta_{K,C}(b,q) :=
  \begin{cases}
    \frac{|C|}{[K(\mu_q):\QQ]},
    &\begin{aligned}
&\text{ if $\gcd(b,q)=1$ and $\sigma_b$ restricts to the same element}\\
&\text{ of $\Gal(K\cap \QQ(\mu_q)/\QQ)$ as the elements of $C$},
\end{aligned}
\\
    0, &\text{ otherwise.}
  \end{cases}
\end{equation}
This generalises the quantities defined in \eqref{eq:def_eta}, as clearly
\begin{equation}\label{eq:eta_special_case}
\eta(a,b,k,q)=\eta_{G(k,a),\{\ident\}}(b,q).
\end{equation}
Chebotarev's density theorem applied to $K(\mu_q)$ shows that
  \begin{equation}\label{eq:cheb}
  \sum_{\substack{n\in[N]}}\Lambda_{K,C}(b+nq) = q\eta_{K,C}(b,q)N+o_{K,q}(N).
\end{equation}
As before, we extend
\eqref{eq:cheb} from $\psi(n)=b+qn$ to arbitrary finite complexity
systems of affine-linear forms
$\Psi=(\psi_1,\ldots,\psi_t):\ZZ^s\to\ZZ^t$. To this end, let
$\vK=(K_1,\ldots,K_t)$ be number fields normal over $\QQ$ and $\vC=(C_1,\ldots,C_t)$,
where $C_i$ is a conjugacy class in $\Gal(K_i/\QQ)$.
For any $q\in\NN$, we define the density
\begin{equation}\label{eq:def_tau}
  \tau_{\vK,\vC,\Psi}(q) := \EE_{n\in(\ZZ/q\ZZ)^s}\prod_{i\in[t]}\frac{q\eta_{K_i,C_i}(\psi_i(n),q)}{\eta_{K_i,C_i}(0,1)}=q^{t-s}\sum_{n\in(\ZZ/q\ZZ)^s}\prod_{i\in[t]}\frac{\eta_{K_i,C_i}(\psi_i(n),q)}{\eta_{K_i,C_i}(0,1)}.
\end{equation}
Furthermore, let
\begin{equation}\label{eq:def_DK}
\cD_{\vK}:=\lcm(\Phi_{K_i^\ab}\where 1\leq i\leq t),
\end{equation}
where $\Phi_{K_i^\ab}\in\NN$ is
the finite part of the conductor\footnote{In concrete terms, $\Phi_{K_i^\ab}$ is the minimal $f\in\NN$ such that $K_i^\ab\subseteq \QQ(\mu_f)$. Such $f$ exist by the Kronecker-Weber theorem.} of the maximal abelian subextension $K_i^\ab/\QQ$ of $K_i/\QQ$.
With this setup in place, our second main result is the following version of the theorem of Green and Tao.

\begin{theorem}\label{thm:main_cheb}
  Let $s,t,L,N\in\NN$. Let $\vK=(K_1,\ldots,K_t)$ and
  $\vC=(C_1,\ldots,C_t)$, where $K_i/\QQ$ is a Galois number field
  and $C_i\subseteq \Gal(K_i/\QQ)$ a conjugacy class for all
  $i\in[t]$. Let $\Psi=(\psi_1,\ldots,\psi_t)$ be a system of
  affine-linear forms of finite complexity with size
  $\vecnorm{\Psi}_N\leq L$. Let $X\subseteq[-N,N]^s$ be a convex
  set. Then
  \begin{equation*}
    \sum_{n\in X\cap\ZZ^s}\prod_{i\in[t]}\Lambda_{K_i,C_i}(\psi_i(n)) = \vol(X\cap\Psi^{-1}(\RR_+^t))\fS(\vK,\vC,\Psi) + o_{\vK,\vC,s,t,L}(N^s),
  \end{equation*}
  where
  \begin{equation*}
    \fS(\vK,\vC,\Psi):=\Big(\prod_{i\in[t]}\eta_{K_i,C_i}(0,1)\Big)\tau_{\vK,\vC,\Psi}(\cD_{\vK})\prod_{p\nmid\cD_{\vK}}\tau_{\vK,\vC,\Psi}(p).
  \end{equation*}
Here, the infinite product $\prod_{p\nmid\cD_\vK}\tau_{\vK,\vC,\Psi}(p)$ over all primes not dividing $\cD_\vK$ converges absolutely. 
\end{theorem}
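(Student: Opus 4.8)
The plan is to run the Green--Tao--Ziegler linear-forms machinery with the Chebotarev--von Mangoldt functions $\Lambda_{K_i,C_i}$ in place of $\Lambda$, after splitting off from each a ``major-arc'' part carrying its abelian bias. Fix $i$, write $G_i:=\Gal(K_i/\QQ)$ and $m_i:=\Phi_{K_i^\ab}$; since $K_i^\ab\subseteq\QQ(\mu_{m_i})$ one has $K_i\cap\QQ(\mu_q)=K_i^\ab$ for every $q$ with $m_i\mid q$. Combining \eqref{eq:cheb} (for $q=m_i$) with the prime number theorem in progressions, among primes in an admissible class $b\bmod m_i$ (meaning $\gcd(b,m_i)=1$ and $\sigma_b$ restricts on $K_i^\ab$ to the common restriction of $C_i$) the proportion with Artin symbol $C_i$ equals $\rho_i(b):=\phi(m_i)\eta_{K_i,C_i}(b,m_i)$; using $[K_i(\mu_{m_i}):\QQ]=|G_i|\phi(m_i)/[K_i^\ab:\QQ]$ this simplifies to the constant $|C_i|[K_i^\ab:\QQ]/|G_i|$, and $\rho_i(b):=0$ for inadmissible $b$. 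Define the abelian model $\widetilde\Lambda_{K_i,C_i}(n):=\rho_i(n\bmod m_i)\,\Lambda(n)$ and the remainder $E_i:=\Lambda_{K_i,C_i}-\widetilde\Lambda_{K_i,C_i}$. Because $0\le\Lambda_{K_i,C_i}\le\Lambda$ and $0\le\rho_i(\cdot)\ll_{\vK}1$, all three of $\Lambda_{K_i,C_i}$, $\widetilde\Lambda_{K_i,C_i}$, $E_i$ are pointwise $O_{\vK}(1)$ times the Green--Tao pseudorandom majorant of $\Lambda$; hence the generalised von Neumann theorem of \cite{MR2680398} applies as soon as the $E_i$ are shown to have negligible Gowers norms.

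The crux is: after the usual $W$-trick with $W$ a multiple of $\cD_{\vK}=\lcm_i m_i$, each $E_i$ has $\|\cdot\|_{U^{d+1}}=o(1)$, where $d$ is the complexity of $\Psi$ in the sense of \cite[Definition 1.5]{MR2680398}. By column orthogonality of characters, $\one_{[K_i/\QQ,p]=C_i}\log p=\tfrac{|C_i|}{|G_i|}\sum_{\rho}\overline{\chi_\rho(C_i)}\,\Lambda_\rho(p)$ for primes $p$ unramified in $K_i$, where $\Lambda_\rho$ is the von Mangoldt function attached to the Artin $L$-function $L(s,\rho)$ and the sum runs over the irreducible characters $\chi_\rho$ of $G_i$; the one-dimensional $\rho$ (which factor through $G_i^\ab$ and give Dirichlet $L$-functions modulo $m_i$) reassemble exactly into $\widetilde\Lambda_{K_i,C_i}$. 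Hence, up to an $O_{\vK}(\sqrt N\log^2 N)$ term supported on prime powers and the finitely many primes ramified in $K_i$, one has $E_i=\tfrac{|C_i|}{|G_i|}\sum_{\dim\rho\ge2}\overline{\chi_\rho(C_i)}\,\Lambda_\rho$. By the inverse theorem for the Gowers norms \cite{MR2950773} it then suffices that each $\Lambda_\rho$ with $\dim\rho\ge2$, after the $W$-trick, be asymptotically orthogonal to bounded-complexity nilsequences; concretely, for every such $\rho$, every residue $b$, and every bounded-complexity nilsequence $(F,g,\Gamma)$,
\[
  \sum_{\substack{p\le N\\ p\equiv b\,(W)}}\chi_\rho(\Frob_p)\,\overline{F\!\left(g\!\left(\tfrac{p-b}{W}\right)\Gamma\right)}\,\log p=o\!\left(\tfrac{N}{\phi(W)}\right).
\]
This is an Artin-$L$-function analogue of Green and Tao's orthogonality of $\mu$ to nilsequences, and is the main technical input of the proof: one obtains it by feeding a Vaughan-type combinatorial identity for $-L'(s,\rho)/L(s,\rho)$ --- which uses only the Euler product of $L(s,\rho)$, not its conjectural holomorphy --- or, via Brauer induction, the corresponding identities for Hecke $L$-functions of subfields of $K_i$, into the Green--Tao analysis of Type~I and Type~II sums against nilsequences underlying \cite{MR2680398}. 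Granting this, the generalised von Neumann theorem lets us replace every $\Lambda_{K_i,C_i}$ by $\widetilde\Lambda_{K_i,C_i}$ in $\sum_{n\in X\cap\ZZ^s}\prod_{i\in[t]}\Lambda_{K_i,C_i}(\psi_i(n))$ at the cost of $o_{\vK,\vC,s,t,L}(N^s)$.

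It remains to evaluate $M:=\sum_{n\in X\cap\ZZ^s}\prod_{i\in[t]}\widetilde\Lambda_{K_i,C_i}(\psi_i(n))$; put $m:=\cD_{\vK}$. Discarding the $O_{\vK}(N^{s-1}(\log N)^{t+1})$ contribution from those $n$ for which some $\psi_i(n)$ is a proper prime power or is divisible by a prime dividing $m$, and splitting $n$ according to $c:=n\bmod m$, one gets $M=\sum_{c\in(\ZZ/m\ZZ)^s}\big(\prod_i\rho_i(\psi_i(c))\big)\,S(c)+o(N^s)$ with $S(c):=\sum_{n\in X\cap\ZZ^s,\ n\equiv c\,(m)}\prod_i\Lambda(\psi_i(n))$. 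Substituting $n=c+mn'$ turns $S(c)$ into a Green--Tao--Ziegler count for the finite-complexity system $\Psi_c(n'):=\Psi(c+mn')$ over the convex set $m^{-1}(X-c)$, of size $O_{L,m}(1)$ relative to $N/m$; by \cite{MR2680398,MR2950773}, $S(c)=m^{-s}\vol(X\cap\Psi^{-1}(\RR_+^t))\prod_p\beta_{\Psi_c,p}+o_{\vK,s,t,L}(N^s)$, where $\beta_{\Psi_c,p}=\beta_{\Psi,p}$ for $p\nmid m$ while $\beta_{\Psi_c,p}=(p/(p-1))^t\prod_i\one_{p\nmid\psi_i(c)}$ for $p\mid m$. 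Using $K_i\cap\QQ(\mu_m)=K_i^\ab$ one gets $[K_i(\mu_m):\QQ]=|G_i|\phi(m)/[K_i^\ab:\QQ]$, hence the key identity $\rho_i(b)\,\one_{\gcd(b,m)=1}=\phi(m)\,\eta_{K_i,C_i}(b,m)$ for all $b$; plugging this in, collecting $\prod_{p\mid m}(p/(p-1))^t=(m/\phi(m))^t$ and summing over $c$ shows that, once the overall $m^{-s}$ and the primes $p\nmid m$ are separated out, the $c$-sum collapses to $m^{t-s}\sum_c\prod_i\eta_{K_i,C_i}(\psi_i(c),m)=\big(\prod_i\eta_{K_i,C_i}(0,1)\big)\tau_{\vK,\vC,\Psi}(\cD_{\vK})$, so that $M=\vol(X\cap\Psi^{-1}(\RR_+^t))\big(\prod_i\eta_{K_i,C_i}(0,1)\big)\tau_{\vK,\vC,\Psi}(\cD_{\vK})\prod_{p\nmid\cD_{\vK}}\beta_{\Psi,p}+o(N^s)$. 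A direct check from \eqref{eq:def_eta_K_C} and \eqref{eq:def_tau} gives $\tau_{\vK,\vC,\Psi}(p)=\beta_{\Psi,p}$ for every $p\nmid\cD_{\vK}$, so $M=\vol(X\cap\Psi^{-1}(\RR_+^t))\fS(\vK,\vC,\Psi)+o(N^s)$, which is the asserted asymptotic; the same identity together with Green and Tao's bound $\beta_{\Psi,p}=1+O_{s,t,L}(p^{-2})$ for finite-complexity systems yields the absolute convergence of $\prod_{p\nmid\cD_{\vK}}\tau_{\vK,\vC,\Psi}(p)$.
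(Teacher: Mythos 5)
Your overall architecture matches the paper's: you split each $\Lambda_{K_i,C_i}$ into an abelian model carrying the bias modulo $\Phi_{K_i^\ab}$ plus an error, note that everything is dominated by the Green--Tao majorant, kill the error via a Gowers-norm estimate and the generalised von Neumann theorem, and then evaluate the model term; your model $\rho_i(n\bmod m_i)\Lambda(n)$ coincides (on the relevant residues) with the paper's model $\phi(W)\eta_{K_i,C_i}(b,W)\Lambda_{b,W}$, and your endgame -- splitting into residues modulo $\cD_{\vK}$ and invoking the original Green--Tao--Ziegler theorem for each subsystem $\Psi_c$ -- is a perfectly reasonable alternative to the paper's $W$-trick bookkeeping, with the local computation $\tau_{\vK,\vC,\Psi}(p)=\beta_{\Psi,p}$ for $p\nmid\cD_\vK$ matching Lemma \ref{lem:euler_factor_bound_cheb}.

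The genuine gap is in the step you yourself flag as ``the main technical input'': the orthogonality of the non-abelian part to nilsequences. Your first suggestion, a Vaughan-type identity for $-L'/L(s,\rho)$ applied over the rational integers, does not feed into the Green--Tao Type~I/Type~II analysis as stated: in that analysis the ``long'' variable must carry non-oscillating weights ($1$ or $\log$), whereas Vaughan for $\Lambda_\rho$ produces bilinear sums whose smooth variable is weighted by coefficients of $L(s,\rho)^{\pm 1}$ (traces of Frobenius), to which the nilsequence equidistribution estimates of \cite{MR2877065} simply do not apply. Your second suggestion, inducing to Hecke $L$-functions of subfields, is essentially the paper's route (it inducts from $\langle c\rangle$, $c\in C$, to the fixed field $L=K^c$ and works with $\Lambda_L$ twisted by characters of $\langle c\rangle$, Lemma \ref{lem:passing_to_ideals}), but then the Type~I/II sums run over \emph{ideals} of $L$ and one must obtain cancellation in sums of the form $\sum_{\norm\aaa\in P}\xi(\aaa)F(g(\norm\aaa)\Gamma)$, i.e.\ of an equidistributed nilsequence evaluated along ideal norms in a progression. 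This is not covered by the existing Green--Tao machinery: it is the genuinely new ingredient of the paper (Proposition \ref{prop:norms_vs_equidist_nilsequences}, a higher-order analogue of Kane's exponential-sum lemma, proved by a geometry-of-numbers decomposition into lines on which the norm is a degree-$D$ integer polynomial, combined with the equidistribution of polynomial subsequences of nilsequences, Proposition \ref{prop:equidist_poly_subseq}); your proposal asserts the conclusion without supplying this. A secondary omission: inside the reduction from general to equidistributed nilsequences one also needs a Siegel--Walfisz-type statement for $\Lambda_{K,C}$ uniform in moduli of size a power of $\log N$ (the paper's Proposition \ref{prop:better_than_chebotarev}), which cannot be read off from effective Chebotarev for $K(\mu_q)$ because the degree grows, and which again goes through the Hecke $L$-function route; your plan does not address it.
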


The factor $\fS(\vK,\vC,\Psi)$ in the  main term has an analogous interpretation as in Theorem
\ref{thm:main_artin}.  In particular, the quotient
$\eta_{K_i,C_i}(\psi_i(n),q)/\eta_{K_i,C_i}(0,1)$ in
\eqref{eq:def_tau} can be seen as the conditional probability that a
random prime $p$ with Artin symbol $[K_i/\QQ,p]=C_i$ is congruent to
$\psi_i(n)$ modulo $q$.
In contrast to Theorem \ref{thm:main_artin}, these conditional probabilities simplify considerably:
we will see in Lemma \ref{lem:euler_factor_bound_cheb} that $\tau_{\vK,\vC,\Psi}(\cD_\vK)=\beta_{\Psi,p}$ whenever $p\nmid\cD_K$, with $\beta_{\Psi,p}$ the same density as for unrestricted primes in \cite{MR2680398}, defined in \eqref{eq:def_beta_p}. Hence, we can also write 
\begin{equation}\label{eq:cheb_leading_term}
\fS(\vK,\vC,\Psi) = \EE_{n\in(\ZZ/\cD_\vK\ZZ)^s}\Big(\prod_{i\in[t]}\cD_{\vK}\eta_{K_i,C_i}(\psi_i(n),\cD_\vK)\Big)\prod_{p\nmid\cD_{\vK}}\beta_{\Psi,p},
\end{equation}
which is very close to the formula for unrestricted primes in \cite[Main Theorem]{MR2680398}.

\begin{remark}
  Analogous statements as in Remark \ref{rmk:artin} apply. One easily sees directly from the definition that if $\eta_{K,C}(b,q)=0$, then there is at most one possible prime $p\equiv b\bmod q$ with $[K/\QQ,p]=C$, namely $\gcd(b,q)$.

  It is interesting to note that whether or not $\fS(\vK,\vC,\Psi)$ is positive does not depend on the full data of $\vK$ and $\vC$. From \eqref{eq:cheb_leading_term}, one sees that the only information on $\vK,\vC$ relevant for this question are the restrictions of the conjugacy classes $C_i$ to the maximal abelian subextensions $K_i^\ab$ of $K_i$.
\end{remark}

The theorem of Green and Tao \cite{MR2680398} has been applied in combination
with fibration or specialisation techniques in more algabraically-flavored
contexts, where it is used to show that certain linear forms, e.g. the linear
factors of a well-chosen discriminant polynomial, take prime values simultaneously
infinitely often (see e.g. \cite{MR3292295,MR4094712,MR4177544,MR4736376}). Theorem
\ref{thm:main_cheb} adds the capability of prescribing splitting conditions at
these prime values, which we hope will be useful. To facilitate applications,
we state the following implication of Theorem \ref{thm:main_cheb}.

\begin{corollary}\label{cor:existence_cheb}
  Let $s,t,K_i,C_i$ be as in Theorem \ref{thm:main_cheb},
  $\Psi=(\psi_1,\ldots,\psi_t)$ a system of affine-linear forms and
  $X\subseteq \RR^s$ an open convex cone, such that
  \begin{enumerate}
  \item no two of the forms $\psi_i,\ldots,\psi_t$ have linearly dependent linear parts,
  \item the product $\prod_{i\in[t]}\psi_i$ has no fixed prime divisor, i.e. for
    every prime $p$ there is $n\in\ZZ^s$ such that none of
    $\psi_1(n),\ldots,\psi_t(n)$ are divisible by $p$,
  \item there exists $x\in K$ with $\dot\psi_1(x),\ldots,\dot\psi_k(x)>0$, where $\dot\psi_i=\psi_i-\psi_i(0)$ is the linear part of the form $\psi_i$,
  \item there are $q\in\NN$ with $\cD_{\vK}\mid q$ and $n\in\ZZ^s$, such that,
    for all $1\leq i\leq t$, we have $\gcd(q,\psi_i(n))=1$ and the automorphism
    $\sigma_{\psi_i(n)}:\zeta_q\mapsto\zeta_q^{\psi_i(n)}$ of $\QQ(\mu_q)$ restricts to the
    same element of $\Gal(K_i^\ab/\QQ)$ as the elements of $C$.
  \end{enumerate}
  Then there are infinitely many $n\in\ZZ^s\cap X$ for which the values $\psi_{i}(n)$ are distinct primes with $[K_i/\QQ,\psi_i(n)]=C_i$ for all $i$.
\end{corollary}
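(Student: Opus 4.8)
The plan is to deduce Corollary~\ref{cor:existence_cheb} from Theorem~\ref{thm:main_cheb} by verifying that, under hypotheses (1)--(4), the convex set can be chosen so that both the volume factor and the singular series $\fS(\vK,\vC,\Psi)$ are strictly positive, after which positivity of the main term forces the sum $\sum_{n\in X'\cap\ZZ^s}\prod_i\Lambda_{K_i,C_i}(\psi_i(n))$ to grow like $N^s$, and in particular to be nonzero for all large $N$; a counting argument then extracts infinitely many $n$ with each $\psi_i(n)$ a genuine \emph{prime} (not a higher prime power) with the prescribed Artin symbol, and a further pruning removes the finitely many degeneracies where two values coincide.

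First I would set up the test region. Since $X$ is an open convex cone and, by (3), contains a point $x$ with $\dot\psi_i(x)>0$ for all $i$, a small ball $B$ around $x$ inside $X$ still has $\dot\psi_i>0$ on $B$; then for $N$ large the dilate $X_N := NB$ satisfies $X_N\subseteq[-N,N]^s$ (after rescaling $B$ once and for all), $X_N$ is convex, $\vol(X_N)\asymp N^s$, and $X_N\subseteq\Psi^{-1}(\RR_+^t)$ up to a boundary error: more precisely $\vol(X_N\cap\Psi^{-1}(\RR_+^t))\geq c N^s$ for a constant $c>0$ depending only on $B$ and $\Psi$, because $\psi_i(Ny)=N\dot\psi_i(y)+\psi_i(0)>0$ once $N\dot\psi_i(y)>|\psi_i(0)|$, which holds on a sub-ball for $N$ large. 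Applying Theorem~\ref{thm:main_cheb} with $X=X_N$ gives
\begin{equation*}
\sum_{n\in X_N\cap\ZZ^s}\prod_{i\in[t]}\Lambda_{K_i,C_i}(\psi_i(n)) = \vol(X_N\cap\Psi^{-1}(\RR_+^t))\,\fS(\vK,\vC,\Psi) + o(N^s),
\end{equation*}
so it remains to show $\fS(\vK,\vC,\Psi)>0$.

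For the singular series, I would use the factorisation
\begin{equation*}
\fS(\vK,\vC,\Psi)=\Big(\prod_{i\in[t]}\eta_{K_i,C_i}(0,1)\Big)\,\tau_{\vK,\vC,\Psi}(\cD_{\vK})\prod_{p\nmid\cD_{\vK}}\tau_{\vK,\vC,\Psi}(p).
\end{equation*}
The prefactor is positive since each $\eta_{K_i,C_i}(0,1)=|C_i|/[K_i:\QQ]>0$. For the local factors at $p\nmid\cD_{\vK}$, I invoke the identity $\tau_{\vK,\vC,\Psi}(p)=\beta_{\Psi,p}$ from Lemma~\ref{lem:euler_factor_bound_cheb} together with the bound $\tau_{\vK,\vC,\Psi}(p)=1+O(p^{-2})$ that is part of Theorem~\ref{thm:main_cheb}; hence all but finitely many are within $1/2$ of $1$, so the tail of the product is positive, and the finitely many remaining $\beta_{\Psi,p}$ are nonzero precisely because of hypothesis (2) (no fixed prime divisor of $\prod_i\psi_i$), by the standard argument that $\beta_{\Psi,p}>0$ iff some residue $n\bmod p$ makes every $\psi_i(n)$ a unit mod~$p$. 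Finally, for the remaining factor $\tau_{\vK,\vC,\Psi}(\cD_{\vK})$, hypothesis (4) supplies $q$ with $\cD_{\vK}\mid q$ and $n_0\in\ZZ^s$ with $\gcd(q,\psi_i(n_0))=1$ and $\sigma_{\psi_i(n_0)}$ restricting correctly; reducing mod $\cD_{\vK}$ and using the definition of $\eta_{K_i,C_i}$ shows the residue class $n_0\bmod\cD_{\vK}$ contributes a strictly positive summand $\prod_i\cD_{\vK}\eta_{K_i,C_i}(\psi_i(n_0),\cD_{\vK})>0$ to $\tau_{\vK,\vC,\Psi}(\cD_{\vK})$, and all summands are $\geq 0$, so $\tau_{\vK,\vC,\Psi}(\cD_{\vK})>0$. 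Thus $\fS(\vK,\vC,\Psi)>0$.

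With $\fS(\vK,\vC,\Psi)>0$ established, the main term is $\geq c\,\fS\,N^s\gg N^s$, so for $N$ large the weighted sum is positive; since each $\Lambda_{K_i,C_i}$ is nonnegative, there is at least one $n\in X_N\cap\ZZ^s$ with $\prod_i\Lambda_{K_i,C_i}(\psi_i(n))\neq 0$, i.e. each $\psi_i(n)=p_i^{e_i}$ with $p_i$ prime, unramified in $K_i$, and $[K_i/\QQ,p_i]=C_i$. To upgrade from prime powers to primes and to arrange distinctness, I would bound the contribution of the ``bad'' $n$: the terms where some $\psi_i(n)$ is a proper prime power contribute $O(N^{s-1/2}\log^{t} N)=o(N^s)$ by a trivial divisor/counting estimate (for each $i$, the number of $n\in X_N$ with $\psi_i(n)$ a proper prime power is $O(N^{s-1}\cdot N^{1/2})$ since $\dot\psi_i\neq 0$ and there are $O(\sqrt{M})$ proper prime powers up to $M$), and, using (1), the terms where $\psi_i(n)=\psi_j(n)$ for some $i\neq j$ lie on a proper affine subspace (the linear parts being linearly independent, the equation $\psi_i=\psi_j$ is nontrivial), contributing $O(N^{s-1}\log^{t}N)=o(N^s)$. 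Subtracting these from the positive main term leaves infinitely many $n\in X\cap\ZZ^s$ (taking $N\to\infty$ and noting each such $n$ lies in $X$ since $X_N\subseteq X$) with the $\psi_i(n)$ distinct primes and $[K_i/\QQ,\psi_i(n)]=C_i$ for all $i$, as claimed.

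The main obstacle is the positivity of the archimedean factor combined with the local factor at $\cD_{\vK}$: one must be careful that the single mod-$\cD_{\vK}$ class produced by hypothesis (4) genuinely contributes a positive term to $\tau_{\vK,\vC,\Psi}(\cD_{\vK})$ and is not cancelled—this is where the nonnegativity of each summand $\prod_i\cD_{\vK}\eta_{K_i,C_i}(\psi_i(n),\cD_{\vK})\geq 0$ is essential—and, on the archimedean side, that the cone $X$, which a priori need not meet $\Psi^{-1}(\RR_+^t)$ in a set of positive measure, can be replaced by a genuinely ``fat'' region using (3). Everything else is bookkeeping with the Green--Tao--Ziegler asymptotic already in hand via Theorem~\ref{thm:main_cheb}.
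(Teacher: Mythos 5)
Your proposal is correct and follows essentially the same route as the paper: apply Theorem \ref{thm:main_cheb} to a convex set of volume $\asymp N^s$ obtained from $X$ via condition \emph{(3)}, deduce $\fS(\vK,\vC,\Psi)>0$ from conditions \emph{(2)} and \emph{(4)} using the factorisation with $\tau_{\vK,\vC,\Psi}(p)=\beta_{\Psi,p}$ (equivalently \eqref{eq:cheb_leading_term}), and then discard the $o(N^s)$ contribution of proper prime powers and of coincidences $\psi_i(n)=\psi_j(n)$ via condition \emph{(1)}. The only cosmetic difference is that you dilate a small ball inside $X$ rather than taking $X\cap[-N,N]^s$ directly, which changes nothing of substance.
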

Let us emphasize again that condition \emph{(4)} does not depend on
the full fields $K_i$ and conjugacy classes $C_i$, but only on the
restrictions of $C_i$ to the maximal abelian subextensions of
$K_i$. Hence, \emph{(4)} is a collection of congruence conditions on
$n$ modulo $\cD_\vK$. The inclusion of $q$ is only there for
convenience in applications; if \emph{(4)} holds for any $q$ with
$\cD_\vK\mid q$, then also for $q=\cD_\vK$.

\subsection{Application in inverse Galois theory}\label{sec:galois}
A special case of Corollary \ref{cor:existence_cheb} was asked for explicitly
by Kim and K\"onig in the context of inverse Galois problems with local
conditions. In \cite[Remark 6]{MR4177544}, they ask whether, for given $n\geq 2$, an analog of the theorem of Green and Tao can prove the
existence of infinitely many pairs $(s,t)\in\ZZ^2$, such that
\begin{align*}
    &|t|& &\text{ is a prime congruent to } 1\bmod n,\\
    &|s|& &\text{ is a prime congruent to } 1\bmod n-1,\\
    &|(n-1)^{n-1}t-n^ns|& &\text{ is a prime splitting completely in } F_{n-1},
\end{align*}
where $F_k$ is the splitting field of the polynomial\footnote{The problem is
  formulated for the reciprocal polynomial of $f_k$, which clearly has
  the same splitting field.}
\begin{equation*}
  f_k := x^{k-1}+2x^{k-2}+3x^{k-3}+\cdots+(k-1)x+k
\end{equation*}
over $\QQ$. In Appendix \ref{sec:appendix}, which is joint work with
Joachim K\"onig, we apply Corollary \ref{cor:existence_cheb} to answer
this question positively for all $n\geq 2$. Using this to replace an
application of the original theorem of Green and Tao in the proof of
\cite[Theorem 5.5]{MR4177544}, we prove the following result on
locally cyclic $\Sn_n$-extensions with prescribed Artin symbol at
finitely many primes. This generalisation of the special case $n=5$
treated in \cite[Theorem 5.5]{MR4177544} was suggested by Kim and
K\"onig as the motivation for their question. Recall that a Galois
number field $K/\QQ$ is \emph{locally cyclic} if all decomposition
groups are cyclic.

\begin{theoremA1}
  Let $n\geq 2$ be an integer. Let $M_1$ be a finite set of primes and $M_2$ a finite set of
  sufficiently large primes in terms of $n$. For each $p\in M_2$, let
  $C_p$ be a conjugacy class of $\Sn_n$. Then there are infinitely many
  linearly disjoint Galois-extensions $K/\QQ$, such that:
  \begin{enumerate}
  \item $\Gal(K/\QQ)\simeq \Sn_n$,
  \item $K/\QQ$ is locally cyclic,
  \item All $p\in M_1\cup M_2$ are unramified in $K$,
  \item For each $p\in M_2$, we have $[K/\QQ,p]=C_p$.
  \end{enumerate}
\end{theoremA1}

This result is new even without the conditions \emph{(3)} and \emph{(4)}. It answers positively a question of Bubboloni and Sonn \cite{MR3556829}, who asked whether locally cyclic Galois extensions of $\QQ$ with Galois group $\Sn_n$ exist for all $n$. Let us note that \cite[Theorem 5.5]{MR4177544}
also covers the case of $\PGL_2(7)$. A generalisation to $\PGL_2(p)$
for arbitrary primes $p$ follows from the main result of
\cite{MR4736376}.

\subsection{Previous results and examples}

Three significant special cases of our results are already present in
the literature and of relevance to our work. Two are related to
Vinogradov's three primes theorem, which concerns a classical
constellation of complexity $1$, and one to primes in arithmetic
progressions.

\begin{example}\label{ex:kane}
  Let $t\geq 3$, let $\vK,\vC$ as in Theorem \ref{thm:main_cheb}, and let $c_1,\ldots,c_t\in\ZZ$ with $\gcd(c_1,\ldots,c_t)=1$, let $N,M\in\NN$. In \cite{MR3060874}, Kane obtains an asymptotic formula for
  \begin{equation*}
    \sum_{\substack{x_i\in [N]\\ c_1 x_1+\cdots+c_t x_t = M}} \prod_{i\in[t]} \Lambda_{K_i,C_i}(x_i).
  \end{equation*}
  Actually, Kane's formula is stated in terms of primes $p_i$ with $[K_i/\QQ,p_i]=C$ weighted
  by $\log(p_i)$ instead of integers $n_i$ weighted by
  $\Lambda_{K_i,C_i}(n_i)$, but the contribution of prime powers
  making up the difference between these two settings is clearly
  negligible. Considering $c_1,\ldots,c_t$ as fixed, we may assume
  that $M\ll N$, as otherwise the sum is zero.  As explained 
  in \cite[Theorem 1.8]{MR2680398} and its proof, one may choose a
  basis for the affine sublattice of $\ZZ^t$ given by
  $c_1x_1+\cdots+c_tx_t=M$, and thus describe it as the image of an
  affine-linear system $\Psi:\ZZ^{t-1}\to\ZZ^t$ of size
  $\vecnorm{\Psi}\ll 1$ and complexity $\leq 1$.

  We describe Kane's formula in \S\ref{subsec:comparison_Kane} and
  show that it agrees with the corresponding special case of Theorem
  \ref{thm:main_cheb}. In order to obtain his result, Kane generalises
  the proof of Vinogradov's three primes theorem presented in \cite[\S
  19]{MR2061214}, based on classical analytic number theory
  surrounding the Hardy-Littlewood circle method. Some of Kane's
  techniques play a role in our work. Most importantly, our
  Proposition \ref{prop:norms_vs_equidist_nilsequences} can be seen as
  a higher-order Fourier analysis version of \cite[Lemma 21]{MR3060874},
  Kane's exponential sum estimate for polynomial phases evaluated at
  ideal norms.
\end{example}

We should mention that Kane's result features a quantitative error
term saving an arbitrary power of $\log N$, whereas our error terms
are purely qualitative.  There has been great recent
  progress in obtaining quantitative versions of the methods of Green,
  Tao and Ziegler.  Tao and Ter\"av\"ainen \cite{taotera}, based on
  Manners' quantitative inverse theorem for Gowers norms
  \cite{manners}, obtained a version of the theorem of Green and Tao
  with quantitative error terms saving a small power of $\log\log
  N$. A very recent breakthrough in the area is the quasipolynomial
  inverse theorem of Leng, Sah and Sawhney \cite{LSS}. One might hope
  that applications of this work could lead to quantitative error
  terms and also dispose of the need for pseudorandom majorants in our
  work. In the context of Theorem \ref{thm:main_artin}, this does not
  seem very promising with what we have at the moment, as our bounds for correlations with
  nilsequences are extremely inexplicit, see Theorem
  \ref{thm:W_tricked_artin_mangoldt_with_nilsequences_nonconst_avg}. Although
  we are not following this direction in the present work, we would be
  very interested in seeing explicit bounds in the context of Theorem
  \ref{thm:main_cheb}.

  There are some other recent analytic results concerning patterns in
  primes with restricted Artin symbols, for example generalisations
  \cite{MR3344173,MR3667488} of Maynard's \cite{MR3272929} celebrated
  bounded gaps result, and results on primes in Beatty sequences
  \cite{MR4130085}. Similarly, there are some recent results and
  conjectures on gaps between primes with prescribed primitive roots \cite{MR3272281,MR3518382,MR4517480}.

\begin{example} \label{ex:vinogradov}
  Let $t=3$ and $\va$ as in Theorem \ref{thm:main_artin}. In \cite{MR4201547}, Frei, Koymans and Sofos prove, conditionally upon $\HRH(a_i)$ for $1\leq i\leq 3$, an asymptotic formula for
\begin{equation*} 
\sum_{\substack{x_i\in\NN\\x_1+x_2+x_3=N}}\prod_{i=1}^{3}\Lambda_a(x_i),
\end{equation*}
i.e. for Vinogradov's three primes theorem with prescribed primitive roots. Similarly as in Example \ref{ex:kane}, this is a special case of our Theorem \ref{thm:main_artin} for the system $\Psi(n_1,n_2)=(n_1,n_2,N-n_1-n_2)$ of complexity $1$. We present the formula of \cite{MR4201547} in \S\ref{subsec:comparison_FKS} and show that it agrees with Theorem \ref{thm:main_artin}.

To prove their result, the authors of \cite{MR4201547} use the
Hardy-Littlewood circle method to show a version of the three primes
theorem with additional splitting conditions, which they then insert
into the technique of Hooley's conditional proof of Artin's conjecture
\cite{MR207630}. This global application of Hooley's techniques leads
to a highly complex expression for the leading term as an infinite
sums over singular series twisted by M\"obius functions, requiring a long and intricate analysis in order to
render this leading term in the form of an (incomplete) Euler product
as in Theorem \ref{thm:main_artin}. In our proof of Theorem
\ref{thm:main_artin}, we apply Hooley's techniques more locally,
namely only in order to show non-correlation of (a suitably normalised
and $W$-tricked version of) our Artin-von Mangoldt functions with
nilsequences. This makes the shape of our main term in Theorem
\ref{thm:main_artin} arise naturally from the $W$-trick
of Green and Tao and factorisation properties of the densities
$\delta(a_i,b,q)$, and might therefore serve as a more satisfying explanation
for the observations made in \cite{MR4201547}.
\end{example}

\begin{example} \label{ex:2}
  By the Green-Tao theorem \cite{MR2415379}, any positive-density subset $\cP$ of the primes contains arbitrarily long arithmetic progressions, i.e., for any $t\in\NN$ the system
  \begin{equation*}
    \Psi(n_1,n_2)=(n_1,n_1+n_2,\ldots,n_1+(t-1)n_2)
  \end{equation*}
  of complexity $t-2$ takes infinitely many values in $\cP^t$. Taking $\cP$ to
  be the set of primes with $[K/\QQ,p]=C$ for some fixed $K,C$, which has
  positive density in the primes by \eqref{eq:cheb} with $q=1$, one obtains a
  weakened special case of Theorem \ref{thm:main_cheb}. Taking $\cP$ to be the
  set of primes with primitive root $a$, one obtains a weakened special case of
  Theorem \ref{thm:main_artin}. In both cases, the precise asymptotic formulae
  in our results are new. In \S\ref{sec:prog_example}, we work out for
  illustration the leading constant $\fS(\va,\Psi)$ in Theorem
  \ref{thm:main_artin} in case $t=3$ for some values of $a$. For example, if
  $a=2$, we get
\[
\fS((2,2,2),\Psi)=2\cA_2^3\prod_{p\neq 2}\left(1-\frac{p^4-p^3-3p^2-2p-1}{(p^2-p-1)^3}\right),
\]
where $\cA_{2}$ is the constant arising from the naive heuristic for the density of primes with primitive root $2$, defined at the start of \S\ref{sec:delta}.

The proof of \cite{MR2415379}, establishing a transference principle to
transfer Szemeredi's theorem \cite{MR369312} from the integers to the primes,
would work, more generally, for \emph{homogeneous, translation-invariant}
systems $\Psi$, i.e. systems $\Psi:\ZZ^s\to\ZZ^t$ of \emph{linear} forms whose
image contains the point $(1,\ldots,1)\in\ZZ^t$. Hence, also for these systems,
existence-versions of our main theorems were known already.
\end{example}

For arbitrary finite-complexity systems, a general transference principle was recently established by Bienvenu, Shao and Ter\"av\"ainen \cite[Theorem 3.2]{MR4564765}. The authors apply their transference principle to show lower bounds for linear constellations in, e.g., \emph{Chen-primes}. Their result could also be applied to ($W$-tricked versions of) our functions $\Lambda_a$ and $\Lambda_{K,C}$ and would lead to asymptotic lower bounds of the right order of magnitude in Theorem \ref{thm:main_artin} and Theorem \ref{thm:main_cheb}. However, the verification of the hypotheses of \cite[Theorem 3.2]{MR4564765} in our situation requires essentially the same work that takes up the bulk of our paper: the investigation of correlations of $\Lambda_a$ and $\Lambda_{K,C}$ with nilsequences.

\begin{example} \label{ex:13}
  Let us verify the claim made at the start of the introduction that, conditionally on $\HRH(7)$ our Theorem \ref{thm:main_artin} shows the existence of infinitely many pairs $(n_1,n_2)\in\ZZ^2$ for which all of  \eqref{eq:comp_2_example} are primes with primitive root $7$. To this end, we investigate factor $\fS(\va,\Psi)$ in the main term in Theorem \ref{thm:main_artin} for the system
  \begin{equation*}
  \Psi(n_1,n_2)=(n_1,n_2,n_1+n_2-1,n_1+2n_2-2)
\end{equation*}
of complexity $2$. We do so for various choices of $a_1=a_2=a_3=a_4=a$ in \S\ref{sec:example_intro}. For $a=2,3,5,6$ we see that $\sigma_{\va,\Psi}(|\Delta_a|)=0$, whereas for $a=7,10,11,13$ we get $\fS(\va,\Psi)>0$.
For $a=7$, we compute 
\begin{equation*}
\fS((7,7,7,7),\Psi)=\frac{914838624}{353220125}\cA_{7}^4\prod_{p\neq 2,3,7}\left(1-\frac{p^6-11p^4-4p^3+p^2+4p+1}{(p^2-p-1)^4}\right).
\end{equation*}
where $\cA_{7}$ is the constant arising from the naive heuristic for the density of primes with primitive root $7$, defined at the start of \S\ref{sec:delta}.
\end{example}

\subsection*{Acknowledgements} We thank Pierre-Yves Bienvenu for helpful discussions. C.F. was first exposed to the machinery invented by Green, Tao and Ziegler in a still ongoing collaboration with Lilian Matthiesen. In particular, he thanks Matthiesen for introducing him to her work on polynomial subsequences of equidistributed nilsequences. We thank Lilian Matthiesen and an anonymous referee for pointing out relevant references and bibliographic inaccuracies in an earlier version.

Most of this work was completed while M.T. was a visiting scholar at TU Graz in 2022--2024. The authors thank TU Graz for its hospitality. C.F. was supported by EPSRC grants EP/T01170X/1 and EP/T01170X/2. M.T. was supported by Czech Science Foundation (GA\v{C}R) grant 22-11563O.

\section{Notation, outline and proof strategy}
\subsection{Some notation}
We use asymptotic notation similarly as in \cite{MR2680398}. In $\ll$-, $\asymp$-
and $O$-notation, we indicate any dependences of the implicit constant by
subscripts. For example, $A=O_a(B)$ means that $|A|\leq C(a)B$, where $C(a)$ is
a positive constant depending only on $a$. In $o$-notation, it is understood
that $N$ is the parameter going to infinity. Hence, $A=o_a(B)$ means that for every
$\epsilon>0$ we have  $|A|< \epsilon B$ if only $N\geq N_0(a,\epsilon)$, where
$N_0(a,\epsilon)$ is a positive integer depending only on $a$ and $\epsilon$.  
For a finite set $S$ and a function $f:S\to\CC$, we use the notation
\begin{equation*}
\EE_{x\in S}f(s):=\frac{1}{|S|}\sum_{x\in S}f(x).
\end{equation*}
As already stated, for $N\in\NN$ we let $[N]:=\{1,\ldots,N\}$. We
think of number fields as subfields of $\CC$. We denote the
discriminant of a number field $K$ by $\Delta_K$, the monoid of
non-zero ideals of its ring of integers $\OO_K$ by $\ideals$, the
maximal abelian subextionsion of $K$ over $\QQ$ by $K^\ab$, and the
finite part of the conductor of this abelian subfield by
$\Phi_{K^\ab}$. We denote the absolute norm of $\aaa\in\ideals$ by
$\norm\aaa$.

\subsection{Local densities}
In Section \ref{sec:local_densities}, we study the local factors
$\tau_{\vK,\vC,\Psi}(q)$ and $\sigma_{\va,\Psi}(q)$ appearing in
Theorem \ref{thm:main_cheb} and Theorem \ref{thm:main_artin},
respectively. In particular, we will prove that the Euler products
appearing in these results are indeed absolutely convergent.

\subsection{Polynomial subsequences of equidistributed nilsequences}
We prove our main results using the machinery developed by Green, Tao and
Ziegler, in particular the inverse theorem for Gowers norms
\cite{MR2950773}. This makes it necessary to deal with polynomial sequences in
nilmanifolds. Green and Tao have established the quantitative theory of polynomial nilsequences in \cite{MR2877065}. We use the definitions and terminology of this work freely and will refer to it frequently. In particular, we use Mal'cev bases as defined in \cite[Definition 2.1]{MR2877065} and the metrics induced by these Mal'cev bases as defined in \cite[Definition 2.2]{MR2877065}. Lipschitz constants are understood with respect to this metric.

In Section \ref{sec:equidistributed_nilsequences}, we compile some
facts regarding polynomial subsequences of equidistributed
nilsequences, which are probably well known to experts.  Polynomial
subsequences of nilsequences were first investigated by Matthiesen in
\cite{MR2949879,MR3742557}, and the main results in this section are
technical variations of hers. We have included full proofs, as
we were not able to find the exact required statements in the
literature.

\subsection{W-trick}\label{sec:W_trick}
A central aspect in the strategy of Green, Tao and Ziegler is the $W$-trick, which already appeared  in \cite{MR2180408,MR2415379}. It requires us to
consider an increasing function $w(N)$ that grows to infinity with $N$ and
satisfies $1\leq w(N)\leq \frac{1}{3}\log\log N$, a constant $\cD\in\NN$, and
the function
\begin{equation}\label{eq:cheb_def_W}
  W=W(N):=\cD\prod_{\substack{p<w(N)\\p\nmid \cD}}p \ll_\cD (\log N)^{2/3}.
\end{equation}
Throughout the paper, the letters $w$, $\cD$ and $W$ will always refer to the
objects chosen here.  For the proof of Theorem \ref{thm:main_cheb}, we can
make essentially the same choice $w(N)=\log\log\log N$ as in \cite{MR2680398},
except that we need $\cD=\cD_\vK$ instead of $\cD=1$. For the proof of Theorem
\ref{thm:main_artin}, however, the function $w(\cdot)$ needs to grow extremely
slowly with $N$, with the growth rate depending on implicit constants that
appear in particular in
\S\ref{sec:W_tricked_lambda_a_with_nilsequences}. Ultimately the choice of
$w(\cdot)$ will depend only on the data of $s,t,\va,L$ given in the statement
of Theorem \ref{thm:main_artin} and the constant $\epsilon>0$ from the
$o$-notation.

In addition to the von Mangoldt functions already defined, it is convenient to work with the
following versions, which are non-zero only on primes:
\begin{align*}
  \Lambda'(n) &:=
  \begin{cases}
    \log p,&\text{ if }n=p \text{ prime},\\
    0,&\text{ otherwise,}
  \end{cases}\\
  \Lambda_{K,C}'(n) &:=
  \begin{cases}
    \log p,&\text{ if }n=p\text{ prime with $p$ unramified in $K$ and $[K/\QQ,p]=C$},\\
    0,&\text{ otherwise,}
  \end{cases}\\
  \Lambda_{a}'(n) &:=
  \begin{cases}
    \log p,&\text{ if }n=p \text{ prime with }\FF_p^\times=\langle a\rangle,\\
    0,&\text{ otherwise.}
  \end{cases}
\end{align*}
For $b\in \{0,\ldots,W-1\}$, we consider the following $W$-tricked versions of our
various von Mangoldt functions:
\begin{align*}
  \Lambda_{b,W}(n) &:= \frac{\phi(W)}{W}\Lambda(b+nW), &  \Lambda_{b,W}'(n) &:= \frac{\phi(W)}{W}\Lambda'(b+nW), \\
  \Lambda_{K,C,b,W}(n) &:= \frac{\phi(W)}{W}\Lambda_{K,C}(b+nW),&  \Lambda_{K,C,b,W}'(n) &:= \frac{\phi(W)}{W}\Lambda_{K,C}'(b+nW), \\
  \Lambda_{a,b,W}(n) &:= \frac{\phi(W)}{W}\Lambda_a(b+nW),&  \Lambda_{a,b,W}'(n) &:= \frac{\phi(W)}{W}\Lambda_{a}'(b+nW).
\end{align*}
Now \eqref{eq:cheb} suggests that  
\begin{equation*}
  \EE_{n\in[N]}\Lambda_{K,C,b,W}(n) = \phi(W)\eta_{K,C}(b,W)+o_{K,\cD}(1),
\end{equation*}
and \eqref{eq:artin_mangoldt_asymptotic} suggests that, assuming $\HRH(a)$,
\begin{equation}\label{eq:artin_W_asymp}
  \EE_{n\in[N]}\Lambda_{a,b,W}(n) = \phi(W)\delta(a,b,W)+o_{a,\cD}(1).
\end{equation}
As $W$ grows with $N$, one would actually need uniform versions of
\eqref{eq:cheb} and \eqref{eq:artin_mangoldt_asymptotic} to prove the
statements above. For \eqref{eq:cheb}, we will show such a uniform
version in Proposition \ref{prop:better_than_chebotarev}. For
\eqref{eq:artin_mangoldt_asymptotic}, a uniform version was proved in
\cite{MR4033249}, conditionally upon a stronger version of
$\HRH(a)$. It seems difficult to do so assuming just $\HRH(a)$. We
will neither prove nor use \eqref{eq:artin_W_asymp} in this paper. A
slightly weaker version, where the error term also depends on
$w(\cdot)$, follows from a special case of Theorem
\ref{thm:W_tricked_artin_mangoldt_with_nilsequences_nonconst_avg}
below.  As higher prime powers are easily seen to be irrelevant in
these avarages, analogous asymptotics hold for $\Lambda_{K,C,b,W}'$
and $\Lambda_{a,b,W}'$.

We will convince ourselves (Lemma \ref{lem:pseudorandom_majorant}) that the
pseudorandom majorant constructed by Green and Tao in \cite[Proposition
6.4]{MR2680398} for $\Lambda_{b,W}$ is also sufficient for our
$\Lambda_{K,C,b,W}$ and $\Lambda_{a,b,W}$. Standard arguments, including the
generalised von Neumann Theorem \cite[Proposition 7.1]{MR2680398} (see Lemma
\ref{lem:neumann}), then reduce Theorem \ref{thm:main_cheb} to the following
Gowers norm estimate. For the definition of $\vecnorm{\cdot}_{U^{u+1}[N]}$ we
refer to \cite[(B.13)]{MR2680398}. 

\begin{proposition}\label{prop:gowers_estimate_cheb}
  Let $K$ be a Galois number field with $\Phi_{K^\ab}\mid W$, let $C\subseteq\Gal(K/\QQ)$ be a conjugacy class, let $u,N\in\NN$ and $b\in\{0,\ldots,W-1\}$. Then  
  \begin{equation*}
    \vecnorm{\Lambda_{K,c,b,W}'(\cdot)-\phi(W)\eta_{K,C}(b,W)}_{U^{u+1}[N]}= o_{K,u,\cD,w(\cdot)}(1).
  \end{equation*}
\end{proposition}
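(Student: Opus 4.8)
The plan is to follow the standard Green--Tao--Ziegler strategy for showing that a (normalised, $W$-tricked) von Mangoldt-type function has small Gowers norms, namely to decompose $\Lambda_{K,C,b,W}'$ into a ``main term'' piece plus a ``generalised M\"obius'' piece and then invoke the inverse theorem for the $U^{u+1}$-norm \cite{MR2817840}. Concretely, I would first observe that the statement is vacuous unless $\gcd(b,W)=1$ and $\sigma_b$ restricts correctly onto $\Gal(K\cap\QQ(\mu_W)/\QQ)$, since otherwise $\Lambda_{K,C}'(b+nW)$ is supported on at most one value of $n$ (as noted in the remark after Theorem \ref{thm:main_cheb}) and $\eta_{K,C}(b,W)=0$, so the function inside the norm is $o(1)$ in $L^\infty$ and hence in $U^{u+1}[N]$. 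So assume we are in the generic case. By the inverse theorem, it suffices to show that for every $\delta>0$, every nilmanifold $G/\Gamma$ of dimension and step $O_{u}(1)$ equipped with a fixed rational Mal'cev basis, every polynomial sequence $g\in\poly(\ZZ,G/\Gamma)$, and every Lipschitz function $F\colon G/\Gamma\to\CC$ with $\vecnorm{F}_{\Lip}\leq 1$, one has
\begin{equation*}
  \EE_{n\in[N]}\bigl(\Lambda_{K,C,b,W}'(n)-\phi(W)\eta_{K,C}(b,W)\bigr)\overline{F(g(n)\Gamma)} = o_{K,u,\cD,w(\cdot)}(1),
\end{equation*}
uniformly in the relevant data, which is the ``non-correlation with nilsequences'' estimate.

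The next step is to reduce this Chebotarev non-correlation estimate to the already-known non-correlation of the plain $W$-tricked $\Lambda_{b,W}'$ (equivalently, of the M\"obius function) with nilsequences, as established in \cite{MR2877065} and used in \cite{MR2680398}. The mechanism is Dirichlet-character/class-field-theory decomposition: because $\Phi_{K^\ab}\mid W$, the condition ``$p$ unramified in $K$ with $[K/\QQ,p]=C$'' can be separated into (i) an abelian part, which by class field theory is exactly a union of residue classes of $p$ modulo $\Phi_{K^\ab}$ (hence modulo $W$), and (ii) a genuinely non-abelian part detecting the conjugacy class inside $\Gal(K/\QQ)$ beyond its image in $\Gal(K^\ab/\QQ)$. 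Part (i) is harmless: restricting to a fixed residue $b+nW$ already fixes the abelian behaviour, which is precisely why $\Phi_{K^\ab}\mid W$ is assumed, so after the $W$-trick the abelian part contributes only the constant $\phi(W)\eta_{K,C}(b,W)$ (matching the subtracted term) and does not interfere with the nilsequence. Part (ii) is where one needs an input beyond the Green--Tao estimate for $\Lambda$; I would handle it exactly as Proposition \ref{prop:better_than_chebotarev} is meant to be used --- a uniform (in $W$, growing with $N$) Chebotarev-type equidistribution statement --- combined with a Vaughan-type identity / Type I--Type II decomposition of $\Lambda_{K,C}'$ so that the sum over $n$ of $\Lambda_{K,C}'(b+nW)\overline{F(g(n)\Gamma)}$ is expressed through bilinear and linear sums to which the quantitative equidistribution theory of polynomial nilsequences \cite{MR2877065} applies, after absorbing the arithmetic weights coming from the Artin symbol condition into the coefficients. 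In effect, the Artin-symbol weight factors through ideal-counting in $\OO_K$, and one uses that counting ideals of bounded norm in prescribed Frobenius classes is ``equidistributed enough'' --- this is the analogue in the Chebotarev setting of the circle-method input of Kane \cite{MR3060874}, here in higher-order Fourier-analytic form (the excerpt flags Proposition \ref{prop:norms_vs_equidist_nilsequences} as the relevant tool).

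More precisely, I expect the cleanest route is: expand $\Lambda_{K,C}'(m)$ using the indicator of $m$ being prime together with $\frac{|C|}{|\Gal(K/\QQ)|}\sum_{\chi}\overline{\chi(C)}\,\chi(\Frob_m)$-type orthogonality over irreducible representations of $\Gal(K/\QQ)$ (or, more elementarily, over Artin $L$-functions), isolate the trivial representation which yields $\Lambda'$ itself and hence --- after the $W$-trick --- the Green--Tao non-correlation estimate together with the main term $\phi(W)\eta_{K,C}(b,W)$, and bound each non-trivial-representation contribution by combining (a) a Vaughan-type bilinear decomposition of the associated twisted von Mangoldt function $\Lambda_{K,C}'(m)\,(\text{nontrivial character of }\Frob_m)$ with (b) the fact that such twists have no main term (this is where the uniform effective Chebotarev/PNT for the relevant $L$-functions, Proposition \ref{prop:better_than_chebotarev}, enters to control Type~I sums) and (c) the equidistribution theory of polynomial nilsequences to handle Type~II sums. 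Throughout one keeps careful track that all error terms are $o_{K,u,\cD,w(\cdot)}(1)$, exploiting that $W\ll(\log N)^{2/3}$ grows slowly enough that the $W$-dependence is absorbed.

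The main obstacle, I expect, is the non-abelian Type I input: showing that the sum of $\Lambda_{K,C}'(b+nW)$ over $n\leq N$ against a ``major-arc-like'' linear phase (the abelianised part of the nilsequence) has no main term beyond the predicted $\phi(W)\eta_{K,C}(b,W)$, \emph{uniformly} in $W$ as $W=W(N)\to\infty$. Unconditionally this is exactly the content of the effective Chebotarev-type estimate Proposition \ref{prop:better_than_chebotarev}, and making it strong enough to survive the $W$-trick (i.e.\ in the ``$q$ growing with $N$'' regime) while keeping the dependence only on $K,\cD,w(\cdot)$ is the delicate point; by contrast, the bilinear (Type II) estimates, once the arithmetic weights are repackaged via ideal norms as in Kane's argument, should follow fairly mechanically from the quantitative equidistribution of polynomial nilsequences together with a Cauchy--Schwarz/van der Corput iteration, as in \cite{MR2877065,MR2680398}.
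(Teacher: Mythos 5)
Your overall architecture matches the paper's: a pseudorandom majorant plus a transferred inverse theorem reduces the Gowers-norm bound to non-correlation with nilsequences; the abelian part of the Artin condition (constant on the class $b\bmod W$ because $\Phi_{K^\ab}\mid W$) supplies the main term; the non-abelian part is handled by a Vaughan-type Type I/II decomposition, Kane-style cancellation along ideal norms (Proposition \ref{prop:norms_vs_equidist_nilsequences}) and the uniform Chebotarev estimate (Proposition \ref{prop:better_than_chebotarev}); and $\Lambda'_{b,W}$ is replaced by $1$ at the end via the classical estimate (Lemma \ref{lem:gowers_estimate_classical}). One gap as written: the inverse theorem of \cite{MR2817840} applies to $1$-bounded functions, so ``by the inverse theorem it suffices to show non-correlation'' is not literally available for the unbounded function $\Lambda_{K,C,b,W}'-\phi(W)\eta_{K,C}(b,W)$; you need a pseudorandom majorant (here simply the Green--Tao majorant, since $\Lambda'_{K,C,b,W}\leq\Lambda'_{b,W}$, cf.\ Lemma \ref{lem:pseudorandom_majorant}) together with a transferred inverse theorem such as \cite{MR4436255}. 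Routine, but it must be said.

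The more substantive issue is that your nilsequence target subtracts the \emph{constant} $\phi(W)\eta_{K,C}(b,W)$ and you propose to run the M\"obius-and-nilsequences strategy of \cite{MR2877066} on that difference. The reduction in \cite{MR2877066} to the totally equidistributed case requires the function to have power-of-$\log$ small averages along \emph{all} progressions of modulus $(\log N)^{O(1)}$, and this fails for the constant model: for $b>1$, $\Lambda_{K,C,b,W}$ vanishes identically on $n\equiv0\bmod b$, so the difference has average $-\phi(W)\eta_{K,C}(b,W)\asymp_K1$ there, and even on moduli with a prime factor $>w(N)$ one only saves $1/w(N)$, far short of $(\log N)^{-A}$. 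This is exactly why the paper proves Theorem \ref{thm:W_tricked_chebotarev_mangoldt_with_nilsequences_nonconst_avg} with the model $\phi(W)\eta_{K,C}(b,W)\Lambda_{b,W}(n)$, verifies this model in all progressions via Proposition \ref{prop:better_than_chebotarev} and Lemma \ref{lem:proxy_in_progressions}, and only passes to the constant at the very end by Lemma \ref{lem:gowers_estimate_classical} and the triangle inequality. Your character decomposition in fact produces this model rather than the constant (the abelian characters, constant on $b\bmod W$, sum to $\phi(W)\eta_{K,C}(b,W)\Lambda'_{b,W}(n)$), so the plan is repairable, but the stated target of the nilsequence step is the wrong one, and the necessity of the von Mangoldt model is the key structural point you gloss over. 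Relatedly, for irreducible characters of dimension $>1$ the weight $\chi(\Frob_m)$ is not multiplicative in $m$, so no Vaughan identity over the integers exists for $\Lambda(m)\chi(\Frob_m)$; ``absorbing the weights into the coefficients'' does not fix this. The paper (following Kane \cite{MR3060874}) instead fixes $c\in C$, passes to $L=K^{\langle c\rangle}$, and rewrites the count through $\Lambda_L(\aaa)\xi(\aaa)$ with abelian characters $\xi$ of $\Gal(K/L)$ viewed as Hecke characters of $L$ (Lemma \ref{lem:passing_to_ideals}); it is this multiplicativity over ideals of $L$ that makes Lemma \ref{lem:vaughan}, Proposition \ref{prop:inverse_theorem_mu} and Proposition \ref{prop:norms_vs_equidist_nilsequences} applicable, and some such device is missing from your sketch.
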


Similarly, we will deduce Theorem \ref{thm:main_artin} from the following conditional result. 

\begin{proposition}\label{prop:gowers_estimate_artin}
  Let $0<\delta\leq 1$, $u,N\in\NN$, $b\in\{0,\ldots,W-1\}$ and
  $a\in\ZZ\smallsetminus\{-1\}$ not a perfect square, and assume $\HRH(a)$. If
  $|\Delta_a|$ divides $\cD$ and the function $w(\cdot)$ grows sufficiently slowly in terms of
  $\delta,u,a$, then
  \begin{equation*}
    \vecnorm{\Lambda_{a,b,W}'(\cdot)-\phi(W)\delta(a,b,W)}_{U^{u+1}[N]}\leq \delta, 
  \end{equation*}
  if only $N$ is sufficiently large in terms of $\delta,u,a,\cD,w(\cdot)$.
\end{proposition}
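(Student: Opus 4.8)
The plan is to follow the standard Green--Tao--Ziegler strategy of reducing a Gowers norm bound to a non-correlation estimate with polynomial nilsequences, and then to supply the arithmetic input via Hooley's conditional method. First I would invoke the inverse theorem for Gowers norms \cite{MR2817840}: if the $U^{u+1}[N]$ norm of $f:=\Lambda_{a,b,W}'-\phi(W)\delta(a,b,W)$ exceeds $\delta$, then $f$ correlates with a polynomial nilsequence $n\mapsto F(g(n)\Gamma)$ of bounded complexity (dimension, step, Lipschitz constant all bounded in terms of $u$ and $\delta$), uniformly over such nilsequences. So it suffices to show that, for any fixed-complexity nilmanifold $G/\Gamma$ and any polynomial sequence $g$, one has $\EE_{n\in[N]}f(n)\overline{F(g(n)\Gamma)}=o(1)$, with the rate controlled so that it beats $\delta$ once $w(\cdot)$ is slow enough and $N$ large enough. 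To make the majorant machinery available one first notes (via Lemma \ref{lem:pseudorandom_majorant}) that the Green--Tao pseudorandom majorant for $\Lambda_{b,W}'$ dominates $\Lambda_{a,b,W}'$, since the latter is supported on a subset of the primes in the progression $b\bmod W$; this lets us treat $f$ as a bounded-in-$L^2$ perturbation and reduces everything to the linear (correlation) estimate above.

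The heart of the matter is the non-correlation estimate: $\EE_{n\in[N]}(\Lambda_{a,b,W}'(n)-\phi(W)\delta(a,b,W))\overline{F(g(n)\Gamma)}=o(1)$. Here I would split $\Lambda_a'$ using Hooley's identity for the indicator of ``$a$ is a primitive root mod $p$''. Recall that $\FF_p^\times=\langle a\rangle$ iff for every prime $\ell\mid p-1$, $a$ is not an $\ell$-th power mod $p$; by inclusion--exclusion this is $\sum_{k}\mu(k)\one_{p\text{ splits completely in }G(k,a)}$, and the splitting condition in $G(k,a)=\QQ(\mu_k,\sqrt[k]{a})$ is a congruence/Chebotarev-type condition. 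Following Hooley, one truncates this sum at three ranges: $k\le \xi_1:=\tfrac16\log N$, $\xi_1<k\le\xi_2:=\sqrt N\log N$ (or the analogous classical choices), and $k>\xi_2$. For the small-$k$ range, splitting completely in $G(k,a)$ is itself detected by a congruence condition modulo (roughly) $\disc G(k,a)$, so $\Lambda_a'$ restricted there is a finite combination of $W$-tricked $\Lambda'$-type functions along arithmetic progressions, and the Green--Tao--Ziegler non-correlation estimate for $\Lambda'$ with nilsequences \cite[Main Theorem]{MR2680398} applies directly to each. For the tail ranges one needs that the contribution is $o(N/\log N)$ in absolute value, \emph{without} any appeal to nilsequence structure — here HRH$(a)$ enters exactly as in Hooley's original argument (the medium range via a Brun--Titchmarsh/large-sieve type bound, the large range via the explicit-formula/effective-Chebotarev estimate for $\QQ(\mu_k,\sqrt[k]{a})$ that HRH$(a)$ provides) — and then one bounds the correlation trivially by $\|F\|_\infty$ times this tail. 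Assembling the three pieces and subtracting the density $\phi(W)\delta(a,b,W)$ (whose value is precisely the main term produced by the small-$k$ Chebotarev densities after the $W$-trick, by Moree's formula in \S\ref{sec:delta} and the definition of $\delta(a,b,q)$) gives the claimed $o(1)$.

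The main obstacle is quantitative bookkeeping in the $W$-trick. Because $W=W(N)$ grows with $N$, the error terms coming from the small-$k$ applications of the Green--Tao--Ziegler estimate are of the shape $o_{u}(1)$ but with the convergence rate depending on $W$, hence on $w(\cdot)$; meanwhile the Hooley tail bounds lose powers of $W$ and of the number $\sim\xi_1$ of auxiliary moduli. One must therefore choose $w(\cdot)$ to grow slowly enough — depending on $\delta,u,a$ and the implicit constants from \S\ref{sec:W_tricked_lambda_a_with_nilsequences} — that all of these contributions are simultaneously $<\delta/3$ (say) for $N$ large; this is the ``$w(\cdot)$ grows sufficiently slowly in terms of $\delta,u,a$'' clause in the statement, and getting the dependencies to close is the delicate part. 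A secondary technical point is uniformity: the inverse theorem yields a nilsequence whose complexity depends on $\delta$, so the Green--Tao--Ziegler non-correlation estimate must be applied with that complexity fixed first and only then $N\to\infty$ and $W\to\infty$ — the order of quantifiers has to be managed carefully, which is exactly why the conclusion is stated as ``$\le\delta$ for $N$ large'' rather than as a clean $o(1)$.
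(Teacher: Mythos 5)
There is a genuine gap at the heart of your plan, namely the treatment of the small-$k$ range of Hooley's inclusion--exclusion. You claim that ``splitting completely in $G(k,a)$ is itself detected by a congruence condition modulo (roughly) $\disc G(k,a)$'', so that the restricted function becomes a finite combination of $W$-tricked classical von Mangoldt functions in progressions, to which the Green--Tao--Ziegler non-correlation estimate applies directly. This is false for every squarefree $k>2$: the field $G(k,a)=\QQ(\mu_k,\sqrt[k]{a})$ is non-abelian over $\QQ$, and the condition ``$p$ splits completely in $G(k,a)$'', i.e. $p\equiv 1\bmod k$ \emph{and} $a\in\FF_p^{\times k}$, is a Chebotarev condition that is not determined by any congruence on $p$. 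Consequently the existing orthogonality results for $\Lambda'$ in progressions give you nothing for these terms; supplying the missing non-correlation estimate for the Chebotarev--von Mangoldt functions $\Lambda_{G(k,a),\{\ident\},b,W}$ is precisely the main technical content of the paper (Theorems \ref{thm:ideal_von_mangoldt_equidist_nilsequences} and \ref{thm:W_tricked_chebotarev_mangoldt_with_nilsequences_nonconst_avg}), proved by passing to the ideal von Mangoldt function of a suitable subfield via Lemma \ref{lem:passing_to_ideals}, a twisted Vaughan identity over ideals, and a Kane-style equidistribution result for nilsequences along ideal norms (Proposition \ref{prop:norms_vs_equidist_nilsequences}). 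Relatedly, because the function $\Lambda_{a,b,W}-\phi(W)\delta(a,b,W)$ is not equidistributed in all progressions, the paper does not compare against the constant model as you do, but against $\phi(W)\delta(a,b,W)\Lambda_{b,W}(n)$, and only recovers the constant at the very end via Lemma \ref{lem:gowers_estimate_classical} and the triangle inequality.

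A second, quantitative problem is your truncation at $\xi_1\asymp\log N$. The nilsequence non-correlation bounds for the individual fields $G(k,a)$ carry constants depending on $k$ that are ineffective (Siegel zeros enter through Proposition \ref{prop:better_than_chebotarev}), so one cannot sum them over $k$ up to $\log N$; effective RH-conditional Chebotarev, which makes Hooley's original $\xi_1$-range work, is not available once a nilsequence weight is inserted. This is why Theorem \ref{thm:hooley} truncates instead at $w(N)$-smooth $k$ with $w(N)$ growing arbitrarily slowly, and why the choice of $w(\cdot)$ in \S\ref{sec:W_tricked_lambda_a_with_nilsequences} is calibrated (via the function $h$ and its inverse) against exactly these $k$-dependent constants. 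Your closing remarks about choosing $w(\cdot)$ slowly do gesture at this, but they are not compatible with a truncation at $\log N$; and in any case the argument cannot close without the non-abelian non-correlation input described above.
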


The analog of Proposition \ref{prop:gowers_estimate_cheb} and Proposition \ref{prop:gowers_estimate_artin} for the classical von Mangoldt function is the estimate
\begin{equation}\label{eq:gowers_estimate_classical}
  \vecnorm{\Lambda_{b,W}'(\cdot)-1_{\gcd(b,W)=1}}_{U^{u+1}[N]}= o_{u,\cD,w(\cdot)}(1),
\end{equation}
which was proved in \cite[Theorem 7.2]{MR2680398} with the choice of $\cD=1$ and $w(N)=\log\log\log N$ but holds also in general (Lemma \ref{lem:gowers_estimate_classical}).

\subsection{Von Mangoldt model and orthogonality with nilsequences}

Green and Tao use the inverse theorem for Gowers norms \cite{MR2950773} to deduce
\eqref{eq:gowers_estimate_classical} from an orthogonality result of
$\Lambda_{b,W}-1$ with nilsequences (\cite[Proposition
10.2]{MR2680398}). The proof of this result uses
the identity
\begin{equation}\label{eq:Lambda_mu_identity}
  \Lambda(n)=-\sum_{d\mid n}\mu(d)\log d.
\end{equation}
Starting from this identity, Green and Tao reduce \cite[Proposition
10.2]{MR2680398} to a sieve estimate \cite[(12.5)]{MR2680398} and a
strong orthogonality result for the M\"obius function $\mu$ with
nisequences, which they prove in \cite[Theorem 1.1]{MR2877066}. Unfortunately, an
analog of \eqref{eq:Lambda_mu_identity} is not available for our
functions $\Lambda_{K,C}$ and $\Lambda_a$.

Therefore, we proceed in a different way in our proofs of Proposition
\ref{prop:gowers_estimate_cheb} and Proposition
\ref{prop:gowers_estimate_artin}: in order to show orthogonality with
nilsequences, we apply directly the strategy used for the M\"obius
function in \cite{MR2877066}. This makes crucial use of the fact that
$\mu(n)$ is equidistributed in arithmetic progressions, which is
clearly not entirely true for our function
$\Lambda_{K,C,b,W}(n)-\phi(W)\eta_{K,C}(b,W)$ due to progressions
such as $n\equiv 0\bmod b$. To deal with this issue, we replace the
constant average $\phi(W)\eta_{K,C}(b,W)$ by a suitably scaled version
of the $W$-tricked classical von Mangoldt function $\Lambda_{b,W}(n)$
and show that this provides a good model for $\Lambda_{K,C,b,W}(n)$ in all
arithmetic progressions.  With
this model, we show the following orthogonality result, which is
analogous to \cite[Theorem
1.1]{MR2877066}.

\begin{theorem}\label{thm:W_tricked_chebotarev_mangoldt_with_nilsequences_nonconst_avg}
  Let $m, d\geq 1$, $Q\geq 2$.  Let $G/\Gamma$ be a nilmanifold of
  dimension $m$, let $G_\bullet$ be a filtration of $G$ of degree $d$,
  and $g\in\poly(\ZZ,G_\bullet)$ a polynomial sequence. Suppose
  $G/\Gamma$ has a $Q$-rational Mal'cev basis $\cX$ adapted to
  $G_\bullet$, defining a metric $d_\cX$ on $G/\Gamma$.

  Then, for any Lipschitz function $F:G/\Gamma\to [-1,1]$, $A>0$, $N\geq 2$,
  $b\in \{0,\ldots,W-1\}$, Galois number field $K$ with $\Phi_{K^\ab}\mid W$ and conjugacy
  class $C\subseteq\Gal(K/\QQ)$, we
  have
  \begin{align*}
    \EE_{n\leq N}\left(\Lambda_{K,C,b,W}(n)-\phi(W)\eta_{K,C}(b,W) \Lambda_{b,W}(n)\right)&F(g(n)\Gamma)\\ &\ll_{K,m,d,A,\cD} Q^{O_{m,d,A}(1)}(1+\vecnorm{F}_{\Lip})(\log N)^{-A}.
  \end{align*}
\end{theorem}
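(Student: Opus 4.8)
The plan is to adapt the proof of the Möbius orthogonality theorem \cite[Theorem 1.1]{MR2877066} of Green and Tao, using the crucial observation that the ``error'' $\Lambda_{K,C,b,W}(n) - \phi(W)\eta_{K,C}(b,W)\Lambda_{b,W}(n)$ behaves, in arithmetic progressions, like a bona fide von Mangoldt error term. First I would write $\Lambda_{K,C,b,W}'$ in place of $\Lambda_{K,C,b,W}$, since higher prime powers contribute $O((\log N)(\log N)/N)$ to the average and are harmless; likewise one may as well use $\Lambda_{b,W}'$. The key input is a \emph{uniform Chebotarev estimate}: I would invoke (the proof of) Proposition \ref{prop:better_than_chebotarev}, which says that for every $A>0$, uniformly over moduli $q \le (\log N)^A$ and residues $b$ with $\Phi_{K^\ab}\mid q$,
\[
  \sum_{\substack{n\le N \\ n\equiv b \bmod q}}\Lambda_{K,C}'(n) - \frac{|C|}{[K^\ab(\mu_q):\QQ]}\,\cdot\,\frac{1}{[K:K^\ab]}\sum_{\substack{n\le N \\ n\equiv b\bmod q}}\Lambda'(n) \ll_{K,A} N (\log N)^{-A}.
\]
In other words, once one restricts to a fixed residue class whose $\sigma$-image in $\Gal(K\cap\QQ(\mu_q)/\QQ)$ is compatible with $C$, the prime-counting function $\Lambda_{K,C}'$ is, up to a constant depending only on the abelianised data, a fixed multiple of the ordinary prime-counting function, with a power-of-log saving. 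Combined with $\Phi_{K^\ab}\mid W$, this yields precisely that $\Lambda'_{K,C,b,W}(n) - \phi(W)\eta_{K,C}(b,W)\Lambda'_{b,W}(n)$ is equidistributed in arithmetic progressions to moduli up to $(\log N)^{A}$ with a $(\log N)^{-A}$ saving.

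Granting this, I would run the nilsequence machinery exactly as in \cite{MR2877066}. Using the Mal'cev basis $\cX$, approximate the Lipschitz function $F$; by the factorisation theorem \cite[Theorem 1.19]{MR2877065} one decomposes the polynomial sequence $g$ into a smooth part, a rational part (periodic modulo some $q_0 \ll Q^{O_{m,d,A}(1)}$) and an equidistributed part on a subnilmanifold. On the rational progressions produced by this decomposition, the equidistribution input above applies provided $N$ is large enough that the modulus is below $(\log N)^A$ — here one chooses the parameter governing the factorisation in terms of $A,m,d$, which is the source of the $Q^{O_{m,d,A}(1)}$ factor. On each progression one then replaces $\Lambda_{K,C,b,W}$ by its model $\phi(W)\eta_{K,C}(b,W)\Lambda_{b,W}$ at the cost of $O((\log N)^{-A})$, and what remains is an average of the \emph{classical} $W$-tricked von Mangoldt function against an equidistributed nilsequence, which is exactly the content of \cite[Proposition 10.2]{MR2680398} (equivalently the combination of \cite[(12.5)]{MR2680398} with \cite[Theorem 1.1]{MR2877066}); those estimates furnish the claimed bound. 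One needs to be mildly careful that the implied constants in the model replacement depend only on $K$ (through $\Phi_{K^\ab}$, the degree $[K:\QQ]$, and $|C|$), which they do by construction.

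The main obstacle I expect is \emph{not} the nilsequence bookkeeping — that is essentially a transcription of \cite{MR2877066} — but rather establishing the uniform Chebotarev estimate with the correct main term, i.e.\ making sure that the difference $\Lambda_{K,C}'(n)-\eta\cdot\Lambda'(n)$ genuinely enjoys Siegel–Walfisz-type equidistribution uniformly in $q$ up to a power of $\log N$. This is where Proposition \ref{prop:better_than_chebotarev} does the heavy lifting: one needs the classical Chebotarev density theorem for the compositum $K(\mu_q)$ in the Siegel–Walfisz range, which rests on a zero-free region for $\zeta_{K(\mu_q)}$ that is uniform in $q$; since $K$ is fixed and only the cyclotomic part $\mu_q$ varies, this follows from the Deuring–Heilbronn phenomenon together with standard effective Chebotarev (cf.\ Lagarias–Odlyzko), exactly as needed to make $w(\cdot)$-uniform $W$-tricked statements like \eqref{eq:cheb} work. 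A secondary subtlety is matching the constant: one must check the elementary identity $\eta_{K,C}(b,q) = \frac{|C|}{[K:K^\ab]}\cdot\frac{1}{[K^\ab(\mu_q):\QQ]}\cdot\one_{\sigma_b|_{K^\ab\cap\QQ(\mu_q)}\text{ compatible with }C}$ and confirm that the compatibility condition for $C$ in $K$ is equivalent to the compatibility condition for its restriction to $K^\ab$, which holds because $\sigma_b$ automatically lies in the image of the cyclotomic subgroup and thus only ``sees'' the abelian part of $K$.
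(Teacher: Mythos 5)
There is a genuine gap at the decisive step. After the factorisation of $g$ into smooth, rational and equidistributed parts, you propose to ``replace $\Lambda_{K,C,b,W}$ by its model $\phi(W)\eta_{K,C}(b,W)\Lambda_{b,W}$ at the cost of $O((\log N)^{-A})$'' on each progression, justified only by Siegel--Walfisz/uniform-Chebotarev equidistribution of the difference in arithmetic progressions. That justification controls $\sum_{n\in P}\bigl(\Lambda'_{K,C,b,W}(n)-\phi(W)\eta_{K,C}(b,W)\Lambda'_{b,W}(n)\bigr)$, i.e.\ the correlation with the \emph{constant} (or periodic) part of the nilsequence, but it says nothing about the correlation of this difference with the genuinely equidistributed, mean-zero part $F(g'(n)\Gamma')$ on that progression. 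If progression-level equidistribution sufficed here, the entire M\"obius--nilsequences theorem of \cite{MR2877066} would be a triviality, since $\mu$ satisfies Siegel--Walfisz; the whole difficulty is precisely that oscillating nilsequences are not captured by progressions, and one needs a bilinear (Type I/II, Vaughan-style) analysis of the arithmetic function itself. Your plan quietly reduces the hard term to ``classical $\Lambda_{b,W}$ against an equidistributed nilsequence, which is \cite[Proposition 10.2]{MR2680398}'', but that reduction is exactly the unproved step: the object one must handle is $\Lambda_{K,C,b,W}$ (or the difference) against the equidistributed part, and no analogue of the identity $\Lambda(n)=-\sum_{d\mid n}\mu(d)\log d$, nor of Vaughan's identity, is available for $\Lambda_{K,C}$ as a function on integers -- the paper points this out explicitly as the reason it cannot follow the route through \cite[(12.5)]{MR2680398} and \cite[Theorem 1.1]{MR2877066}.

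What the paper actually does, and what is missing from your proposal, is the following: it passes from $\Lambda_{K,C}$ to the ideal von Mangoldt function $\Lambda_L$ of the fixed field $L=K^c$ via character orthogonality on the cyclic group $\Gal(K/L)$ (Lemma \ref{lem:passing_to_ideals}), proves a twisted Vaughan identity for $\Lambda_L$ over ideals (Lemma \ref{lem:vaughan}, Proposition \ref{prop:inverse_theorem_mu}), and -- the key new technical ingredient -- proves that totally equidistributed polynomial nilsequences exhibit cancellation when sampled along ideal norms in progressions (Proposition \ref{prop:norms_vs_equidist_nilsequences} and Corollary \ref{cor:norms_vs_equidist_nilsequences_log}, a nilsequence generalisation of Kane's exponential-sum lemma, resting on Propositions \ref{prop:equidist_poly_subseq} and \ref{prop:spread_nilsequence}). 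These combine into Theorem \ref{thm:ideal_von_mangoldt_equidist_nilsequences} and then Proposition \ref{prop:equidist_nilsequences}, which is the non-correlation statement your argument tacitly assumes. Your outer skeleton (factorisation theorem of \cite{MR2877065}, the $Q^{O_{m,d,A}(1)}$ bookkeeping, and the use of Proposition \ref{prop:better_than_chebotarev} through Lemma \ref{lem:proxy_in_progressions} for the constant contributions) does match the paper, and your side remarks on $\eta_{K,C}(b,q)$ under $\Phi_{K^\ab}\mid q$ are correct; but identifying the uniform Chebotarev input as ``the main obstacle'' misplaces the difficulty -- the bulk of the work is the equidistributed-nilsequence non-correlation for $\Lambda_{K,C}$, which your proposal does not address.
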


To prove Theorem
\ref{thm:W_tricked_chebotarev_mangoldt_with_nilsequences_nonconst_avg}, we
follow the approach of \cite{MR2877066} and reduce to the special case of
equidistributed nilsequences and test functions of zero average (Proposition
\ref{prop:equidist_nilsequences}). Using some class field theory, we further reduce
this result to the following theorem concerning the von Mangoldt function for
ideals $\aaa\in\ideals$,
\begin{equation}\label{eq:def_ideal_von_mangoldt}
  \Lambda_K(\aaa) =
    \begin{cases}
      \log \norm\ppp &\text{ if }\aaa=\ppp^e \text{ with $\ppp$ prime ideal and
        $e\in\NN$},\\
      0 &\text{ otherwise.}
    \end{cases}.
  \end{equation}
  A \emph{Dirichlet character} of $K$ is a finite-order Hecke character.

\begin{theorem} \label{thm:ideal_von_mangoldt_equidist_nilsequences} For all integers $m\geq 0$ and
  $d,D\geq 1$, there is a constant $c(m,d,D)>0$, such that the following
  holds.

  Let $N\in\NN$ be sufficiently large depending
  on $m,d,D$. Let $\delta\in(0,1)$ and $Q\geq 2$. Let $G/\Gamma$ be an
  $m$-dimensional nilmanifold with a filtration $G_{\bullet}$ of degree $d$ and
  a $Q$-rational Mal'cev basis $\cX$ adapted to $G_{\bullet}$. Let
  $g\in\poly(\ZZ,G_\bullet)$ and suppose that $(g(n)\Gamma)_{n\in[N]}$ is
  totally $\delta$-equidistributed. Let $K$ be a number field of degree $D$
  and $\xi$ a Dirichlet character of $K$ of modulus $\mmm$. Then, for any Lipschitz function
  $F:G/\Gamma\rightarrow\RR$ with $\int_{G/\Gamma}F=0$ and for any arithmetic
  progression $P\subseteq [N]$ of size at least $N/Q$, we have the bound
  \begin{equation}\label{eq:prop2.1analog}
  |\EE_{\norm \aaa\leq N}\Lambda_K(\aaa)\xi(\aaa)\one_P(\norm \aaa)F(g(\norm \aaa)\Gamma)|\ll_{m,d,K,\mmm}
  \delta^{c(m,d,D)}Q\vecnorm{F}_{\textnormal{Lip}}(\log N)^2. 
  \end{equation}
\end{theorem}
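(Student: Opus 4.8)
The plan is to mimic the proof of \cite[Theorem 1.1]{MR2877066} for the Möbius function, replacing the combinatorial identity for $\mu$ by the analytic input coming from zero-density estimates and zero-free regions for Hecke $L$-functions $L(s,\xi)$ of the number field $K$. First I would expand $\Lambda_K(\aaa)\xi(\aaa)$ by a Perron/contour-integral argument: writing $\Lambda_K(\aaa)=\sum_{\ddd\mid\aaa}\mu_K(\ddd)\log\norm(\aaa/\ddd)$ or, more efficiently, working directly with $-L'(s,\xi)/L(s,\xi)$, one reduces the sum $\EE_{\norm\aaa\le N}\Lambda_K(\aaa)\xi(\aaa)\one_P(\norm\aaa)F(g(\norm\aaa)\Gamma)$ to a dyadic decomposition into ``Type I'' and ``Type II'' bilinear sums in the variable $n=\norm\aaa$, exactly as Green--Tao do via Vaughan's identity. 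The key point is that although $\aaa$ ranges over ideals, every factor other than $\Lambda_K$ and $\xi$ depends only on the rational integer $\norm\aaa$, so after grouping ideals by their norm we obtain ordinary sums over $n\le N$ weighted by the arithmetic functions $r_\xi(n):=\sum_{\norm\aaa=n}\xi(\aaa)$ and its Dirichlet-convolution pieces, whose partial sums are controlled by the analytic properties of $L(s,\xi)$.

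The second step is to feed these bilinear sums into the quantitative equidistribution theory of Green--Tao \cite{MR2877065}. For the Type I sums, where one variable is essentially free, one uses that $(g(n)\Gamma)$ is totally $\delta$-equidistributed together with \cite[Proposition 3.1]{MR2877065} (equidistribution along arithmetic progressions, which survives the restriction $\one_P$ at the cost of a factor $Q$) to show each such sum is $O(\delta^{c}Q\|F\|_{\Lip}(\log N)^{O(1)})$. For the Type II sums one runs the standard Cauchy--Schwarz manoeuvre, opening the square to produce correlations $F(g(n)\Gamma)\overline{F(g(n')\Gamma)}$ along pairs $n,n'$ with $n\equiv n'$ in a controlled sense, and invokes the factorisation theorem \cite[Theorem 1.19]{MR2877065} plus the fact that $F$ has mean zero to win unless $g$ fails to be equidistributed — contradicting the hypothesis. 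This is essentially the part of \cite{MR2877066} that is ``soft'' in the sense of being insensitive to which multiplicative function one uses, so it transfers with only bookkeeping changes; the dependence on $K,\mmm$ enters only through the number of ideals of given norm (a divisor-type bound $O_{D,\epsilon}(n^\epsilon)$) and through the conductor of $\xi$.

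The genuinely arithmetic input — and the step I expect to be the main obstacle — is obtaining sufficiently good cancellation in the ``major arc'' or ``smooth'' contributions, i.e. bounding $\sum_{n\le x}\Lambda_K(\aaa)\xi(\aaa)$-type sums (and their restrictions to progressions) with a saving of an arbitrary power of $\log N$. For the rational Möbius function this is \cite[Theorem 1.1]{MR2877066} itself (or classical results of Davenport); here one needs the analogous statement for the Hecke $L$-function $L(s,\xi)$ attached to a Dirichlet character of $K$, namely a standard zero-free region of the form $\Re s > 1 - c/\log(\mmm(|t|+2))$ and a classical zero-density estimate. Because $\mmm$ and $K$ (hence $D$) are fixed, these are available unconditionally from the theory of Hecke $L$-functions (e.g.\ Lagarias--Odlyzko, Fogels), with all constants depending only on $K$ and $\mmm$ — matching the claimed dependence in the bound. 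One subtlety is the possible Siegel zero: it must be absorbed into the implied constant $\ll_{m,d,K,\mmm}$, which is harmless since we are not asking for effectivity in $K$. Assembling the contour shift, the Type I/II decomposition, and the equidistribution estimates, and tracking the powers of $\delta$, $Q$, $\log N$ and $\|F\|_{\Lip}$ through each step then yields \eqref{eq:prop2.1analog}.
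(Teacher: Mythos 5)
Your outer scaffolding matches the paper's: a Vaughan-type identity for $\Lambda_K$ (twisted by $\xi$) giving Type I and Type II sums, pigeonholing into progressions at the cost of a factor $Q$, Cauchy--Schwarz for Type II, and the quantitative equidistribution machinery of \cite{MR2877065} to reach a contradiction with total $\delta$-equidistribution. But there is a genuine gap at the central point: after Vaughan, \emph{every} Type I and Type II sum is still a sum over \emph{ideals} $\bbb$, with the nilsequence sampled at the norms, e.g.\ $\sum_{\norm\bbb\in P_r}\xi(\bbb)F(g(r\norm\bbb)\Gamma)$, and your proposal never explains how to extract cancellation from such a sum. Grouping by $n=\norm\bbb$ replaces the bounded weight of the M\"obius setting by the divisor-like weight $r_\xi(n)=\sum_{\norm\bbb=n}\xi(\bbb)$, and knowing the partial sums of $r_\xi$ (via $L(s,\xi)$, zero-free regions, etc.) does not help: one cannot sum by parts against the wildly oscillating bounded factor $\one_P(n)F(g(n)\Gamma)$, and the correlation of the ideal-counting function with nilsequences is itself a hard problem --- the paper explicitly remarks that power-saving estimates for linear correlations of $a_L(n)$ are unlikely to be attainable. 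Note also that the target bound has a saving only in $\delta$ (with a $(\log N)^2$ \emph{loss}), so no log-power cancellation from Hecke $L$-functions is needed or used here; the Siegel--Walfisz-type input (with its ineffective constants) enters the paper only later, in Proposition \ref{prop:better_than_chebotarev} for the model-in-progressions step, not in the proof of Theorem \ref{thm:ideal_von_mangoldt_equidist_nilsequences}.

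What actually closes the gap in the paper is a geometric argument, not an analytic one: ideals in a fixed ray class are parameterised by lattice points of $T_\ccc$ in a fundamental domain of $K_\infty$ modulo units, the region is split into ``lines'' $b+(0,M]r$, and along each line the norm $|N_{K/\QQ}(b+rx)|\norm\ccc^{-1}$ is an integer polynomial $P_L$ of degree $D$ with coefficients of size $\ll N^{1-i/D}$ (Lemma \ref{lem:norm_coefficients_divisible}). One then needs that the polynomial subsequence $(g(P_L(n))\Gamma)_n$ inherits total equidistribution from $(g(n)\Gamma)_n$, which is Proposition \ref{prop:equidist_poly_subseq}; this yields Proposition \ref{prop:norms_vs_equidist_nilsequences} and Corollary \ref{cor:norms_vs_equidist_nilsequences_log}, the higher-order analogue of Kane's exponential-sum estimate, which is exactly the tool applied inside both the Type I and Type II branches (in Type II, to the product sequence $(g(rn),g(r'n))$ on $G/\Gamma\times G/\Gamma$). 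Without this ingredient, or some substitute for it, your Type I/II sums cannot be bounded, so the proposal as written does not yield the theorem; the $L$-function input you foreground is both insufficient for this step and unnecessary for the statement.
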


We prove Theorem \ref{thm:ideal_von_mangoldt_equidist_nilsequences} in
\S\ref{sec:ideal_von_mangoldt} and deduce Theorem
\ref{thm:W_tricked_chebotarev_mangoldt_with_nilsequences_nonconst_avg}
from it in \S\ref{sec:von_mangoldt_proxy}. A generalisation of the results of Green and Tao to prime \emph{elements} in number fields was obtained by Kai \cite{Kai} and recently applied to Hilbert's tenth problem by Koymans and Pagano \cite{KP}. Our Theorem \ref{thm:ideal_von_mangoldt_equidist_nilsequences} concerns norms of \emph{ideals} and is independent of Kai's work.

Let us just mention in
passing that it would also be possible to deal with $\Lambda_{K,C}$ in
terms of the von Mangoldt function $\Lambda_L$ for ideals in certain
subfields $L$ of $K$ (see Lemma \ref{lem:passing_to_ideals}). For
these ideal von Mangoldt functions, an analogue of
\eqref{eq:Lambda_mu_identity} exists and our techniques from
\S\ref{sec:ideal_von_mangoldt} would be capable of also proving an
analogue of \cite[Theorem 1.1]{MR2877066} for the M\"obius function
$\mu_L$ of ideals of $L$. However, it seems quite difficult to prove the required
analogue of the sieve estimate \cite[(12.5)]{MR2680398} for
$\Lambda_L$, as that could require power-saving estimates for linear
correlations of the ideal counting function
$a_L(n):=|\{\aaa\text{ ideal of L}\ :\ \norm\aaa=n\}|$. Such
correlations could be studied using techniques of Browning
and Matthiesen \cite{MR3742196}, but to achieve power-saving seems unlikely. Hence, we have decided to
proceed in a different way.

\subsection{Hooley's method}

Recall the definition of the number fields $G(k,a)$ in
\eqref{eq:def_Gka}. By index calculus, a prime $p\nmid a$ has
primitive root $a$ if and only if it does not split completely in any
of the fields $G(k,a)$ with $k$ prime. In \cite{MR207630}, Hooley deduces the case
$q=1$ of \eqref{eq:artin_prime_asymptotic} by applying
inclusion-exclusion to this observation and then truncating via $\HRH(a)$. In order to study
correlations of our $W$-tricked Artin-von Mangoldt functions
$\Lambda_{a,b,W}(n)$ with nilsequences, we modify Hooley's method to
deduce the following result, which also incorporates the observation
from \cite{MR4201547} that the truncation parameter can be chosen to
grow to infinity with $N$ arbitrarily slowly and still yield a
saving. For $k\in\NN$, we denote by $p_+(k)$ the largest prime divisor
of $k$ if $k>1$ and $p_+(1):=1$.

\begin{theorem}\label{thm:hooley}
  Let $a\in\ZZ$ be not equal to $-1$ or a perfect square, and assume that $\HRH(a)$ holds true.
  Let $F:\NN\to\CC$ be a function with $|F(n)|\leq C$ for all $n\in\NN$. Let
  $b\in\{0,\ldots,W-1\}$ and assume that $N\in\NN$ is sufficiently large in terms of $a$. Then, 
  \begin{align*}
    \EE_{n\leq N}\Lambda_{a,b,W}(n)F(n) &=
    \sum_{\substack{k\in\NN\\p_+(k)\leq w(N)}}\mu(k)\EE_{n\leq
    N}\Lambda_{G(k,a),\{\ident\},b,W}(n)F(n) +
    O_{a,C,\cD}\left(\frac{1}{w(N)}\right).
  \end{align*}
\end{theorem}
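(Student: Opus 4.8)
The plan is to start from the index-theoretic characterisation: for a prime $p\nmid a$, $a$ is a primitive root mod $p$ if and only if $p$ does not split completely in $G(\ell,a)$ for any prime $\ell$. Writing $r_a(p)$ for the largest $k$ such that $p$ splits completely in $G(k,a)$ (equivalently, $k\mid [\FF_p^\times:\langle a\rangle]$), inclusion–exclusion over the squarefree part of $r_a(p)$ gives the formal identity $\Lambda_a(n)=\sum_{k}\mu(k)\Lambda_{G(k,a),\{\ident\}}(n)$, where the $k$-th term counts (with weight $\log p$) primes $p\equiv b\bmod W$ splitting completely in $G(k,a)$. Plugging $b+nW$ in and inserting $F$, the left-hand side of the claimed identity equals $\sum_{k\in\NN}\mu(k)\EE_{n\le N}\Lambda_{G(k,a),\{\ident\},b,W}(n)F(n)$, with the sum over \emph{all} $k$. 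So the content of the theorem is that truncating the sum to $p_+(k)\le w(N)$ introduces an error of size $O_{a,C,\cD}(1/w(N))$.

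Next I would split the tail $\sum_{k:\,p_+(k)>w(N)}\mu(k)\EE_{n\le N}\Lambda_{G(k,a),\{\ident\},b,W}(n)F(n)$ following Hooley's classical three-range dissection. Since $|F|\le C$ and $\Lambda_{G(k,a),\{\ident\},b,W}(n)\ge 0$, it suffices to bound $\frac{\phi(W)}{W}\sum_{n\le N}\Lambda_{G(k,a),\{\ident\}}(b+nW)$ summed over the relevant $k$, i.e.\ to count primes $p\le NW$, $p\equiv b\bmod W$, splitting completely in $G(k,a)$ and weighted by $\log p$. A prime splits completely in $G(k,a)$ only if $k\mid p-1$ and $k^2\mid p-\text{(something)}$, but more crudely if $p$ splits completely in $G(\ell,a)$ for some prime $\ell$ with $w(N)<\ell$. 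For the range $w(N)<\ell\le z_1$ with $z_1:=\frac{1}{6}\log(NW)$, one uses the effective prime counting for $G(\ell,a)$ afforded by $\HRH(a)$ (the Dedekind zeta function of $G(\ell,a)$ satisfies RH, and $[G(\ell,a):\QQ]\asymp \ell^2$, $|\Delta_{G(\ell,a)}|\le (\ell\cdot\text{stuff})^{O(\ell^2)}$): the count of such primes is $\ll \frac{NW}{\ell^2\log(NW)}+\sqrt{NW}\log(NW\ell^{\ell})$, and summing over $\ell>w(N)$ gives a contribution $\ll_{a}\frac{1}{w(N)}\cdot\frac{NW}{\log(NW)}$, which after the normalising factor $\frac{\phi(W)}{W}\cdot\frac{\log(NW)}{N}\cdot$ turns into $O_{a,C,\cD}(1/w(N))$. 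For the range $z_1<\ell\le z_2$ with $z_2:=\sqrt{NW}$, one drops $\HRH$ and uses the trivial sieve bound: a prime $p$ with $p\equiv b\bmod W$ and $p\equiv 1\bmod \ell$ for some $\ell\in(z_1,z_2]$ forces $p-1$ to have a prime factor in that range; by Mertens / Brun–Titchmarsh the number of such $p\le NW$ is $\ll \sum_{z_1<\ell\le z_2}\frac{NW}{\phi(W)\ell\log(NW/(W\ell))}\ll \frac{NW}{\phi(W)}\cdot\frac{\log(z_2/z_1)}{\log NW}\ll \frac{NW}{\phi(W)\log z_1}$, again $O(1/w(N))$ after normalisation since $\log z_1\gg\log\log N\gg w(N)$ (here one uses that $w(N)\le\frac13\log\log N$). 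Finally, for $\ell>z_2$: if $p$ splits completely in $G(\ell,a)$ then $\ell\mid p-1$, so $p>\ell>\sqrt{NW}$, whence $p$ cannot be of the form $b+nW$ with $n\le N$ at all (as $b+nW\le NW+W$), so this range is empty — or rather, one bounds it by noting each such $p$ contributes to at most one $\ell$ and $p-1$ has at most one prime factor exceeding $\sqrt{p}$, giving $\ll\frac{NW}{\phi(W)\log NW}$ times a $1/z_2$-type saving.

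The main obstacle is the $\HRH$-based range: one must carefully track the dependence of the effective Chebotarev / prime ideal theorem error term on the degree and discriminant of $G(\ell,a)$ as $\ell$ grows with $N$, ensuring that the cutoff $z_1$ can be taken as large as $\asymp\log(NW)$ while keeping the error summable in $\ell$ and uniform in $W=W(N)\ll(\log N)^{2/3}$ and in $b$. This is exactly the point where \cite{MR4201547} observed that the truncation parameter may grow arbitrarily slowly; here we only need it to be as small as $w(N)$, which makes the bookkeeping easier — the $1/w(N)$ in the error term comes precisely from $\sum_{\ell>w(N)}\ell^{-2}\ll 1/(w(N)\log w(N))$ together with the density heuristic $[G(\ell,a):\QQ]\asymp\ell(\ell-1)$. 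One must also check that prime powers contribute negligibly (they contribute $O(\sqrt{N}\log N)$ to each relevant sum, absorbed into the error), that the $k$ with a square factor vanish under $\mu$, and that for squarefree $k$ with $p_+(k)\le w(N)$ but $k$ itself large the terms are already handled inside the main sum, not the tail. Assembling these pieces yields the stated identity.
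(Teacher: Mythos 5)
Your overall architecture matches the paper's: inclusion--exclusion over complete splitting in the fields $G(q,a)$, truncation at primes $q\le w(N)$, and a three-range treatment of the discarded primes via conditional Chebotarev, Brun--Titchmarsh, and an elementary argument. However, the quantitative dissection is wrong exactly where the theorem lives. You stop the $\HRH$ range at $z_1\asymp\log(NW)$ and ask Brun--Titchmarsh to cover $(z_1,\sqrt{NW}]$. That window is far too long: Brun--Titchmarsh plus Mertens gives a prime count of order $\frac{NW}{\phi(W)\log N}\sum_{z_1<\ell\le z_2}\ell^{-1}\asymp\frac{NW}{\phi(W)\log N}\log\frac{\log z_2}{\log z_1}\asymp\frac{NW\log\log N}{\phi(W)\log N}$ (your intermediate bound $\log(z_2/z_1)/\log NW$ is not what these tools give), and after weighting by $\log p$ and normalising by $\phi(W)/(WN)$ this contributes $\asymp\log\log N$, which diverges rather than being $O(1/w(N))$. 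This is precisely why Hooley's method needs the Riemann Hypothesis input up to $\ell$ of size nearly $\sqrt N$: the paper runs the conditional effective Chebotarev theorem up to $\xi_2=\sqrt N/(\sqrt W(\log N)^2)$, working in a suitable compositum of $\QQ(\mu_W)$ with $G(\ell,a)$ to impose the congruence modulo $W$ (and checking that $\HRH(a)$ implies RH for that field's zeta function), so that the Brun--Titchmarsh window $(\xi_2,\xi_3]$ with $\xi_3=W\sqrt N\log N$ is short and contributes only $O(\log\log N/\log N)$. Your top range is also misanalysed: a prime $p\equiv b\bmod W$ with $p\le WN+b$ can of course exceed $\sqrt{NW}$, and the observation that $p-1$ has at most one prime factor above $\sqrt p$ gives no saving (a positive proportion of shifted primes have such a factor). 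The correct mechanism is Hooley's divisibility trick: complete splitting of $p$ in $G(\ell,a)$ forces $p\mid a^{2(p-1)/\ell}-1$ with $(p-1)/\ell$ small, so $p$ divides $\prod_{m<2\sqrt N/\log N}(a^{2m}-1)$, a number with only $O_a(N/(\log N)^2)$ prime factors; this is exactly why the top range must begin around $W\sqrt N\log N$ rather than at $\sqrt{NW}$.

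There is a further gap in your reduction of the tail. You bound $\sum_{k:\,p_+(k)>w(N)}\mu(k)(\cdots)$ by taking absolute values term by term in $k$ and then replacing ``splits completely in $G(k,a)$'' by ``splits completely in $G(\ell,a)$ for some prime $\ell>w(N)$''. That replacement discards the multiplicity with which each prime $p$ is counted, namely the number of squarefree divisors of the index $[\FF_p^\times:\langle a\rangle]$ having a prime factor above $w(N)$, so the two quantities are not comparable as written, and summing effective Chebotarev errors over all such composite $k$ is not affordable either. The clean route, which is what the paper does, is to truncate the product $\prod_q\bigl(1-\one_{p\in\spl(G(q,a))}\bigr)$ at $q\le w(N)$ \emph{before} expanding: the error at each prime is then at most $\log p$ times the indicator that $p$ splits completely in some $G(q,a)$ with $q>w(N)$ prime, each bad prime counted once, while expanding the truncated product and using that $G(k,a)$ is the compositum of the $G(q,a)$, $q\mid k$, produces exactly the sum over all squarefree $k$ with $p_+(k)\le w(N)$ appearing in the statement. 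Your minor bookkeeping issues (the missing $1/\phi(W)$ in the main term of the conditional range, ramified primes and prime powers) are fixable, but the range dissection and the top-range argument need to be redone along the lines above.
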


We will prove Theorem \ref{thm:hooley} in \S\ref{sec:hooley}. We will use in in \S\ref{sec:W_tricked_lambda_a_with_nilsequences} to deduce from Theorem \ref{thm:W_tricked_chebotarev_mangoldt_with_nilsequences_nonconst_avg} the following conditional version for $\Lambda_{a,b,W}$.

\begin{theorem}\label{thm:W_tricked_artin_mangoldt_with_nilsequences_nonconst_avg}
  Let $m, d, M\geq 1$, $Q\geq 2$.  Let $G/\Gamma$ be a nilmanifold of
  dimension $m$, let $G_\bullet$ be a filtration of $G$ of degree $d$,
  and $g\in\poly(\ZZ,G_\bullet)$ a polynomial sequence. Suppose
  $G/\Gamma$ has a $Q$-rational Mal'cev basis $\cX$ adapted to
  $G_\bullet$, defining a metric $d_\cX$ on $G/\Gamma$.

  Let $a\in\ZZ$ be not equal to $-1$ or a perfect square and assume
  $\HRH(a)$. Suppose that $w(N)$ grows to infinity with $N$
  sufficiently slowly in terms of $a, m, d, Q, M$, and that
  $|\Delta_a|$ divides $\cD$.
  
  Then, for any Lipschitz function $F:G/\Gamma\to [-1,1]$ with
  $\vecnorm{F}_{\Lip}\leq M$,
  $N\geq 2$ and
  $b\in\{0,\ldots,W-1\}$, we have
  \begin{equation*}
    \left|\EE_{n\leq
        N}\left(\Lambda_{a,b,W}(n)-\phi(W)\delta(a,b,W)\Lambda_{b,W}(n)\right)F(g(n)\Gamma)\right|\ll_{a,\cD,w(\cdot)}\frac{1}{w(N)}.
  \end{equation*}
\end{theorem}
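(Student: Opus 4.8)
The plan is to deduce Theorem \ref{thm:W_tricked_artin_mangoldt_with_nilsequences_nonconst_avg} from Theorem \ref{thm:hooley} and Theorem \ref{thm:W_tricked_chebotarev_mangoldt_with_nilsequences_nonconst_avg} by essentially transporting the Hooley inclusion-exclusion identity through the correlation with the nilsequence $F(g(n)\Gamma)$. First, I apply Theorem \ref{thm:hooley} with the bounded function $n\mapsto F(g(n)\Gamma)$ (which satisfies $|F(g(n)\Gamma)|\leq 1$ since $F$ takes values in $[-1,1]$), obtaining
\begin{equation*}
  \EE_{n\leq N}\Lambda_{a,b,W}(n)F(g(n)\Gamma) = \sum_{\substack{k\in\NN\\ p_+(k)\leq w(N)}}\mu(k)\,\EE_{n\leq N}\Lambda_{G(k,a),\{\ident\},b,W}(n)F(g(n)\Gamma) + O_{a,\cD}\!\left(\frac{1}{w(N)}\right).
\end{equation*}
Next, to each inner correlation I apply Theorem \ref{thm:W_tricked_chebotarev_mangoldt_with_nilsequences_nonconst_avg} with $K=G(k,a)$ and $C=\{\ident\}$, which is legitimate precisely when $\Phi_{K^\ab}\mid W$; I address this divisibility issue below. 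This replaces $\EE_{n\leq N}\Lambda_{G(k,a),\{\ident\},b,W}(n)F(g(n)\Gamma)$ by $\phi(W)\eta_{G(k,a),\{\ident\}}(b,W)\,\EE_{n\leq N}\Lambda_{b,W}(n)F(g(n)\Gamma)$ up to an error of size $\ll_{G(k,a),m,d,A}Q^{O_{m,d,A}(1)}(1+M)(\log N)^{-A}$. Summing over $k$ with $p_+(k)\leq w(N)$, using $\eta(a,b,k,W)=\eta_{G(k,a),\{\ident\}}(b,W)$ from \eqref{eq:eta_special_case} and Moree's absolutely convergent formula $\delta(a,b,W)=\sum_k\mu(k)\eta(a,b,k,W)$ (reviewed in \S\ref{sec:delta}), the main terms reassemble into $\phi(W)\delta(a,b,W)\,\EE_{n\leq N}\Lambda_{b,W}(n)F(g(n)\Gamma)$ up to the tail $\sum_{p_+(k)>w(N)}\mu(k)\eta(a,b,k,W)$, which is $O_a(1/w(N))$ by the same tail estimates that underlie Hooley's method (essentially because the relevant fields have degree growing with $k$). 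This yields exactly the claimed bound $O_{a,\cD,w(\cdot)}(1/w(N))$, provided the accumulated error from Theorem \ref{thm:W_tricked_chebotarev_mangoldt_with_nilsequences_nonconst_avg} is also $\ll 1/w(N)$.

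The main obstacle, and the reason $w(\cdot)$ must grow sufficiently slowly in terms of $a,m,d,Q,M$, is controlling this accumulated error uniformly. The implied constant in Theorem \ref{thm:W_tricked_chebotarev_mangoldt_with_nilsequences_nonconst_avg} depends on $K=G(k,a)=\QQ(\mu_k,\sqrt[k]{a})$, and this dependence grows with $k$; moreover the number of $k$ with $p_+(k)\leq w(N)$ and $k\leq$ (some truncation) grows with $N$. The way to handle this is standard in this circle of ideas: choose $A$ large (depending on $m,d$) and note that each individual error is $\leq C(a,k,m,d,A)Q^{O(1)}(1+M)(\log N)^{-A}$; since only $k$ with $p_+(k)\leq w(N)$ contribute and the relevant range of $k$ can be restricted to $k\leq (\log N)^{O(1)}$ after the Hooley truncation, there are at most $(\log N)^{o(1)}$ such terms, each with constant $C(a,k,m,d,A)$ bounded by a function of $w(N)$ alone once $k$ ranges over the truncated set. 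Choosing $w(N)\to\infty$ slowly enough that $\sup_{k}C(a,k,m,d,A)\cdot(\text{number of terms})\cdot Q^{O(1)}(1+M)(\log N)^{-A}\ll 1/w(N)$ — possible because $(\log N)^{-A}$ decays faster than any inverse power of $w(N)=o(\log\log N)$ — finishes the estimate. One must be slightly careful that the $W$-trick consistency condition $\Phi_{(G(k,a))^\ab}\mid W$ holds for all relevant $k$: since $|\Delta_a|\mid\cD\mid W$ and $W=\cD\prod_{p<w(N),p\nmid\cD}p$ is divisible by all primes below $w(N)$, and since $(G(k,a))^\ab\subseteq\QQ(\mu_{k|\Delta_a|})$ roughly speaking, for $k$ with $p_+(k)\leq w(N)$ one does get $\Phi_{(G(k,a))^\ab}\mid W$ after possibly enlarging the truncation bookkeeping; this is exactly the kind of compatibility that the slow growth of $w(\cdot)$ and the inclusion of $|\Delta_a|$ in $\cD$ are designed to guarantee, and it parallels the corresponding step in \cite{MR4201547}.

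Two smaller points complete the argument. First, throughout I may replace $\Lambda_{a,b,W}$ and $\Lambda_{G(k,a),\{\ident\},b,W}$ by their primed versions $\Lambda_{a,b,W}'$ and $\Lambda_{G(k,a),\{\ident\},b,W}'$ at the cost of $O((\log N)^{-1})$-type errors, since the contribution of proper prime powers to these averages is negligible, as noted after \eqref{eq:artin_W_asymp}; this is harmless and keeps us within the setting where Theorem \ref{thm:hooley} and Theorem \ref{thm:W_tricked_chebotarev_mangoldt_with_nilsequences_nonconst_avg} apply directly. Second, the factor $\phi(W)\delta(a,b,W)$ is bounded (indeed $\phi(W)\delta(a,b,W)=O_a(1)$ by Moree's formula together with $W\mid\cD\prod p$), so multiplying the classical correlation $\EE_{n\leq N}\Lambda_{b,W}(n)F(g(n)\Gamma)$ by it does not inflate any error terms. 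Assembling the main-term identity and the three sources of error — the $O_{a,\cD}(1/w(N))$ from Theorem \ref{thm:hooley}, the Hooley-type tail $O_a(1/w(N))$, and the accumulated $o(1/w(N))$ from Theorem \ref{thm:W_tricked_chebotarev_mangoldt_with_nilsequences_nonconst_avg} under the slow-growth hypothesis on $w(\cdot)$ — gives the stated bound and completes the proof.
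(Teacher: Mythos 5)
Your proposal is correct and follows essentially the same route as the paper: Hooley's truncated inclusion--exclusion (Theorem \ref{thm:hooley}) applied with $F(g(n)\Gamma)$ as the bounded weight, then Theorem \ref{thm:W_tricked_chebotarev_mangoldt_with_nilsequences_nonconst_avg} for each $G(k,a)$, reassembly of the main terms via $\delta(a,b,W)=\sum_k\mu(k)\eta(a,b,k,W)$, and a choice of $w(\cdot)$ growing slowly enough (the paper's $h^{-1}(\log N)$ device) to absorb the $k$-dependent constants $C_k$ into a $1/w(N)$ error. The two points you treat loosely are exactly what the paper makes precise: the divisibility $\Phi_{G(k,a)^\ab}\mid W$ follows from Lemma \ref{lem:max_abelian_Gka} (giving $\Phi_{G(k,a)^\ab}\mid\lcm(k,|\Delta_a|)\mid W$), and the tail bound must be for $\phi(W)\sum_{p_+(k)>w(N)}\mu(k)\eta(a,b,k,W)$, which is handled uniformly in $W$ via the degree formula of Lemma \ref{lem:Fqka_degree} as in \eqref{eq:hooley_application_complete_summation}.
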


\subsection{Completion of proofs}

We deduce Proposition \ref{prop:gowers_estimate_artin} and Theorem \ref{thm:main_artin} from Theorem \ref{thm:W_tricked_artin_mangoldt_with_nilsequences_nonconst_avg} in \S\ref{sec:proof_main_artin}, following the method developed by Green and Tao in \cite{MR2680398}. In particular, we use a transferred version due to Dodos and Kanellopoulos \cite{MR4436255} of the inverse theorem for Gowers norms of Green, Tao and Ziegler \cite{MR2950773}. We have to be particularly careful here with regard to the dependence of error terms on the function $w(\cdot)$, as the choice of this function itself has to depend on the constant $\epsilon$ hidden in the $o$-notation in Theorem \ref{thm:main_artin}. For this reason, we have decided to write out the full proof carefully, even though it is essentially the same as in \cite{MR2680398}.

The deductions of Proposition \ref{prop:gowers_estimate_cheb} and Theorem
\ref{thm:main_cheb} from Theorem
\ref{thm:W_tricked_chebotarev_mangoldt_with_nilsequences_nonconst_avg} follow
the same steps, but is simpler, as there we can just take the same
$w(N)=\log\log\log N$ as in \cite{MR2680398}. We only give a short summary in
\S\ref{sec:proof_main_cheb}, where we also prove Corollary \ref{cor:existence_cheb}.

\subsection{Examples}
The examples mentioned in the introduction are explained in \S\ref{sec:examples}. 

\section{Local densities}\label{sec:local_densities}

\subsection{Elementary facts about maximal abelian subextensions}

Here we provide some simple facts concerning maximal abelian subextensions $K^\ab/\QQ$ of Galois number fields $K/\QQ$ that we will require later on. We are not aiming for the highest generality in these statements.
\begin{lemma}\label{lem:max_ab_subext_compositum}
  Let $K/\QQ$ be a Galois number field and $q\in\NN$ with $K^\ab\subseteq \QQ(\mu_q)$. Then
  \begin{equation*}
   K(\mu_q)^\ab=\QQ(\mu_q).
\end{equation*}
\end{lemma}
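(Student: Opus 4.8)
The statement asserts that adjoining $\mu_q$ to $K$ makes the maximal abelian subextension as small as it can possibly be, namely exactly $\QQ(\mu_q)$. The plan is to argue by Galois theory. First I would note the obvious inclusion $\QQ(\mu_q)\subseteq K(\mu_q)^\ab$: indeed $\QQ(\mu_q)/\QQ$ is abelian and contained in $K(\mu_q)$, so it sits inside the maximal abelian subextension. For the reverse inclusion, set $E:=K(\mu_q)^\ab$; this is the fixed field of the commutator subgroup $[G,G]$ of $G:=\Gal(K(\mu_q)/\QQ)$, so it suffices to show $\Gal(K(\mu_q)/\QQ(\mu_q))\supseteq [G,G]$, equivalently that $H:=\Gal(K(\mu_q)/\QQ(\mu_q))$ is normal in $G$ with abelian quotient --- the normality is automatic since $\QQ(\mu_q)/\QQ$ is Galois, and the quotient $G/H\cong\Gal(\QQ(\mu_q)/\QQ)$ is abelian --- combined with the statement that $H$ \emph{contains} every commutator. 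Actually the cleanest route is: $E/\QQ$ is abelian and $E\subseteq K(\mu_q)$, so $E$ corresponds to a subgroup of $G$ containing $[G,G]$; I want $E\subseteq\QQ(\mu_q)$, i.e. $E\cdot\QQ(\mu_q)=\QQ(\mu_q)$, which follows once I know $E\subseteq\QQ(\mu_q)$ --- so let me restructure.

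Here is the structure I would actually write. Since $K/\QQ$ is Galois, $K^\ab$ is the maximal abelian subextension, and by hypothesis $K^\ab\subseteq\QQ(\mu_q)$. Now consider any subfield $E$ with $\QQ\subseteq E\subseteq K(\mu_q)$ and $E/\QQ$ abelian; I must show $E\subseteq\QQ(\mu_q)$. The field $E\cap K$ is an abelian subextension of $K/\QQ$, hence $E\cap K\subseteq K^\ab\subseteq\QQ(\mu_q)$. On the other hand $E$ is contained in the compositum $K\cdot\QQ(\mu_q)=K(\mu_q)$, and $E$ is abelian over $\QQ$; using the natural injection $\Gal(K(\mu_q)/\QQ)\hookrightarrow\Gal(K/\QQ)\times\Gal(\QQ(\mu_q)/\QQ)$ (valid because $K(\mu_q)$ is the compositum of $K$ and $\QQ(\mu_q)$), one sees $E$ corresponds to a subgroup of this product whose image, since $E/\QQ$ is abelian, must... this is getting heavy. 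The slick argument: restriction gives a homomorphism $\Gal(K(\mu_q)/\QQ(\mu_q))\to\Gal(K/K\cap\QQ(\mu_q))$ which is an isomorphism (translation theorem for compositum), and likewise I can identify things to conclude that the maximal abelian subextension of $K(\mu_q)/\QQ(\mu_q)$ — hold on, we want abelian over $\QQ$, not over $\QQ(\mu_q)$.

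Let me commit to the group-theoretic version, which I think is the honest one. Write $G=\Gal(K(\mu_q)/\QQ)$, $H_1=\Gal(K(\mu_q)/K)$, $H_2=\Gal(K(\mu_q)/\QQ(\mu_q))$, so $H_1\cap H_2=1$ because $K(\mu_q)$ is generated by $K$ and $\QQ(\mu_q)$. Both $K^\ab$ and $\QQ(\mu_q)$ are abelian over $\QQ$, so $[G,G]\subseteq \Gal(K(\mu_q)/K^\ab)$ and $[G,G]\subseteq H_2$. We want $K(\mu_q)^\ab=\QQ(\mu_q)$, i.e. the fixed field of $[G,G]$ equals the fixed field of $H_2$, i.e. $[G,G]=H_2$. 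The inclusion $[G,G]\subseteq H_2$ is just done. For $H_2\subseteq [G,G]$: take $\sigma\in H_2$, so $\sigma$ fixes $\mu_q$ pointwise; restricting $\sigma$ to $K$ gives an element $\bar\sigma\in\Gal(K/\QQ)$, and since $\sigma$ fixes $K^\ab$ (because $K^\ab\subseteq\QQ(\mu_q)$ and $\sigma\in H_2$) we get $\bar\sigma\in\Gal(K/K^\ab)=[\Gal(K/\QQ),\Gal(K/\QQ)]$, hence $\bar\sigma$ is a product of commutators $[\bar\alpha_i,\bar\beta_i]$ in $\Gal(K/\QQ)$; lifting each $\bar\alpha_i,\bar\beta_i$ to $\alpha_i,\beta_i\in G$ via surjectivity of $G\to\Gal(K/\QQ)$, the element $\tau:=\prod[\alpha_i,\beta_i]\in[G,G]$ has the same restriction to $K$ as $\sigma$; moreover $\tau\in[G,G]\subseteq H_2$, so both $\sigma$ and $\tau$ fix $\QQ(\mu_q)$ pointwise, hence $\sigma\tau^{-1}\in H_1\cap H_2=1$, giving $\sigma=\tau\in[G,G]$. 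This closes the argument. The main obstacle is purely organizational: keeping straight the three subgroups $H_1,H_2,[G,G]$ and using the two facts $K^\ab\subseteq\QQ(\mu_q)$ and $H_1\cap H_2=1$ at exactly the right moments; there is no analytic or deep content, it is the standard compositum/translation yoga in Galois theory.
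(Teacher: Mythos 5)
Your final committed argument is correct and is essentially the paper's proof: both establish that the commutator subgroup of $\Gal(K(\mu_q)/\QQ)$ equals $\Gal(K(\mu_q)/\QQ(\mu_q))$, with the nontrivial inclusion proved by restricting to $K$, writing the restriction as a product of commutators in $\Gal(K/\QQ)$ (using $K^\ab\subseteq\QQ(\mu_q)$), lifting these to $\Gal(K(\mu_q)/\QQ)$, and using abelianness of $\Gal(\QQ(\mu_q)/\QQ)$. The only difference is presentational — the paper phrases it via the fiber-product description of $\Gal(K(\mu_q)/\QQ)$, while you work directly with the subgroups $H_1,H_2$ and the fact $H_1\cap H_2=1$, which amounts to the same thing.
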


\begin{proof}
  As $K\cap\QQ(\mu_q)=K^\ab$, the Galois group of $K(\mu_q)/\QQ$ is isomorphic to the fiber product
  \begin{equation*}
    G := \{(\sigma,\tau)\in \Gal(K/\QQ)\times\Gal(\QQ(\mu_q)/\QQ)\ :\ \sigma|_{K^\ab}=\tau|_{K^\ab}\}
  \end{equation*}
  via $\sigma\mapsto (\sigma|_K,\sigma|_{\QQ(\mu_q)})$. Hence, we need to show that the commutator subgroup $G'$ of $G$ is equal to the kernel $H$ of the projection $G\to \Gal(\QQ(\mu_q)/\QQ)$.

  Clearly, as $\Gal(\QQ(\mu_q)/K)$ is abelian, every commutator in $G$ satisfies
  \begin{equation*}
    [(\sigma_1,\tau_1),(\sigma_2,\tau_2)]=([\sigma_1,\sigma_2],[\tau_1,\tau_2])=([\sigma_1,\sigma_2],\ident)\in H,
  \end{equation*}
  and thus $G'\subseteq H$.

  On the other hand, every $(\sigma,\ident)\in H$ satisfies $\sigma|_{K^\ab}=\ident$ and thus $\sigma\in\Gal(K/\QQ)'$. Hence, we may write $\sigma=[\sigma_1,\rho_1][\sigma_2,\rho_2]\cdots[\sigma_k,\rho_k]$ with $\sigma_i,\rho_i\in \Gal(K/\QQ)$ and $k\in\NN$. As the projection $G\to\Gal(K/\QQ)$ is surjective, there are $\tau_i,\lambda_i\in\Gal(\QQ(\mu_q)/\QQ)$ with $(\sigma_i,\tau_i)\in G$ and $(\rho_i,\lambda_i)\in G$ for all $i$. Then, again as $\Gal(\QQ(\mu_q)/\QQ)$ is abelian,
  \begin{equation*}
    [(\sigma_1,\tau_1),(\rho_1,\lambda_1)]\cdots[(\sigma_k,\tau_k),(\rho_k,\lambda_k)]=([\sigma_1,\rho_1]\cdots[\sigma_k,\rho_k],[\tau_1,\lambda_1]\cdots[\tau_k,\lambda_k])=(\sigma,\ident),
  \end{equation*}
  showing that $(\sigma,\ident)\in G'$ and hence $H\subseteq G'$.  
\end{proof}

\begin{lemma}\label{lem:max_abelian_Gka}
  Let $a\in\QQ^\times$, let $k\in\NN$, and write
  $K=\QQ(\mu_k,\sqrt[k]{a})$. Then the maximal abelian subextension of $K/\QQ$ is
  \begin{equation*}
    K^{\ab}=
    \begin{cases}
      \QQ(\mu_k),&\text{ if }2\nmid k,\\
      \QQ(\mu_k,\sqrt{a}),&\text{ if }2\mid k.
    \end{cases}
  \end{equation*}
\end{lemma}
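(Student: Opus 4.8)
The plan is to represent $G:=\Gal(K/\QQ)$ as a group of affine transformations of $\ZZ/k\ZZ$ and read off its commutator subgroup. Fix a $k$-th root $\alpha:=\sqrt[k]{a}$ and $\zeta_k:=e^{2\pi i/k}$; since $\mu_k\subseteq K$, for every $\tau\in G$ there are unique $e(\tau)\in\ZZ/k\ZZ$ and $c(\tau)\in(\ZZ/k\ZZ)^\times$ with $\tau(\alpha)=\zeta_k^{e(\tau)}\alpha$ and $\tau(\zeta_k)=\zeta_k^{c(\tau)}$, and a short computation gives $e(\tau_1\tau_2)=e(\tau_1)+c(\tau_1)e(\tau_2)$. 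Thus $\tau\mapsto(e(\tau),c(\tau))$ embeds $G$ into the group of pairs $(e,c)\in(\ZZ/k\ZZ)\times(\ZZ/k\ZZ)^\times$ with product $(e_1,c_1)(e_2,c_2)=(e_1+c_1e_2,c_1c_2)$. Here $c$ is onto $(\ZZ/k\ZZ)^\times\cong\Gal(\QQ(\mu_k)/\QQ)$, its kernel is $H:=\Gal(K/\QQ(\mu_k))$, and by Kummer theory $H$ is cyclic of order $d:=[K:\QQ(\mu_k)]\mid k$, with $e$ identifying it with the subgroup $(k/d)\ZZ/k\ZZ$ of $\ZZ/k\ZZ$. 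Since $G/H$ is abelian, $[G,G]\subseteq H$.

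One inclusion in the statement is immediate: $\QQ(\mu_k)\subseteq K^{\ab}$ always, and if $2\mid k$ then $\sqrt{a}=\alpha^{k/2}\in K$ and $\QQ(\mu_k,\sqrt a)/\QQ$ is abelian (a compositum of the abelian extensions $\QQ(\mu_k)/\QQ$ and $\QQ(\sqrt a)/\QQ$), so $\QQ(\mu_k,\sqrt a)\subseteq K^{\ab}$. For the reverse I would use the multiplication rule to compute
\begin{equation*}
  [(e_1,c_1),(e_2,c_2)]=\bigl((1-c_2)e_1-(1-c_1)e_2,\,1\bigr),
\end{equation*}
fix a generator $\sigma$ of $H$ (chosen so that $e(\sigma)=k/d$) and, for each $c\in(\ZZ/k\ZZ)^\times$, some $\tau_c\in G$ with $c(\tau_c)=c$; then $[\tau_c,\sigma]$ has $e$-value $(c-1)(k/d)$, so $[G,G]\supseteq(k/d)I$, where $I\le\ZZ/k\ZZ$ is the subgroup generated by $\{c-1:c\in(\ZZ/k\ZZ)^\times\}$. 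If $k$ is odd then $-1$ is a unit and $\gcd(2,k)=1$, so $-2\in I$ forces $I=\ZZ/k\ZZ$; hence $[G,G]=(k/d)\ZZ/k\ZZ=H$ and $K^{\ab}=K^H=\QQ(\mu_k)$.

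For even $k$ every unit is odd, and this has two consequences: first, every $c-1$ is even, so $I=2\ZZ/k\ZZ$ (the inclusion ``$\supseteq$'' coming from producing, via the Chinese remainder theorem, a unit $c$ with $\gcd(c-1,k)=2$); second, every commutator has even $e$-value, so $[G,G]\subseteq H\cap(2\ZZ/k\ZZ)$. Since an element $\tau\in H$ fixes $\sqrt a=\alpha^{k/2}$ exactly when $\zeta_k^{(k/2)e(\tau)}=(-1)^{e(\tau)}=1$, i.e.\ when $e(\tau)$ is even, we have $H\cap(2\ZZ/k\ZZ)=\Gal(K/\QQ(\mu_k,\sqrt a))$. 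On the other hand $[G,G]\supseteq(k/d)I=(k/d)(2\ZZ/k\ZZ)$, and, using that $k$ is squarefree so that $d$ and $k/d$ are coprime, a comparison of orders (distinguishing whether $d$ is even or odd) shows $(k/d)(2\ZZ/k\ZZ)=H\cap(2\ZZ/k\ZZ)$. Hence $[G,G]=H\cap(2\ZZ/k\ZZ)$ and $K^{\ab}=\QQ(\mu_k,\sqrt a)$. I expect this final order comparison in the even case to be the only delicate point, and it is genuinely where squarefreeness of $k$ enters --- this suffices for the applications, since the fields $G(k,a)$ of \eqref{eq:def_Gka} arise only for squarefree $k$, but for general $k$ one can have $K^{\ab}$ strictly larger (e.g.\ $a=-9$, $k=4$, where $K=\QQ(i,\sqrt 6)$ is already abelian over $\QQ$), so there one would need to exploit in addition the commutators $[\tau_{c_1},\tau_{c_2}]$ with both $c_i\neq 1$, whose $e$-values involve the cocycle $e(\cdot)$ and hence the arithmetic of $a$.
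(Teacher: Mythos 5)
Your argument is correct where the statement is true, and it takes a genuinely different route from the paper. The paper works field-theoretically inside $K^\ab$: it writes $K^\ab=\QQ(\mu_k,\beta)$ with $\beta^l\in\QQ(\mu_k)$, parametrises $\Gal(K^\ab/\QQ)$ by pairs $(i,j)$, and deduces from commutativity that $l=[K^\ab:\QQ(\mu_k)]\in\{1,2\}$, whence $K^\ab$ equals the claimed field. You instead embed $G=\Gal(K/\QQ)$ into $\ZZ/k\ZZ\rtimes(\ZZ/k\ZZ)^\times$ via the Kummer cocycle and compute $[G,G]$ directly, sandwiching it between the subgroup generated by the commutators $[\tau_c,\sigma]$ (whose $e$-values are $(c-1)k/d$) and the subgroup of $H$ of even $e$-value, and then take fixed fields. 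I checked the commutator identity, the identification $e(H)=(k/d)\ZZ/k\ZZ$, the determination of $I$, and the order comparison in the even squarefree case; all are correct, and your upper bound is indeed a subgroup (it is $\Gal(K/\QQ(\mu_k,\sqrt a))$), so the sandwich argument is sound. What your approach buys is precision about hypotheses: it proves the conclusion for all odd $k$ and for all squarefree even $k$, and it pinpoints that squarefreeness enters exactly in the comparison $(k/d)(2\ZZ/k\ZZ)=H\cap 2\ZZ/k\ZZ$.

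Your closing remark is moreover an important catch: as stated for arbitrary $k\in\NN$, the lemma is false, and your counterexample is valid. For $a=-9$, $k=4$ one may take $\sqrt[4]{-9}=\tfrac{\sqrt6}{2}(1+i)$, so $K=\QQ(i,\sqrt6)$ is abelian of degree $4$, while the lemma predicts $K^\ab=\QQ(\mu_4,\sqrt{-9})=\QQ(i)$. Correspondingly, the paper's own proof has a gap in the case $2\mid k$ and $\sqrt a\in\QQ(\mu_k)$: then the claimed field is just $\QQ(\mu_k)$, and since for even $k$ every $i\in(\ZZ/k\ZZ)^\times$ is odd, the non-commutation criterion never excludes $l=2$, so the bound $l\in\{1,2\}$ does not yield the conclusion (in the example above $l=2$). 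This does not affect the rest of the paper, because the lemma is only ever applied to the fields $G(k,a)$ with $k$ squarefree, but the statement should be restricted to squarefree $k$ (which is exactly the version your proof establishes) or the remaining even non-squarefree case handled separately.
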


\begin{proof}
  In each of the two cases, let $F$ be the field claimed to coincide with $K^{\ab}$
  in the lemma. Then in both cases $F/\QQ$ is abelian and $F\subseteq K$, so
  $F\subseteq K^\ab$.

  We have $\QQ(\mu_k)\subseteq K^\ab\subseteq K$ and the extension
  $K/\QQ(\mu_k)$ is cyclic of degree dividing $k$. Hence, for some $l\mid k$ the extension
  $K^\ab/\QQ(\mu_k)$ is the unique degree-$l$-subextension of $K/\QQ(\mu_k)$,
  and $K^\ab=\QQ(\mu_k,\beta)$ with $\beta^l\in \QQ(\mu_k)$.

 Every automorphism
  $\sigma\in\Gal(K^\ab/\QQ)$ must satisfy $\sigma(\zeta_k)=\zeta_k^i$ for some
  $i\in\left(\ZZ/k\ZZ\right)^\times$ and $\sigma(\beta)=\zeta_k^{jk/l}\beta$
  for some $j\in\ZZ/l\ZZ$. As $|\Gal(K^\ab/\QQ)|=\phi(k)l$, each of these
  specifications is indeed realised by exactly one
  $\sigma_{i,j}\in\Gal(K^\ab/\QQ)$. Now, for any $i\in(\ZZ/k\ZZ)^\times$,
  \begin{align*}
    \sigma_{i,0}\sigma_{1,1}(\beta)&=\sigma_{i,0}(\zeta_k^{k/l}\beta)=\zeta_k^{ik/l}\beta,\\
    \sigma_{1,1}\sigma_{i,0}(\beta)&=\sigma_{1,1}(\beta)=\zeta_k^{k/l}\beta.
  \end{align*}
  Hence, these two elements of $\Gal(K^\ab/\QQ)$ do not commute unless $i\equiv
  1\bmod l$. As $\Gal(K^\ab/\QQ)$ is abelian, this shows that $l\in\{1,2\}$,
  and hence $K^\ab=F$.
\end{proof}

Recall that by $\Phi_{K^\ab}$ we denote the finite part of the conductor of the maximal abelian
subfield $K^\ab\subseteq K$, i.e. the smallest $f\in\NN$ such that
$K^\ab\subseteq \QQ(\zeta_f)$.
The above lemma shows in particular that  $\Phi_{G(k,a)^\ab}|\lcm(\Delta_{a},k)$ for the fields $G(k,a)$ defined in \eqref{eq:def_Gka}.

\subsection{Chebotarev classes}\label{sec:local_densities_cheb}

Recall that for $\vK=(K_1,\ldots,K_t)$, we defined
$\cD_\vK$ in \eqref{eq:def_DK}. Furthermore, we note that for every Galois number field $K$, conjugacy class $C\subseteq\Gal(K/\QQ)$, $q\in\NN$ and $b\in\ZZ$, it is obvious from \eqref{eq:cheb} that $\eta_{K,C}(b,q)\leq 1/\phi(q)$, and thus
\begin{equation}\label{eq:phi_eta_bound}
  \phi(q)\eta_{K,C}(b,q)\leq 1.
\end{equation}

\begin{lemma}[Almost-multiplicativity]\label{lem:almost_multiplicativity_cheb}
Let $q_1,q_2\in\NN$ with $\gcd(q_1,q_2)=1$ and $b\in\ZZ$. 
\begin{enumerate}
\item Let $K/\QQ$ be a finite Galois extension and
  $C\subseteq\Gal(K/\QQ)$ a conjugacy class.  Suppose that
  $\Phi_{K^{ab}}\mid q_1$. Then
    \begin{equation*}
      \eta_{K,C}(0,1)\eta_{K,C}(b,q_1q_2)=\eta_{K,C}(b,q_1)\eta_{K,C}(b,q_2).
    \end{equation*}
\item Let $\vK,\vC,\Psi$ be as in Theorem \ref{thm:main_cheb}. Suppose that $\cD_{\vK}\mid q_1$. Then
    \begin{equation*}
      \tau_{\vK,\vC,\Psi}(q_1q_2)=\tau_{\vK,\vC,\Psi}(q_1)\tau_{\vK,\vC,\Psi}(q_2).
    \end{equation*}
\end{enumerate}
\end{lemma}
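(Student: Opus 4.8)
\medskip

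The plan is to prove part (1) first and then bootstrap part (2) from it. For part (1), the natural approach is to unwind the definition \eqref{eq:def_eta_K_C} of $\eta_{K,C}$ in all four quantities and check the claimed identity case-by-case according to whether the coprimality and Frobenius-compatibility conditions hold. First I would dispose of the easy case: if $\gcd(b,q_1q_2)>1$, then $\gcd(b,q_i)>1$ for some $i$, so both sides vanish; conversely if $\gcd(b,q_1)=\gcd(b,q_2)=1$ then $\gcd(b,q_1q_2)=1$. So we may assume $\gcd(b,q_1q_2)=1$ throughout. Next, since $\Phi_{K^\ab}\mid q_1$, we have $K^\ab\subseteq\QQ(\mu_{q_1})$, hence $K\cap\QQ(\mu_{q_2})=K^\ab\cap\QQ(\mu_{q_2})=\QQ$ (the latter because $\QQ(\mu_{q_1})\cap\QQ(\mu_{q_2})=\QQ$ as $\gcd(q_1,q_2)=1$), and $K\cap\QQ(\mu_{q_1q_2})=K\cap\QQ(\mu_{q_1})=K^\ab$. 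Therefore $\sigma_b\in\Gal(\QQ(\mu_{q_2})/\QQ)$ automatically restricts trivially on $K\cap\QQ(\mu_{q_2})=\QQ$, so the Frobenius-compatibility condition in $\eta_{K,C}(b,q_2)$ is always satisfied (when $\gcd(b,q_2)=1$), and the condition in $\eta_{K,C}(b,q_1q_2)$ is equivalent to the one in $\eta_{K,C}(b,q_1)$ since $K\cap\QQ(\mu_{q_1q_2})=K\cap\QQ(\mu_{q_1})$ and $\sigma_b|_{\QQ(\mu_{q_1})}$ is the same whether $\sigma_b$ is viewed in $\Gal(\QQ(\mu_{q_1q_2})/\QQ)$ or computed from $b\bmod q_1$. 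So both sides are zero unless $\sigma_b$ (mod $q_1$) is $K$-compatible with $C$, and in that case the identity reduces to the purely numerical statement
\begin{equation*}
\frac{|C|}{[K:\QQ]}\cdot\frac{|C|}{[K(\mu_{q_1q_2}):\QQ]}=\frac{|C|}{[K(\mu_{q_1}):\QQ]}\cdot\frac{|C|}{[K(\mu_{q_2}):\QQ]},
\end{equation*}
i.e. $[K:\QQ]\,[K(\mu_{q_1q_2}):\QQ]=[K(\mu_{q_1}):\QQ]\,[K(\mu_{q_2}):\QQ]$. This degree identity I would verify from the fiber-product descriptions of the Galois groups: $[K(\mu_{q_1}):\QQ]=[K:\QQ]\,\phi(q_1)/[K\cap\QQ(\mu_{q_1}):\QQ]=[K:\QQ]\,\phi(q_1)/[K^\ab:\QQ]$, $[K(\mu_{q_2}):\QQ]=[K:\QQ]\,\phi(q_2)$ (since $K\cap\QQ(\mu_{q_2})=\QQ$), and $[K(\mu_{q_1q_2}):\QQ]=[K:\QQ]\,\phi(q_1q_2)/[K^\ab:\QQ]=[K:\QQ]\,\phi(q_1)\phi(q_2)/[K^\ab:\QQ]$; multiplying out gives the claim.

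\medskip

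For part (2), I would substitute the product formula \eqref{eq:def_tau} and exploit the Chinese Remainder Theorem to split the average over $(\ZZ/q_1q_2\ZZ)^s$ into a product of averages over $(\ZZ/q_1\ZZ)^s$ and $(\ZZ/q_2\ZZ)^s$. Writing $n\in(\ZZ/q_1q_2\ZZ)^s$ as $(n_1,n_2)$ with $n_j\in(\ZZ/q_j\ZZ)^s$, we have $\psi_i(n)\equiv\psi_i(n_1)\bmod q_1$ and $\psi_i(n)\equiv\psi_i(n_2)\bmod q_2$. Since $\cD_\vK\mid q_1$ means $\Phi_{K_i^\ab}\mid q_1$ for every $i$, part (1) applies to each factor and gives
\begin{equation*}
\frac{q_1q_2\,\eta_{K_i,C_i}(\psi_i(n),q_1q_2)}{\eta_{K_i,C_i}(0,1)}=\frac{q_1\,\eta_{K_i,C_i}(\psi_i(n_1),q_1)}{\eta_{K_i,C_i}(0,1)}\cdot\frac{q_2\,\eta_{K_i,C_i}(\psi_i(n_2),q_2)}{\eta_{K_i,C_i}(0,1)}.
\end{equation*}
Taking the product over $i\in[t]$ and then the average $\EE_{n\in(\ZZ/q_1q_2\ZZ)^s}=\EE_{n_1\in(\ZZ/q_1\ZZ)^s}\EE_{n_2\in(\ZZ/q_2\ZZ)^s}$ factors the whole expression as $\tau_{\vK,\vC,\Psi}(q_1)\,\tau_{\vK,\vC,\Psi}(q_2)$, as desired.

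\medskip

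The main obstacle is the bookkeeping in part (1): one must be careful that the ``$\sigma_b$ restricts to the same element of $\Gal(K\cap\QQ(\mu_q)/\QQ)$ as the elements of $C$'' condition is correctly matched across the three moduli $q_1$, $q_2$, $q_1q_2$, which hinges on the identifications $K\cap\QQ(\mu_{q_1q_2})=K\cap\QQ(\mu_{q_1})=K^\ab$ and $K\cap\QQ(\mu_{q_2})=\QQ$ coming from the hypothesis $\Phi_{K^\ab}\mid q_1$ together with $\gcd(q_1,q_2)=1$. Once that compatibility is in place, everything else is a short computation with Galois-group orders and the CRT, with no serious difficulty.
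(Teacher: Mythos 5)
Your proposal is correct and follows essentially the same route as the paper: dispose of the case $\gcd(b,q_1q_2)>1$, use $\Phi_{K^\ab}\mid q_1$ to identify $K\cap\QQ(\mu_{q_1q_2})=K\cap\QQ(\mu_{q_1})=K^\ab$ and $K\cap\QQ(\mu_{q_2})=\QQ$ so that the compatibility conditions in the three $\eta$'s match up, reduce to the degree identity $[K:\QQ][K(\mu_{q_1q_2}):\QQ]=[K(\mu_{q_1}):\QQ][K(\mu_{q_2}):\QQ]$, and then deduce part (2) from part (1) via the Chinese remainder theorem. The only cosmetic difference is that you verify the degree identity directly from the compositum formula $[KL:\QQ]=[K:\QQ]\,[L:\QQ]/[K\cap L:\QQ]$ applied to the computed intersections, whereas the paper routes this through its Lemma \ref{lem:max_ab_subext_compositum} to get $K(\mu_{q_1})\cap\QQ(\mu_{q_2})=\QQ$; both amount to the same computation.
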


\begin{proof}
  Let us start with \emph{(1)}. If $\gcd(b,q_1q_2)\neq 1$ then both sides of the equation are zero. Hence, we assume now that $\gcd(b,q_1q_2)=1$. Clearly, $\eta_{K,C}(0,1)=|C|/[K:\QQ]$. Considering ramified primes in $K^\ab$ and $\QQ(\mu_{q_2})$, we see that $K\cap\QQ(\mu_{q_2})=\QQ$, and thus $\eta_{K,C}(b,q_2)=|C|/[K(\mu_{q_2}):\QQ]$. Hence, the desired equality reduces to
  \begin{equation}\label{eq:almost_mult_cheb_1}
    \frac{\eta_{K,C}(b,q_1q_2)}{[K:\QQ]}=\frac{\eta_{K,C}(b,q_1)}{[K(\mu_{q_2}):\QQ]}.
  \end{equation}
  As $\Phi_{K^\ab}\mid q_1$, we get $K\cap\QQ(\mu_{q_1})=K^\ab=K\cap\QQ(\mu_{q_1q_2})$. In particular, the restrictions of the automorphisms $\zeta_{q_1}\mapsto\zeta_{q_1}^b\in\Gal(\QQ(\mu_{q_1})/\QQ)$ and $\zeta_{q_1q_2}\mapsto\zeta_{q_1q_2}^b\in\Gal(\QQ(\mu_{q_1q_2})/\QQ)$ to this field coincide, thus showing that
  \begin{equation} \label{eq:almost_mult_cheb_2}
  \eta_{K,C}(b,q_1q_2)[K(\mu_{q_1q_2}):\QQ] = \eta_{K,C}(b,q_1)[K(\mu_{q_1}):\QQ].   
\end{equation}
With Lemma \ref{lem:max_ab_subext_compositum} we get $K(\mu_{q_1})\cap \QQ(\mu_{q_2}) = K(\mu_{q_1})^\ab\cap\QQ(\mu_{q_2})= \QQ(\mu_{q_1})\cap\QQ(\mu_{q_2})=\QQ$, and thus
\begin{equation*}
 [K:\QQ][K(\mu_{q_1q_2}):\QQ]=[K:\QQ][K(\mu_{q_1}):\QQ][\QQ(\mu_{q_2}):\QQ]=[K(\mu_{q_1}):\QQ][K(\mu_{q_2}):\QQ].
\end{equation*}
Now \eqref{eq:almost_mult_cheb_1} follows from \eqref{eq:almost_mult_cheb_2} upon dividing both sides by this quantity.

If $\cD_\vK\mid q_1$, then we can apply \emph{(1)} for each $\eta_{K_i,C_i}(\cdot,\cdot)$ to see that
\begin{align*}
\tau_{\vK,\vC,\Psi}(q_1q_2)&=q_1^{t-s}q_2^{t-s}\sum_{n\in(\ZZ/q_1q_2\ZZ)}\prod_{i\in[t]}\frac{\eta_{K_i,C_i}(\psi_i(n),q_1)}{\eta_{K_i,C_i}(0,1)}\frac{\eta_{K_i,C_i}(\psi_i(n),q_2)}{\eta_{K_i,C_i}(0,1)}.
\end{align*}
Together with the Chinese remainder theorem, this shows \emph{(2)}.
\end{proof}

Recall the definition \eqref{eq:def_beta_p} of the local densities $\beta_{\Psi,p}$ from \cite{MR2680398}.

\begin{lemma}\label{lem:euler_factor_bound_cheb}
  Let $\vK,\vC,\Psi$ be as in Theorem \ref{thm:main_cheb}. Then, for all primes $p\nmid\cD_{\vK}$, we have
  \begin{equation*}
    \tau_{\vK,\vC,\Psi}(p)=\beta_{\Psi,p}.
  \end{equation*}
  In particular, for all primes $p$, we have
  \begin{equation*}
    \tau_{\vK,\vC,\Psi}(p)=1+O_{s,t,L,\vK}(1/p^2)
  \end{equation*}
  and the infinite product in Theorem \ref{thm:main_cheb} converges absolutely.
\end{lemma}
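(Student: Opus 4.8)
The plan is to compute $\tau_{\vK,\vC,\Psi}(p)$ directly from \eqref{eq:def_tau} and \eqref{eq:def_eta_K_C} for a prime $p\nmid\cD_\vK$, and to observe that each local factor collapses to the one occurring in the classical density $\beta_{\Psi,p}$ of \eqref{eq:def_beta_p}. The only input needed is the disjointness $K_i\cap\QQ(\mu_p)=\QQ$ for every $i\in[t]$: the field $K_i\cap\QQ(\mu_p)$ is an abelian subextension of $K_i/\QQ$, hence contained in $K_i^\ab$, so $K_i\cap\QQ(\mu_p)=K_i^\ab\cap\QQ(\mu_p)$; and since $K_i^\ab\subseteq\QQ(\mu_{\Phi_{K_i^\ab}})$ with $\Phi_{K_i^\ab}\mid\cD_\vK$ coprime to $p$ by hypothesis, we get $K_i^\ab\cap\QQ(\mu_p)\subseteq\QQ(\mu_{\Phi_{K_i^\ab}})\cap\QQ(\mu_p)=\QQ$.

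With this in hand, for any $b$ with $p\nmid b$ the group $\Gal(K_i\cap\QQ(\mu_p)/\QQ)$ is trivial, so the compatibility condition in \eqref{eq:def_eta_K_C} is automatically satisfied, and $K_i$, $\QQ(\mu_p)$ being linearly disjoint over $\QQ$ gives $[K_i(\mu_p):\QQ]=[K_i:\QQ]\phi(p)$; hence $\eta_{K_i,C_i}(b,p)=|C_i|/([K_i:\QQ]\phi(p))=\eta_{K_i,C_i}(0,1)/\phi(p)$, while for $p\mid b$ both $\eta_{K_i,C_i}(b,p)$ and $\one_{\gcd(b,p)=1}$ vanish. Thus $\eta_{K_i,C_i}(\psi_i(n),p)/\eta_{K_i,C_i}(0,1)=\one_{\gcd(\psi_i(n),p)=1}/\phi(p)$ for all $n$, and substituting into \eqref{eq:def_tau} yields exactly $\tau_{\vK,\vC,\Psi}(p)=p^{t-s}\sum_{n\in(\ZZ/p\ZZ)^s}\prod_{i\in[t]}\one_{\gcd(\psi_i(n),p)=1}/\phi(p)=\beta_{\Psi,p}$.

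For the quantitative assertion, when $p\nmid\cD_\vK$ this identity together with the standard estimate $\beta_{\Psi,p}=1+O_{s,t,L}(p^{-2})$ from \cite{MR2680398} gives the claimed bound, and then $\sum_p p^{-2}<\infty$ yields absolute convergence of $\prod_{p\nmid\cD_\vK}\tau_{\vK,\vC,\Psi}(p)$. For the finitely many $p\mid\cD_\vK$ I would argue crudely: by \eqref{eq:phi_eta_bound} each factor $\eta_{K_i,C_i}(\psi_i(n),p)/\eta_{K_i,C_i}(0,1)$ is at most $[K_i:\QQ]/\phi(p)$, so $0\leq\tau_{\vK,\vC,\Psi}(p)\leq (p/\phi(p))^t\prod_{i\in[t]}[K_i:\QQ]\leq 2^t\prod_{i\in[t]}[K_i:\QQ]$, and since such $p$ lie in a finite set depending only on $\vK$ this gives $|\tau_{\vK,\vC,\Psi}(p)-1|\ll_{t,\vK}1\ll_{s,t,L,\vK}p^{-2}$. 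Everything here is routine; the only step with any content is the disjointness $K_i\cap\QQ(\mu_p)=\QQ$, which is precisely where the definition of $\cD_\vK$ via the conductors $\Phi_{K_i^\ab}$ enters, together with the bookkeeping of the finite set of primes dividing $\cD_\vK$.
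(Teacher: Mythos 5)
Your argument is correct and is essentially the paper's proof: for $p\nmid\cD_\vK$ one uses $K_i\cap\QQ(\mu_p)=\QQ$ (which follows exactly as you say from $K_i^\ab\subseteq\QQ(\mu_{\Phi_{K_i^\ab}})$ and $p\nmid\Phi_{K_i^\ab}$) to reduce each factor $\eta_{K_i,C_i}(\psi_i(n),p)/\eta_{K_i,C_i}(0,1)$ to $\one_{\gcd(\psi_i(n),p)=1}/\phi(p)$, giving $\tau_{\vK,\vC,\Psi}(p)=\beta_{\Psi,p}$, and then one invokes \cite[Lemma 1.3]{MR2680398}. Your additional crude bound for the finitely many primes $p\mid\cD_\vK$ via \eqref{eq:phi_eta_bound} is a correct way to make explicit the step the paper leaves implicit in "the remaining assertions now follow."
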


\begin{proof}
If $p\nmid \cD_{\vK}$, then  
$K_i\cap \QQ(\mu_p)=\QQ$ for all $1\leq i\leq t$. If $\gcd(b,q)=1$, we get 
\[
\frac{p\eta_{K_i,C_i}(b,p)}{\eta_{K_i,C_i}(0,1)}=p\frac{|C_i|}{[K_i(\mu_p):\QQ]}\frac{[K_i:\QQ]}{|C_i|}=\frac{p}{p-1}.
\]
If $\gcd(b,q)\neq 1$, then $\eta_{K_i,C_i}(b,q)=0$. Hence, for all $n\in(\ZZ/p\ZZ)^s$ we have
\begin{equation*}
  \frac{p\eta_{K_i,C_i}(\psi_i(n),p)}{\eta_{K_i,C_i}(0,1)}=\one_{\gcd(\psi(n),p)=1}\frac{p}{p-1},
\end{equation*}
and thus $\tau_{\vK,\vC,\Psi}(p)=\beta_{\Psi,p}$. The remaining assertions now follow from \cite[Lemma 1.3]{MR2680398}.
\end{proof}

\begin{lemma}\label{lem:equidistribution_cheb}
  Let $K/\QQ$ be a finite Galois extension and $C\subseteq\Gal(K/\QQ)$ a conjugacy class. 
  Let $Q,q\in\NN$ and $b\in\ZZ$, and assume that $\Phi_{K^\ab}\mid Q$. Then
  \begin{equation*}
    \eta_{K,C}(b,Qq) = \one_{\gcd(b,q)=1}\frac{\phi(Q)}{\phi(Qq)}\eta_{K,C}(b,Q).
  \end{equation*}
\end{lemma}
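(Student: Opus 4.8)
The plan is to split on whether $\gcd(b,Qq)=1$, and in the main case to reduce the claimed identity to the degree equality $[K(\mu_{Qq}):K(\mu_Q)]=[\QQ(\mu_{Qq}):\QQ(\mu_Q)]$, which I would prove via the compositum formula together with Lemma \ref{lem:max_ab_subext_compositum}.

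First I would dispose of the case $\gcd(b,Qq)\neq 1$. Then the left-hand side vanishes by definition \eqref{eq:def_eta_K_C}. On the right-hand side, either $\gcd(b,q)\neq 1$, killing the indicator, or $\gcd(b,q)=1$ but $\gcd(b,Q)\neq 1$, in which case $\eta_{K,C}(b,Q)=0$; either way the right-hand side is zero. So assume henceforth $\gcd(b,Qq)=1$, equivalently $\gcd(b,Q)=\gcd(b,q)=1$, so the indicator $\one_{\gcd(b,q)=1}$ equals $1$. Next I would pin down the relevant field intersections: since $\Phi_{K^\ab}\mid Q\mid Qq$ we have $K^\ab\subseteq\QQ(\mu_Q)\subseteq\QQ(\mu_{Qq})$, and since every subfield of a cyclotomic field is abelian over $\QQ$, this forces $K\cap\QQ(\mu_Q)=K\cap\QQ(\mu_{Qq})=K^\ab$. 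Moreover the automorphism $\zeta_{Qq}\mapsto\zeta_{Qq}^{b}$ of $\QQ(\mu_{Qq})$ restricts on $\QQ(\mu_Q)$ to $\zeta_Q\mapsto\zeta_Q^{b}$, hence restricts on $K^\ab$ to the same element as the latter. As all elements of the conjugacy class $C$ have a common image in $\Gal(K^\ab/\QQ)$, the compatibility condition in \eqref{eq:def_eta_K_C} holds for modulus $Qq$ if and only if it holds for modulus $Q$. If it fails, then $\eta_{K,C}(b,Qq)=\eta_{K,C}(b,Q)=0$ and the identity is trivial; so assume it holds, whence $\eta_{K,C}(b,Qq)=|C|/[K(\mu_{Qq}):\QQ]$ and $\eta_{K,C}(b,Q)=|C|/[K(\mu_Q):\QQ]$. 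The claimed identity is then equivalent to $[K(\mu_{Qq}):K(\mu_Q)]=\phi(Qq)/\phi(Q)=[\QQ(\mu_{Qq}):\QQ(\mu_Q)]$.

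Finally I would prove this degree equality. Writing $K(\mu_{Qq})=K(\mu_Q)\cdot\QQ(\mu_{Qq})$, the compositum degree formula gives
\[
[K(\mu_{Qq}):K(\mu_Q)]=[\QQ(\mu_{Qq}):K(\mu_Q)\cap\QQ(\mu_{Qq})],
\]
so it suffices to show $K(\mu_Q)\cap\QQ(\mu_{Qq})=\QQ(\mu_Q)$. The inclusion $\supseteq$ is clear. For $\subseteq$, the field $K(\mu_Q)\cap\QQ(\mu_{Qq})$ is abelian over $\QQ$ (being a subfield of $\QQ(\mu_{Qq})$) and contained in $K(\mu_Q)$, hence contained in $K(\mu_Q)^\ab$, which equals $\QQ(\mu_Q)$ by Lemma \ref{lem:max_ab_subext_compositum} (applicable since $K^\ab\subseteq\QQ(\mu_Q)$). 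This completes the argument. The steps are all routine; the only point needing care is the bookkeeping around the compatibility condition at the two moduli and the invocation of Lemma \ref{lem:max_ab_subext_compositum} to identify $K(\mu_Q)^\ab$.
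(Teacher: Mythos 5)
Your argument is correct: the case analysis is complete, the observation that $\Phi_{K^\ab}\mid Q$ forces $K\cap\QQ(\mu_Q)=K^\ab=K\cap\QQ(\mu_{Qq})$ and that $\sigma_b$ at the two moduli has the same restriction to $K^\ab$ is exactly the mechanism the paper uses to match the compatibility conditions, and your degree identity does the rest. The only real divergence is in how the degree ratio is obtained. You compute $[K(\mu_{Qq}):K(\mu_Q)]=[\QQ(\mu_{Qq}):\QQ(\mu_Q)]$ via the compositum formula over $K(\mu_Q)$ together with Lemma \ref{lem:max_ab_subext_compositum}, which identifies $K(\mu_Q)\cap\QQ(\mu_{Qq})\subseteq K(\mu_Q)^\ab=\QQ(\mu_Q)$. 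The paper instead compares each field to its cyclotomic subfield: since $K/\QQ$ is Galois, $[K(\mu_Q):\QQ(\mu_Q)]=[K:K\cap\QQ(\mu_Q)]=[K:K^\ab]=[K(\mu_{Qq}):\QQ(\mu_{Qq})]$, so $[K(\mu_{Qq}):\QQ]/[K(\mu_Q):\QQ]=\phi(Qq)/\phi(Q)$ falls out without invoking Lemma \ref{lem:max_ab_subext_compositum} (that lemma is reserved there for Lemma \ref{lem:almost_multiplicativity_cheb}, where a genuinely different intersection $K(\mu_{q_1})\cap\QQ(\mu_{q_2})$ must be controlled). Your route is slightly heavier but equally valid, and it has the small virtue of spelling out explicitly that all elements of $C$ share a restriction to $K^\ab$, a point the paper leaves implicit by referring back to the proof of Lemma \ref{lem:almost_multiplicativity_cheb}.
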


\begin{proof}
  We can assume that $\gcd(b,q)= 1$, as otherwise both sides of the
  equation are zero. From $\Phi_{K^\ab}\mid Q$ we get
  $K\cap\QQ(\mu_Q)=K^\ab=K\cap\QQ(\mu_{Qq})$. This shows that
  \begin{equation*}
  [K(\mu_Q):\QQ(\mu_Q)]=[K:K^\ab]=[K(\mu_{Qq}):\QQ(\mu_{Qq})].
\end{equation*}
  Moreover, as in the proof of Lemma \ref{lem:almost_multiplicativity_cheb}, it implies that
  \begin{equation*}
    \eta_{K,C}(b,Qq)=\eta_{K,C}(b,Q)\frac{[K(\mu_Q):\QQ]}{[K(\mu_{Qq}):\QQ]} =
    \eta_{K,C}(b,Q)\frac{\phi(Q)}{\phi(Qq)}.
  \end{equation*}
\end{proof}

\subsection{Artin primes}\label{sec:delta} 
Here we prove analogues of Lemma \ref{lem:almost_multiplicativity_cheb} and Lemma \ref{lem:euler_factor_bound_cheb} for the densities $\delta(a,b,q)$ and $\sigma_{\va,\Psi}(q)$ appearing in Theorem \ref{thm:main_artin}. We will deduce these analogues from closed formulae for $\delta(a,b,q)$, which we recall now. Write
\begin{align*}
h_a&:=\max\big\{m \in \NN:a \text{ is an } m\text{th power}\big\},\\
\cA_a&:=
\prod_{p\mid h_a} \left(1-\frac{1}{p-1}\right)
       \prod_{p\nmid h_a} \left(1-\frac{1}{p(p-1)}\right).
\end{align*}
The quantity $\cA_a$ results from a naive heuristic for the expected density of primes with prescribed primitive root $a$. It is not always equal to $\delta(a,0,1)$ due to dependencies between splitting conditions. More precisely, for a positive integer $m$, we let  
\begin{equation*}
f_a^\ddagger(m):=
\prod_{\substack{p\mid m,p \mid h_a}}\frac{1}{p-2}
\prod_{\substack{p\mid m,p\nmid h_a}}\frac{1}{p^2-p-1}.
\end{equation*}
Then Hooley \cite{MR207630} has shown that
\begin{equation}\label{eq:hooley_density}
\delta(a,0,1)=
\cA_a
\cdot
\big(
1+
\mu(2|\Delta_a|)
f_a^\ddagger(|\Delta_a|)
\big)
.\end{equation}
For the general formula due to  Moree \cite{MR2490093}, we require some more notation. Let
\begin{align}\label{eq:beta_q_def}
\beta_a(q)
&:=
\begin{cases}  
(-1)^{\frac{\frac{\Delta_a}{\gcd(q,\Delta_a)}-1}{2}}\gcd(q,\Delta_a),
&\mbox{if } \frac{\Delta_a}{\gcd(q,\Delta_a)} \text{ is odd, }\\ 
1 & \mbox{otherwise, }
\end{cases}\\
  \nonumber
  f_a^\dagger(q)&:=
\prod_{\substack{p\mid h_a,p \mid q }} \left(1-\frac{1}{p-1}\right)^{-1}
\prod_{\substack{p\nmid h_a,p \mid q }} \left(1-\frac{1}{p(p-1)}\right)^{-1},
\end{align}
and 
\begin{equation} \label{eq:cA_a(b,q)}
\cA_a(b,q):=
\cA_a \cdot
\begin{cases}  
\frac{f_a^\dagger(q)}{\phi(q)} \prod_{p|b-1, p| q}
\Big(1-\frac{1}{p}\Big),&\mbox{if } \gcd(b-1,q,h_a)=\gcd(b,q)=1,\\  
0,&\mbox{otherwise. } \end{cases}
\end{equation}
Then
 
\begin{equation}\label{eq:delta_closed_formula}
\delta(a,b,q) 
=
\cA_a(b,q)
\Bigg(
1+\mu\left(\frac{2|\Delta_a|}{\gcd(q,\Delta_a)}\right)\left(\frac{\beta_a(q)}{b}\right)f_a^\ddagger\left(\frac{|\Delta_a|}{\gcd(q,\Delta_a)}\right) 
\Bigg)
.
\end{equation}
One can deduce from these formulas that
\begin{equation}\label{eq:delta_phi_bound}
  \delta(a,b,q)\leq \frac{1}{\phi(q)},
\end{equation}
but of course (under $\HRH(a)$) this is also immediate from \eqref{eq:artin_prime_asymptotic} and the prime
number theorem in arithmetic progressions. Now we can prove our analogue of Lemma \ref{lem:almost_multiplicativity_cheb}.

\begin{lemma}[Almost-multiplicativity]\label{lem:almost_multiplicativity}
  Let $q_1,q_2\in\NN$ with $\gcd(q_1,q_2)=1$ and $b\in\ZZ$. 
  \begin{enumerate}
  \item Let $a\in\ZZ$ be not equal to $-1$ or a perfect square, and suppose that $|\Delta_a|\mid q_1$. Then
    \begin{equation*}
      \delta(a,0,1)\delta(a,b,q_1q_2)=\delta(a,b,q_1)\delta(a,b,q_2).
    \end{equation*}
  \item Let $\va,\Psi$ be as in Theorem \ref{thm:main_artin} and suppose that $\cD_{\va}\mid q_1$. Then
  \begin{equation*}
      \sigma_{\va,\Psi}(q_1q_2)=\sigma_{\va,\Psi}(q_1)\sigma_{\va,\Psi}(q_2).
    \end{equation*}
  \end{enumerate}
\end{lemma}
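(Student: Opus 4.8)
The plan is to derive both parts directly from Moree's closed formula \eqref{eq:delta_closed_formula}, together with Hooley's formula \eqref{eq:hooley_density} for the value at $q=1$, $b=0$, mirroring the structure of the proof of Lemma \ref{lem:almost_multiplicativity_cheb}. Write $\delta(a,b,q)=\cA_a(b,q)\,R_a(b,q)$, where $R_a(b,q)$ denotes the correction bracket in \eqref{eq:delta_closed_formula}, and recall $\cA_a(0,1)=\cA_a$ and $R_a(0,1)=1+\mu(2|\Delta_a|)f_a^\ddagger(|\Delta_a|)$ from \eqref{eq:hooley_density}. For part \emph{(1)}, first dispose of the degenerate cases: if $\gcd(b,q_1q_2)\neq 1$ or $\gcd(b-1,q_1q_2,h_a)\neq 1$, then $\cA_a(b,q_1q_2)=0$; since $\gcd(q_1,q_2)=1$, the same failure occurs for $q_1$ or for $q_2$, so both sides of the asserted identity vanish. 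Hence we may assume all these coprimality conditions hold, and in particular that $\cA_a(b,q_1)$, $\cA_a(b,q_2)$, $\cA_a(b,q_1q_2)$ are all given by the first branch of \eqref{eq:cA_a(b,q)}.

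Next, handle the $\cA_a$-factors: using $\phi(q_1q_2)=\phi(q_1)\phi(q_2)$, the multiplicativity $f_a^\dagger(q_1q_2)=f_a^\dagger(q_1)f_a^\dagger(q_2)$ (both are products over the disjoint sets of primes dividing $q_1$ and $q_2$), and the analogous splitting of $\prod_{p\mid b-1,\,p\mid q}(1-1/p)$, one gets $\cA_a(0,1)\,\cA_a(b,q_1q_2)=\cA_a(b,q_1)\,\cA_a(b,q_2)$; note that this step does not use the hypothesis $|\Delta_a|\mid q_1$. The hypothesis enters in the correction factor: from $|\Delta_a|\mid q_1$ and $\gcd(q_1,q_2)=1$ we obtain $\gcd(q_1,\Delta_a)=\gcd(q_1q_2,\Delta_a)=|\Delta_a|$ and $\gcd(q_2,\Delta_a)=1$. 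Consequently $f_a^\ddagger(|\Delta_a|/\gcd(q_1,\Delta_a))=f_a^\ddagger(|\Delta_a|/\gcd(q_1q_2,\Delta_a))=f_a^\ddagger(1)=1$ and $\mu(2|\Delta_a|/\gcd(q_1,\Delta_a))=\mu(2)=-1$; moreover, since $\Delta_a/|\Delta_a|=\pm1$ is odd, \eqref{eq:beta_q_def} gives $\beta_a(q_1)=\beta_a(q_1q_2)=(-1)^{(\pm1-1)/2}|\Delta_a|=\Delta_a$. On the other hand, because $\Delta_a$ is a fundamental discriminant it satisfies $\Delta_a\equiv 1\bmod 4$ whenever it is odd, so with $\gcd(q_2,\Delta_a)=1$ the formula \eqref{eq:beta_q_def} yields $\beta_a(q_2)=1$, hence $\left(\frac{\beta_a(q_2)}{b}\right)=1$. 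Therefore $R_a(b,q_1)=R_a(b,q_1q_2)=1-\left(\frac{\Delta_a}{b}\right)$ and $R_a(b,q_2)=1+\mu(2|\Delta_a|)f_a^\ddagger(|\Delta_a|)=R_a(0,1)$. Combining with the $\cA_a$-identity,
\[
\delta(a,0,1)\delta(a,b,q_1q_2)=\cA_a(0,1)\cA_a(b,q_1q_2)\,R_a(0,1)\left(1-\left(\tfrac{\Delta_a}{b}\right)\right)=\delta(a,b,q_1)\delta(a,b,q_2),
\]
which is \emph{(1)}.

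For part \emph{(2)}, I would argue exactly as in the proof of Lemma \ref{lem:almost_multiplicativity_cheb}\emph{(2)}. Since $\cD_{\va}\mid q_1$ implies $|\Delta_{a_i}|\mid q_1$ for every $i$, part \emph{(1)} applies to each factor $\delta(a_i,\cdot,\cdot)$. Using the Chinese remainder theorem to identify $(\ZZ/q_1q_2\ZZ)^s$ with $(\ZZ/q_1\ZZ)^s\times(\ZZ/q_2\ZZ)^s$, and the fact that $\delta(a_i,\psi_i(n),q_j)$ depends only on $\psi_i(n)\bmod q_j$, the expression \eqref{eq:def_sigma} for $\sigma_{\va,\Psi}(q_1q_2)$ becomes
\[
(q_1q_2)^{t-s}\sum_{n'\in(\ZZ/q_1\ZZ)^s}\sum_{n''\in(\ZZ/q_2\ZZ)^s}\prod_{i\in[t]}\frac{\delta(a_i,\psi_i(n'),q_1)\,\delta(a_i,\psi_i(n''),q_2)}{\delta(a_i,0,1)^2},
\]
which factorises as $\sigma_{\va,\Psi}(q_1)\sigma_{\va,\Psi}(q_2)$. (If $\delta(a_i,0,1)=0$ for some $i$, the identity is read formally with the convention that the corresponding product vanishes.)

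The main obstacle is the correction-factor bookkeeping in part \emph{(1)}: one must track the Kronecker symbols and the parity of the fundamental discriminant $\Delta_a$ carefully, and in particular recognise that $|\Delta_a|\mid q_1$ serves exactly to collapse $\mu$, $f_a^\ddagger$ and $\beta_a$ at $q_1$ (and at $q_1q_2$) to their trivial values, while leaving the $q_2$-correction equal to the global correction at $q=1$; everything else is formal multiplicativity of $\phi$, $f_a^\dagger$ and the product over common prime divisors of $b-1$.
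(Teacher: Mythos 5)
Your proposal is correct and follows essentially the same route as the paper: evaluate Moree's closed formula \eqref{eq:delta_closed_formula}, use $|\Delta_a|\mid q_1$ and $\gcd(q_1,q_2)=1$ to collapse the $\mu$-, $\beta_a$- and $f_a^\ddagger$-data at $q_1$ and $q_1q_2$ (and to identify the $q_2$-correction with the one at $q=1$), reduce to the multiplicativity of $\cA_a(b,\cdot)$ after disposing of the degenerate coprimality cases, and deduce \emph{(2)} from \emph{(1)} via the Chinese remainder theorem. Your extra verification that $\beta_a(q_1)=\beta_a(q_1q_2)=\Delta_a$ and $\beta_a(q_2)=1$ (using $\Delta_a\equiv 1\bmod 4$ for odd fundamental discriminants) is a correct filling-in of details the paper states without comment.
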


\begin{proof}
Let us start with \emph{(1)}. In \eqref{eq:delta_closed_formula}, 
we see from our hypotheses on $q_1,q_2$ that
\begin{align*}
\mu\left(\frac{2|\Delta_a|}{\gcd(q_1,\Delta_a)}\right)&=\mu\left(\frac{2|\Delta_a|}{\gcd(q_1q_2,\Delta_a)}\right)=\mu(2)=-1,\\
\mu\left(\frac{2|\Delta_a|}{\gcd(1,\Delta_a)}\right)&=\mu\left(\frac{2|\Delta_a|}{\gcd(q_2,\Delta_a)}\right)=\mu(2|\Delta_a|).
\end{align*}
Moreover, we have $\beta_a(q_1)=\beta_a(q_1q_2)=\Delta_a$ and $\beta_a(1)=\beta_a(q_2)=1$, as well as
\begin{align*}
f_a^\ddagger\left(\frac{|\Delta_a|}{\gcd(q_1,\Delta_a)}\right)&=f_a^\ddagger\left(\frac{|\Delta_a|}{\gcd(q_1q_2,\Delta_a)}\right)=f_a^\ddagger(1),\\
f_a^\ddagger\left(\frac{|\Delta_a|}{\gcd(1,\Delta_a)}\right)&=f_a^\ddagger\left(\frac{|\Delta_a|}{\gcd(q_2,\Delta_a)}\right)=f_a^\ddagger(|\Delta_a|).  
\end{align*}
Therefore, it suffices to study the values of $\cA_a(b,q)$ defined in (\ref{eq:cA_a(b,q)}). We can immediately conclude that $\cA_a(0,1)=\cA_a$. Now clearly $\gcd(b-1,q_1q_2,h_a)\neq 1$ if and only if $\gcd(b-1,q_1,h_a)\neq 1$ or $\gcd(b-1,q_2,h_a)\neq 1$, and similarly for $\gcd(b,q_1q_2)$. Therefore, $\cA_a(b,q_1q_2)=0$ if and only if at least one of $\cA_a(b,q_1)$ and $\cA_a(b,q_2)$ is zero, in which case both sides of our equality are zero. We are left with the case where $\gcd(b-1,q_1q_2,h_a)=\gcd(b,q_1q_2)=1$, and then we have
\begin{equation*}
  \cA_a(0,1)\cA_a(b,q_1q_2)=\cA_a^2\frac{f_a^\dagger(q_1q_2)}{\phi(q_1q_2)}\prod_{p\mid b-1, p\mid q_1q_2}\left(1-\frac{1}{p}\right) = \cA_a(b,q_1)\cA_a(b,q_2).
\end{equation*}

Part \emph{(2)} follows from \emph{(1)} and the Chinese remainder theorem, as in the proof of Lemma \ref{lem:almost_multiplicativity_cheb}.
\end{proof}

Recall that $\vecnorm{\Psi}_N\leq L$ in Theorem \ref{thm:main_artin}, so in particular the linear coefficients of all $\psi_i$ are bounded by $L$. The following lemma is the analog of Lemma \ref{lem:euler_factor_bound_cheb} for the densities $\sigma_{\va,\Psi}(p)$, though it yields less precise information.

\begin{lemma}\label{lem:euler_factor_bounds}
  Let $\va,\Psi$ be as in Theorem \ref{thm:main_artin}. 
  For all primes $p$, 
  we have
  \begin{equation*}
    \sigma_{\va,\Psi}(p)=1+O_{s,t,L,\va}(1/p^2).
  \end{equation*}
\end{lemma}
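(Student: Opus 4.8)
The plan is to estimate $\sigma_{\va,\Psi}(p)=p^{t-s}\sum_{n\in(\ZZ/p\ZZ)^s}\prod_{i\in[t]}\frac{\delta(a_i,\psi_i(n),p)}{\delta(a_i,0,1)}$ by comparing each Euler factor $\frac{p\delta(a_i,\psi_i(n),p)}{\delta(a_i,0,1)}$ with the corresponding unrestricted factor $\one_{\gcd(\psi_i(n),p)=1}\frac{p}{p-1}$ from $\beta_{\Psi,p}$, and then invoking \cite[Lemma 1.3]{MR2680398} which already handles $\beta_{\Psi,p}=1+O_{s,t,L}(1/p^2)$ (exactly as in the proof of Lemma \ref{lem:euler_factor_bound_cheb}). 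So the first step is to understand $\delta(a,b,p)/\delta(a,0,1)$ for a single prime $p$ from the closed formula \eqref{eq:delta_closed_formula}. One should split according to whether $p\mid\Delta_a$ or $p\nmid \Delta_a$. For $p\nmid\Delta_a$ (which, since $p$ is an arbitrary prime and $|\Delta_a|\mid\cD_{\va}$ is \emph{not} assumed here, is the ``generic'' case but not the only one), $\gcd(p,\Delta_a)=1$, so $\beta_a(p)=1$ if $\Delta_a$ is odd and the Jacobi-symbol factor is harmless; the M\"obius arguments $\frac{2|\Delta_a|}{\gcd(p,\Delta_a)}$ and $\frac{|\Delta_a|}{\gcd(p,\Delta_a)}$ do not change, and $\cA_a(b,p)=\cA_a\cdot\frac{f_a^\dagger(p)}{p-1}\prod_{p\mid b-1}(1-1/p)$ with the product being either $1$ (if $p\nmid b-1$) or $1-1/p$. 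One computes $\delta(a,b,p)/\delta(a,0,1)$ explicitly: for $\gcd(b,p)=1$ it equals $\frac{f_a^\dagger(p)}{p-1}\cdot(1\text{ or }1-1/p)$ times the ratio of the bracketed $(1+\cdots)$ terms, and $f_a^\dagger(p)=1+O(1/p)$ while the bracket ratio is $1+O(1/p)$, so $\frac{p\delta(a,b,p)}{\delta(a,0,1)}=\one_{\gcd(b,p)=1}\cdot\frac{p}{p-1}\cdot(1+O(1/p))$ — close to, but not exactly, the unrestricted factor.

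The key point then is a cancellation-on-average argument. Writing $\frac{p\delta(a_i,\psi_i(n),p)}{\delta(a_i,0,1)}=\one_{\gcd(\psi_i(n),p)=1}(1+e_i(n))$ with $e_i(n)=O_{a_i}(1/p)$, I would expand the product $\prod_{i\in[t]}(1+e_i(n))=1+\sum_i e_i(n)+O_{t}(1/p^2)$. The constant term gives $p^{t-s}\sum_n\prod_i\one_{\gcd(\psi_i(n),p)=1}=\beta_{\Psi,p}=1+O_{s,t,L}(1/p^2)$. For the linear terms $p^{t-s}\sum_n \one_{\ldots}e_i(n)$, the crucial observation is that $e_i(n)$ depends on $n$ only through whether $p\mid \psi_i(n)-1$, i.e. $e_i(n)$ takes only two values depending on the residue class of $\psi_i(n)\bmod p$. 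Splitting the sum over $n\in(\ZZ/p\ZZ)^s$ according to the value $v=\psi_i(n)\bmod p$, and using finite complexity (no two linear parts proportional) together with standard point-counting on affine subspaces of $\FF_p^s$ — as in \cite[Lemma 1.3]{MR2680398} — one sees that for each fixed $v$ the count of $n$ with $\psi_i(n)\equiv v$ and all $\psi_j(n)$ coprime to $p$ differs from $p^{s-1}\prod_{j\neq i}(1-1/p)$ by $O_{s,t,L}(p^{s-2})$. Since $e_i$ is $O(1/p)$, each such linear term contributes $O_{s,t,L,\va}(1/p^2)$; there are $t$ of them. The quadratic-and-higher remainder is $O_{s,t,L,\va}(1/p^2)$ trivially since there the $\one_{\ldots}$-weighted count is $\leq p^s$ and the remainder is $O(1/p^2)$. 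Summing gives $\sigma_{\va,\Psi}(p)=1+O_{s,t,L,\va}(1/p^2)$.

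The main obstacle is book-keeping the dependence on $p\mid\Delta_a$: when $p\mid\Delta_a$ the formula \eqref{eq:delta_closed_formula} degenerates differently — $\gcd(p,\Delta_a)$ can be $p$ (or, if $p^2\mid\Delta_a$, still $p$), the M\"obius argument $\frac{2|\Delta_a|}{\gcd(p,\Delta_a)}$ changes, the factor $\beta_a(p)$ becomes $\pm p$ or $1$, and $f_a^\dagger, f_a^\ddagger$ acquire nontrivial factors at $p$. However, there are only finitely many such $p$ (the divisors of $\Delta_a$, hence of $\cD_{\va}$), and for those the bound $\sigma_{\va,\Psi}(p)=1+O_{s,t,L,\va}(1/p^2)$ is vacuous: it suffices to note $\sigma_{\va,\Psi}(p)=O_{s,t,L,\va}(1)$, which follows at once from $p\delta(a,b,p)\leq p/\phi(p)=O(1)$ by \eqref{eq:delta_phi_bound} together with $\delta(a,0,1)>0$ (which holds under $\HRH(a)$, or directly from \eqref{eq:hooley_density} since $\cA_a>0$ and the correction factor is positive — $a\neq -1$ and $a$ not a square guarantees $\mu(2|\Delta_a|)f_a^\ddagger(|\Delta_a|)>-1$). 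So for these finitely many $p$ one absorbs $\sigma_{\va,\Psi}(p)-1=O_{s,t,L,\va}(1)=O_{s,t,L,\va}(p^2/p^2)=O_{s,t,L,\va}(1/p^2)$ trivially. Hence I would structure the proof as: (i) record $\delta(a,0,1)>0$ and $p\delta(a,b,p)=O_a(1)$, so the claim is trivial for $p\mid\cD_{\va}$; (ii) for $p\nmid\cD_{\va}$, derive from \eqref{eq:delta_closed_formula} the expansion $\frac{p\delta(a_i,\psi_i(n),p)}{\delta(a_i,0,1)}=\one_{\gcd(\psi_i(n),p)=1}(1+O_{a_i}(1/p))$ with the error depending on $\psi_i(n)$ only via $\psi_i(n)\bmod p$; (iii) expand the product, reduce to $\beta_{\Psi,p}$ plus finitely many corrections, and apply the point-counting of \cite[Lemma 1.3]{MR2680398} to bound everything by $O_{s,t,L,\va}(1/p^2)$.
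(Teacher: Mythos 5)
Your overall strategy is the paper's: handle a bounded set of exceptional primes by the trivial bound $\sigma_{\va,\Psi}(p)=O_\va(1)$ (using $\delta(a,b,p)\le 1/\phi(p)$ and $\delta(a,0,1)>0$), and for the remaining $p$ evaluate $p\delta(a_i,\psi_i(n),p)/\delta(a_i,0,1)$ from Moree's formula and reduce to counting points of $\FF_p^s$ with $\psi_i(n)\equiv 0,1\bmod p$, using that no two linear parts are proportional; the paper organizes this count by the number of indices with $\psi_i(n)\equiv 1$ (Bonferroni), you organize it as a comparison with $\beta_{\Psi,p}$, which is the same computation. However, your exceptional set is too small: excluding only $p\mid\cD_\va$ does not make step (ii) true. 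If $p\mid h_{a_i}$ but $p\nmid\Delta_{a_i}$, then by \eqref{eq:cA_a(b,q)} we have $\cA_{a_i}(b,p)=0$, hence $\delta(a_i,b,p)=0$, whenever $b\equiv 1\bmod p$, so the factor is \emph{not} $\one_{\gcd(b,p)=1}(1+O(1/p))$ (e.g. $a_i=8$, $p=3$: primes with primitive root $8$ are never $\equiv 1\bmod 3$, yet $3\nmid\cD_\va=8$). The paper avoids this by also assuming $p\nmid h_{a_1}\cdots h_{a_t}$, and likewise assumes $p$ large enough that no form is constant and no two linear parts are proportional over $\FF_p$; you should enlarge your trivially handled set accordingly (still only $O_{\va,s,t,L}(1)$ primes).

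Second, the bookkeeping as written does not support your intermediate claims. With $e_i$ measured against $1$, i.e. factor $=\one_{\gcd(\psi_i(n),p)=1}(1+e_i(n))$, the generic value of $e_i$ (when $\psi_i(n)\not\equiv 0,1$) is $\asymp 1/p$, so the ``constant term'' is $p^{-s}\#\{n:\ \gcd(\prod_i\psi_i(n),p)=1\}=1-t/p+O(1/p^2)$, which is neither $\beta_{\Psi,p}$ (your display also drops the factors $1/\phi(p)$ from \eqref{eq:def_beta_p}) nor $1+O(1/p^2)$, and each linear term is $\asymp 1/p$; the final bound then only follows because these $1/p$ contributions cancel, which is precisely what must be exhibited. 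To make your claims correct you must normalize against the unrestricted factor, writing the ratio as $\frac{p\one}{p-1}(1+e_i')$, and use that for $p\nmid h_{a_i}\Delta_{a_i}$ the bracketed terms in \eqref{eq:delta_closed_formula} cancel \emph{exactly} (since $\beta_{a_i}(p)=1$) and $f_{a_i}^\dagger(p)=(1-\tfrac{1}{p(p-1)})^{-1}=1+O(1/p^2)$; then $e_i'=O(1/p^2)$ except on the event $\psi_i(n)\equiv 1\bmod p$, where $e_i'=-1/p+O(1/p^2)$, and only then does your thin-set observation give $O(1/p^2)$ per linear term, with the constant term equal to $\beta_{\Psi,p}=1+O_{s,t,L}(1/p^2)$ by \cite[Lemma 1.3]{MR2680398}. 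Your stated estimates ``$f_a^\dagger(p)=1+O(1/p)$ and the bracket ratio is $1+O(1/p)$'' are too weak for the claimed $O(1/p^2)$. With these two repairs the argument is correct and essentially coincides with the paper's proof.
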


\begin{proof}
  Even though the execution is slightly more complicated, the basic idea of this proof is the same as in \cite[Lemma 1.3]{MR2680398}.
  We may assume $p$ to be sufficiently large so that
  $p\nmid h_{a_1}\cdots h_{a_t}\cD_\va$
  and such that no form is constant and no two of the linear parts of the forms $\psi_i$ are linearly dependent over $\FF_p$.
  Recall from \eqref{eq:def_sigma} that
\[
\sigma_{\va,\Psi}(p)=\EE_{n\in(\ZZ/p\ZZ)^s}\prod_{i\in[t]}\frac{p\delta(a_i,\psi_i(n),p)}{\delta(a_i,0,1)}.
\]
One checks from  \eqref{eq:delta_closed_formula} and $p\nmid\Delta_{a_i}$ that
\[
\frac{p\delta(a_i,\psi_i(n),p)}{\delta(a_i,0,1)}=\frac{p\cA_{a_i}(\psi_i(n),p)}{\cA_{a_i}}.
\] 
Let us now discuss the value of this expression. As $p\nmid h_{a_i}$, we obtain the following three cases:
\[
\frac{\cA_{a_i}(\psi_i(n),p)}{\cA_{a_i}}=\left\{\begin{array}{ll}
0, & \text{ if }\psi_i(n)\equiv 0\bmod p,\\
\frac{1}{p-1}\left(1-\frac{1}{p(p-1)}\right)^{-1}, & \text{ if } \psi_i(n)\not\equiv 0,1\bmod p,\\
\frac{1}{p-1}\left(1-\frac{1}{p}\right)\left(1-\frac{1}{p(p-1)}\right)^{-1}, &
                                                                               \text{ if } \psi_i(n)\equiv 1\bmod p.
                                                \end{array}\right.
\] 
Therefore, we get
\begin{equation}\label{eq:sumk}
\sigma_{\va,\Psi}(p)=p^{-s}\sum_{l=0}^t\left(\frac{p}{p-1}\right)^t\left(1-\frac{1}{p(p-1)}\right)^{-t}\left(1-\frac{1}{p}\right)^l|S_l|,
\end{equation}
where 
\[
S_l:=\{n\in(\ZZ/p\ZZ)^s\where 
 \psi_i(n)\not\equiv 0\bmod p\ \text{ for all  $i$ and } \ \psi_i(n)\equiv 1\bmod p \text{ for exactly }l\text{ indices } i  \}.  
\]
Let us now discuss each summand of the above sum. We will start with $l=0$. Let
\[
A_i:=\{n\in(\ZZ/p\ZZ)^s\where \psi_i(n)\equiv 0,1\bmod p\}.
\]
As no two of the linear parts of the forms $\psi_i$ are linearly dependent over $\FF_p$, we get
\begin{equation}\label{eq:sigma_euler_sets}
  |A_i|=2p^{s-1}\quad\text{ and }\quad|A_i\cap A_j|=4p^{s-2} \text{ for all $i\neq j$}.
\end{equation}
Using the Bonferroni inequalities (i.e. truncated inclusion-exclusion), we obtain
\[
|S_0|\geq |(\ZZ/p\ZZ)^s|-\sum_{i=1}^t|A_i|=p^s-2tp^{s-1}
\] 
and
\[
|S_0|\leq |(\ZZ/p\ZZ)^s|-\sum_{i=1}^t|A_i|+\sum_{1\leq i<j\leq t}|A_i\cap A_j|=p^s-2tp^{s-1}+4\binom{t}{2}p^{s-2},
\]
which gives $|S_0|=p^s-2tp^{s-1}+O_t(p^{s-2})$. With
\[
\left(\frac{p}{p-1}\right)^t\left(1-\frac{1}{p(p-1)}\right)^{-t}=\left(1+\frac{1}{p}+O(p^{-2})\right)^t\left(1+O(p^{-2})\right)^{-t}=1+\frac{t}{p}+O_t(p^{-2}),
\]
we can therefore compute the contribution from $l=0$ to $\sigma_{\va,\Psi}(p)$ in \eqref{eq:sumk} as
\begin{align*}
                                                                                  &p^{-s}\left(1+\frac{t}{p}+O_t(p^{-2})\right)(p^s-2tp^{s-1}+O_t(p^{s-2}))=1-\frac{t}{p}+O_t(p^{-2}).
\end{align*}

We proceed with the contribution of $l=1$. For that, we have
\[
S_1=\bigcup_{i=1}^t\tilde{A}_i\setminus\bigcup_{1\leq i<j\leq t}(A_i\cap A_j),
\]
where
\[
\tilde{A}_i:=\{n\in(\ZZ/p\ZZ)^s\where\psi_i(n)\equiv 1\bmod p)\}
\]
satisfies $|\tilde{A}_i|=p^{s-1}$ and $|\tilde{A_i}\cap\tilde{A_j}|=p^{s-2}$ for all $i\neq j$. From \eqref{eq:sigma_euler_sets}, we know that $|A_i\cap A_j|=O(p^{s-2})$, and therefore inclusion-exclusion yields
\[
|S_1|=\sum_{i=1}^t\tilde{A}_i+O_t(p^{s-2})=tp^{s-1}+O_t(p^{s-2}).
\]
Similarly as before, we see for all $l\in [t]$ that
\begin{equation}\label{eq:sigma_euler_factors}
\left(\frac{p}{p-1}\right)^t\left(1-\frac{1}{p(p-1)}\right)^{-t}\left(1-\frac{1}{p}\right)^l=\left(1+O_t(p^{-1})\right)\left(1+O_l(p^{-1})\right)=1+O_t(p^{-1}).
\end{equation}
Therefore, the contribution of $l=1$ to $\sigma_{\va,\Psi}(p)$ in \eqref{eq:sumk} is
\begin{align*}
  &p^{-s}\left(1+O_t(p^{-1})\right)(tp^{s-1}+O_t(p^{s-2}))=\frac{t}{p}+O_t(p^{-2}).
\end{align*}

Thus, the total contribution of $l=0,1$ in \eqref{eq:sumk} is $1+O_t(p^{-2})$. Moreover, it is clear that $|S_l|=O_t(p^{-2})$ for all $l\geq 2$, which together with \eqref{eq:sigma_euler_factors} shows that the remaining summands have negligible contribution. 
\end{proof}

\section{Equidistributed  nilsequences}\label{sec:equidistributed_nilsequences}
Here we collect a few facts about equidistribution of nilsequences and their polynomial subsequences, which are
probably well known. However, we were not able to find references in exactly the form we need, so we prove the required results here. 
We start with simple observations about composites of binomial polynomials with integer polynomials.

\begin{lemma}\label{lem:poly_subsequence_coefficients}
  Let $j,D\geq 0$. In the polynomial ring $\QQ[a_0,\ldots,a_D,x]$ in $D+2$ variables, write 
  \begin{equation}\label{eq:binom_poly_comp}
    \binom{a_0+a_1x+\cdots+a_Dx^D}{j}=\sum_{k=0}^{jD}s_{j,k}\binom{x}{k},
  \end{equation}
  with polynomials $s_{j,k}\in\QQ[a_0,\ldots,a_D]$.
  \begin{enumerate}
  \item We have $s_{j,jD}= \frac{(jD)!}{j!}a_D^j$.
  \item We have $s_{j,k}(\ZZ^D)\subseteq \ZZ$ for all $k$.  
  \item If we assign each variable $a_i$ the degree $\deg a_i:=1$, then $\deg s_{j,k}\leq j$ for all $k$.
  \item If we assign each variable $a_i$
    the degree $\deg a_i := i$, then $s_{j,k}$ contains no terms of
    total degree smaller than $k$.
\end{enumerate}
\end{lemma}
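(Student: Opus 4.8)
The plan is to treat \eqref{eq:binom_poly_comp} as an identity in $\QQ[a_0,\ldots,a_D,x]$ and exploit that, writing $P(x):=a_0+a_1x+\cdots+a_Dx^D$, the element $\binom{P(x)}{j}=\tfrac{1}{j!}P(x)(P(x)-1)\cdots(P(x)-j+1)$ is a polynomial in $x$ of degree exactly $jD$ over the coefficient ring $\QQ[a_0,\ldots,a_D]$. Since $\binom{x}{0},\ldots,\binom{x}{jD}$ form a $\QQ$-basis of the polynomials in $x$ of degree $\leq jD$, the coefficients $s_{j,k}$ in \eqref{eq:binom_poly_comp} are uniquely determined, and ring homomorphisms acting on the $a_i$ transport this identity in a controlled way. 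All four parts will follow by applying a well-chosen such substitution and then reading off a coefficient.

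For (1) I would compare coefficients of $x^{jD}$: the leading coefficient of $\binom{P(x)}{j}$ in $x$ is $\tfrac{1}{j!}a_D^j$, while on the right-hand side only $\binom{x}{jD}$ contributes to $x^{jD}$, with coefficient $s_{j,jD}/(jD)!$, giving $s_{j,jD}=\tfrac{(jD)!}{j!}a_D^j$. For (2) I would use the standard fact that a polynomial in $\QQ[x]$ taking integer values at every nonnegative integer has integer coefficients in the binomial basis (the $k$-th such coefficient being the $k$-th forward difference at $0$, which is an integer combination of integer values); applied to $h(x):=\binom{P(x)}{j}$ for a fixed integer tuple $(a_0,\ldots,a_D)$, which takes integer values since each $P(n)\in\ZZ$, this gives $s_{j,k}(a_0,\ldots,a_D)\in\ZZ$.

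Parts (3) and (4) I would both obtain from the device of substituting $a_i\mapsto t^{w_i}a_i$ in \eqref{eq:binom_poly_comp} for a fresh indeterminate $t$ and suitable weights $w_i$, then tracking the power of $t$. For (3) take $w_i=1$: then $P(x)$ becomes $tP(x)$, and $\binom{tP(x)}{j}=\tfrac{1}{j!}\prod_{l=0}^{j-1}(tP(x)-l)$ has $t$-degree $\leq j$ as a polynomial in $x$ over $\QQ[a_0,\ldots,a_D][t]$; since passing from $\{x^m\}$ to $\{\binom{x}{k}\}$ is $\QQ$-linear, each binomial-basis coefficient still has $t$-degree $\leq j$, and by uniqueness this coefficient equals $s_{j,k}(ta_0,\ldots,ta_D)=\sum_\alpha c_\alpha t^{|\alpha|}a^\alpha$, so $|\alpha|\leq j$ whenever $c_\alpha\neq 0$, which is the assertion. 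For (4) take $w_i=i$: then $\sum_i a_ix^i$ becomes $\sum_i a_it^ix^i=P(tx)$. I would first record a small composition lemma: if $R(x)=\sum_i r_ix^i$ with $t^i\mid r_i$, then $[x^m]f(R(x))$ is divisible by $t^m$ for every $f\in\QQ[y]$; by linearity it suffices to check $f(y)=y^n$, where $[x^m]R(x)^n=\sum_{i_1+\cdots+i_n=m}r_{i_1}\cdots r_{i_n}$ is divisible by $t^{i_1+\cdots+i_n}=t^m$. Applying this with $R(x)=P(tx)$ (so $r_i=a_it^i$) and $f(y)=\binom{y}{j}$ shows $[x^m]\binom{P(tx)}{j}$ is divisible by $t^m$; then, since $x^m=\sum_{k\leq m}k!\,S(m,k)\binom{x}{k}$ with integer coefficients, the $\binom{x}{k}$-coefficient of a polynomial all of whose $x^m$-coefficients are divisible by $t^m$ is a combination of those coefficients with $m\geq k$, hence divisible by $t^k$. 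By uniqueness this coefficient is $s_{j,k}(t^0a_0,\ldots,t^Da_D)=\sum_\alpha c_\alpha t^{\sum_i i\alpha_i}a^\alpha$, so divisibility by $t^k$ forces $\sum_i i\alpha_i\geq k$ for every nonzero $c_\alpha$, which is exactly (4).

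I expect (4) to be the main obstacle: the substitutions for (1)--(3) are essentially immediate, but for (4) one must correctly propagate the $t$-power bookkeeping through both the composition step and the monomial-to-binomial change of basis. Once the weighting $a_i\mapsto t^ia_i$ is chosen so that $P(tx)$ appears, the remaining verifications are routine.
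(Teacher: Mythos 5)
Your proof is correct. Parts (1) and (2) coincide with the paper's argument (compare the $x^{jD}$-coefficients; integer-valued polynomials have integer coordinates in the binomial basis). For parts (3) and (4), however, you take a genuinely different route: the paper proves them by downward induction on $k$, comparing the coefficient of $x^k$ on both sides of \eqref{eq:binom_poly_comp} and using that every term $c\,a_0^{i_0}\cdots a_D^{i_D}$ occurring in $[x^k]\binom{P(x)}{j}$ satisfies $i_0+\cdots+i_D\leq j$ and $i_1+2i_2+\cdots+Di_D=k$, so that the claims propagate from $s_{j,jD}$ down to $s_{j,k}$ through the triangular change of basis. You instead apply the substitutions $a_i\mapsto t a_i$ (for (3)) and $a_i\mapsto t^i a_i$ (for (4)) and track $t$-degrees, respectively $t$-divisibility, through the composition $\binom{tP(x)}{j}$, resp.\ $\binom{P(tx)}{j}$, and through the monomial-to-binomial change of basis; since the substituted coefficient is $s_{j,k}(ta_0,\ldots,ta_D)$, resp.\ $s_{j,k}(a_0,ta_1,\ldots,t^Da_D)$, the degree and valuation bounds translate exactly into (3) and (4). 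Both arguments are complete; your weighting device isolates the homogeneity content of (3)--(4) and avoids the induction, at the small cost of having to justify (as you do) that the basis change does not involve $t$ and that $t^k$-divisibility of $\sum_\alpha c_\alpha t^{w(\alpha)}a^\alpha$ forces $w(\alpha)\geq k$ termwise, while the paper's induction gets both statements simultaneously from a single coefficient comparison, which is the same computation it reuses later (e.g. in \eqref{eq:sjk_bound}).
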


\begin{proof}
  Comparing the coefficients of $x^{jD}$ on both sides of \eqref{eq:binom_poly_comp}, we see that $a_D^j/j! = s_{j,jD}/(jD)!$, which shows all four assertions of the lemma in case $k=jD$. Assertion \emph{(2)} is clear, as for $a_0,\ldots,a_D\in\ZZ$ the left-hand side of \eqref{eq:binom_poly_comp} is an integer-valued polynomial, so it has integral coefficients in the binomial basis $\binom{x}{k}$.

  For arbitrary $k$, the coefficient of $x^{k}$ on the left-hand side of \eqref{eq:binom_poly_comp} consists of terms of the form
  \begin{equation*}
    t=c a_0^{i_0}\cdots a_D^{i_D}
  \end{equation*}
  with $c\in\QQ$ and $i_0,\ldots,i_D\geq 0$ satisfying $i_0+\cdots+i_D\leq j$ and $i_1+2i_2+\cdots+D i_D=k$. The coefficient of $x^k$ on the right-hand side has the form
  \begin{equation*}
    \frac{1}{k!}s_{j,k}+c_{k+1}s_{j,k+1}+\cdots+c_{D}s_{j,D},
  \end{equation*}
  where $c_i\in\QQ$ is the coefficient of $x^{k}$ in $\binom{x}{i}$. Hence, if $0\leq k<jD$ and assertions \emph{(3)} and \emph{(4)} are true for $s_{j,k+1},\ldots,s_{j,jD}$, then they follow for $s_{j,k}$ by comparing the coefficients of $x^k$ on both sides described above.  
\end{proof}

The following lemma can be thought of as a polynomial version of \cite[Lemma 7.10]{MR2877065}. However, even in the case $D=1$, our assumptions on the coefficients are slightly different. Recall the definition of the smoothness norms $\vecnorm{p}_{\Cinf[N]}$ of polynomial sequences $p:\ZZ\to\RR/\ZZ$ given in \cite[Definition 2.7]{MR2877065}:
writing $p$ in the binomial basis as
\begin{equation}\label{eq:p_binomial_basis}
  p(n)=\sum_{j=0}^d\alpha_j\binom{n}{j}\quad \text{ with }\quad \alpha_j\in\RR,
\end{equation}
we let $\vecnorm{\alpha_j}_{\RmZ}$ denote the distance of $\alpha_j$ to the nearest integer and 
\begin{equation*}
  \vecnorm{p}_{\Cinf[N]}:=\sup_{1\leq j\leq d}N^j\vecnorm{\alpha_j}_{\RmZ}.
\end{equation*}

\begin{lemma}[Polynomial extrapolation]\label{lem:polynomial_extrapolation}
  Let $d,D,Q,N\in\NN$. Let $P=\sum_{i=0}^{D} a_ix^i\in\ZZ[x]$ with coefficients satisfying $|a_i|\leq QN^{1-i/D}$ and $a_{D}\neq 0$. Set $N':=\lfloor N^{1/D}\rfloor$. Let $p:\ZZ\to\RR/\ZZ$ be a polynomial sequence of degree $d$ and write $\tilde p = p\circ P$. Then
  \begin{equation}\label{eq:poly_extrapolation_1}
    \vecnorm{\tilde p}_{\Cinf[N']}\ll_{d,D} Q^{d}\vecnorm{p}_{\Cinf[N]},
  \end{equation}
and there is $q\in\NN$ with $q\ll_{d,D}Q^{O_{d,D}(1)}$, such that
  \begin{equation}\label{eq:poly_extrapolation_2}
    \vecnorm{qp}_{\Cinf[N]} \ll_{d,D} Q^{O_{d,D}(1)}\vecnorm{\tilde p}_{\Cinf[N']}.
  \end{equation}
\end{lemma}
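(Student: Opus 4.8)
The plan is to deduce both bounds from the explicit base-change of Lemma \ref{lem:poly_subsequence_coefficients}. Writing $p$ in the binomial basis as $p(n)=\sum_{j=0}^d\alpha_j\binom nj$ and substituting the identity $\binom{P(x)}{j}=\sum_{k=0}^{jD}s_{j,k}\binom xk$, where now $s_{j,k}:=s_{j,k}(a_0,\dots,a_D)\in\ZZ$ is an integer by part \emph{(2)} of that lemma (the $a_i$ being integers), one gets $\tilde p(n)=\sum_{k=0}^{dD}\tilde\alpha_k\binom nk$ with $\tilde\alpha_k=\sum_{j=0}^d\alpha_js_{j,k}$ (and $s_{j,k}=0$ whenever $k>jD$, while $s_{0,k}=0$ for $k\ge1$). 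The quantitative heart of the argument is a size estimate for $s_{j,k}$: every monomial $c\,a_0^{i_0}\cdots a_D^{i_D}$ occurring in $s_{j,k}$ has $i_0+\dots+i_D\le j$ by part \emph{(3)} and $i_1+2i_2+\dots+Di_D\ge k$ by part \emph{(4)}, so, using $|a_i|\le QN^{1-i/D}$ and $Q\ge1$, its absolute value is $\ll_{d,D}Q^{\sum i_l}N^{\sum i_l-(1/D)\sum l\,i_l}\le Q^jN^{j-k/D}$; summing the $O_{d,D}(1)$ monomials gives $|s_{j,k}|\ll_{d,D}Q^jN^{j-k/D}$. The same computation with part \emph{(1)} shows that the leading coefficient $s_{j,jD}=\tfrac{(jD)!}{j!}a_D^j$ is a \emph{nonzero} integer (here the hypothesis $a_D\ne0$ enters) of size $\ll_{d,D}Q^j$, since $|a_D|\le QN^{1-D/D}=Q$.

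For \eqref{eq:poly_extrapolation_1}: since $s_{j,k}\in\ZZ$, one has $\vecnorm{\tilde\alpha_k}_{\RmZ}\le\sum_{j=1}^d|s_{j,k}|\,\vecnorm{\alpha_j}_{\RmZ}$ for $k\ge1$; I would then plug in $\vecnorm{\alpha_j}_{\RmZ}\le N^{-j}\vecnorm{p}_{\Cinf[N]}$ together with the bound on $|s_{j,k}|$ and the trivial inequality $(N')^k\le N^{k/D}$ (as $N'=\lfloor N^{1/D}\rfloor$), so that each term of $(N')^k\vecnorm{\tilde\alpha_k}_{\RmZ}$ is $\ll_{d,D}Q^j\vecnorm{p}_{\Cinf[N]}\le Q^d\vecnorm{p}_{\Cinf[N]}$; taking the supremum over $1\le k\le dD$ yields $\vecnorm{\tilde p}_{\Cinf[N']}\ll_{d,D}Q^d\vecnorm{p}_{\Cinf[N]}$.

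For \eqref{eq:poly_extrapolation_2} the idea is to invert the triangular system formed by the equations indexed by multiples of $D$. Since $s_{m,mD}\ne0$, a downward induction on $m=d,d-1,\dots,1$ solves
\begin{equation*}
  \alpha_m=\frac{m!}{(mD)!\,a_D^m}\Bigl(\tilde\alpha_{mD}-\sum_{j=m+1}^d\alpha_js_{j,mD}\Bigr),
\end{equation*}
and unwinding this recursion expresses $\alpha_j=\sum_{m=j}^dr_{j,m}\tilde\alpha_{mD}$, where $r_{j,m}$ is a sum of $O_{d,D}(1)$ ``chain'' terms $\pm\bigl(\prod_{\nu=1}^rs_{l_\nu,l_{\nu-1}D}\bigr)\big/\bigl(\prod_{\nu=0}^r\tfrac{(l_\nu D)!}{l_\nu!}a_D^{l_\nu}\bigr)$ over increasing chains $j=l_0<l_1<\dots<l_r=m$ in $\{1,\dots,d\}$. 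As every $s_{j,k}$ is an integer, the only denominators are the factors $\tfrac{(l_\nu D)!}{l_\nu!}a_D^{l_\nu}$, so $q:=\bigl(\prod_{l=1}^d\tfrac{(lD)!}{l!}\bigr)|a_D|^{d(d+1)/2}\in\NN$ clears all of them, and $q\ll_{d,D}Q^{d(d+1)/2}$ because $|a_D|\le Q$. The size bound on the $s_{j,k}$ then gives $|qr_{j,m}|\ll_{d,D}Q^{d(d+1)/2}N^{m-j}$, the power of $N$ arising from the telescoping $\prod_\nu N^{l_\nu-l_{\nu-1}}=N^{l_r-l_0}=N^{m-j}$ and the power of $Q$ staying bounded since $\sum_{\nu=0}^rl_\nu\le d(d+1)/2$. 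Finally, inserting $\vecnorm{\tilde\alpha_{mD}}_{\RmZ}\le(N')^{-mD}\vecnorm{\tilde p}_{\Cinf[N']}$ and $(N')^{-mD}\ll_{d,D}N^{-m}$ (valid once $N\ge2^D$; when $N=O_D(1)$ all powers of $N$ are $O_{d,D}(1)$ and the estimate is immediate), the net power of $N$ in $N^j|qr_{j,m}|\vecnorm{\tilde\alpha_{mD}}_{\RmZ}$ is $N^{j+(m-j)-m}=N^0$, so $N^j\vecnorm{q\alpha_j}_{\RmZ}\ll_{d,D}Q^{d(d+1)/2}\vecnorm{\tilde p}_{\Cinf[N']}$ uniformly in $1\le j\le d$, which is \eqref{eq:poly_extrapolation_2}.

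I expect the main obstacle to be the bookkeeping in the second part: one must verify that the powers of $a_D$ accumulating in the denominators of the $r_{j,m}$ never exceed $d(d+1)/2$ — so that a single $q$, polynomial in $Q$, suffices — and that the powers of $N$ carried along a chain by the coefficients $s_{l_\nu,l_{\nu-1}D}$ telescope to exactly $N^{m-j}$, which is then cancelled by the factor $N^j$ against $(N')^{-mD}\asymp N^{-m}$; this last comparison forces the harmless case split at $N\asymp2^D$. It is worth stressing that the two hypotheses on $P$ play distinct roles: $a_D\ne0$ is exactly what makes the triangular system invertible, while the case $i=D$ of the coefficient bound, namely $|a_D|\le Q$, is what keeps $q$ of size $Q^{O_{d,D}(1)}$ rather than $(QN)^{O_{d,D}(1)}$.
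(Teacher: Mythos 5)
Your proof is correct and follows essentially the same route as the paper: the same binomial-basis substitution with the integer coefficients $s_{j,k}$ and the bound $|s_{j,k}|\ll_{d,D}Q^jN^{j-k/D}$, and for \eqref{eq:poly_extrapolation_2} the same triangular inversion based on $s_{j,jD}=\frac{(jD)!}{j!}a_D^j\neq 0$ — your explicit $q=\bigl(\prod_{l=1}^d\frac{(lD)!}{l!}\bigr)|a_D|^{d(d+1)/2}$ is, up to sign, exactly the paper's choice $q_1=\prod_{j=1}^d s_{j,jD}$. The only cosmetic difference is that you unroll the paper's downward induction into an explicit chain expansion, and your case split at $N\asymp 2^D$ is unnecessary since $\lfloor N^{1/D}\rfloor\geq N^{1/D}/2$ holds for all $N\geq 1$.
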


\begin{proof}
  Write $p$ in the binomial basis as in \eqref{eq:p_binomial_basis}.
  Then, with the coefficients $s_{j,k}=s_{j,k}(a_0,\ldots,a_D)\in\ZZ$ as in Lemma \ref{lem:poly_subsequence_coefficients}, we have
  \begin{equation*}
    \tilde p(n)=\sum_{j=0}^d\alpha_j\binom{P(n)}{j}=\sum_{k=0}^{dD}\left(\sum_{j=\lceil k/D\rceil}^ds_{j,k}\alpha_j\right)\binom{n}{k}=\sum_{k=0}^{dD}\beta_k\binom{n}{k},
  \end{equation*}
  with
  \begin{equation}\label{eq:beta_k_formula}
  \beta_k=\sum_{j=\lceil k/D\rceil}^ds_{j,k}\alpha_j\in\RR.
\end{equation}
  By our hypothesis, any $t=a_0^{i_0}\cdots a_D^{i_D}$ satisfies $|t| = |a_0^{i_0}\cdots a_D^{i_D}|\leq (QN)^{i_0+\cdots+i_D}N^{\frac{-i_1-2i_2-\cdots-Di_D}{D}}$. Hence, by Lemma \ref{lem:poly_subsequence_coefficients}, we obtain the estimate
  \begin{equation}\label{eq:sjk_bound}
    |s_{j,k}|\ll_{d,D} Q^{j} N^{j-k/D}.
  \end{equation}
  Hence,
  \begin{equation*}
    (N')^k\vecnorm{\beta_k}_{\RmZ}\ll_{d,D}\sum_{j=\lceil k/D\rceil}^dQ^{j}N^j\vecnorm{\alpha_j}_{\RmZ}\ll_d Q^d\vecnorm{p}_{\Cinf[N]},
  \end{equation*}
  which shows \eqref{eq:poly_extrapolation_1}. For \eqref{eq:poly_extrapolation_2}, we start by observing that $s_{j,jD}\in\ZZ\smallsetminus\{0\}$ for all $1\leq j\leq d$ by Lemma \ref{lem:poly_subsequence_coefficients} and our hypothesis $a_D\neq 0$. Take $q_i:=\prod_{j=i}^ds_{j,jD}$, then  $q_i\ll_{d,D}Q^{O_{d,D}(1)}$ by \eqref{eq:sjk_bound}. We show that
  \begin{equation}\label{eq:a_j_desired_bound}
    N^i\vecnorm{q_i\alpha_i}_{\RmZ}\ll_{d,D} Q^{O_{d,D}(1)}\vecnorm{\tilde p}_{\Cinf[N']}
  \end{equation}
  holds for all $1\leq i\leq d$, which is enough to prove \eqref{eq:poly_extrapolation_2} with $q=q_1$.

  Let $1\leq i\leq d$ and assume that we have shown \eqref{eq:a_j_desired_bound} already for all $i<j\leq d$. Using \eqref{eq:beta_k_formula} and \eqref{eq:sjk_bound} with $k=iD$, we see that
  \begin{align*}
    N^i\vecnorm{q_i\alpha_i}_{\RmZ}&\leq N^iq_{i+1}\vecnorm{\beta_{iD}}_{\RmZ} + \sum_{j=i+1}^dN^i|s_{j,iD}|\vecnorm{q_{i+1}\alpha_j}_{\RmZ}\\
    &\ll_{d,D} Q^{O_{d,D}(1)}\left(N^i(N')^{-iD}\vecnorm{\tilde p}_{\Cinf[N']}+\sum_{j=i+1}^dN^{i-j}|s_{j,iD}|\vecnorm{\tilde p}_{\Cinf[N']}\right)\\ &\ll Q^{Q_{d,D}(1)}\vecnorm{\tilde p}_{\Cinf[N']},
  \end{align*}
as desired.
\end{proof}

Next, we prove a simple weak converse to the quantitative Leibman theorem
\cite[Theorem 2.9]{MR2877065}. That such a converse holds was first shown by Matthiesen in \cite[Proposition 14.3]{MR2949879}, our statement here is a minor variation of this.

\begin{lemma}[Horizontal characters obstruct equidistribution]\label{lem:characters_obstruct}
  Let $m,d\geq 0$, then there is a constant $C(m,d)\geq 1$, such that the following
  holds. Let $0<\delta<1/2$ and $N\gg_{m,d} 1$. 
  
  Suppose that $G/\Gamma$ is an $m$-dimensional nilmanifold with a filtration
  $G_\bullet$ and a $\delta^{-1}$-rational Mal'cev basis $\cX$ adapted to this
  filtration. Let $g\in\poly(\ZZ,G_\bullet)$, and suppose that there is a
  nontrivial horizontal
  character $\eta:G/\Gamma\to \RR/\ZZ$ with $|\eta|\leq \delta^{-1}$, such that
  \begin{equation}\label{eq:hor_char_norm_bound}
    \vecnorm{\eta\circ g}_{\Cinf[N]}\leq \delta^{-1}.
  \end{equation}
  Then $(g(n)\Gamma)_{n\in[N]}$ is not totally $\delta^{C}$-equidistributed.
\end{lemma}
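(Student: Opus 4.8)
The plan is to argue by contradiction: assume $(g(n)\Gamma)_{n\in[N]}$ is totally $\delta^C$-equidistributed for a constant $C=C(m,d)$ to be fixed large at the end, and exhibit a Lipschitz test function together with an admissible arithmetic progression violating the equidistribution estimate. The test function should be $F:=e(\eta(\cdot))$ with $e(x):=e^{2\pi i x}$; it descends to $G/\Gamma$ since $\eta$ annihilates $\Gamma$. Because $\eta$ is a nontrivial continuous homomorphism $G\to\RR/\ZZ$, invariance of the Haar measure $\mu$ on $G/\Gamma$ gives $\int_{G/\Gamma}F=e(\eta(h))\int_{G/\Gamma}F$ for every $h\in G$, hence $\int_{G/\Gamma}F=0$; and the compatibility of the metric $d_\cX$ with the Mal'cev coordinates (see \cite[\S2]{MR2877065}) yields $\vecnorm{F}_{\Lip}\le\delta^{-C_1}$ for a constant $C_1=C_1(m)$, using $|\eta|\le\delta^{-1}$.

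The second ingredient is to turn the hypothesis \eqref{eq:hor_char_norm_bound} into the statement that $F(g(\cdot)\Gamma)$ is nearly constant on short progressions. Writing $\eta(g(n))=\sum_{j=0}^d\alpha_j\binom{n}{j}$ and lifting the coefficients to real numbers $\tilde\alpha_j$ with $|\tilde\alpha_j|=\vecnorm{\alpha_j}_{\RmZ}\le\delta^{-1}N^{-j}$ for $1\le j\le d$ (and $\tilde\alpha_0\in[0,1)$), the real polynomial $p(n):=\sum_j\tilde\alpha_j\binom{n}{j}$ satisfies $e(p(n))=F(g(n)\Gamma)$. On an initial segment $P=\{1,\dots,M\}$ with $M\le\delta^4 N/(2d)$ the bound $\binom{n}{j}\le M^j$ gives $|p(n)-\tilde\alpha_0|\le\sum_{j=1}^d\delta^{-1}(M/N)^j\le d\delta^{-1}(M/N)\le\delta^3/2$, so $|F(g(n)\Gamma)-e(\tilde\alpha_0)|\le\pi\delta^3$ for all $n\in P$ and therefore $\big|\EE_{n\in P}F(g(n)\Gamma)-\int_{G/\Gamma}F\big|=\big|\EE_{n\in P}F(g(n)\Gamma)\big|\ge 1-\pi\delta^3\ge\tfrac12$, using $\delta<1/2$.

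The contradiction is obtained by making $P$ long enough to be admissible, which is precisely the point where the freedom to take $C$ large enters. When $\delta^4 N\ge 2d$ I would take $M=\lfloor\delta^4 N/(2d)\rfloor\ge\delta^4 N/(4d)$; since $\delta<1/2$, this is $\ge\delta^C N$ once $C\ge 6+\log_2 d$, so $P\subseteq[N]$ is an admissible progression for total $\delta^C$-equidistribution, which would force $\tfrac12\le\delta^C\vecnorm{F}_{\Lip}\le\delta^{C-C_1}$, impossible for $C\ge C_1+1$. In the remaining range $\delta^4 N<2d$ one checks $\delta^C N<2^{5-C}d\le 1$ for $C\ge 5+\log_2 d$, so the singleton $P=\{1\}$ is admissible, and directly $\big|F(g(1)\Gamma)-\int_{G/\Gamma}F\big|=1>\delta^{C-C_1}\ge\delta^C\vecnorm{F}_{\Lip}$ for $C\ge C_1+1$. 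Taking $C(m,d):=\max\{C_1(m)+1,\,6+\log_2 d\}$ finishes the proof.

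I expect the only genuinely delicate point to be the bookkeeping around $C$: the progression $P$ must be short enough for $\eta\circ g$ to be essentially constant on it --- forcing $|P|$ to be at most $\delta^4 N/(2d)$ --- and simultaneously long enough to be admissible, $|P|\ge\delta^C N$, and these are reconcilable only because ``not totally $\delta^C$-equidistributed'' is a comparatively weak conclusion when $C$ is large; one must resist proving something stronger. The vanishing of $\int_{G/\Gamma}F$ and the Lipschitz bound for $e(\eta(\cdot))$ are standard, and everything else is elementary estimation of binomial coefficient sums.
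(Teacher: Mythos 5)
Your proof is correct, and its skeleton is the same as the paper's: exhibit a function factoring through $\eta$, with mean zero and Lipschitz norm $\ll\delta^{-O_{m,d}(1)}$ (this is where the $\delta^{-1}$-rationality of the Mal'cev basis enters), and a short sub-progression of $[N]$ on which $\eta(g(n))$ barely moves, so that the average of the test function over that progression is far from its integral. The differences are in the implementation, and they are worth noting. First, you take $F=e(\eta(\cdot))$, whose mean-zero property and unimodularity are automatic, whereas the paper uses a bump function $H\circ\eta$ with $H=1$ on a short interval containing the values $\eta(g(n))$; your choice is a little cleaner. Second, instead of invoking \cite[Lemma 2.8]{MR2877065} to control consecutive differences, you lift the binomial-basis coefficients (using that $\binom{n}{j}\in\ZZ$, so the lift does not change $e(\eta(g(n)))$) and bound $\sum_{j\ge 1}\vecnorm{\alpha_j}_{\RmZ}\binom{n}{j}$ directly on an initial segment of length $\asymp\delta^4N/d$; this is an elementary and self-contained substitute. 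Third, you make the bookkeeping around $C$ explicit, including the degenerate range $\delta^4N<2d$, where a singleton progression suffices — a case the paper absorbs into ``$C$ large enough''.

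The one place where you are too quick is the Lipschitz bound $\vecnorm{e(\eta(\cdot))}_{\Lip}\le\delta^{-C_1}$. This is genuinely the bulk of the paper's proof: one must compare the metric $d_\cX$ on $G/\Gamma$ with the Mal'cev coordinates of suitable representatives in the fundamental domain, which uses the $\delta^{-1}$-rationality of $\cX$ together with \cite[Lemmas A.4, A.14--A.16]{MR2877065}, and gives $\vecnorm{\eta(x)-\eta(y)}_{\RmZ}\ll_{m,d}\delta^{-O_{m,d}(1)}d_{G/\Gamma}(x,y)$ since $\eta=k\cdot\psi$ with $|k|=|\eta|\le\delta^{-1}$. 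In particular the exponent depends on $d$ as well as $m$, so your $C_1$ should be $C_1(m,d)$; since the final constant $C(m,d)$ depends on both anyway, this is harmless, but the bound is not a one-line consequence of ``compatibility of the metric with coordinates'' and deserves the references above.
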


\begin{proof}
  This is inspired by arguments in the proofs of \cite[Lemma A.17]{MR2877065} and \cite[Proposition
  2.1]{MR2877066}. By \cite[Lemma 2.8]{MR2877065}, our assumption
  \eqref{eq:hor_char_norm_bound} implies that
  \begin{equation*}
    \vecnorm{\eta(g(n))-\eta(g(n-1))}_{\RmZ}\ll_d(\delta N)^{-1}\quad\text{ for
    all }n\in \{2,\ldots,N\}.
  \end{equation*}
  Let $N':=\lceil \delta^{C}N\rceil$ with large enough $C$, then this shows that the
  values $\eta(g(n))$, $n\in N'$ lie in an interval $I$ of length $\leq 1/2$ on the
  torus. Let $H:\RR/\ZZ\to [-1,1]$ be a function of Lipschitz norm
  $\vecnorm{H}_{\Lip}\ll 1$ and mean zero, such that $H=1$ on $I$. Let
  $F:=H\circ\eta$, then
  \begin{equation*}
    \vecnorm{F}_{\Lip} \leq 1 + \sup_{\substack{x,y\in
        G/\Gamma\\\eta(x)\neq\eta(y)}}\frac{|H(\eta(x))-H(\eta(y))|}{\vecnorm{\eta(x)-\eta(y)}_{\RmZ}}\frac{\vecnorm{\eta(x)-\eta(y)}_{\RmZ}}{d_{G/\Gamma}(x,y)}\ll_d1+\sup_{\substack{x,y\in
        G/\Gamma\\\eta(x)\neq\eta(y)}}\frac{\vecnorm{\eta(x)-\eta(y)}_{\RmZ}}{d_{G/\Gamma}(x,y)}.
  \end{equation*}
  Let $x,y$ be as in the supremum above. Let $\psi:G\to\RR^m$ denote the coordinates (of the second kind) with respect to our Mal'cev basis $\cX$.
  By \cite[Lemma A.14]{MR2877065}, there are $x'\in x$, $y'\in y$
  with $\psi(x'),\psi(y')\in[0,1)^m$. By the proof of \cite[Lemma
  A.15]{MR2877065}, there is $\gamma\in \Gamma$, such that
  \begin{equation*}
    d_{G/\Gamma}(x,y)=d_{G/\Gamma}(x'\Gamma,y'\Gamma)=d(x',y'\gamma).
  \end{equation*}
  By \cite[Lemma A.16]{MR2877065}, this quantity is $\ll_{m,d} \delta^{-O_{m,d}(1)}$.
  Using \cite[Lemma A.4]{MR2877065}, we also have $d(x',\ident_{G})\ll_{m,d}
  \delta^{-O_{m,d}(1)}$, and by the triangle inequality
  $d(y',\ident_{G})\ll_{m,d}\delta^{-O_{m,d}(1)}$. Hence, \cite[Lemma
  A.4]{MR2877065} shows that
  \begin{equation*}
   |\psi(x')-\psi(y'\gamma)| \ll_{m,d}\delta^{-O_{m,d}(1)}d(x',y'\gamma).
 \end{equation*}
 We have $\eta(x)=k\cdot\psi(x)$ for some $k\in\ZZ^m$ with
 $|k|=|\eta|\leq\delta^{-1}$. Hence, as $\eta$ is a homomorphism that
 annihilates $\Gamma$,
 \begin{equation*}
   \vecnorm{\eta(x)-\eta(y)}_{\RR/\ZZ}\leq |\eta(x')-\eta(y'\gamma)| \ll\delta^{-1}|\psi(x')-\psi(y'\gamma)|\ll_{m,d}\delta^{-O_{m,d}(1)}d(x',y'\gamma)=\delta^{-O_{m,d}(1)}d(x,y),
 \end{equation*}
 thus showing that $\vecnorm{F}_{\Lip}\ll_{m,d}\delta^{-O_{m,d}(1)}$.

 As $\eta$ is a surjective continuous homomorphism, $\int_{\RR/\ZZ}H = 0$ implies
 $\int_{G/\Gamma}F=0$. Hence, we obtain
 \begin{equation*}
   \left|\EE_{n\in [N']}F(g(n)\Gamma)-\int_{G/\Gamma}F\right| = \left|\EE_{n\in [N']}H(\eta(g(n)))\right|=1>\delta^{C}\vecnorm{F}_{\Lip},
 \end{equation*}
 if only $C$ is large enough in terms of $m,d$. This shows that
 $(g(n)\Gamma)_{n\in[N]}$ is not totally $\delta^{C}$-equidistributed, as desired. 
\end{proof}

The following proposition states, essentially, that polynomial subsequences of equidistributed polynomial nilsequences are again equidistributed polynomial nilsequences. The first results concerning equidistribution of polynomial subsequences are due to Matthiesen \cite[Proposition 15.3 and Proposition 15.4]{MR2949879}. Our version is a minor technical variation of \cite[Proposition 15.3]{MR2949879}, based on the same ideas.

\begin{proposition}[Equidistribution of polynomial subsequences]\label{prop:equidist_poly_subseq}
  Let $d,D,m\in\NN$. Let $G/\Gamma$ be an $m$-dimensional nilmanifold
  together with a filtration $G_\bullet$ of degree $d$ and a rational
  Mal'cev basis $\cX$ adapted to this filtration. Let
  $g\in\poly(\ZZ,G_\bullet)$ be a polynomial sequence, and let
  $P=\sum_{i=0}^Da_ix^i\in\ZZ[x]$ be a polynomial of degree $D$. Then
  \begin{enumerate}
  \item The function $\tilde g := g\circ P:\ZZ\to G$ is a polynomial sequence of degree $dD$. More precisely, $\tilde g\in\poly(\ZZ,\tilde G_\bullet)$, with the filtration $G=\tilde G_0=\tilde G_1\geq\cdots\geq \tilde G_{dD+1}=\{0\}$, where $\tilde G_i := G_{\lceil i/D\rceil}$. Moreover, the Mal'cev basis $\cX$ is also adapted to the filtration $\tilde G_\bullet$.
  \item There is a constant $c=c(m,d,D)\in (0,1)$, depending only on $m,d$ and $D$, such that the following holds: Let $N\in\NN$ such that $N\gg_{m,d,D} 1$ and set $\tilde N = \lfloor N^{1/D}\rfloor$.
    Assume that $\delta\in (N^{-1/2D},2^{-1/c})$, that
    \begin{equation*}
    |a_i|\leq \delta^{-c}N^{1-i/D}\quad\text{ for all }0\leq i\leq D,
  \end{equation*}
  and that the Mal'cev basis $\cX$ is $\delta^{-c}$-rational. If $(g(n)\Gamma)_{n\in[N]}$ is totally $\delta$-equidistributed, then the sequence $(\tilde g(n)\Gamma)_{n\in[\tilde N]}$ is totally $\delta^c$-equidistributed.
  \end{enumerate}
\end{proposition}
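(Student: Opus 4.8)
The plan is to handle the two parts separately: part~\emph{(1)} by combining the Taylor expansion of polynomial sequences with the Lazard--Leibman theorem, and part~\emph{(2)} by a quantitative contrapositive argument that inserts the polynomial extrapolation estimate of Lemma~\ref{lem:polynomial_extrapolation} between the quantitative Leibman theorem \cite[Theorem~2.9]{MR2877065} and its converse, Lemma~\ref{lem:characters_obstruct}.

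For part~\emph{(1)}, I would first record the group-theoretic facts. One has $\tilde G_0=\tilde G_1=G$ (since $\lceil 0/D\rceil=0$ and $\lceil 1/D\rceil=1$) and $\tilde G_{dD+1}=G_{d+1}=\{\ident\}$, while $[\tilde G_i,\tilde G_j]=[G_{\lceil i/D\rceil},G_{\lceil j/D\rceil}]\subseteq G_{\lceil i/D\rceil+\lceil j/D\rceil}\subseteq G_{\lceil(i+j)/D\rceil}=\tilde G_{i+j}$ by the elementary inequality $\lceil i/D\rceil+\lceil j/D\rceil\ge\lceil(i+j)/D\rceil$; hence $\tilde G_\bullet$ is a filtration of degree $dD$. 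The decreasing chain $\tilde G_0\supseteq\tilde G_1\supseteq\cdots$ is just $G_0\supseteq\cdots\supseteq G_d\supseteq\{\ident\}$ with each term repeated $D$ times, so the same Mal'cev basis $\cX$ remains adapted to $\tilde G_\bullet$, with the same rationality constant. For the polynomiality of $\tilde g$, I would write $g$ in its Taylor expansion $g(n)=g_0g_1^{\binom n1}\cdots g_d^{\binom nd}$ with $g_j\in G_j$ (see \cite{MR2877065}), so that $\tilde g(n)=\prod_{j=0}^d g_j^{\binom{P(n)}{j}}$, and then observe that each factor $n\mapsto g_j^{\binom{P(n)}{j}}$ already lies in $\poly(\ZZ,\tilde G_\bullet)$: since powers of $g_j$ commute, the $k$-fold discrete derivative of this factor is $g_j$ raised to the $k$-fold discrete derivative of the integer-valued polynomial $n\mapsto\binom{P(n)}{j}$, which has degree $jD$ in $n$; this derivative is trivial for $k>jD$ and otherwise takes values in $\langle g_j\rangle\subseteq G_j\subseteq G_{\lceil k/D\rceil}=\tilde G_k$, the last inclusion being exactly $\lceil k/D\rceil\le j$, valid because $k\le jD$. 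Since $\poly(\ZZ,\tilde G_\bullet)$ is a group under pointwise multiplication (the Lazard--Leibman theorem, \cite{MR2877065}), the finite product $\tilde g$ lies in $\poly(\ZZ,\tilde G_\bullet)$, and $\tilde G_{dD+1}=\{\ident\}$ gives $\deg\tilde g\le dD$.

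For part~\emph{(2)}, I would argue by contraposition, assuming $(\tilde g(n)\Gamma)_{n\in[\tilde N]}$ is \emph{not} totally $\delta^c$-equidistributed. Since $\delta<2^{-1/c}$ forces $\delta^c<\tfrac12$, and $\tilde N\gg_{m,d,D}1$ because $N\gg_{m,d,D}1$, the quantitative Leibman theorem \cite[Theorem~2.9]{MR2877065} (which says that the failure of total equidistribution of a polynomial orbit on $[\tilde N]$ is witnessed by a nontrivial horizontal character of controlled size), applied to $\tilde g$ on $[\tilde N]$ with the degree-$dD$ filtration $\tilde G_\bullet$ and the $\delta^{-c}$-rational Mal'cev basis $\cX$ from part~\emph{(1)}, produces a nontrivial horizontal character $\tilde\eta$ on $G/\Gamma$ with $|\tilde\eta|\le\delta^{-O_{m,d,D}(c)}$ and $\vecnorm{\tilde\eta\circ\tilde g}_{\Cinf[\tilde N]}\le\delta^{-O_{m,d,D}(c)}$. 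Writing $p:=\tilde\eta\circ g$, a polynomial sequence $\ZZ\to\RR/\ZZ$ of degree $\le d$ (its $j$-th binomial coefficient being $\tilde\eta(g_j)$), one has $\tilde\eta\circ\tilde g=p\circ P$, so the hypotheses on the coefficients of $P$ (together with $a_D\neq0$) let me apply the estimate \eqref{eq:poly_extrapolation_2} of Lemma~\ref{lem:polynomial_extrapolation} with $Q=\lceil\delta^{-c}\rceil$ and $N'=\tilde N$: there is $q\in\NN$ with $q\le\delta^{-O_{m,d,D}(c)}$ and $\vecnorm{qp}_{\Cinf[N]}\le\delta^{-O_{m,d,D}(c)}\vecnorm{\tilde\eta\circ\tilde g}_{\Cinf[\tilde N]}\le\delta^{-O_{m,d,D}(c)}$. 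Then $q\tilde\eta$ is again a nontrivial horizontal character (it is nonzero, $q$ being a positive integer and $\tilde\eta\neq0$), with $|q\tilde\eta|\le\delta^{-O_{m,d,D}(c)}$ and $\vecnorm{(q\tilde\eta)\circ g}_{\Cinf[N]}=\vecnorm{qp}_{\Cinf[N]}\le\delta^{-O_{m,d,D}(c)}$. Choosing an exponent $c_0=O_{m,d,D}(c)$ with $c_0\ge c$ large enough that $\delta^{-c_0}$ dominates all these bounds (using $\delta<2^{-1/c}$ to absorb the implied constants into a slightly larger exponent) and such that $\cX$ is $\delta^{-c_0}$-rational, Lemma~\ref{lem:characters_obstruct}, applied to $g$ with the original filtration $G_\bullet$ and parameter $\delta^{c_0}$, shows that $(g(n)\Gamma)_{n\in[N]}$ is not totally $\delta^{c_0C(m,d)}$-equidistributed, where $C(m,d)$ is the constant from that lemma. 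Since $c_0C(m,d)=O_{m,d,D}(c)$, I would finally fix $c=c(m,d,D)>0$ small enough that $c_0C(m,d)\le1$, so that $\delta^{c_0C(m,d)}\ge\delta$; then the assumed total $\delta$-equidistribution of $(g(n)\Gamma)_{n\in[N]}$ would entail its total $\delta^{c_0C(m,d)}$-equidistribution, a contradiction. Hence $(\tilde g(n)\Gamma)_{n\in[\tilde N]}$ is totally $\delta^c$-equidistributed.

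I expect the main obstacle to be part~\emph{(1)}: making rigorous the claim that $g\circ P\in\poly(\ZZ,\tilde G_\bullet)$ with the stretched filtration, which rests on the commuting-powers computation of iterated discrete derivatives together with the ceiling inequality $\lceil i/D\rceil+\lceil j/D\rceil\ge\lceil(i+j)/D\rceil$; the combinatorics of Lemma~\ref{lem:poly_subsequence_coefficients} then re-enters only through Lemma~\ref{lem:polynomial_extrapolation}, on which part~\emph{(2)} rests. Part~\emph{(2)} is otherwise a fairly mechanical chaining of three inputs, the only delicate points being the bookkeeping of the successive powers of $\delta$ (so that a single small $c=c(m,d,D)$ works everywhere), the use of the window $N^{-1/2D}<\delta<2^{-1/c}$ to keep $\tilde N$ large compared with $\delta^{-c}$ and to keep $\delta^c<\tfrac12$, and the handling of the constant term $a_0$ of $P$, which is not directly controlled by the remaining coefficient hypotheses and so must be bounded or normalised before Lemma~\ref{lem:polynomial_extrapolation} is invoked.
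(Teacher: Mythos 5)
Your part \emph{(1)} is correct and takes a slightly different route from the paper: you verify the filtration property of $\tilde G_\bullet$ via the ceiling inequality and then establish $g\circ P\in\poly(\ZZ,\tilde G_\bullet)$ by writing $g$ in its group-element Taylor expansion $g(n)=\prod_j g_j^{\binom nj}$, computing iterated discrete derivatives of each commuting factor $g_j^{\binom{P(\cdot)}{j}}$, and invoking the Lazard--Leibman group property of $\poly(\ZZ,\tilde G_\bullet)$. The paper instead works in Mal'cev coordinates of the second kind, expanding $\binom{P(n)}{j}$ via the coefficients $s_{j,k}$ of Lemma \ref{lem:poly_subsequence_coefficients} and checking the vanishing pattern of the coordinate vectors; both arguments are valid and of comparable length, yours being marginally more conceptual and the paper's feeding more directly into the quantitative input it needs later anyway.

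In part \emph{(2)} there is a genuine gap at the very first step. You assume $(\tilde g(n)\Gamma)_{n\in[\tilde N]}$ is not \emph{totally} $\delta^c$-equidistributed and immediately invoke the quantitative Leibman theorem \cite[Theorem 2.9]{MR2877065} as if it converted this into a nontrivial horizontal character $\tilde\eta$ with $\vecnorm{\tilde\eta\circ\tilde g}_{\Cinf[\tilde N]}\leq\delta^{-O_{m,d,D}(c)}$. But that theorem is stated for failure of plain $\delta$-equidistribution on the full interval; failure of \emph{total} equidistribution only guarantees the existence of a subprogression $\{u+vn:n\in[\bar N]\}\subseteq[\tilde N]$ with $\bar N\geq\delta^c\tilde N$ along which equidistribution fails, and Leibman applied to $\bar g(n):=\tilde g(u+vn)$ then controls only $\vecnorm{\bar\eta\circ\bar g}_{\Cinf[\bar N]}$, not $\vecnorm{\bar\eta\circ\tilde g}_{\Cinf[\tilde N]}$. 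The missing step is precisely the paper's first application of Lemma \ref{lem:polynomial_extrapolation} with $D=1$, $Q\asymp\delta^{-c}$ and the linear substitution $x\mapsto u+vx$ (noting $1\leq v\ll\delta^{-c}$ and $|u|\leq\tilde N$), which produces $\bar q\ll\delta^{-O(c)}$ with $\vecnorm{\bar q\bar\eta\circ\tilde g}_{\Cinf[\tilde N]}\ll\delta^{-O_{m,d}(c)}$; only after this can one compose with $P$ and apply \eqref{eq:poly_extrapolation_2} a second time, as you do, before finishing with Lemma \ref{lem:characters_obstruct} exactly as in your write-up. So your argument is repairable with a tool you already use, but as written the key smoothness-norm bound on $[\tilde N]$ is unjustified and the parenthetical description of Theorem 2.9 is a mis-citation. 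Your closing remark about the constant term $a_0$ is a fair observation (the proposition's hypotheses bound only $a_1,\ldots,a_D$, while Lemma \ref{lem:polynomial_extrapolation} as stated uses a bound on all coefficients); the paper's proof does not comment on this either, and in the sole application (Proposition \ref{prop:norms_vs_equidist_nilsequences}) one has $|a_0|\ll N$, so this is a presentational issue rather than the substantive gap.
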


\begin{proof}
 For $i\in\NN$, write $\tilde m_i := \dim \tilde G_i = m_{\lceil i/D\rceil}$. With $H_j$ as in \cite[Definition 2.1]{MR2877065}, we have $\tilde G_i = G_{\lceil i/D\rceil} = H_{m-\tilde m_i}$, hence $\cX$ is also a Mal'cev basis adapted to the filtration $\tilde G_\bullet$.
  
 Let $\psi:G\to\RR^m$ denote the coordinates (of the second kind) with respect to our Mal'cev basis $\cX$. By \cite[Lemma 6.7]{MR2877065}, we have
  \begin{equation*}
    \psi(g(n)) = \sum_{j=0}^d\vt_j\binom{n}{j},
  \end{equation*}
  with vectors $\vt_j\in\RR^m$ satisfying $(\vt_j)_i=0$ for all $i\leq m-m_j$,
  where $m_j=\dim G_j$. Then, using the coefficients $s_{j,k}(a_0,\ldots,a_D)$ from Lemma \ref{lem:poly_subsequence_coefficients}, 
  \begin{equation*}
    \psi(g(P(n)))=\sum_{j=0}^d\vt_j\binom{P(n)}{j}=\sum_{j=0}^d\vt_j\sum_{k=0}^{jD}s_{j,k}\binom{n}{k}  =\sum_{k=0}^{dD}\left(\sum_{j=\lceil k/D\rceil}^{d}s_{j,k}\vt_j\right)\binom{n}{k}=:\sum_{k=0}^{dD}\tilde\vt_k\binom{n}{k}.
  \end{equation*}
  For any $j\geq \lceil k/D\rceil$, we have $m_j\leq m_{\lceil k/D\rceil}=\tilde m_k$. Hence, if $i\leq m-\tilde m_k$, then also $i\leq m-m_j$ for all $j\geq \lceil k/D\rceil$, and thus $(\tilde\vt_k)_i=0$. Again by \cite[Lemma 6.7]{MR2877065}, this shows that $p\circ P\in \poly(\ZZ,\tilde G_\bullet)$, as desired in \emph{(1)}.

  For \emph{(2)}, suppose that the sequence $(\tilde g(n)\Gamma)_{n\in[\tilde N]}$ is not totally $\delta^c$-equidistributed. Then there is a progression $P=\{u+vn\ :\ n\in [\bar N]\}\subseteq [\tilde N]$ with $\bar N\geq \delta^c\tilde N$ and a Lipschitz function $F:G/\Gamma\to \CC$, such that
  \begin{equation*}
    \left|\EE_{n\in P}F(\tilde g(n)\Gamma)-\int_{G/\Gamma}F\right|\leq \delta^c\vecnorm{F}_{\Lip},
  \end{equation*}
  i.e. the sequence $(\bar g(n)\Gamma)_{n\in [\bar N]}$ with $\bar g(n)=\tilde g(u+vn) = g(P(u+vn))$ is not $\delta^c$-equidistributed.
  The quantitative Leibmann theorem \cite[Theorem 2.9]{MR2877065} yields a
  nontrivial horizontal character $\bar\eta:G\to \RR/\ZZ$ with $0<|\bar\eta|\ll_{m,d}\delta^{-O_{m,d}(c)}$, such that
  \begin{equation*}
    \vecnorm{\bar\eta\circ\bar g}_{\Cinf[\bar N]}\ll_{m,d}\delta^{-O_{m,d}(c)}.
  \end{equation*}
  As $\delta>\tilde N^{-1/2}$, we get $\delta^c>\tilde N^{-c/2}>\tilde N^{-1/2}\geq 2\tilde N^{-1}$. Hence,
  \begin{equation*}
    1\leq v\leq \frac{\tilde N-1}{\bar N-1}\leq \frac{2\tilde N}{\delta^c\tilde N}=2\delta^{-c}\leq \tilde N
  \end{equation*}
  and thus also $|u|\leq\tilde N$. By Lemma \ref{lem:polynomial_extrapolation} with $D=1$, $Q=2\delta^{-c}$, $N=\tilde N$, $p=\bar\eta\circ\tilde g$, $P(X)=u+vx$, we find $\bar q\in \NN$ with $\bar q\ll_{d}\delta^{-O_{d}(c)}$ such that
  \begin{equation*}
\vecnorm{\bar q\bar\eta\circ\tilde g}_{\Cinf[\tilde N]}\ll_{d}\delta^{-O_{d}(c)}\vecnorm{\bar\eta\circ\bar g}_{\Cinf[\tilde N]}\ll_{m,d}\delta^{-O_{m,d}(c)}.
\end{equation*}
Another application of Lemma \ref{lem:polynomial_extrapolation}, this time with $Q=\delta^{-c}$ and $p=\bar q\bar\eta\circ g$ yields $q\in\NN$ with $q\ll_{d,D}\delta^{-O_{d,D}(c)}$, such that
  \begin{equation*}
\vecnorm{q\bar q\bar\eta\circ g}_{\Cinf[N]}\ll_{d,D}\delta^{-O_{d,D}(c)}\vecnorm{\bar q\bar\eta\circ\tilde g}_{\Cinf[\tilde N]}\ll_{m,d,D}\delta^{-O_{m,d,D}(c)}.
\end{equation*}
Now Lemma \ref{lem:characters_obstruct} with $\eta=q\bar q\bar\eta$ and $\delta=\delta^{O_{m,d,D}(c)}$ shows that $(g(n)\Gamma)_{n\in [N]}$ is not totally $\delta^{O_{m,d,D}(Cc)}$-equidistributed, for some $C=C(m,d)\geq 1$. If $c$ was chosen small enough, this contradicts our assumption that $(g(n)\Gamma)_{n\in[N]}$ is totally $\delta$-equidistributed. 
\end{proof}

Finally, we require the following result which allows us to realise a given equidistributed polynomial nilsequence as a linear subsequence of another equidistributed polynomial nilsequence. The proof is similar to that of Proposition \ref{prop:equidist_poly_subseq}.

\begin{proposition}\label{prop:spread_nilsequence}
  Let $m,d\geq 0$. Let $G/\Gamma$ be an $m$-dimensional nilmanifold with a filtration
  $G_\bullet$ and a rational Mal'cev basis $\mathcal{X}$ adapted to this
  filtration. Let $g\in\poly(\ZZ,G_\bullet)$ be a polynomial sequence, and $q\in\NN$ and $b\in\ZZ$.
  \begin{enumerate}
  \item There is a polynomial sequence $\tilde g\in\poly(\ZZ,G_\bullet)$ that
    satisfies
    \begin{equation}\label{eq:def_g_tilde}
      \tilde g(n) = g\left(\frac{n-b}{q}\right)\quad\text{ whenever }n\in\ZZ\text{ with
      }n\equiv b\bmod q.
    \end{equation}
  \item There is a constant $c=c(m,d)\in(0,1)$, depending only on $m$ and $d$,
    such that the following holds: Let $N\in\NN$, assume that $N\gg_{m,d}1$, and set $\tilde N=qN+b$. Assume that $\delta\in(\tilde N^{-1/2}, 2^{-1/c})$, $q\leq
    \delta^{-c}$ and $|b|\leq \delta^{-c}N$, and that the Mal'cev basis $\mathcal{X}$
    is $\delta^{-c}$-rational.  If $(g(n)\Gamma)_{n\in[N]}$ is totally
    $\delta$-equidistributed, then the sequence $(\tilde g(n)\Gamma)_{n\in [\tilde
      N]}$ is totally $\delta^{c}$-equidistributed.
  \end{enumerate}
\end{proposition}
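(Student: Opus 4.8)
The plan is to mirror the proof of Proposition~\ref{prop:equidist_poly_subseq}, but now the ``polynomial'' inserted into $g$ is the affine map $x\mapsto qx+b$, composed with the condition $n\equiv b\bmod q$. For part~\emph{(1)}, I would simply define $\tilde g(n):= g'\bigl((n-b)/q\bigr)$ where $g'\in\poly(\ZZ,G_\bullet)$ is the polynomial sequence $g'(x):=g(x)$ rewritten so that the substitution $x=(n-b)/q$ makes sense over $\ZZ$: concretely, write $\psi(g(n))=\sum_{j=0}^d\vt_j\binom{n}{j}$ via \cite[Lemma~6.7]{MR2877065}, and observe that the coefficients of $\binom{(n-b)/q}{j}$, when re-expanded in the binomial basis $\binom{n}{k}$ \emph{after} restricting to $n\equiv b\bmod q$, are again of the shape required by \cite[Lemma~6.7]{MR2877065} (the relevant vanishing $(\tilde\vt_k)_i=0$ for $i\le m-\tilde m_k$ is inherited exactly as in the proof of Proposition~\ref{prop:equidist_poly_subseq}, since the filtration is unchanged here). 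This shows $\tilde g\in\poly(\ZZ,G_\bullet)$ and \eqref{eq:def_g_tilde} holds by construction. Alternatively, and perhaps more cleanly, one can invoke that $\poly(\ZZ,G_\bullet)$ is closed under composition with integer-polynomial reparametrisations of the index (a standard fact from \cite{MR2877065}), applied to the map $n\mapsto qn+b$, and then reindex.

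For part~\emph{(2)}, suppose for contradiction that $(\tilde g(n)\Gamma)_{n\in[\tilde N]}$ is not totally $\delta^c$-equidistributed. Then there is a subprogression $P=\{u+vn: n\in[\bar N]\}\subseteq[\tilde N]$ with $\bar N\ge\delta^c\tilde N$ and a Lipschitz $F:G/\Gamma\to\CC$ detecting non-equidistribution of $\bar g(n):=\tilde g(u+vn)$. Applying the quantitative Leibman theorem \cite[Theorem~2.9]{MR2877065} produces a nontrivial horizontal character $\bar\eta$ with $0<|\bar\eta|\ll_{m,d}\delta^{-O_{m,d}(c)}$ and $\vecnorm{\bar\eta\circ\bar g}_{\Cinf[\bar N]}\ll_{m,d}\delta^{-O_{m,d}(c)}$. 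From $\delta>\tilde N^{-1/2}$ and $\bar N\ge\delta^c\tilde N$ one bounds $1\le v\le 2\delta^{-c}$ and $|u|\le\tilde N$, exactly as in Proposition~\ref{prop:equidist_poly_subseq}. Now the key point: $\bar g(n)=\tilde g(u+vn)=g\bigl((u+vn-b)/q\bigr)$ whenever $u+vn\equiv b\bmod q$. I would arrange things so that the inner argument is itself an affine function of $n$: since $P\subseteq[\tilde N]$ and $\tilde N=qN+b$, the integers $u+vn$ that are $\equiv b\bmod q$ form a subprogression, and restricting $n$ to that subprogression turns $n\mapsto (u+vn-b)/q$ into an honest affine map $n\mapsto \alpha n+\beta$ with integer coefficients satisfying $\alpha\ll\delta^{-O(c)}$, $|\beta|\ll\delta^{-O(c)}N$. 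Then I invoke Lemma~\ref{lem:polynomial_extrapolation} with $D=1$ twice: once to pull $\vecnorm{\bar\eta\circ\bar g}_{\Cinf}$ back to $\vecnorm{\bar q\,\bar\eta\circ g}_{\Cinf[N]}$ for some $\bar q\ll_d\delta^{-O_d(c)}$ (absorbing the affine reparametrisation), and conclude $\vecnorm{\bar q\,\bar\eta\circ g}_{\Cinf[N]}\ll_{m,d}\delta^{-O_{m,d}(c)}$. Finally Lemma~\ref{lem:characters_obstruct} with $\eta=\bar q\,\bar\eta$ and the appropriate power of $\delta$ forces $(g(n)\Gamma)_{n\in[N]}$ to fail total $\delta^{O_{m,d}(Cc)}$-equidistribution, contradicting the hypothesis once $c=c(m,d)$ is chosen small enough.

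The main obstacle I anticipate is bookkeeping around the congruence restriction $n\equiv b\bmod q$: unlike the clean polynomial substitution in Proposition~\ref{prop:equidist_poly_subseq}, here $\tilde g$ is only defined by \eqref{eq:def_g_tilde} on a residue class, so I must be careful that the subprogression $P$ produced by Leibman's theorem interacts correctly with this residue class and that the resulting doubly-restricted progression still has length $\gg\delta^{O(c)}\tilde N$ and modulus $\ll\delta^{-O(c)}$ — in particular that passing from $P$ to its intersection with $\{n: u+vn\equiv b\bmod q\}$ loses only a factor depending on $q\le\delta^{-c}$, which is fine. A secondary point is verifying that the ``spread-out'' sequence $\tilde g$ genuinely lies in $\poly(\ZZ,G_\bullet)$ with the \emph{same} filtration (not a refined one as in the $D>1$ case); this holds precisely because the reparametrisation is affine, so degrees are preserved, but it deserves an explicit sentence citing the vanishing pattern of the binomial coefficients as in the proof of Proposition~\ref{prop:equidist_poly_subseq}.
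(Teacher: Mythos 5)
Your part \emph{(1)} is essentially the paper's construction and is fine (one small remark: no ``restriction to $n\equiv b\bmod q$'' is needed when re-expanding $\binom{(x-b)/q}{j}$ in the binomial basis — the coefficients are merely rational, and \cite[Lemma 6.7]{MR2877065} allows arbitrary real coefficient vectors with the required vanishing pattern, so $\tilde g$ is a genuine element of $\poly(\ZZ,G_\bullet)$ defined on \emph{all} of $\ZZ$). The gap is in part \emph{(2)}, at the step where you pass from the Leibman output to $g$. Your reduction needs the progression $P=\{u+vn: n\in[\bar N]\}$ produced by the equidistribution failure to contain a long subprogression of integers $\equiv b\bmod q$, so that $\bar g$ can be identified with $g$ composed with an \emph{integer} affine map. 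But the congruence $u+vn\equiv b\bmod q$ is solvable only when $\gcd(v,q)\mid(b-u)$, and nothing forces this: e.g.\ $q=2$, $v$ even, $u\not\equiv b\bmod 2$ gives $P$ entirely disjoint from the residue class. In that case the ``doubly-restricted progression'' you rely on is empty, the identity \eqref{eq:def_g_tilde} is never available on $P$, and your chain of two applications of Lemma \ref{lem:polynomial_extrapolation} cannot even start. Your closing paragraph flags the interaction of $P$ with the residue class but only considers losing a factor $\le q$ in length, not the emptiness scenario, so the argument as written does not close.

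The fix — and this is what the paper does — is to work at the level of polynomial coefficients rather than values on integer points of the residue class, exploiting that $\tilde g$ from part \emph{(1)} is defined everywhere: one has the identity $g(n)=\tilde g(qn+b)$ valid for \emph{every} $n\in\ZZ$. Concretely, from $\vecnorm{\bar\eta\circ\bar g}_{\Cinf[\bar N]}\ll\delta^{-O_{m,d}(c)}$ one first applies the backward direction (part \emph{(2)}) of Lemma \ref{lem:polynomial_extrapolation} with $D=1$ to the affine map $n\mapsto u+vn$ (no congruence considerations enter), obtaining $q'\ll\delta^{-O_{d}(c)}$ with $\vecnorm{q'\bar\eta\circ\tilde g}_{\Cinf[\tilde N]}\ll\delta^{-O_{m,d}(c)}$ after the cheap range extension from $[\bar N]$ to $[\tilde N]$; then one applies the forward direction (part \emph{(1)}) to $P(x)=qx+b$, using $g=\tilde g\circ P$, to get $\vecnorm{q'\bar\eta\circ g}_{\Cinf[N]}\ll\delta^{-O_{m,d}(c)}$; Lemma \ref{lem:characters_obstruct} then contradicts the total $\delta$-equidistribution of $(g(n)\Gamma)_{n\in[N]}$ for $c$ small enough. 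So your overall architecture (contradiction, Leibman, polynomial extrapolation, obstruction by horizontal characters) is the right one and matches the paper; what must change is the order and nature of the transfer: use the global polynomial identity $g(n)=\tilde g(qn+b)$ instead of the pointwise identity $\tilde g(m)=g((m-b)/q)$ on a residue class that $P$ may never meet.
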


\begin{proof}
  Let $\psi:G\to\RR^m$ denote the coordinates (of the second kind) with respect to our Mal'cev basis $\cX$. By \cite[Lemma 6.7]{MR2877065}, we have
  \begin{equation*}
    \psi(g(n)) = \sum_{j=0}^d\vt_j\binom{n}{j},
  \end{equation*}
  with vectors $\vt_j\in\RR^m$ satisfying $(\vt_j)_i=0$ for all $i\leq m-m_j$,
  where $m_j=\dim G_j$. We set
  \begin{equation*}
    \tilde g(n) := \psi^{-1}\left(\sum_{j=0}^d\vt_j\binom{(n-b)/q}{j}\right),
  \end{equation*}
  then it is clear that \eqref{eq:def_g_tilde} holds.
  Note that each $\binom{(x-b)/q}{j}$ is a polynomial of degree $j$ in
  $\QQ[x]$. Writing it in the binomial basis, we obtain coefficients
  $s_{j,0},\ldots,s_{j,j}\in\QQ$ such that
  \begin{equation*}
    \binom{(x-b)/q}{j} = \sum_{i=0}^js_{j,i}\binom{x}{i}.
  \end{equation*}
  Hence,
  \begin{equation*}
    \psi(\tilde g(n)) = \sum_{j=0}^d\sum_{i=0}^j\vt_js_{j,i}\binom{n}{i}=\sum_{i=0}^d\left(\sum_{j=i}^d\vt_js_{j,i}\right)\binom{n}{i}=:\sum_{i=0}^d\tilde\vt_i\binom{n}{i}.
  \end{equation*}
  The vectors $\tilde\vt_i\in\RR^m$ then satisfy for all $k\leq m-m_i$ that
  \begin{equation*}
    (\tilde\vt_i)_k = \sum_{j=i}^d(\vt_j)_ks_{j,i}=0,
  \end{equation*}
  as $k\leq m-m_i\leq m-m_j$ for all $j\geq i$. Again by \cite[Lemma
  6.7]{MR2877065}, this shows that $\tilde g\in\poly(\ZZ,G_\bullet)$, thus
  proving \emph{(1)}.

  Suppose now that, for some sufficiently small $c$, the sequence $(\tilde g(n)\Gamma)_{n\in\tilde N}$ is not totally
  $\delta^c$-equidistributed. Then
  there is a progression $P=\{u+vn\where n\in [\bar N]\}\subseteq [\tilde N]$ with
  $\bar N\geq \delta^c\tilde N$ and a Lipschitz function $F:G/\Gamma\to \CC$, such that
  \begin{equation*}
    \left|\EE_{n\in P}F(g(n)\Gamma)-\int_{G/\Gamma}F\right|\leq\delta^c\vecnorm{F}_{\Lip}.
  \end{equation*}
  In other words, the sequence $(\bar g(n)\Gamma)_{n\in[\bar N]}$
  with $\bar g(n)=\tilde g(u+vn)$ is not $\delta^c$-equidistributed.
  The
  quantitative Leibmann theorem \cite[Theorem 2.9]{MR2877065} shows the
  existence of a
  nontrivial horizontal character $\bar\eta$ with $0<|\bar\eta|\ll_{m,d} \delta^{-O_{m,d}(c)}$,
  such that
  \begin{equation*}
    \vecnorm{\bar\eta\circ \bar g}_{\Cinf[\bar N]}\ll_{m,d}\delta^{-O_{m,d}(c)}.
  \end{equation*}
  Note that, crudely, $\bar N\geq 2$ and thus $1\leq v\leq 2\delta^{-c}\leq 2\delta^{-1}\leq \tilde N$, which also gives $|u|\leq \tilde N$. Hence, by Lemma \ref{lem:polynomial_extrapolation} with $D=1$ and $Q=2\delta^{-c}$,
  there is $q\in\ZZ$ with $1\leq |q|\ll_q
  \delta^{-O_d(c)}$, such that the horizontal character $\eta=q\bar\eta$ with
  $|\eta|=\delta^{-O_{m,d}(c)}$ satisfies
  \begin{equation*}
    \vecnorm{\eta\circ\tilde g}_{\Cinf[\tilde
      N]}\leq\delta^{-O_{d}(c)}\vecnorm{\eta\circ\tilde g}_{\Cinf[\bar
      N]}\ll_{d}\delta^{-O_{d}(c)}\vecnorm{\bar\eta\circ\bar g}_{\Cinf[\bar
      N]} \ll_{m,d}\delta^{-O_{m,d}(c)}.
  \end{equation*}
  Now $g(n)=\tilde g(qn+b)$, so applying Lemma \ref{lem:polynomial_extrapolation}
  once more, we see that
  \begin{equation*}
    \vecnorm{\eta\circ g}_{\Cinf[N]}\leq \vecnorm{\eta\circ
      g}_{\Cinf[\tilde N]}\ll_{d}\delta^{-O_d(c)}\vecnorm{\eta\circ
      \tilde g}_{\Cinf[\tilde N]}\ll_{m,d}\delta^{-O_{m,d}(c)}.
  \end{equation*}
  If $\delta_0$ is small enough in terms of $m,d,c$, this implies by Lemma
  \ref{lem:characters_obstruct} that the sequence $(g(n)\Gamma)_{n\in[N]}$ is not totally
  $\delta^{O_{m,d}(Cc)}$-equidistributed, for some $C=C(m,d)\geq 1$. Hence, if
  $c$ was chosen small enough this contradicts our assumption that
  $(g(n)\Gamma)_{n\in[N]}$ is totally $\delta$-equidistributed.  
\end{proof}

\section{Ideal von Mangoldt function: proof of
  Theorem \texorpdfstring{\ref{thm:ideal_von_mangoldt_equidist_nilsequences}}{2.4}\label{sec:ideal_von_mangoldt}}

\subsection{Ideal norms and equidistributed nilsequences}
We will need cancellation when equidistributed nilsequences of mean zero are summed over the norms of ideals of a number field $K$ (i.e. nonzero ideals of the ring of integers $\OO_K$), twisted by Dirichlet characters of $K$. A result of Kane \cite[Lemma 21]{MR3060874} deals with the case of exponential phases $e(\alpha n)$. Here, we generalise Kane's result to the non-abelian setting.

\begin{lemma}\label{lem:norm_coefficients_divisible}
  Let $K$ be a number field of degree $D$ and $a,b\in K$ with $b\neq 0$. Then $N_{K/\QQ}(a+bx)$ is a polynomial in $\QQ[x]$ of degree $D$. Write $N_{K/\QQ}(a+bx)=\sum_{i=0}^Da_ix^i$. Then:
  \begin{enumerate}
  \item Let $\ccc$ an ideal of $K$. If $a,b\in\ccc$, then $a_i$ is an integer divisible by $\norm\ccc$ for all $0\leq i\leq D$.
  \item Let $A,B>0$ with $\abs{a}_v\leq A$ and $\abs{b}_v\leq B$ for all $v\mid\infty$, then $|a_i|\ll_D A^{D-i}B^i$ for all $0\leq i\leq D$.
  \end{enumerate}
\end{lemma}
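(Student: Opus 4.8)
The plan is to start from the factorisation $N_{K/\QQ}(a+bx)=\prod_{\sigma}\bigl(\sigma(a)+\sigma(b)x\bigr)$, the product running over the $D$ embeddings $\sigma\colon K\hookrightarrow\CC$. Since $b\neq 0$, every $\sigma(b)\neq 0$, so this is a product of $D$ linear polynomials and hence has degree exactly $D$ with leading coefficient $N_{K/\QQ}(b)$; its coefficients are symmetric functions of the $\sigma(a),\sigma(b)$, hence fixed by $\Gal(\overline{\QQ}/\QQ)$, hence lie in $\QQ$. Concretely $a_i=\sum_{\#S=i}\prod_{j\in S}\sigma_j(b)\prod_{j\notin S}\sigma_j(a)$, the sum over $i$-element subsets $S\subseteq\{1,\dots,D\}$.

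Part \emph{(2)} is then immediate: by hypothesis $|\sigma(a)|\leq A$ and $|\sigma(b)|\leq B$ for every archimedean place, i.e. for every embedding $\sigma\colon K\hookrightarrow\CC$, so the triangle inequality applied to the above formula for $a_i$ gives $|a_i|\leq\binom{D}{i}A^{D-i}B^i\ll_D A^{D-i}B^i$.

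For part \emph{(1)}, with $a,b\in\ccc\subseteq\OO_K$, I would realise $N_{K/\QQ}(a+bx)$ as a determinant and factor it through $\ccc$. Fix a $\ZZ$-basis $\omega_1,\dots,\omega_D$ of $\OO_K$ and a $\ZZ$-basis $v_1,\dots,v_D$ of the full lattice $\ccc\subseteq\OO_K$, and let $P\in M_D(\ZZ)$ be the base-change matrix $v_k=\sum_j P_{jk}\omega_j$, so that $|\det P|=[\OO_K:\ccc]=\norm\ccc$. Since $a,b\in\ccc$, multiplication by $a$ and by $b$ send $\OO_K$ into $(a)\subseteq\ccc$ resp. $(b)\subseteq\ccc$; hence multiplication by $a+bx$ defines a $\ZZ[x]$-linear map $\OO_K\otimes_\ZZ\ZZ[x]\to\ccc\otimes_\ZZ\ZZ[x]$ whose matrix $M'(x)$ with respect to the bases $(\omega_i)$ and $(v_k)$ lies in $M_D(\ZZ[x])$ (each entry has the form $c+dx$ with $c,d\in\ZZ$). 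Composing with the inclusion $\ccc\otimes_\ZZ\ZZ[x]\hookrightarrow\OO_K\otimes_\ZZ\ZZ[x]$, whose matrix is $P$, recovers multiplication by $a+bx$ on $\OO_K\otimes_\ZZ\ZZ[x]$, whose matrix in the basis $(\omega_i)$ is therefore $P\,M'(x)$ and whose determinant is $N_{K/\QQ}(a+bx)$. Consequently
\[
  N_{K/\QQ}(a+bx)=\det\bigl(P\,M'(x)\bigr)=(\det P)\det M'(x)=\pm\,\norm\ccc\cdot\det M'(x),
\]
and since $\det M'(x)\in\ZZ[x]$, every coefficient $a_i$ is an integer divisible by $\norm\ccc$.

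The only genuinely delicate point is part \emph{(1)}: one has to check that $\ccc\otimes_\ZZ\ZZ[x]$ is free over $\ZZ[x]$ on the $v_k$ and injects into $\OO_K\otimes_\ZZ\ZZ[x]$, that the composite of the two maps really is multiplication by $a+bx$ on $\OO_K\otimes_\ZZ\ZZ[x]$, and that the determinant of this multiplication map equals $N_{K/\QQ}(a+bx)$ — which holds because $K\otimes_\QQ\QQ(x)=K(x)$ is a field ($K/\QQ$ being separable), so norms are compatible with base change from $\QQ$ to $\QQ(x)$. Everything else is routine linear-algebra bookkeeping. As a sanity check, the cases $i=0$ and $i=D$ of \emph{(1)} follow directly from $a_0=N_{K/\QQ}(a)$ and $a_D=N_{K/\QQ}(b)$ together with $\ccc\mid(a)$, $\ccc\mid(b)$ and multiplicativity of ideal norms.
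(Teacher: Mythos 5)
Your proposal is correct. The factorisation over embeddings, the explicit formula for $a_i$, and part \emph{(2)} coincide with the paper's argument. For part \emph{(1)}, however, you take a genuinely different route: the paper stays with the symmetric-function expression for $a_i$, observes $a_i\in\prod_{\sigma\in R}\sigma(\ccc)\OO_L$ in the normal closure $L$, and reduces the claim to the ideal identity $\prod_{\sigma\in R}\sigma(\ccc)\OO_L=\norm_{K/\QQ}(\ccc)\OO_L$, which it proves by analysing the Galois orbit of a prime of $L$ and then passing to general $\ccc$ by multiplicativity. Your argument instead factors the multiplication-by-$(a+bx)$ map on $\OO_K\otimes_\ZZ\ZZ[x]$ through the lattice $\ccc\otimes_\ZZ\ZZ[x]$ (legitimate because $a\OO_K,b\OO_K\subseteq\ccc$), so that $N_{K/\QQ}(a+bx)=\det(PM'(x))=\pm\norm\ccc\cdot\det M'(x)$ with $\det M'(x)\in\ZZ[x]$; the only point needing care is the identification of $\det$ of that multiplication map with the norm polynomial, which your base-change remark (or simply evaluation at infinitely many integers $x_0$, where it is the usual norm $N_{K/\QQ}(a+bx_0)$) settles. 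What each approach buys: the paper's is self-contained ideal-theoretic bookkeeping in the normal closure and reuses the same explicit coefficient formula for both parts; yours avoids the normal closure entirely, uses only $[\OO_K:\ccc]=\norm\ccc$ and elementary lattice linear algebra, and delivers integrality and the divisibility by $\norm\ccc$ in a single determinant identity. Both proofs are complete and correct.
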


\begin{proof}
  Let $L$ be the normal closure of $K$, $G=\Gal(L/\QQ)$, $H=\Gal(L/K)\leq G$ and $R\subseteq G$ a system of representatives for $G/H$. Then
  \begin{equation*}
    N_{K/\QQ}(a+bx)=\prod_{\sigma\in R}(\sigma(a)+\sigma(b)x)=\sum_{i=0}^Da_ix^i, 
  \end{equation*}
  with
  \begin{equation}\label{eq:norm_poly_coeff_formula}
    a_i = \sum_{\substack{I\subseteq R\\|I|=i}}\prod_{\sigma\notin I}\sigma(a)\prod_{\sigma\in I}\sigma(b)\in L.
  \end{equation}
  For any $\tau\in G$, the set $\{\tau\sigma\ :\ \sigma\in R\}$ is also a system of representatives of $G/H$, which shows that $\tau(a_i)=a_i$, and thus $a_i\in\QQ$.

  For assertion \emph{(1)}, we observe from \eqref{eq:norm_poly_coeff_formula} that $a_i\in \prod_{\sigma\in R}\sigma(\ccc)\OO_L$. Hence, it is sufficient to show that
  \begin{equation}\label{eq:ideal_conjugates_norm_equation}
    \prod_{\sigma\in R}\sigma(\ccc)\OO_L = \norm_{K/\QQ}(\ccc)\OO_L,
  \end{equation}
  as then $a_i\in \norm_{K/\QQ}(\ccc)\OO_L\cap\QQ = \norm_{K/\QQ}(\ccc)\ZZ$. To show \eqref{eq:ideal_conjugates_norm_equation}, we first consider a prime ideal $\PPP$ of $L$. With $p=\PPP\cap\ZZ$ and the factorisation $p\OO_L=(\PPP_1\cdots\PPP_r)^e$ with $\PPP_1=\PPP$ and $[\OO_L/\PPP_i:\FF_p]=f$, we get
  \begin{equation*}
    \prod_{\sigma\in G}\sigma(\PPP)=(\PPP_1\cdots\PPP_r)^{|G|/r}=(\PPP_1\cdots\PPP_r)^{ef}=(p\OO_L)^f=\norm_{L/\QQ}(\PPP)\OO_L.
  \end{equation*}
  This implies \eqref{eq:ideal_conjugates_norm_equation} in case $K=L$. Now let $\ccc$ be any nonzero ideal of $\OO_K$. Then, using what we just proved,
  \begin{equation*}
    \left(\prod_{\sigma\in R}\sigma(\ccc)\OO_L\right)^{|H|} = \prod_{\sigma\in G}\sigma(\ccc\OO_L) = \norm_{L/\QQ}(\ccc\OO_L)\OO_L = \norm_{K/\QQ}(\ccc^{|H|})\OO_L = (\norm_{K/\QQ}(\ccc)\OO_L)^{|H|},
  \end{equation*}
  which shows \eqref{eq:ideal_conjugates_norm_equation} in general and thus \emph{(1)}.

  For \emph{(2)}, it is enough to observe that in \eqref{eq:norm_poly_coeff_formula} one has
  \begin{equation*}
    \left|\prod_{\sigma\notin I}\sigma(a)\prod_{\sigma\in I}\sigma(b)\right| = \prod_{\sigma\notin I}\abs{a}_{v_\sigma}\prod_{\sigma\in I}\abs{b}_{v_\sigma} \leq A^{D-i}B^i. 
  \end{equation*}
\end{proof}

Recall that a Dirichlet character $\xi$ of $K$ is a finite order Hecke character. We consider it as a character of the ray class group $\xi:I(\mmm)/P_\mmm\to S^1$, where $\mmm=\mmm_\infty\mmm_0$ is a cycle, $I(\mmm)$ is the group of fractional ideals relatively prime to $\mmm_0$ and $P_\mmm$ is the subgroup of principal fractional ideals $(\alpha)\in I(\mmm)$ with $\alpha\equiv 1\mods\mmm$. See \cite[Chapter VI]{MR1282723} for more on this notation. As usual, for arbitrary nonzero ideals $\aaa$ of $\OO_K$, we set $\xi(\aaa):=\xi(\aaa P_\mmm)$ if $\aaa\in I(\mmm)$ and $\xi(\aaa):=0$ otherwise. Here is our version of \cite[Lemma 21]{MR3060874}.

\begin{proposition}[Cancellation of equidistributed nilsequences along ideal
  norms in progressions]\label{prop:norms_vs_equidist_nilsequences}
  Let $m,d,D\in\NN$. Then there is a constant $c=c(m,d,D)\in(0,1/2)$, such that the following holds. Let $N,Q\in\NN$ and $\delta\in(0,1)$.
  Let $G/\Gamma$ be an
  $m$-dimensional nilmanifold together with a filtration $G_\bullet$ of degree
  $d$ and a $Q$-rational Mal'cev basis adapted to this
  filtration. Let $g\in\poly(\ZZ,G_\bullet)$ be a polynomial sequence of degree
  $d$ and suppose that $(g(n)\Gamma)_{n\in[N]}$ is totally
  $\delta$-equidistributed in $G/\Gamma$. Let $F:G/\Gamma\to\CC$ be a
  Lipschitz-function such that $\int_{G/\Gamma}F=0$.  
  Let $K$ be a number field of degree $D$ and $\xi$ a Dirichlet character of $K$ of modulus $\mmm$. 
  Let $P\subseteq [N]$ be an arithmetic progression of
  length at least $N/Q$. Then
  \begin{equation*}
    \left|\sum_{\substack{\aaa\in\ideals\\\norm\aaa\in P}}\xi(\aaa)F(g(\norm\aaa)\Gamma)\right|\ll_{m,d,K,\mmm} N\delta^{c(m,d,D)}Q\vecnorm{F}_{\textnormal{Lip}}.
  \end{equation*}
\end{proposition}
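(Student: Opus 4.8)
The plan is to adapt the strategy behind Kane's exponential-sum bound \cite[Lemma~21]{MR3060874}, feeding in the equidistribution results of \S\ref{sec:equidistributed_nilsequences} in place of the classical estimate for exponential phases along ideal norms. After handling degenerate ranges of the parameters by the trivial bound $\ll_K N\vecnorm{F}_{\Lip}$ for the left-hand side (the asserted bound exceeds it whenever $\delta$ fails to be smaller than a suitable constant depending on $m,d,D,K$, or $Q$ is too large), we may assume throughout that $\delta$ is sufficiently small in those terms and that $Q\le\delta^{-c_0}$ for a small $c_0=c_0(m,d,D)>0$.

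\emph{Step 1: reduction to lattice point sums.} Using the classical parametrisation of integral ideals by lattice points in a fundamental domain (cf.\ the treatment of the ideal-counting function in \cite[Ch.~VI]{MR1282723}), fix representatives $\mathfrak b_1,\dots,\mathfrak b_h$ of the ray class group of $K$ modulo $\mmm$, chosen coprime to $\mmm_0$, and a fundamental domain $\mathcal F\subseteq(K\otimes_\QQ\RR)^\times$ for the $\OO_K^\times$-action, homogeneous of degree $1$ under scaling and semialgebraic of complexity $O_K(1)$. Every $\aaa\in\ideals$ with $\gcd(\aaa,\mmm_0)=1$ and $\norm\aaa\le N$ equals $\alpha\mathfrak b_j^{-1}$ for a unique pair $(j,\alpha)$ with $\alpha\in\mathfrak b_j\cap\mathcal F$, $|\alpha|_v\ll_K N^{1/D}$ for all $v\mid\infty$, and $\norm\aaa=|N_{K/\QQ}(\alpha)|/\kappa_j$, where $\kappa_j:=\norm\mathfrak b_j\ll_K 1$. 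Splitting $\alpha$ further according to its class in $(\OO_K/\mmm_0)^\times$ and its signs at the real places, the factor $\xi(\aaa)=\xi(\mathfrak b_j^{-1})\xi((\alpha))$ becomes a constant of modulus $\le 1$ on each of the resulting $O_{K,\mmm}(1)$ pieces. We are thus reduced to bounding, uniformly over these pieces,
\begin{equation*}
  \Sigma:=\sum_{\substack{\alpha\in\Lambda\cap\mathcal F\\ |\alpha|_v\ll_K N^{1/D},\ |N_{K/\QQ}(\alpha)|/\kappa\in P}}F\bigl(g(|N_{K/\QQ}(\alpha)|/\kappa)\Gamma\bigr),
\end{equation*}
where $\kappa=\kappa_j\ll_K 1$ and $\Lambda$ is a coset, contained in $\mathfrak b_j$, of the full-rank sublattice $\mmm_0\mathfrak b_j$ of $\OO_K$; note $\kappa\mid|N_{K/\QQ}(\alpha)|$ for every $\alpha\in\Lambda$, since then $\mathfrak b_j\mid(\alpha)$, and on each piece $|N_{K/\QQ}(\alpha)|=\epsilon_0N_{K/\QQ}(\alpha)$ with $\epsilon_0\in\{\pm1\}$ fixed.

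\emph{Step 2: slicing into a one-variable norm polynomial.} Fix a $\ZZ$-basis $\omega_1,\dots,\omega_D$ of the lattice underlying $\Lambda$ and use it as linear coordinates on $K\otimes_\QQ\RR\cong\RR^D$; then the archimedean box becomes a fixed convex body dilated by $N^{1/D}$ and $\mathcal F$ a fixed semialgebraic cone. Writing $\alpha=\beta+x\omega_D$ with $\beta$ in the rank-$(D-1)$ sublattice spanned by $\omega_1,\dots,\omega_{D-1}$, there are $\ll_K N^{(D-1)/D}$ relevant values of $\beta$, each with $|\beta|_v\ll_K N^{1/D}$, and for each of them the admissible $x$ form a union of $O_K(1)$ intervals of length $\ll_K N^{1/D}$. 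By Lemma \ref{lem:norm_coefficients_divisible}, $\epsilon_0N_{K/\QQ}(\beta+x\omega_D)=:P_\beta(x)$ lies in $\ZZ[x]$, has degree $D$ with leading coefficient $\epsilon_0N_{K/\QQ}(\omega_D)\in\ZZ\smallsetminus\{0\}$, has coefficients of size $\ll_K N^{1-i/D}$, and is divisible by $\kappa$ for every integer $x$; translating the variable by $\ll_K N^{1/D}$ preserves all these properties. Hence, after splitting and translating, each $\beta$ contributes $O_K(1)$ sums $\sum_{y\in I}F(g(P(y)/\kappa)\Gamma)$ with $I$ a subprogression of $[\tilde N]$, $\tilde N:=\lfloor(\kappa N)^{1/D}\rfloor\asymp_K N^{1/D}$, and $P\in\ZZ[x]$ as above.

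\emph{Step 3: transferring equidistribution and concluding.} By Proposition \ref{prop:spread_nilsequence}(1) with $q=\kappa$, $b=0$, there is $\tilde g\in\poly(\ZZ,G_\bullet)$ with $\tilde g(n)=g(n/\kappa)$ whenever $\kappa\mid n$, so $g(P(y)/\kappa)=\tilde g(P(y))$. By Proposition \ref{prop:spread_nilsequence}(2) (using $\kappa\ll_K1\le\delta^{-c_0}$, $Q\le\delta^{-c_0}$, $\delta$ small), $\tilde g$ is totally $\delta^{c_1}$-equidistributed on $[\kappa N]$ for some $c_1=c_1(m,d)>0$. Since the coefficients of $P$ are $\ll_K N^{1-i/D}\le\delta^{c_1c_2}(\kappa N)^{1-i/D}$ for $\delta$ small in terms of $m,d,D,K$, Proposition \ref{prop:equidist_poly_subseq}(2) applied to $\tilde g$ on $[\kappa N]$ and to $P$ shows that $y\mapsto\tilde g(P(y))\Gamma$ is totally $\delta^{c_1c_2}$-equidistributed on $[\tilde N]$, for some $c_2=c_2(m,d,D)>0$. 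As $\int_{G/\Gamma}F=0$, total equidistribution bounds the contribution of a subprogression $I\subseteq[\tilde N]$ of length $\ge\delta^{c_1c_2}\tilde N$ by $\ll_K\tilde N\delta^{c_1c_2}\vecnorm{F}_{\Lip}$, and shorter $I$ are estimated trivially by the same quantity. Summing over the $O_K(1)$ subintervals per $\beta$, the $\ll_K N^{(D-1)/D}$ values of $\beta$, and the $O_{K,\mmm}(1)$ pieces from Step 1 gives $|\Sigma|\ll_{m,d,K,\mmm}N^{(D-1)/D}\cdot N^{1/D}\delta^{c_1c_2}\vecnorm{F}_{\Lip}=N\delta^{c_1c_2}\vecnorm{F}_{\Lip}$; taking $c:=c_1c_2$ (shrunk so that $c<c_0$ and $c<1/2$) and using $Q\ge1$ yields the claimed bound.

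\emph{Main obstacle.} The genuinely new analytic input — equidistribution of polynomial subsequences of equidistributed nilsequences — is already isolated in Propositions \ref{prop:spread_nilsequence} and \ref{prop:equidist_poly_subseq}, so the bulk of the work here is Steps~1--2: setting up the ideal-to-lattice parametrisation so that it is compatible with the Hecke character $\xi$, and so that slicing off the last coordinate produces genuine degree-$D$ integer polynomials $P_\beta$ whose coefficients obey the size bound $\ll_K N^{1-i/D}$ demanded by Proposition \ref{prop:equidist_poly_subseq} (this is exactly where Lemma \ref{lem:norm_coefficients_divisible} enters), together with careful bookkeeping of how the equidistribution parameter degrades through the two compositions and of the fact that the auxiliary quantities $\kappa$, the number of pieces, and the lengths and translates of the slices depend only on $K$, $\mmm$, $m$, $d$, $D$.
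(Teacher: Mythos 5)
Your proposal is correct in outline but follows a genuinely different decomposition from the paper's, and it leans on one claim that is not justified as stated. The paper (following Kane's Lemma 21) fixes a primitive element $r\in\ccc\mmm_0$ of bounded archimedean size, sums over \emph{all} short lines $b+(0,M]r$ with $M\asymp \delta^{c/2}N^{1/D}$ through every lattice point (with a $1/M$ normalisation), disposes of the $\ll MN^{1-1/D}$ lines meeting the boundary of the region by the Lipschitz-parameterisability of the boundary, and for interior lines sums over the whole line; moreover it divides the norm polynomial by $\norm\ccc$ using Lemma \ref{lem:norm_coefficients_divisible}(1), so only Proposition \ref{prop:equidist_poly_subseq} is needed. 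You instead partition the lattice into full slices $\beta+x\omega_D$ of length $\asymp N^{1/D}$, intersect each slice with the fundamental domain directly, and route the division by $\kappa$ through Proposition \ref{prop:spread_nilsequence}. What the averaging over short lines buys is precisely that one never needs to understand how a long line meets the fundamental domain; your route must control that intersection, and your justification — that $\mathcal{F}$ can be taken ``semialgebraic of complexity $O_K(1)$'' — is false for the standard construction of \cite[VI, \S 3]{MR1282723}, which is cut out by conditions $\sum_v c_v\log\abs{\alpha}_v\in[0,1)$ with real, generally irrational $c_v$, hence not semialgebraic, and you give no alternative construction. The fact you need is nevertheless true: along a slice each such condition reads $f(x)\in[a,b)$ with $f(x)=\sum_v c_v\log|\ell_v(x)|$, whose derivative is a rational function with $O_D(1)$ zeros, so each slice meets the domain (and the norm range) in $O_{K}(1)$ intervals; but this argument has to be supplied, as it is the crux that your decomposition, unlike the paper's, cannot avoid.

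Two further repairs are needed. First, the condition $\norm\aaa\in P$ contains a congruence modulo the common difference $q\le Q$ of $P$; on a slice this selects a union of up to $\asymp q$ residue classes of $y$ (with $q$ as large as $\delta^{-c_0}$, not $O_K(1)$), so ``each $\beta$ contributes $O_K(1)$ sums over subprogressions'' is not correct as stated. The estimate survives because the class lengths sum to at most the slice length and the short classes can be absorbed after fixing the order of the constants ($c_0$ small compared with $c_1c_2$), exactly as the paper does when it splits $[M]$ into classes modulo $q$; this bookkeeping must be made explicit. Second, Propositions \ref{prop:equidist_poly_subseq}(2) and \ref{prop:spread_nilsequence}(2) require lower bounds $\delta>N^{-1/2D}$, resp.\ $\delta>\tilde N^{-1/2}$: you dispose of large $\delta$ and large $Q$, but not of very small $\delta$, so the paper's standard reduction (the claim is trivial for $\delta\le 1/N$ by total $\delta$-equidistribution; otherwise apply the result with $\delta^{1/2}$ and halve $c$) is still needed. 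Minor further points: after translation your intervals have length $\ll_K N^{1/D}$ with a constant that may exceed $\kappa^{1/D}$, so an extra $O_K(1)$-fold subdivision is required to keep them inside $[\tilde N]$; and since Lemma \ref{lem:norm_coefficients_divisible}(1) already gives $P_\beta/\kappa\in\ZZ[x]$, you could dispense with Proposition \ref{prop:spread_nilsequence} entirely and compose $g$ with $P_\beta/\kappa$, as the paper does.
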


\begin{proof}
  We start with a few standard reductions as in the proof of \cite[Proposition 2.1]{MR2877065}.
  As $\vecnorm{F}_\infty\leq\vecnorm{F}_{\Lip}$, the desired bound holds trivially whenever $\delta\gg_{m,d,K,\mmm} 1$. Moreover, if $\delta\leq 1/N$, then the total $\delta$-equidistribution of $(g(n)\Gamma)_{n\in[N]}$ implies that $|\xi(\aaa)F(g(\norm\aaa)\Gamma)|\leq \delta\vecnorm{F}_{\Lip}$ for every ideal $\aaa$ with $\norm\aaa\leq N$, again making the conclusion trivial. Hence,
  we may assume that $\delta>N^{-1/2}$. Indeed, for $\delta>1/N$ we may then apply the result for $\delta^{1/2}$, thus halving the value of $c(m,d,D)$. Therefore, we assume from now on that
  \begin{equation}\label{eq:kane_prop_delta_bounds}
  N^{-1/2}< \delta \leq \eta(m,d,K,\mmm), 
\end{equation}
where the choice of $\eta(m,d,K,\mmm)<1$ will be described later in the proof. Now we may clearly assume that $N$ is sufficiently large in terms of $m,d,K,\mmm$. Taking $c=c(m,d,D)$ to be the constant from Proposition \ref{prop:equidist_poly_subseq}, we will show the desired bound with $c/2$ in place of $c$.

Hence, we may assume that $Q\leq \delta^{-c/2}<\delta^{-c}$, as otherwise the result is again trivial. Hence, our Mal'cev basis is $\delta^{-c}$-rational, and the progression $P$ has length $\geq \delta^{c/2}N$.
  
Next, we recall the classical method to sum Dirichlet characters over ideals of bounded norm by lattice point counting. Fix a ray class $\cC\in I(\mmm)/P_\mmm$ and an ideal $\ccc\in I(\mmm)$ with $\norm\ccc\ll_{K,\mmm} 1$ such that $\ccc^{-1}$ represents the class $\cC$. Then ideals $\aaa\in \cC$ have the form $\aaa=a\ccc^{-1}$ for $a\in\ccc\smallsetminus\{0\}$ with $a\equiv 1\mods\mmm$. As $a\ccc^{-1}=b\ccc^{-1}$ if and only if $ab^{-1}\in U_\mmm$, the group of units of $\OO_K$ congruent to $1 \mods \mmm$, the ideals $\aaa\in \cC$ are parameterised in this way by classes of elements $a$ as above modulo $U_\mmm$. We embed $K$ into the $\RR$-algebra $K_\infty := K\otimes_\QQ\RR = \prod_{v\mid\infty}K_v$ of dimension $D$, then the elements $a\in\ccc$ with $a\equiv 1\mods\mmm_0$ form a translate of the lattice $\ccc\mmm_0\in K_\infty$. Let us call this translate $T_\ccc$ and observe that $T_\ccc\subseteq\ccc$. To incorporate the archimedean part of $\mmm$, we consider the subgroup
\begin{equation*}
J_K(\infty,\mmm):=\{x\in K_\infty^\times\ :\ x_v>0 \text{ for real }v\mid\mmm\}\subseteq K_\infty^\times = \prod_{v\mid\infty}K_v^\times,
\end{equation*}
on which $U_\mmm$ acts by multiplication. Hence, ideals $\aaa\in\ccc$ are parameterised by elements $a\in T_\ccc\cap J_K(\infty,\mmm)$, up to the action of $U_\mmm$, via $a\mapsto a\ccc^{-1}$. In particular, $\norm\aaa=|N_{K/\QQ}(a)|\norm\ccc^{-1}$, so $\norm\aaa\leq B$ holds if and only if $|N_{K/\QQ}(a)|\leq B\norm\ccc$.

Identifying each $K_v$ with $\RR$ or $\CC$ provides us with notions of euclidean norm and distance, as well as volume on $K_\infty$ and thus on $K_\infty^\times$. Let $V$ be a free abelian subgroup of $U_\mmm$ that generates $U_\mmm$ modulo roots of unity. Then $V$ is finitely generated by Dirichlet's unit theorem. A classical construction (see e.g. \cite[VI,\S 3, Lemma 1]{MR1282723}) yields a fundamental domain $\cF \subset J_K(\infty,\mmm)$ for the action of $U_\mmm$ on $J(\infty,\mmm)$ with the following properties: $t\cF=\cF$ for all $t>0$, and the set $\cF(1)\subseteq \cF$ has a $(D-1)$-Lipschitz-parameterisable boundary, where
\begin{equation*}
\cF(B):=\{\alpha\in \cF\where |N_{K/\QQ}(\alpha)|\leq B\}.
\end{equation*}
It follows from these properties that $\cF(B)=B^{1/D}\cF(1)$, in particular $\cF(B)$ has diameter $\asymp_{K,\mmm} B^{1/D}$ and a $(D-1)$-Lipschitz-parameterisable boundary with Lipschitz constant $\asymp_{K,\mmm} B^{1/D}$.
  
We write the progression $P\subseteq [N]$ as $P=\{n\in (N_1,N_2]\where n\equiv u\bmod q\}$, with integers $0\leq u<q$ and $0\leq N_1< N_2\leq N$ such that $N_2-N_1\gg q\delta^{c/2} N$. In particular, observe that $q\ll \delta^{-c/2}$. Writing $R:=\cF(N_2\norm\ccc)\smallsetminus \cF(N_1\norm\ccc)$ and denoting by $\omega_\mmm$ the number of roots of unity in $U_\mmm$, we see that
\begin{align}\label{eq:kane_prop_ray_classes_split}
  \sum_{\substack{\aaa\in \cC\\\norm\aaa\in P}}\xi(\aaa)F(g(\norm\aaa)\Gamma) &= \frac{\xi(\ccc)}{\omega_\mmm}\sum_{a\in T_\ccc\cap R}1_{u+q\ZZ}(|N_{K/\QQ}(a)|\norm\ccc^{-1})F(g(|N_{K/\QQ}(a)|\norm\ccc^{-1})\Gamma).
\end{align}
As there are $\ll_{K,\mmm} 1$ ray classes $\cC$, it suffices to bound a sum such as on the right-hand side above.

Fix a primitive element $r\in\ccc\mmm_0$ (i.e. $r\notin k\ccc\mmm_0$ for any $k\in\NN$, $k>1$) with $|r|_v\ll_{K,\mmm} 1$ for all $v\mid \infty$ and an integer $M=\lfloor N^{1/D}C\delta^{c/2}\rfloor$, with a sufficiently large constant $C=C(m,d,K,\mmm)$. We define a \emph{line} in $K_\infty$ to be a line segment of the form $L=b+(0,M]r\subseteq K_\infty$ of length $(M+1)|r|$, where $b\in T_\ccc$. Then $L_\ccc := L\cap T_\ccc = \{b+r,\ldots,b+Mr\}$. Clearly, every point $a\in T_\ccc$ is contained in exactly $M$ lines, so we can write
\begin{multline}\label{eq:splitting_into_lines}
  \sum_{a\in T_\ccc\cap R}1_{u+q\ZZ}(|N_{K/\QQ}(a)|\norm\ccc^{-1})F(g(|N_{K/\QQ}(a)|\norm\ccc^{-1})\Gamma) \\= \frac{1}{M}\sum_{L \text{ line}}\sum_{a\in L_\ccc\cap R}1_{u+q\ZZ}(|N_{K/\QQ}(a)|\norm\ccc^{-1})F(g(|N_{K/\QQ}(a)|\norm\ccc^{-1})\Gamma).
\end{multline}
Due to the Lipschitz-parameterisability of the boundaries of $\cF(N_1\norm\ccc)$ and $\cF(N_2\norm\ccc)$, the boundary of $R$ is contained in the union of $\ll_{K,\mmm} N^{1-1/D}$ balls of radius $\ll_{K,m} 1$. Each such ball intersects the closures of at most $\ll_{K,\mmm} M$ lines. Hence, there are at most $\ll_{K,\mmm}MN^{1-1/D}$ lines whose closure intersects the boundary of $R$.
As each line contains at most $M$ points of $T_\ccc\cap R$,
the total contribution of these lines to \eqref{eq:splitting_into_lines} is $\ll_{K,\mmm}MN^{1-1/D}\vecnorm{F}_\infty\leq NC\delta^{c/2}\vecnorm{F}_{\textnormal{Lip}}$, which is acceptable for our desired bound.

Next, we bound the contribution to \eqref{eq:splitting_into_lines} coming from a line $L=b+(0,M]r$ whose closure is contained in the interior of $R$, so in particular $b\in R$. Recall from the definition of a line that $b\in T_\ccc$ and $r\in\ccc\mmm_0$, so in particular $b,r\in\ccc$. With Lemma \ref{lem:norm_coefficients_divisible}, we see that $N_{K/\QQ}(b+rx)$ is a polynomial in $x$ of degree $D$ with integer coefficients divisible by $\norm\ccc$. The coefficient of $x^i$ is $\ll_{K,\mmm} N^{1-i/D}$, as $b\in R\subseteq (N\norm\ccc)^{1/D}\cF(1)$ and $\abs{r}_{v}\ll_{K,\mmm}1$ for all $v\mid\infty$. As $R\subseteq \cF\subseteq K_\infty^\times$, we have $N_{K/k}(a)\neq 0$ for all $a\in R$. As $L$ does not meet the boundary of $R$, it lies entirely within one connected component, so the sign of $N_{K/\QQ}(a)$ is constant on $L$. Hence, for $a=b+rx\in L$ we may write
\begin{equation*}
 |N_{K/\QQ}(a)|\norm\ccc^{-1}=P_L(x),
\end{equation*}
for a polynomial $P_L\in\ZZ[x]$ of degree $D$, in which the coefficient $a_i$ of $x^i$ satisfies $ a_i \ll_{K,\mmm}  N^{1-i/D}$. We choose $\eta(m,d,K,\mmm)$ sufficiently small in \eqref{eq:kane_prop_delta_bounds} so that this implies $|a_i|\leq \delta^{-c}N^{1-i/D}$.

By Proposition \ref{prop:equidist_poly_subseq}, the sequence $((g\circ P_L)(n)\Gamma)_{n\in \tilde N}$ is totally $\delta^{c}$-equidistributed in $G/\Gamma$, where $\tilde N=\lfloor N^{1/D}\rfloor$. The sum we are trying to bound now takes the form
\begin{equation}\label{eq:application_of_poly_subsequence}
  \sum_{a\in L_\ccc}1_{u+q\ZZ}(|N_{K/\QQ}(a)|\norm\ccc^{-1})F(g(|N_{K/\QQ}(a)|\norm\ccc^{-1})\Gamma) = \sum_{n\in [M]}1_{u+q\ZZ}(P_L(n))F(g(P_L(n))\Gamma).
\end{equation}
To deal with the factor $1_{u+q\ZZ}(P_L(n))$, we observe that it is constant on residue classes modulo $q$. Hence, splitting $[M]$ into such classes, the sum above becomes
\begin{equation*}
  \sum_{v\bmod q}1_{u+q\ZZ}(P_L(v))\sum_{\substack{n\in[M]\\n\equiv v\bmod q}}F(g(P_L(n))\Gamma)
\end{equation*}
Recall that $q\ll \delta^{-c/2}$. Hence we may choose
$C$ and $N$ sufficiently large in order to ensure that the set $\{n\in[M]\where n\equiv v\bmod q\}$ is a subprogression of $[\tilde N]$ of length $\geq \delta^{c}\tilde N$. Therefore, total $\delta^{c}$-equidistribution yields the estimate
\begin{equation*}
  \sum_{\substack{n\in [M]\\n\equiv v\bmod q}}F(g(P_L(n))\Gamma) \ll \frac{M}{q}\delta^{c}\vecnorm{F}_{\textnormal{Lip}},
\end{equation*}
and hence the bound $\ll M\delta^{c}\vecnorm{F}_{\textnormal{Lip}}$ for the expression in \eqref{eq:application_of_poly_subsequence}.

The number of lines $L$ whose closure is contained in the interior of $R$ is clearly bounded by $|\ccc\cap \cF(N\norm\ccc)|\ll_{K,\mmm} N$. Summing the above bound over all these lines in \eqref{eq:splitting_into_lines} and including our bound for the lines intersecting the boundary shown earlier, we see that
\begin{equation*}
  \sum_{a\in T_\ccc\cap R}1_{u+q\ZZ}(|N_{K/\QQ}(a)|\norm\ccc^{-1})F(g(|N_{K/\QQ}(a)|\norm\ccc^{-1})\Gamma) \ll_{K,\mmm} N\delta^{c/2}\vecnorm{F}_{\textnormal{Lip}}.
\end{equation*}
Inserting this in \eqref{eq:kane_prop_ray_classes_split} shows the desired bound.
\end{proof}

\begin{corollary}\label{cor:norms_vs_equidist_nilsequences_log}
  Let $m,d,D\in\NN$. Then there is a constant $c=c(m,d,D)\in(0,1/2)$, such that under the hypotheses of Proposition \ref{prop:norms_vs_equidist_nilsequences}, we have
    \begin{equation*}
    \left|\sum_{\norm\aaa\in P}\xi(\aaa)F(g(\norm\aaa)\Gamma)\log(\norm\aaa)\right|\ll_{m,d,K,\mmm} N\log( N+2)\delta^{c(m,d,D)}Q\vecnorm{F}_{\textnormal{Lip}}.
  \end{equation*}
\end{corollary}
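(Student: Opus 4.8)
The plan is to remove the weight $\log(\norm\aaa)$ by partial summation, at the cost of one factor $\log N$ and a halving of the constant $c$. First I would perform the standard reductions from the proof of Proposition~\ref{prop:norms_vs_equidist_nilsequences}: replacing $\delta$ by $\delta^{1/2}$ if necessary (halving $c$), we may assume $\delta>N^{-1/2}$; moreover the asserted bound is trivial when $Q\geq\delta^{-c}$, since then its left-hand side is $\ll_K N\log N\,\vecnorm{F}_\infty$ by the elementary fact that $\OO_K$ has $\ll_K x$ nonzero ideals of norm $\leq x$. So we may assume $N^{-1/2}<\delta$, $Q<\delta^{-c}$ and $N$ large, and write $P=\{n\in(N_1,N_2]\where n\equiv u\bmod q\}$ with $0\leq N_1<N_2\leq N$ and $q\ll Q$ (the latter from $|P|\geq N/Q$). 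For $t\geq0$ put $A(t):=\sum_{\aaa\in\ideals,\ \norm\aaa\in P,\ \norm\aaa\leq t}\xi(\aaa)F(g(\norm\aaa)\Gamma)$, so that $A(t)=0$ for $t\leq N_1$ and $A(N)$ is the sum bounded in Proposition~\ref{prop:norms_vs_equidist_nilsequences}. Abel summation gives
\[
 \sum_{\norm\aaa\in P}\xi(\aaa)F(g(\norm\aaa)\Gamma)\log\norm\aaa=A(N)\log N-\int_{1}^{N}A(t)\,\frac{dt}{t},
\]
and the boundary term is $\ll_{m,d,K,\mmm}N\log N\,\delta^{c}Q\vecnorm{F}_{\Lip}$ by Proposition~\ref{prop:norms_vs_equidist_nilsequences}; this is where $\log(N+2)$ enters.

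For the integral I would bound $A(t)$ in two complementary ways. The trivial estimate $|A(t)|\ll_K\bigl(\min(t,N_2)-N_1\bigr)\vecnorm{F}_{\Lip}$ ($t>N_1$) follows from ideal counting, the error $O_K(N^{1-1/D})$ from the counting function being negligible for the integral once $\delta>N^{-1/2}$ and $c\leq 2/D$. The cancellation estimate comes from applying Proposition~\ref{prop:norms_vs_equidist_nilsequences} to the sub-progression $P\cap[1,t]\subseteq[N]$, which has length $\ell(t)\asymp\bigl(\min(t,N_2)-N_1\bigr)/q$: with $Q':=\max\{2,\lceil N/\ell(t)\rceil\}$ this yields $|A(t)|\ll_{m,d,K,\mmm}N\delta^{c}Q'\vecnorm{F}_{\Lip}$.

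Finally I would split the integral at $t_0:=N_1+C_0 Nq\delta^{c/2}$. On $(N_1,t_0]$ the trivial bound together with $\bigl(\min(t,N_2)-N_1\bigr)/t\leq 1$ gives $\int_{N_1}^{t_0}|A(t)|\,dt/t\ll_K Nq\delta^{c/2}\vecnorm{F}_{\Lip}\ll NQ\delta^{c/2}\vecnorm{F}_{\Lip}$; on $(t_0,N]$ one has $\ell(t)\gg N\delta^{c/2}$, so $Q'\ll\delta^{-c/2}$, and the cancellation bound gives $|A(t)|\ll N\delta^{c/2}\vecnorm{F}_{\Lip}$, whence $\int_{t_0}^{N}|A(t)|\,dt/t\ll N\delta^{c/2}\log N\,\vecnorm{F}_{\Lip}$. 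Summing the three contributions and renaming $c/2$ as $c$ proves the corollary. The only real obstacle is making the $\delta$-saving uniform in the position of $P$ inside $[N]$: when $P$ sits near the top, $A(t)$ vanishes for most $t$ and near $t\approx N_1$ only the trivial bound is available, so one must use the decay of the factor $1/t$ in the integrand rather than a pointwise $\delta$-power bound on $A(t)$; keeping the ideal-counting error below the threshold is exactly what forces the (harmless) reduction $\delta>N^{-1/2}$ and a small choice of $c$.
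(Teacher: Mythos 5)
Your argument is correct and is essentially the paper's own proof: Abel summation to strip the logarithm, the trivial ideal-count bound for short truncations, and Proposition \ref{prop:norms_vs_equidist_nilsequences} applied to truncated sub-progressions with an inflated rationality parameter, with the integral split at a threshold of size about $Nq\delta^{c/2}$ and $c$ halved at the end. The paper streamlines the bookkeeping by indexing the partial sums along the progression (writing $P=\{b+nq:n\in[M]\}$) and by fixing one uniform parameter $\tilde Q=\lceil 2\delta^{-c/2}Q\rceil\geq Q$ for all truncations, which automatically guarantees the $\tilde Q$-rationality of the Mal'cev basis and disposes of the corner cases (e.g.\ $P$ sitting near the top of $[N]$, or $Q'<Q$) that you handle by hand.
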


\begin{proof}
  As at the start of the proof of Proposition \ref{prop:norms_vs_equidist_nilsequences}, we may assume that $1/N<\delta<1$, as otherwise the bound is either trivial or follows immediately from the total $\delta$-equidistribution of $(g(n)\Gamma)_{n\in[N]}$.
  
  Let $c=c(m,d,D)$ be the constant in Proposition \ref{prop:norms_vs_equidist_nilsequences}. We will show the corollary's conclusion with $c/2$ in place of $c$. Hence, we may assume that $Q\leq \delta^{-c/2}\leq N^{1/4}$, as otherwise the result is trivial. Thus, we may also assume that $N\gg_{m,d,K,\mmm} 1$.
  
  Write the progression as $P=\{b+nq\ :\ n\in [M]\}$, for some $M\geq N/Q$. We note that then $q\leq (N-1)/(M-1)\ll N/M \leq Q$. Set $a_0:=0$ and 
  \begin{equation*}
    a_n := \sum_{\norm\aaa=b+nq}\xi(\aaa)F(g(b+nq)\Gamma)
  \end{equation*}
  for $n\in [M]$. For $t\geq 0$, let $A(t)=\sum_{0\leq n\leq t}a_n$. From the ideal theorem, we have the first bound
  \begin{equation*}
    |A(t)| \leq \sum_{\norm\aaa\leq b+qt}\vecnorm{F}_\infty \ll_K (b+qt)\vecnorm{F}_{\Lip}.
  \end{equation*}
  Set $\tilde Q:=\lceil 2\delta^{-c/2}Q\rceil$, then $N^{1/3}\leq \lceil N/\tilde Q\rceil \leq M$, as $N\gg 1$.

  For any $t\geq\lceil N/\tilde Q\rceil$, consider the progression $P_t=\{b+nq\ :\ n\in (0,t]\cap\NN\}$ of length $\lfloor t\rfloor \geq N/\tilde Q$. As $\tilde Q\geq Q$, our Mal'cev basis is also $\tilde Q$-rational, and we may apply Proposition \ref{prop:norms_vs_equidist_nilsequences} with $\tilde Q$ in place of $Q$ to obtain the bound
    \begin{equation*}
      |A(t)|=\left|\sum_{\norm\aaa\in P_t}\xi(\aaa)F(g(\norm\aaa)\Gamma)\right|\ll_{m,d,K,\mmm} N\delta^c\tilde Q\vecnorm{F}_{\Lip}\ll N\delta^{c/2}Q\vecnorm{F}_{\Lip}\quad\text{ for }t\geq\lceil N/\tilde Q\rceil.  
    \end{equation*}
  
Using the Abel summation formula, we write the sum to be estimated in the corollary as
\begin{align*}
  &\sum_{n\in[M]}a_n\log(b+nq) = A(M)\log(n+Mq)-\int_{1}^M\frac{qA(t)}{b+qt}\mathrm{d}t\\ &\ll_{m,d,K,\mmm}N\delta^{c/2}Q\vecnorm{F}_{\Lip}\log N + \int_{1}^{\lceil N/\tilde Q\rceil} \frac{q(b+qt)\vecnorm{F}_{\Lip}}{b+qt}\mathrm{d}t + \int_{\lceil N/\tilde Q\rceil}^M\frac{qN\delta^{c/2}Q\vecnorm{F}_{\Lip}}{b+qt}\mathrm{d}t\\
  &\ll N\delta^{c/2}Q\vecnorm{F}_{\Lip}\log N + \frac{N}{\tilde Q}q\vecnorm{F}_{\Lip} + N\delta^{c/2}Q\vecnorm{F}_{\Lip}\int_{\lceil N/\tilde Q\rceil}^M\frac{1}{(t-1)}\mathrm{d}t\\
  &\ll N\delta^{c/2}Q\vecnorm{F}_{\Lip}\log N.
\end{align*}
\end{proof}

\subsection{Ideal von Mangoldt function and
  Vaughan's identity}\label{sec:ideal_moebius_nilsequences}
Let $K$ be a number field of degree $D=[K:\QQ]$. For $n,d\in\NN$ and an ideal $\bbb$ of $K$, we write
\begin{align*}
  a_{K}(n) &:= \#\{\aaa\text{ ideal of }\OO_K\where \norm\aaa=n\},\\
  \tau_K(\bbb) &:= \sum_{\aaa\mid\bbb}1,\\
  \tau_d(n) &:=\sum_{n_1\cdots n_d=n}1,\\
  \tau(n)&:=\tau_2(n)
           =\sum_{k\mid n}1.
\end{align*}

\begin{lemma}\label{lem:ideal_divisor_relations}
  Let $K$ be a number field of degree $D$. For $n\in\NN$ and an ideal $\aaa$ of $K$, we have
  \begin{enumerate}
  \item $a_{K}(n) \leq \tau_D(n) \leq \tau(n)^{D-1}$,
  \item $\tau_K(\aaa)\leq \tau(\norm(\aaa))^D$.
  \end{enumerate}
\end{lemma}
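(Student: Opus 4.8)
The plan is to reduce both inequalities to elementary estimates at prime powers, using multiplicativity.

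For (1), I would first record that $a_K$ and $\tau_D$ are both multiplicative — the former by unique factorisation of ideals and multiplicativity of the ideal norm, the latter since $\sum_n\tau_D(n)n^{-s}=\zeta(s)^D$ — so it is enough to compare $a_K(p^k)$ and $\tau_D(p^k)$ at each rational prime $p$. Writing $p\OO_K=\ppp_1^{e_1}\cdots\ppp_g^{e_g}$ with residue degrees $f_i$, an ideal of norm $p^k$ is $\prod_i\ppp_i^{a_i}$ with $a_i\geq 0$ and $\sum_i a_if_i=k$, so $a_K(p^k)$ is the coefficient of $x^k$ in $\prod_{i=1}^g(1-x^{f_i})^{-1}$. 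Each factor $(1-x^{f_i})^{-1}$ has coefficients bounded termwise by those of $(1-x)^{-1}$; since all power series involved have nonnegative coefficients and $g\leq\sum_i e_if_i=D$, while the coefficients of $(1-x)^{-g}$ are nondecreasing in $g$, this gives
\begin{equation*}
  a_K(p^k)=[x^k]\prod_{i=1}^g\frac{1}{1-x^{f_i}}\leq [x^k]\frac{1}{(1-x)^D}=\binom{k+D-1}{D-1}=\tau_D(p^k).
\end{equation*}
The bound $\tau_D(n)\leq\tau(n)^{D-1}$ then follows, again by multiplicativity, from $\binom{k+D-1}{D-1}=\prod_{j=1}^{D-1}\frac{k+j}{j}\leq (k+1)^{D-1}=\tau(p^k)^{D-1}$, which holds termwise.

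For (2), I would use that $\aaa\mapsto\tau_K(\aaa)$ is multiplicative over prime ideals, with $\tau_K(\ppp^k)=k+1$ for a prime ideal $\ppp$. Grouping the prime divisors of a general ideal $\aaa$ according to the rational prime $p$ lying below them, write the $p$-part of $\aaa$ as $\prod_{\ppp\mid p}\ppp^{k_\ppp}$; then the exponent of $p$ in $\norm\aaa$ equals $v_p:=\sum_{\ppp\mid p}f(\ppp\mid p)k_\ppp$, so $k_\ppp\leq v_p$ for every $\ppp\mid p$, and there are at most $\sum_{\ppp\mid p}e(\ppp\mid p)f(\ppp\mid p)=D$ primes above $p$. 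Hence
\begin{equation*}
  \tau_K(\aaa)=\prod_p\prod_{\ppp\mid p}(k_\ppp+1)\leq\prod_p(v_p+1)^D=\Big(\prod_p(v_p+1)\Big)^D=\tau(\norm\aaa)^D.
\end{equation*}

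I do not expect a genuine obstacle; the one point requiring care is the totally split case $g=D$ in (1), where the coefficient comparison must lose no slack — which the generating-function estimate handles uniformly — together with consistently invoking $\sum ef=D$ to control the number of primes above $p$ in both parts.
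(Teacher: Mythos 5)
Your argument is correct, but it proceeds differently from the paper's. The paper disposes of the first inequality in \emph{(1)} by citation (to Blomer) and of the second by induction on $D$, and then deduces \emph{(2)} from \emph{(1)} globally: it maps each divisor $\bbb\mid\aaa$ to the divisibility $\norm\bbb\mid\norm\aaa$, so that $\tau_K(\aaa)\leq\sum_{n\mid\norm\aaa}a_K(n)\leq\sum_{n\mid\norm\aaa}\tau(n)^{D-1}\leq\tau(\norm\aaa)^{D}$. You instead work locally at each rational prime throughout: for \emph{(1)} you identify $a_K(p^k)$ with a coefficient of $\prod_i(1-x^{f_i})^{-1}$ and dominate it termwise by $(1-x)^{-g}$ with $g\leq D$, and for \emph{(2)} you bound each exponent $k_\ppp$ by $v_p=\sum_{\ppp\mid p}f(\ppp\mid p)k_\ppp$ and use that at most $D$ primes lie above $p$, proving \emph{(2)} independently of \emph{(1)}. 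All the steps check out: the termwise domination of products of nonnegative power series, the monotonicity of $\binom{k+g-1}{g-1}$ in $g$, the identity $\tau_D(p^k)=\binom{k+D-1}{D-1}$ and its bound by $(k+1)^{D-1}$, and the multiplicativity claims are all valid. What your route buys is a self-contained elementary proof (no external reference) and, in \emph{(2)}, the slightly sharper local bound $\prod_{\ppp\mid p}(k_\ppp+1)\leq(v_p+1)^{g_p}$ with $g_p$ the actual number of primes above $p$; what the paper's route buys is brevity, since \emph{(2)} falls out of \emph{(1)} in one line. Either version suffices for the applications in the paper.
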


\begin{proof}
For the first inequality in \emph{(1)}, see, e.g., \cite[pg. 940]{Blomer2018}; the second one can be easily proved using induction on $d$. To prove the second part, we use this to obtain
\[
\tau_K(\aaa)=\sum_{\bbb|\aaa}1\leq \sum_{\bbb,\norm \bbb|\norm \aaa}1=\sum_{n|\norm \aaa}a_{K}(n)\leq \sum_{n|\norm \aaa}\tau(n)^{D-1}\leq \sum_{n|\norm \aaa}\tau(\norm \aaa)^{D-1}=\tau(\norm \aaa)^{D}.
\]
\end{proof}

\begin{lemma}\label{lem:divisor_log_bound}
  For any $k\in\NN$, we have
  \begin{equation*}
    \sum_{n\leq N}\frac{\tau(n)^k}{n} \ll (\log N)^{2^k}.
  \end{equation*}
\end{lemma}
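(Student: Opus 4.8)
The plan is to argue by induction on $k$, the key input being the \emph{submultiplicativity} of the divisor function: $\tau(mn)\le\tau(m)\tau(n)$ for all $m,n\in\NN$ (not only coprime ones). This follows prime by prime from the inequality $v_p(m)+v_p(n)+1\le(v_p(m)+1)(v_p(n)+1)$, which rearranges to $0\le v_p(m)v_p(n)$; raising to the $k$-th power gives $\tau(mn)^k\le\tau(m)^k\tau(n)^k$.

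For the base case $k=0$ the claim is just the elementary estimate $\sum_{n\le N}1/n\ll\log N$ (say, for $N\ge 2$). Assume the bound for some $k\ge 0$. Writing $\tau(n)^{k+1}=\tau(n)\,\tau(n)^k$, expanding $\tau(n)=\sum_{d\mid n}1$, and substituting $n=dm$, I would get
\[
  \sum_{n\le N}\frac{\tau(n)^{k+1}}{n}
  =\sum_{d\le N}\sum_{m\le N/d}\frac{\tau(dm)^k}{dm}
  \le\Bigl(\sum_{d\le N}\frac{\tau(d)^k}{d}\Bigr)\Bigl(\sum_{m\le N}\frac{\tau(m)^k}{m}\Bigr),
\]
where the last step uses $\tau(dm)^k\le\tau(d)^k\tau(m)^k$ together with the trivial majorisation $\sum_{m\le N/d}\le\sum_{m\le N}$. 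Applying the induction hypothesis to both factors yields $\sum_{n\le N}\tau(n)^{k+1}/n\ll_k(\log N)^{2\cdot 2^k}=(\log N)^{2^{k+1}}$, closing the induction.

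There is no genuine obstacle here; the only point to watch is that the implied constant is permitted to depend on $k$ (it roughly squares at each inductive step), which is harmless since $k$ is fixed. One could alternatively avoid induction by factoring the Dirichlet series as $\sum_n\tau(n)^k n^{-s}=\zeta(s)^{2^k}H(s)$ with $H$ given by an Euler product absolutely convergent for $\Re s>1/2$, and then invoking a standard Tauberian or partial-summation argument; but the elementary inductive argument above is shorter and entirely self-contained.
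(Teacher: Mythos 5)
Your proof is correct, but it takes a different route from the paper. The paper quotes the standard divisor moment bound $\EE_{n\leq N}\tau(n)^k\ll (\log N)^{2^k-1}$ (citing the appendix of Green--Tao) and then sums dyadically over $2^i\le n\le 2^{i+1}$, which costs two lines given the citation. You instead give a self-contained induction on $k$: the submultiplicativity $\tau(mn)\le\tau(m)\tau(n)$ (checked prime by prime) combined with opening one factor $\tau(n)=\sum_{d\mid n}1$ and substituting $n=dm$ lets you bound the sum for exponent $k+1$ by the square of the sum for exponent $k$, and the base case $k=0$ is just $\sum_{n\le N}1/n\ll\log N$; since $2\cdot 2^k=2^{k+1}$, this reproduces the exponent $2^k$ exactly. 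Your argument needs no external input and only the harmless caveat that the implied constant depends on $k$ (it roughly squares at each step), while the paper's version leans on a known moment estimate and is shorter on the page; both give the same bound, and the usual implicit convention $N\ge 2$ (so that $\log N>0$) is needed equally in either formulation.
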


\begin{proof}
  The divisor moment bound $\EE_{n\leq N}\tau(n)^k\ll (\log N)^{2^k-1}$ is well known, for references see \cite[Lemma C.1]{MR2473624}. Splitting dyadically, we see that
\begin{align*}
\sum_{n\leq N}\frac{\tau(n)^k}{n}&\leq \sum_{0\leq i\leq \lfloor\log N/\log 2\rfloor}\sum_{2^{i}\leq n\leq 2^{i+1}}\frac{\tau(n)^k}{n}\leq \sum_{1\leq i\leq \lfloor\log N/\log 2\rfloor}\frac{1}{2^i}\sum_{2^{i}\leq n\leq 2^{i+1}}\tau(n)^k\\ &\ll \sum_{1\leq i\leq \lfloor\log N/\log 2\rfloor}(\log N)^{2^k-1}\ll (\log N)^{2^k}. 
\end{align*}
\end{proof}

In the following, we prove a version of Vaughan's identity for von Mangoldt function $\Lambda_K(\aaa)$ for ideals $\aaa$ of $K$.

\begin{lemma}[Twisted Vaughan's identity for $\Lambda_K$] \label{lem:vaughan}
 Let $K$ be a number field, consider a completely multiplicative function $\xi:\ideals\to\CC$ and a function $f:\NN\rightarrow \CC$.
 For $N,U,V\geq 1$ with $V\leq N/2$ and $UV\leq N$, we have
\begin{equation*}
\sum_{N/2<\norm \aaa\leq N}\Lambda_K(\aaa)\xi(\aaa)f(\norm \aaa)=T_{\Ia}-T_{\Ib}+T_{\II},
\end{equation*}
where $T_{\Ia},T_{\Ib},T_{\II}$ are given as follows:
\begin{align*}
  T_{\Ia}&:=\sum_{\norm \ddd\leq U}\mu_K(\ddd)\xi(\ddd)\sum_{\frac{N}{2\norm \ddd}<\norm \bbb\leq \frac{N}{\norm \ddd}} \log(\norm \bbb)\xi(\bbb)f(\norm\ddd\bbb),\\
  T_{\Ib}&:=\sum_{ \norm \ddd\leq UV}a_{\ddd}\xi(\ddd)\sum_{\frac{N}{2\norm \ddd}<\norm \bbb\leq \frac{N}{\norm \ddd}}\xi(\bbb)f(\norm\ddd\bbb),\\
  T_{\II}&:=\sum_{V<\norm \ddd\leq \frac{N}{U}}b_{\ddd}\xi(\ddd)\sum_{\max\big(U,\frac{N}{2\norm \ddd}\big)<\norm \bbb\leq \frac{N}{\norm \ddd}}\mu_K(\bbb)\xi(\bbb)f(\norm\ddd\bbb),
\end{align*}
with
\begin{equation*}
  a_{\ddd}:=\sum_{\substack{\bbb\ccc=\ddd\\\norm \bbb\leq U,\norm \ccc\leq V}}\mu_K(\bbb)\Lambda_K(\ccc),\quad\text{ and }\quad b_{\ddd}:=\sum_{\substack{\ccc|\ddd\\\norm \ccc>V}}\Lambda_K(\ccc).
\end{equation*}
\end{lemma}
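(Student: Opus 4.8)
The plan is to transplant the classical derivation of Vaughan's identity to the free commutative monoid $\ideals$ of nonzero ideals of $\OO_K$, and then to twist by $\xi$. Work in the ring of arithmetic functions $u\colon\ideals\to\CC$ under Dirichlet convolution $(u*v)(\aaa):=\sum_{\bbb\ccc=\aaa}u(\bbb)v(\ccc)$, with multiplicative unit $\delta$ supported on $\OO_K$ and constant function $\one$. Since $\ideals$ has unique factorisation into prime ideals, the standard identities hold verbatim: $\mu_K*\one=\delta$ and $\Lambda_K*\one=L$, where $L(\aaa):=\log\norm\aaa$, whence $\Lambda_K=L*\mu_K$. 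Introduce the truncations $\mu_K^{\le U}(\ddd):=\mu_K(\ddd)\one_{\norm\ddd\le U}$, $\mu_K^{>U}:=\mu_K-\mu_K^{\le U}$, and likewise $\Lambda_K^{\le V}$, $\Lambda_K^{>V}:=\Lambda_K-\Lambda_K^{\le V}$. In the ring of arithmetic functions (equivalently, of formal Dirichlet series $\sum_\aaa u(\aaa)\norm\aaa^{-s}$), with $\zeta_K\leftrightarrow\one$, $F\leftrightarrow\mu_K^{\le U}$, $G\leftrightarrow\Lambda_K^{\le V}$, one has the purely algebraic identity
\begin{equation*}
-\frac{\zeta_K'}{\zeta_K}=-\zeta_K'F-\zeta_KFG+\Bigl(-\frac{\zeta_K'}{\zeta_K}-G\Bigr)(1-\zeta_KF)+G,
\end{equation*}
which is verified by expanding the right-hand side.

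Comparing coefficients turns this into an identity of arithmetic functions on $\ideals$. Using $-\zeta_K'\leftrightarrow L$ and, for the third term, the rewriting $(-\zeta_K'/\zeta_K-G)(1-\zeta_KF)=\bigl[(-\zeta_K'/\zeta_K-G)\zeta_K\bigr]\bigl[\zeta_K^{-1}-F\bigr]$, whose coefficient functions are $\Lambda_K^{>V}*\one$ (equal at $\ddd$ to $b_\ddd$) and $\mu_K^{>U}$, we obtain for every ideal $\aaa$
\begin{equation*}
\Lambda_K(\aaa)=\bigl(L*\mu_K^{\le U}\bigr)(\aaa)-\bigl(\one*(\mu_K^{\le U}*\Lambda_K^{\le V})\bigr)(\aaa)+\bigl(\Lambda_K^{>V}*\one*\mu_K^{>U}\bigr)(\aaa)+\Lambda_K^{\le V}(\aaa).
\end{equation*}
Here $(\mu_K^{\le U}*\Lambda_K^{\le V})(\ddd)=a_\ddd$, which vanishes unless $\norm\ddd\le UV$, and $(\Lambda_K^{>V}*\one)(\ddd)=b_\ddd$, which vanishes unless $\norm\ddd>V$. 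Since $V\le N/2$, every ideal $\aaa$ with $N/2<\norm\aaa\le N$ satisfies $\norm\aaa>V$, so the term $\Lambda_K^{\le V}(\aaa)$ drops out, and for all such $\aaa$
\begin{equation*}
\Lambda_K(\aaa)=\sum_{\substack{\ddd\bbb=\aaa\\\norm\ddd\le U}}\mu_K(\ddd)\log\norm\bbb-\sum_{\ddd\mid\aaa}a_\ddd+\sum_{\substack{\ddd\bbb=\aaa\\\norm\bbb>U}}b_\ddd\,\mu_K(\bbb).
\end{equation*}

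Finally, multiply through by $\xi(\aaa)f(\norm\aaa)$ and sum over $N/2<\norm\aaa\le N$. In each of the three pieces replace $\aaa$ by the factorisation $\ddd\bbb$ over which it runs; complete multiplicativity gives $\xi(\ddd\bbb)=\xi(\ddd)\xi(\bbb)$ (the same factoring is used within $a_\ddd$ and $b_\ddd$), while $\norm(\ddd\bbb)=\norm\ddd\,\norm\bbb$, and then one interchanges the two summations with $\ddd$ outermost. The constraint $N/2<\norm\ddd\,\norm\bbb\le N$ becomes $N/(2\norm\ddd)<\norm\bbb\le N/\norm\ddd$ on the inner sum; combining this with the support conditions $\norm\ddd\le UV$ on $a_\ddd$ and $\norm\ddd>V$ on $b_\ddd$, together with the extra condition $\norm\bbb>U$ in the third piece (which also forces $\norm\ddd\le N/U$), reproduces exactly the index sets of $T_{\Ia}$, $T_{\Ib}$, $T_{\II}$, with matching signs $+,-,+$. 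The argument is thus essentially bookkeeping; the two points needing care — and the steps most prone to error — are that the formal Dirichlet-series identity genuinely descends to coefficient functions on $\ideals$ (immediate from unique factorisation of ideals), and that $\xi$ is \emph{completely} multiplicative, which is precisely what licenses factoring $\xi$ across the arbitrary (non-coprime) factorisations $\aaa=\ddd\bbb$ and $\ddd=\bbb\ccc$ that occur. Only the hypothesis $V\le N/2$ is used, to discard the $G$-term.
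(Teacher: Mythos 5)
Your proposal is correct and follows essentially the same route as the paper: the same pointwise Vaughan decomposition of $\Lambda_K(\aaa)$ for $\norm\aaa>V$ into the three convolution pieces (the paper obtains it as a direct generalisation of \cite[Proposition 13.4]{MR2061214} to ideals, which is exactly the formal identity you expand), followed by the same multiplication by $\xi(\aaa)f(\norm\aaa)$, use of complete multiplicativity, and reordering of the double sums to produce $T_{\Ia}$, $T_{\Ib}$, $T_{\II}$. The only difference is that you also write out the derivation of the pointwise identity via the convolution-algebra computation, which the paper cites rather than reproves.
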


\begin{proof}
  A straightforward generalisation of \cite[Proposition 13.4]{MR2061214} to ideals in number fields
  yields
\[
\Lambda_K(\aaa)=\sum_{\substack{\bbb|\aaa\\ \norm \bbb \leq U}}\mu_K(\bbb)\log (\norm \aaa\bbb^{-1})-\sum_{\substack{\bbb\ccc|\aaa\\ \norm\bbb\leq U,\norm \ccc\leq V}}\mu_K(\bbb)\Lambda_K (\ccc)+\sum_{\substack{\bbb\ccc|\aaa\\ \norm\bbb> U,\norm \ccc> V}}\mu_K(\bbb)\Lambda_K (\ccc)
\]
whenever $\norm \aaa > V$. When multiplied by $\xi(\aaa)f(\norm \aaa)$ and summed over $N/2<\norm \aaa\leq N$, the first summand becomes
\[
\sum_{\norm \bbb\leq U}\mu_K(\bbb)\xi(\bbb)\sum_{\frac{N}{2\norm \bbb}<\norm \ccc\leq \frac{N}{\norm \bbb}} \log(\norm \ccc)\xi(\ccc)f( \norm\bbb\ccc)=T_{\Ia}.
\]
Treating the second summand in the same way, we obtain
\begin{align*}
& \sum_{N/2<\norm \aaa\leq N}\Bigg(\sum_{\substack{\bbb\ccc|\aaa\\ \norm\bbb\leq U,\norm \ccc\leq V}}\mu_K(\bbb)\Lambda_K (\ccc)\Bigg)\xi(\aaa)f(\norm \aaa)\\= & \sum_{\norm \ddd \leq UV}\Bigg(\sum_{\substack{\bbb\ccc=\ddd\\ \norm \bbb\leq U,\norm \ccc\leq V}}\mu_K(\bbb)\Lambda_K(\ccc)\Bigg)\xi(\ddd)\sum_{\frac{N}{2\norm\ddd}<\norm \w\leq \frac{N}{\norm \ddd}}\xi(\w)f(\norm\ddd\w)=T_{\Ib}.
\end{align*}
Finally, the last summand becomes
\begin{align*}
&\sum_{N/2<\norm \aaa\leq N}\Bigg(\sum_{\substack{\bbb\ccc|\aaa\\ \norm\bbb> U,\norm \ccc> V}}\mu_K(\bbb)\Lambda_K (\ccc)\Bigg)\xi(\aaa)f(\norm \aaa)\\=&\sum_{V<\norm \ddd \leq \frac{N}{U}}\Bigg(\sum_{\substack{\ccc\mid\ddd\\ \norm \ccc> V}}\Lambda_K(\ccc)\Bigg)\xi(\ddd)\sum_{\max\big(U,\frac{N}{2\norm \ddd}\big)<\norm \bbb\leq \frac{N}{\norm \ddd}}\mu_K(\bbb)\xi(\bbb)f(\norm\ddd\bbb)=T_{\II}.
\end{align*}
\end{proof}

Using Lemma \ref{lem:vaughan}, we deduce the following version of \cite[Proposition 4.2]{MR2473624} for $\Lambda_K$ (twisted by a bounded multiplicative function) in place of $\mu$.

\begin{proposition}\label{prop:inverse_theorem_mu}
Let $K$ be a number field of degree $D$, $\xi:\ideals\to \CC$ a completely multiplicative function and $f:\NN\rightarrow\CC$ be a function with $\vecnorm{\xi}_\infty,\vecnorm{f}_\infty\leq 1$. Let $N\geq 3$ and $\epsilon\in(0,1)$, such that
\begin{equation*}
  \left|\sum_{N/2<\norm \aaa\leq N}\Lambda_K(\aaa)\xi(\aaa)f(\norm \aaa)\right|\geq \epsilon N\log N.
\end{equation*}
Then at least one of the following statements holds:
\begin{itemize}
\item (Weighted type I sum is large): There exists an integer $1\leq R\leq N^{1/3}$, such that 
\[
|\EE_{\frac{N}{2r}<\norm \bbb\leq \frac{N}{r}}\log(\norm\bbb)\xi(\bbb)f(r\norm\bbb)|\gg_K (\epsilon/\log N)^{O_D(1)}
\]
holds for $\gg_K R(\epsilon/\log N)^{O_D(1)}$ integers $r$ with $R/2<r\leq R$. 
\item (Type I sum is large): There exists an integer $1\leq R\leq N^{2/3}$, such that 
\[
|\EE_{\frac{N}{2r}<\norm \bbb\leq \frac{N}{r}}\xi(\bbb)f(r\norm\bbb)|\gg_K (\epsilon/\log N)^{O_D(1)}
\]
holds for $\gg_K R(\epsilon/\log N)^{O_D(1)}$ integers $r$ with $R/2<r\leq R$. 
\item (Type II sum is large): There exist integers $R,B$ with
  \begin{equation*}
  N^{1/3}/2\leq R\leq 2 N^{2/3} \quad\text{ and }\quad N/2\leq RB\leq 2N,
\end{equation*}
  such
  that
  \begin{equation*}
    \left|\EE_{B/2<\norm\bbb\leq B}f(r\norm\bbb)f(r'\norm\bbb)\right|\gg_K (\epsilon/\log
  N)^{O_D(1)}
\end{equation*}
holds for $\gg_K R^2(\epsilon/\log N)^{O_D(1)}$ pairs of integers $(r,r')$ with $R/2<r,r'\leq R$.   
\end{itemize}  
\end{proposition}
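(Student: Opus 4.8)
The plan is to transcribe the proof of the classical statement \cite[Proposition~4.2]{MR2473624} for the M\"obius function, replacing the usual Vaughan identity by its twisted ideal-theoretic analogue from Lemma~\ref{lem:vaughan}. First I would apply Lemma~\ref{lem:vaughan} with $U=V=\lfloor N^{1/3}\rfloor$ (legitimate since $N\geq 3$), writing $\sum_{N/2<\norm\aaa\leq N}\Lambda_K(\aaa)\xi(\aaa)f(\norm\aaa)=T_{\Ia}-T_{\Ib}+T_{\II}$, so that one of $|T_{\Ia}|,|T_{\Ib}|,|T_{\II}|$ is at least $\tfrac{\epsilon}{3}N\log N$; each case will produce one of the three alternatives. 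Throughout I would use the elementary bounds $|a_\ddd|,|b_\ddd|\leq\log\norm\ddd\leq\log N$ (both following from the identity $\sum_{\ccc\mid\ddd}\Lambda_K(\ccc)=\log\norm\ddd$), the ideal theorem $\#\{\aaa\in\ideals:X/2<\norm\aaa\leq X\}\asymp_K X$ for $X\gg_K 1$, the pointwise bound $a_K(n)\leq\tau(n)^{D-1}$ of Lemma~\ref{lem:ideal_divisor_relations}, and the divisor moment estimate $\sum_{n\leq X}\tau(n)^k\ll X(\log X)^{2^k-1}$ underlying Lemma~\ref{lem:divisor_log_bound}. We may assume $N\gg_{K,D}1$, the bounded case being dispatched directly.

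For the two type~I sums I would group the outer ideal $\ddd$ by its norm $r=\norm\ddd$: the inner $\bbb$-sum depends on $\ddd$ only through $r$, while $|\sum_{\norm\ddd=r}\mu_K(\ddd)\xi(\ddd)|\leq a_K(r)$ and $|\sum_{\norm\ddd=r}a_\ddd\xi(\ddd)|\leq a_K(r)\log N$. Hence $|T_{\Ia}|\leq\sum_{r\leq U}a_K(r)|S_{\Ia}(r)|$ and $|T_{\Ib}|\leq(\log N)\sum_{r\leq UV}a_K(r)|S_{\Ib}(r)|$, where $S_{\Ia}(r),S_{\Ib}(r)$ are exactly the unaveraged inner sums in the first two alternatives. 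Splitting the $r$-range dyadically into $O(\log N)$ blocks $(R/2,R]$, a single block carries $\gg\epsilon N$ (resp.\ $\gg\epsilon N/\log N$) of the total. On that block I would run a popularity argument: letting $\mathcal G$ be the set of $r\in(R/2,R]$ for which the pertinent average has modulus $\geq\eta$ (a small constant multiple of $\epsilon/\log N$), the $r\notin\mathcal G$ contribute $\ll_K\eta N$, hence negligibly after fixing the constant in $\eta$; then the trivial bound $|S(r)|\ll_K(N/R)\log N$ on $\mathcal G$ together with $\sum_{r\leq R}a_K(r)\ll_K R$ forces $\sum_{r\in\mathcal G}a_K(r)\gg\epsilon R/\log N$. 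A Cauchy--Schwarz combined with $\sum_{r\leq R}a_K(r)^2\leq\sum_{r\leq R}\tau(r)^{2(D-1)}\ll R(\log R)^{O_D(1)}$ yields $|\mathcal G|\gg_K R(\epsilon/\log N)^{O_D(1)}$, which is precisely the required conclusion, with $R\leq U=N^{1/3}$ in the weighted case and $R\leq UV=N^{2/3}$ in the other. One checks that $\log\norm\bbb\asymp\log N$ on the range $N/2r<\norm\bbb\leq N/r$ when $r\leq N^{1/3}$, so the $\log(\norm\bbb)$-weight in the weighted alternative is harmless.

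The type~II case is where the real work lies. Here $T_{\II}$ is a bilinear sum of $b_\ddd\xi(\ddd)\mu_K(\bbb)\xi(\bbb)f(\norm\ddd\norm\bbb)$ over $V<\norm\ddd\leq N/U$, $\norm\bbb>U$, $N/2<\norm\ddd\norm\bbb\leq N$. The last constraint forces $\norm\ddd\norm\bbb\asymp N$, so a dyadic decomposition $\norm\ddd\sim R$, $\norm\bbb\sim B$ has $RB\asymp N$ and $R\in[N^{1/3}/2,2N^{2/3}]$; only $O(\log N)$ blocks occur, and the dominant one has mass $\gg\epsilon N$. Grouping $\ddd$ by $r=\norm\ddd$ rewrites that block as $\sum_{r\sim R}\alpha_r\sum_{\norm\bbb\sim B,\,N/2<r\norm\bbb\leq N}\mu_K(\bbb)\xi(\bbb)f(r\norm\bbb)$ with $|\alpha_r|\leq a_K(r)\log N$. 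Applying Cauchy--Schwarz in the $\bbb$-variable, with the weight $\mu_K(\bbb)\xi(\bbb)$ (of modulus $\leq1$) pulled outside, duplicates the $\ddd$-variable; expanding the square and interchanging the order of summation gives
\[
  \bigl|T_{\II}^{(R,B)}\bigr|^2 \ll_K B\sum_{r,r'\sim R}|\alpha_r|\,|\alpha_{r'}|\;\Bigl|\sum_{\norm\bbb\in J(r,r')}f(r\norm\bbb)\,\overline{f(r'\norm\bbb)}\Bigr|,
\]
where $J(r,r')\subseteq(B/2,B]$ is the interval cut out by $N/2<r\norm\bbb,\,r'\norm\bbb\leq N$. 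Since the left side is $\gg\epsilon^2N^2$ and $\sum_{r\sim R}|\alpha_r|^2\ll R(\log N)^{O_D(1)}$, a popularity argument over pairs $(r,r')$ --- of the same shape as in type~I, with a Cauchy--Schwarz against $\sum_{r\sim R}a_K(r)^2\ll R(\log R)^{O_D(1)}$ --- produces $\gg_K R^2(\epsilon/\log N)^{O_D(1)}$ popular pairs with corresponding inner average $\gg_K(\epsilon/\log N)^{O_D(1)}$, which is the third alternative. The main obstacle is that the $\bbb$-range $(N/2\norm\ddd,N/\norm\ddd]$ genuinely depends on $\ddd$, so $J(r,r')$ is only a sub-interval of $(B/2,B]$; to obtain the conclusion in the clean stated form one restricts to pairs $(r,r')$ with $r$ close to $r'$ --- at a cost of at most $(\log N)^{O(1)}$ --- and chooses $R,B$ with care, exactly as in the type~II analysis of \cite[\S4]{MR2473624}. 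Carrying out this bookkeeping and verifying that the accumulated $\epsilon^{O_D(1)}$- and $(\log N)^{O_D(1)}$-losses combine into the permitted shape is the most laborious part.
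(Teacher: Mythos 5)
Your treatment of the type I sums $T_{\Ia},T_{\Ib}$ is sound and essentially the paper's argument: group the outer ideal by its norm, use $a_K(r)\leq\tau(r)^{D-1}$ and the divisor moment bounds, then a dyadic pigeonhole and a Cauchy--Schwarz/popularity step. The reorganization (popularity first, Cauchy--Schwarz against $\sum_r a_K(r)^2$ afterwards) is equivalent to the paper's, and the bookkeeping losses you incur are of the permitted shape $(\epsilon/\log N)^{O_D(1)}$.

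The type II case, however, has a genuine gap, and it sits exactly at the step you flag and then defer. After your Cauchy--Schwarz in the $\bbb$-variable the pair $(r,r')$ is already created, so the $r$-dependent range survives as the interval $J(r,r')\subseteq(B/2,B]$ attached to the \emph{final} correlation, and the stated alternative requires the average over the full block $(B/2,B]$ with no extra weight. Your proposed remedy --- restrict to pairs with $r$ close to $r'$, ``at a cost of $(\log N)^{O(1)}$'' --- does not close this: even for $r=r'$ the interval $J(r,r)=(N/2r,N/r]\cap(B/2,B]$ is in general a proper subinterval, and if you localize $r,r'$ to a block of width $\delta R$ (choosing $B$ adapted to the block) the endpoint mismatch between $J(r,r')$ and $(B/2,B]$ is of size $\Theta(\delta B)$, while the correlation lower bound that the popularity argument can produce after paying for the block localization is only of size $O(\delta\,\epsilon^{2}(\log N)^{-O(1)}B)$ --- the same order in $\delta$ but smaller by the $\epsilon$-factors, for every choice of $\delta$. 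So the passage from ``large on $J(r,r')$'' to ``large on $(B/2,B]$'' is not justified, and shrinking the blocks cannot fix it. Note also that \cite[\S 4]{MR2473624} does not argue via nearby pairs; it (and the paper) removes the $r$-dependent cutoff \emph{before} the Cauchy--Schwarz that duplicates $r$, via the completion lemma \cite[Lemma A.2]{MR2473624}: one writes the inner sums as $\sum_{B/2<n\leq B}1_{I_r}(n)g(r,n)$ and replaces $1_{I_r}(n)$ by a phase $e(\alpha n)$ at the cost of $(\log N)^{2}$, with the phase attached to $n=\norm\bbb$ alongside $\mu_K(\bbb)\xi(\bbb)$. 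After expanding the square over $(\bbb,\bbb')$ these factors are absorbed into bounded coefficients $\vb(\bbb,\bbb')$, and only then is the second Cauchy--Schwarz (\cite[Lemma A.10]{MR2473624}) applied to duplicate $r$ into $(r,r')$; this is why the final correlation $\EE_{B/2<\norm\bbb\leq B}f(r\norm\bbb)f(r'\norm\bbb)$ comes out over the clean dyadic range and phase-free. In your ordering a completion at the last stage would leave a phase $e(\alpha\norm\bbb)$ inside the final correlation, which the stated conclusion does not allow. So to repair the argument you must reorder: Cauchy--Schwarz in $r$ first, completion of the $\norm\bbb$-range second, duplication of $r$ last.
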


\begin{proof}
Recall that, by the ideal theorem, the number of ideals of $K$ of norm up to $N$ is $\asymp_K N$.   
We apply Lemma \ref{lem:vaughan} with $U=V=N^{1/3}$. Under the hypotheses of the Proposition, at least one of the quantities $|T_{\Ia}|, |T_{\Ib}|, |T_{\II}|$ is $\gg N(\log N)\epsilon$.

Let us start with the case where $|T_{\Ia}|\gg N(\log N)\epsilon$. As $|\mu_K(\ddd)\xi(\ddd)|\ll 1$, we see that 

\begin{equation}\label{eq:vaughan_0_sum_r}
\epsilon\ll \frac{|T_{\Ia}|}{N} \ll_K \sum_{\norm\ddd\leq N^{1/3}}\frac{1}{\norm \ddd}\left|\EE_{\frac{N}{2\norm \ddd}<\norm \bbb\leq\frac{N}{\norm \ddd}}\log (\norm \bbb )\xi(\bbb)f(\norm \ddd\bbb)\right| = \sum_{r\leq N^{1/3}}\frac{a_K(r)g_0(r)}{r},
\end{equation}
where
\begin{equation*}
  g_0(r):=\left|\EE_{\frac{N}{2r}<\norm \bbb\leq\frac{N}{r}}\log (\norm \bbb )\xi(\bbb)f(r\norm\bbb)\right|.
\end{equation*}
Using Lemma \ref{lem:ideal_divisor_relations} and Lemma \ref{lem:divisor_log_bound}, we see for any exponent $e\in (0,1)$ that
  \begin{equation}\label{eq:vaughan_I_divisor_estimate}
    \sum_{r\leq N^{e}}\frac{a_{K}(r)^2}{r}\leq \sum_{r\leq N^{e}}\frac{\tau(r)^{2D}}{r} \ll_e (\log N)^{2^{2D}}.
  \end{equation}
Therefore, applying the Cauchy-Schwarz inequality to \eqref{eq:vaughan_0_sum_r}, we obtain
\[
\sum_{r\leq N^{1/3}}\frac{g_0(r)^2}{r}\gg_K (\epsilon/\log N)^{O_D(1)}.
\]
Splitting the interval $[1,N^{1/3}]$ dyadically and applying the pigeonhole principle, we find an integer $1\leq R \leq N^{1/3}$, such that 
\begin{equation*}
\sum_{R/2< r\leq R} \frac{g_0(r)^2}{r} \gg_K (\epsilon/\log N)^{O_D(1)},\quad\text{ and thus }\quad \sum_{R/2< r\leq R} g_0(r)^2 \gg_K R(\epsilon/\log N)^{O_D(1)}
\end{equation*}
As $g_0(r)\leq \log N$, this implies that $g_0(r) \gg_K (\epsilon/\log N)^{O_D(1)}$
must hold for at least $R(\epsilon/\log N)^{O_D(1)}$ values of $r$ with $R/2<r\leq R$, and thus the first of the three situations in the proposition's conclusion.
We proceed similarly in the situation where $|T_{\Ib}|\gg N(\log N)\epsilon$.
Note that the coefficients $a_\ddd$ in $T_{\Ib}$ satisfy
\begin{equation*}
  |a_{\ddd}|\ll \sum_{\ccc\mid\ddd}\Lambda_K(\ccc)=\log\norm\ddd\leq\log N.
\end{equation*}
Similarly as before,
we obtain
\begin{equation}\label{eq:vaughan_I_sum_r}
 \epsilon\ll \frac{|T_{\Ib}|}{N\log N}\ll_K \sum_{\norm\ddd\leq N^{2/3}}\frac{1}{\norm\ddd}\left|\EE_{\frac{N}{2\norm \ddd}<\norm \bbb\leq \frac{N}{\norm \ddd}}\xi(\bbb)f(\norm\ddd\bbb)\right| = \sum_{r\leq N^{2/3}}\frac{a_{K}(r)g_I(r)}{r},
\end{equation}
where
\begin{equation*}
    g_I(r):=|\EE_{\frac{N}{2r}<\norm \bbb \leq \frac{N}{r}}\xi(\bbb)f(r\norm\bbb)|.
  \end{equation*}
The remaining argument is analogous to that for $g_0(r)$ above. First, \eqref{eq:vaughan_I_divisor_estimate} and the Cauchy-Schwarz inequality show that
\[
\sum_{r\leq N^{2/3}}\frac{g_I(r)^2}{r}\gg_K (\epsilon/\log N)^{O_D(1)},
\]
from which we deduce the existence of an integer $1\leq R\leq N^{2/3}$ with
\[
\sum_{R/2< r\leq R} \frac{g_I(r)^2}{r} \gg_K (\epsilon/\log N)^{O_D(1)},\quad\text{ and thus }\quad \sum_{R/2< r\leq R} g_I(r)^2 \gg_K R(\epsilon/\log N)^{O_D(1)}.
\]
As $\leq g_I(r)\leq 1$, this shows that $g_I(r) \gg_K (\epsilon/\log N)^{O_D(1)}$ must hold for at least $R(\epsilon/\log N)^{O_D(1)}$ values of $r$, and thus establishes the second situation in the proposition's conclusion.

Now, we consider the case where $|T_{\II}|\gg N(\log N)\epsilon$. 
As again
\begin{equation*}
  |b_{\ddd}|\leq \sum_{\ccc\mid\ddd}\Lambda_K(\ccc)\leq\log\norm\ccc\leq\log N,
\end{equation*}
we get
\begin{align*}
  N\epsilon&\ll\frac{|T_{\II}|}{\log N}\leq \sum_{N^{1/3}<\norm\ddd\leq N^{2/3}}\left|\sum_{\max\big(N^{1/3},\frac{N}{2\norm \ddd}\big)<\norm \bbb\leq \frac{N}{\norm \ddd}}\mu_K(\bbb)\xi(\bbb)f(\norm\ddd\bbb)\right|\\ &= \sum_{N^{1/3}<r\leq N^{2/3}}a_{R}(r)\Bigg|\sum_{\frac{N}{2r}<\norm \bbb\leq \frac{N}{r}}1_{\norm \bbb>N^{1/3}}\mu_K(\bbb)\xi(\bbb)f(r\norm\bbb)\Bigg|.
\end{align*}
By the Cauchy-Schwarz inequality and \eqref{eq:vaughan_I_divisor_estimate},
 we obtain
\[
\sum_{N^{1/3}<r\leq N^{2/3}}r\Bigg|\sum_{\frac{N}{2r}<\norm \bbb\leq \frac{N}{r}}1_{\norm \bbb>N^{1/3}}\mu_K(\bbb)\xi(\bbb)f(r\norm\bbb)\Bigg|^2 \gg_K N^2(\epsilon/\log N)^{O_D(1)}.
\]
Splitting the intervals for $r$ and $\norm\bbb$ dyadically and applying the pigeonhole principle, we find integers $R,B$ with $N^{1/3}\leq R\leq 2N^{2/3}$ and $N\leq RB\leq 4N$, such that

\begin{equation}
\sum_{R/2<r\leq R}\Bigg|\sum_{B/2<\norm \bbb\leq B}1_{I_{r}}(\norm\bbb)\mu_K(\bbb)\xi(\bbb)f(r\norm\bbb)\Bigg|^2 \gg_K \frac{N^2}{R}(\epsilon/ \log N)^{O_D(1)},\label{eq:vaughan_II_splitting}
\end{equation}
where $I_{r}$ is the set $\Big\{n\in\NN\where n>N^{1/3}\text{ and }\frac{N}{2r}<n\leq \frac{N}{r}\Big\}$. Sorting the ideals $\bbb$ by their norms $n=\norm\bbb$, we see that

\begin{equation*}
\sum_{B/2<\norm \bbb\leq B}1_{I_r}(\norm\bbb)\mu_K(\bbb)\xi(\bbb)f(r\norm\bbb)=\sum_{B/2<n\leq B}1_{I_r}(n)g(r,n),
\end{equation*}
where 
\[
g(r,n):=\sum_{\norm \bbb=n}\mu_K(\bbb)\xi(\bbb)f(rn).
\]
Now \cite[Lemma A.2]{MR2473624} gives 
\begin{align*}
\sum_{R/2<r\leq R}\bigg|\sum_{B/2<n\leq B}1_{I_r}(n)g(n)\bigg|^2&\ll \log^2(1+B/2)\sum_{R/2<r\leq R}\bigg|\sum_{B/2<n\leq B}g(r,n)e(\alpha n)\bigg|^2\\ &\ll (\log N)^2 \sum_{R/2<r\leq R}\bigg|\sum_{B/2<\norm \bbb\leq B}\mu_K(\bbb)\xi(\bbb)f(r\norm\bbb)e(\alpha \norm \bbb)\bigg|^2,
\end{align*}
for some $\alpha\in\RR/\ZZ$, where as usual $e(x):=e^{2\pi i x}$. Inserting this back into \eqref{eq:vaughan_II_splitting}, we obtain 
\[
\sum_{R/2<r\leq R}\bigg|\sum_{B/2<\norm \bbb\leq B}\mu_K(\bbb)\xi(\bbb)f(r\norm\bbb)e(\alpha \norm \bbb)\bigg|^2\gg_K\frac{N^2}{R}(\epsilon/\log N)^{O_D(1)}. 
\]
Expanding the left-hand side, we get
\[
\sum_{B/2<\norm \bbb,\norm \bbb'\leq B}\sum_{R/2<r\leq R}\vb(\bbb,\bbb')f(r\norm\bbb)f(r\norm\bbb'),
\]
with some coefficients $\vb(\bbb,\bbb')\in\CC$ that satisfy $|\vb(\w,\w')|\leq 1$. Applying
\cite[Lemma A.10]{MR2473624} with $x=(\bbb,\bbb')$, $y=r$,
$X=\{(\bbb,\bbb')\where \bbb,\bbb' \in\ideals,\  B/2<\norm \bbb,\norm \bbb'\leq B\}$, $Y=\ZZ\cap (R/2,R]$
and
$h(x,y)=f(r\norm\bbb)f(r\norm\bbb')$,
we obtain
\begin{align*}
|\EE_{x\in X}\EE_{y\in Y}\vb(x)h(x,y)|^2&\ll |\EE_{x\in X}\EE_{y,y'\in Y}h(x,y)\overline{h(x,y')}|\\ &\hspace{-2cm}=|\EE_{B/2<\norm \bbb,\norm \bbb'\leq W}\EE_{R/2<r,r'\leq R}f(r\norm\bbb)f(r\norm\bbb')f(r'\norm\bbb)f(r'\norm\bbb')|.
\end{align*}
As $\frac{N^2}{R^2B^2}\gg 1$, this shows that
\begin{align*}
  &|\EE_{B/2<\norm \bbb,\norm \bbb'\leq B}\EE_{R/2<r,r'\leq R}f(r\norm\bbb)f(r\norm\bbb')f(r'\norm\bbb)f(r'\norm\bbb')|\\
  &\gg_K \frac{1}{B^2R}\left(\sum_{B/2<\norm \bbb,\norm \bbb'\leq B}\sum_{R/2<r\leq R}\vb(\bbb,\bbb')f(r\norm\bbb)f(r\norm\bbb')\right)^2\\
  &\gg_K  \frac{1}{B^2 R}\frac{N^2}{R}(\epsilon/\log N)^{O_D(1)}\gg (\epsilon/\log N)^{O_D(1)}.
\end{align*}
Rewriting this as
\begin{equation*}
  \EE_{R/2<r,r'\leq R}\left|\EE_{B/2<\norm\bbb\leq B}f(r\norm\bbb)f(r'\norm\bbb)\right|^2\gg_K(\epsilon/\log N)^{O_D(1)},
\end{equation*}
we conclude that
\begin{equation*}
  \left|\EE_{B/2<\norm\bbb\leq B}f(r\norm\bbb)f(r'\norm\bbb)\right|\gg_K(\epsilon/\log N)^{O_D(1)}
\end{equation*}
holds for at least $\gg_KR^2(\epsilon/\log N)^{O_D(1)}$ pairs $(r,r')$ with $R/2< r,r'\leq R$,
as desired for the third situation in the proposition's conclusion.
\end{proof}

\subsection{Proof of Theorem \ref{thm:ideal_von_mangoldt_equidist_nilsequences}}
Let us start by proving the following version of Theorem
\ref{thm:ideal_von_mangoldt_equidist_nilsequences}, the only difference being
the lower bound of the range of summation.

\begin{lemma} \label{lem:prop2.1analog} For all integers $m\geq 0$ and
  $d,D\geq 1$, there is a constant $c(m,d,D)>0$, such that the following
  holds. Under the same hypotheses as in Theorem
  \ref{thm:ideal_von_mangoldt_equidist_nilsequences}, we have 
  \begin{equation}\label{eq:lem_prop2.1analog}
  \left|\sum_{N/2\leq \norm \aaa\leq N}\Lambda_K(\aaa)\xi(\aaa)1_P(\norm \aaa)F(g(\norm \aaa)\Gamma)\right|\ll_{m,d,K,\mmm}
  \delta^{c(m,d,D)}Q\vecnorm{F}_{\textnormal{Lip}}N(\log N)^2. 
  \end{equation}
\end{lemma}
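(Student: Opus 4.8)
The plan is to prove the contrapositive, following the strategy by which Green and Tao showed the Möbius function to be orthogonal to nilsequences \cite{MR2877066}: we show that if the sum in \eqref{eq:lem_prop2.1analog} is large, then $(g(n)\Gamma)_{n\in[N]}$ cannot be totally $\delta$-equidistributed. First I would normalise $\vecnorm{F}_{\Lip}\le 1$ and regard $\xi$, extended by zero to non-coprime ideals, as a completely multiplicative function on $\ideals$ with $\vecnorm{\xi}_\infty\le 1$. If $\delta^{c}Q\log N$ exceeds a suitable constant depending on $m,d,K,\mmm$, the asserted bound is immediate from the ideal theorem estimate $\sum_{\norm\aaa\le N}\Lambda_K(\aaa)\ll_K N$; so I may assume $\delta^{c}Q\log N$ is small (in particular $\delta$ itself is small in terms of $\log N$, which is used later). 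Suppose, for contradiction, that the left-hand side of \eqref{eq:lem_prop2.1analog} exceeds $C_0\,\delta^{c}QN(\log N)^2$ for a large $C_0=C_0(m,d,K,\mmm)$. Setting $f(n):=1_P(n)F(g(n)\Gamma)$, which has $\vecnorm{f}_\infty\le 1$, I apply Proposition \ref{prop:inverse_theorem_mu} with $\epsilon:=C_0\delta^{c}Q\log N\in(0,1)$, so that $\epsilon/\log N=C_0\delta^{c}Q$. This places us in one of its three alternatives — weighted type I, type I, or type II — each providing a correlation $\gg_K(\delta^{c}Q)^{O_D(1)}$ for a proportion $\gg_K(\delta^{c}Q)^{O_D(1)}$ of dilations $r$, respectively pairs $(r,r')$, of a normalised ideal sum attached to $f$.

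Next, I treat the two type I cases together. Here $f(r\norm\bbb)=1_P(r\norm\bbb)F\big((g\circ(n\mapsto rn))(\norm\bbb)\Gamma\big)$, the sequence $g\circ(n\mapsto rn)$ lies in $\poly(\ZZ,G_\bullet)$ and has degree $d$ by Proposition \ref{prop:equidist_poly_subseq}(1), and $\{n:rn\in P\}$ is an arithmetic progression of common difference at most $Q$. For each $r$ in the relevant proportion I distinguish two cases: if $(g(rn)\Gamma)_{n\le \lfloor N/r\rfloor}$ is totally $\rho'$-equidistributed, where $\rho'$ is an appropriate (necessarily small, of the shape a power of $(\delta^{c}Q)/\log N$) threshold, then Corollary \ref{cor:norms_vs_equidist_nilsequences_log} — or, in the unweighted case, Proposition \ref{prop:norms_vs_equidist_nilsequences} — applied to $g\circ(n\mapsto rn)$ on $[\lfloor N/r\rfloor]$ with the progression $\{n:rn\in P\}\cap(\frac{N}{2r},\frac{N}{r}]$, contradicts the lower bound on the correlation. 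Hence the dilated orbit fails to be totally $\rho'$-equidistributed, and the quantitative Leibman theorem \cite[Theorem 2.9]{MR2877065} furnishes a nontrivial horizontal character $\bar\eta_r$ with $0<|\bar\eta_r|$ and $\vecnorm{\bar\eta_r\circ(g\circ(n\mapsto rn))}_{\Cinf[N/r]}$ both bounded by a power of $\rho'^{-1}$. Pigeonholing over the boundedly many characters of that height, some fixed nontrivial $\bar\eta$ works for a positive proportion of the $r$'s; writing $\bar\eta\circ g$ in the binomial basis and running the standard Weyl/gcd recurrence (as in \cite[Theorem 1.1]{MR2877066}, using that the $r$ lie in a dyadic interval $(R/2,R]$ with $R\le N^{2/3}$ and that suitably many of them have small gcd of $d$-th powers) produces $q_0\in\NN$, of size a power of $\rho'^{-1}$, with $\vecnorm{q_0\bar\eta\circ g}_{\Cinf[N]}$ of the same size. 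Lemma \ref{lem:characters_obstruct} then shows $(g(n)\Gamma)_{n\in[N]}$ is not totally $\delta$-equidistributed once $c=c(m,d,D)$ is chosen small enough — here the earlier reduction ``$\delta$ small in terms of $\log N$'' is exactly what lets the implied $\log N$-factors be absorbed.

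The type II case is handled analogously on the product nilmanifold. Writing $f(r\norm\bbb)f(r'\norm\bbb)=\tilde F(\tilde g(\norm\bbb)\tilde\Gamma)\,1_{P_r\cap P_{r'}}(\norm\bbb)$ with $\tilde g(n):=(g(rn),g(r'n))$ a polynomial sequence on $(G\times G)/(\Gamma\times\Gamma)$ with the product filtration and Mal'cev basis, $\tilde F(x,y):=F(x)F(y)$ satisfying $\int_{(G\times G)/(\Gamma\times\Gamma)}\tilde F=\big(\int_{G/\Gamma}F\big)^2=0$, and $P_r\cap P_{r'}$ an arithmetic progression of common difference at most $Q$, I again split for each pair $(r,r')$: if $(\tilde g(n)(\Gamma\times\Gamma))_{n\le B}$ is suitably equidistributed, Proposition \ref{prop:norms_vs_equidist_nilsequences} applied to $\tilde g,\tilde F$ on $[B]$ contradicts the lower bound; otherwise Leibman's theorem yields a nontrivial horizontal character $(\bar\eta_1,\bar\eta_2)$ of $G\times G$ with $\vecnorm{\bar\eta_1\circ(g\circ(n\mapsto rn))+\bar\eta_2\circ(g\circ(n\mapsto r'n))}_{\Cinf[B]}$ bounded, and, after pigeonholing $(\bar\eta_1,\bar\eta_2)$, a \emph{bilinear} Weyl/gcd recurrence in $(r,r')$ — crucially exploiting $RB\asymp N$, so that after dilation by $r\asymp R$ the orbit is seen at scale $N$ even though the inner variable $\norm\bbb$ runs only up to $B$ — again yields a horizontal character of small height whose composition with $g$ contradicts total $\delta$-equidistribution via Lemma \ref{lem:characters_obstruct}.

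I expect the main obstacle to be this bilinear recurrence step in the type II case, together with the uniform bookkeeping of the implied powers of $\delta$, $Q$ and $\log N$ across all three alternatives: one must arrange a single $c=c(m,d,D)$ so that in every branch the equidistribution thresholds, the heights produced by Leibman's theorem, and the final quantity fed into Lemma \ref{lem:characters_obstruct} all line up to contradict total $\delta$-equidistribution. The remaining points are routine: the boundary term distinguishing $\sum_{N/2\le\norm\aaa\le N}$ from $\sum_{N/2<\norm\aaa\le N}$ is negligible, the conditions $1_P$ merely inflate the rationality parameter $Q$ by a bounded factor, and passing between $\Cinf$-norms at comparable scales is elementary.
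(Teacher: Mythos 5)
Your proposal follows essentially the same route as the paper's proof: assume the correlation is large, feed $f(n)=1_P(n)F(g(n)\Gamma)$ into Proposition \ref{prop:inverse_theorem_mu}, handle the two type I alternatives via the dilated sequences $g(r\cdot)$ together with Proposition \ref{prop:norms_vs_equidist_nilsequences}/Corollary \ref{cor:norms_vs_equidist_nilsequences_log}, the quantitative Leibman theorem and the Weyl/gcd arguments of \cite{MR2877066}, treat the type II alternative analogously on the product nilmanifold, and conclude in every branch with Lemma \ref{lem:characters_obstruct}. The paper's endgame is organised exactly as you anticipate (after Lemma \ref{lem:polynomial_extrapolation} it invokes the arguments of \cite{MR2877066} verbatim with $K=R/2$), so your plan is correct and matches the paper's proof, with the remaining points (the initial reduction to $\delta>N^{-\sigma}$, progression lengths, and the case of small $R$) being routine bookkeeping of the kind you flag.
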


\begin{proof}
  We follow the proof of \cite[Proposition 2.1]{MR2877066}, replacing
  the application of \cite[Proposition 3.1]{MR2877066} by our Propositions
  \ref{prop:inverse_theorem_mu} and \ref{prop:norms_vs_equidist_nilsequences}.

Without loss of generality, we may assume that
$\vecnorm{F}_{\textnormal{Lip}}=1$, so, in particular
$\vecnorm{F}_\infty\leq 1$. If $\delta\leq 1/N$ then the total
$\delta$-equidistribution of $(g(n)\Gamma)_{n\in[N]}$ implies
$|F(g(n)\Gamma))|\leq \delta$ for all $n\in[N]$, in which case
\eqref{eq:lem_prop2.1analog} holds trivially for any $c(m,d,D)\leq 1$. Hence, we may
  assume that $\delta>1/N$. This allows us to assume that, in fact,
  $\delta>N^{-\sigma}$, for some small $\sigma=\sigma(m,d,D)\in (0,1)$ that
  will be specified later in the proof. (To deduce from this the result for
  arbitrary $\delta>1/N$, take the result for $\delta^\sigma$ and replace $c$
  by $\sigma c$.)

  Let $\epsilon := (C\delta)^cQ\log N$, where the small $c=c(m,d,D)\in(0,1)$ and the large $C=C(m,d,K,\mmm)\geq 1$ will be specified later in the proof. We may assume that $\epsilon<1$, as otherwise \eqref{eq:prop2.1analog} holds trivially.  
In particular, we may thus assume that $Q,\log N\leq\delta^{-c}$, and we have $\epsilon/\log N\geq (C\delta)^c$.

Under the above assumptions, we suppose that \eqref{eq:lem_prop2.1analog} does not
hold with $C^c$ as the implied constant, so
  \begin{equation*}
    \left|\sum_{N/2\leq \norm \aaa\leq N}\Lambda_K(\aaa)\xi(\aaa)1_P(\norm
    \aaa)F(g(\norm \aaa)\Gamma)\right|\geq \epsilon N \log N.
  \end{equation*}
  We will deduce that $(g(n)\Gamma)_{n\in[N]}$ is not totally
  $\delta$-equidistributed, contradicting the hypotheses of Theorem
  \ref{thm:ideal_von_mangoldt_equidist_nilsequences}. To this end, we
  apply Proposition \ref{prop:inverse_theorem_mu} with $f(n)=\one_P(n)F(g(n)\Gamma)$, leading to one of three cases.

We treat both cases of large type I sums simultaneously. Let $e=1$ in the weighted
and $e=0$ in the unweighted case. In both cases, we get some $R\ll N^{2/3}$ and $\gg_K R(C\delta)^{O_{D}(c)}$ integers $r\in (R/2,R]$, such that
  \begin{equation*}
    \left|\sum_{N/2r< \norm\bbb\leq
        N/r}(\log\norm\bbb)^e\xi(\bbb)1_P(r\norm\bbb)F(g(r\norm\bbb)\Gamma)\right|\gg_K \frac{N}{r}(C\delta)^{O_{D}(c)}.
  \end{equation*}
  Fix a value of $r$. Let $l$ be the common difference of the progression $P$, then $1\leq l\ll
  Q$. Pigeonholing the values of $\norm\bbb$ into residue classes modulo $l$, we
  find some $b\bmod l$, for which
  \begin{equation*}
    \left|\sum_{\substack{N/2r<\norm\bbb\leq N/r\\\norm\bbb\equiv b\bmod l}}(\log\norm\bbb)^e\xi(\bbb)1_P(r\norm\bbb)F(g(r\norm\bbb)\Gamma)\right|\gg_K\frac{N}{rl}(C\delta)^{O_{D}(c)}.
  \end{equation*}
  The conditions $n\in(N/2r,N/r]$, 
  $n\equiv b\bmod l$ and $\one_P(rn)=1$ are equivalent to $n\in P_r$, for some
  progression $P_r \subseteq(N/2r,N/r]$ of length $\asymp |P|/r\geq
  N/(Qr) \geq \delta^{c}N/r$.
  Hence, we obtain
  \begin{equation}\label{eq:moebius_equidist_type_I_norm_bound}
     \left|\sum_{\substack{\norm\bbb\in P_r}}(\log\norm\bbb)^e\xi(\bbb)F(g(r\norm\bbb)\Gamma)\right|\gg_K\frac{N}{rl}(C\delta)^{O_{D}(c)}
        = \frac{N}{r}(C\delta)^{O_{D}(c)}.
  \end{equation}

      Set $c_1:=\sqrt{c}$, $M_r:=\lceil N/r\rceil$, and define the polynomial sequence $g_r(n):=g(rn)\in\poly(\ZZ,G_\bullet)$.
      We claim that the finite sequence
      $(g_r(n))_{n\in [M_r]}$ is not totally
      $\delta^{c_1}$-equidistributed. Indeed, if it was, then Proposition
      \ref{prop:norms_vs_equidist_nilsequences} (in the unweighted case) or Corollary \ref{cor:norms_vs_equidist_nilsequences_log} (in the weighted case), applied with some appropriate $Q\leq\tilde Q\ll Q$ in place of $Q$, would imply
      \begin{equation*}
        \left|\sum_{\norm\bbb\in P_r}(\log\norm\bbb)^e\xi(\bbb)F(g(r\norm\bbb)\Gamma)\right|\ll_{K,\mmm} M_r\delta^{c_1/O_{d,D,m}(1)}Q(\log M_r)^e\ll \frac{N}{r}\delta^{c_1/O_{d,D,m}(1)-O(c)}.
      \end{equation*}
      This contradicts \eqref{eq:moebius_equidist_type_I_norm_bound} if only 
      $c(m,d,D)$ is small enough and $C(m,d,K,\mmm)$ is large enough.

      Hence, there is a progression
      $\tilde P_r=\{a+q,a+2q,\ldots,a+N_rq\}\subseteq [M_r]$ of length
      $N_r\geq \delta^{c_1}M_r$, such that the sequence
      $(g_r(a+nq)\Gamma)_{n\in [N_r]}$ is not
      $\delta^{c_1}$-equidistributed. Define the polynomial sequence $\tilde{g}_r(n):=g_r(a+nq)$. By the
      quantitative Leibmann theorem of Green and Tao \cite[Theorem
      2.9]{MR2877065}, there exists a nontrivial horizontal character $\psi_r$ with
      $|\psi_r|\ll\delta^{-O_{m,d}(c_1)}$, such that
      \begin{equation*}
      \vecnorm{\psi_r\circ\tilde
        g_r}_{C^\infty[N_r]}\ll\delta^{-O_{m,d}(c_1)}.
      \end{equation*}
      Note that $\tilde P_r\subseteq [M_r]$ and $N_r\geq \delta^{c_1}M_r$ imply
      that $q\ll \delta^{-c_1}$ and $|a|\ll \delta^{-c_1}N_r$. Hence, Lemma \ref{lem:polynomial_extrapolation} with $D=1$ implies
      the existence of a positive integer $q_r\ll\delta^{-O_{m,d}(c_1)}$, such
      that 
      \begin{equation}\label{eq:equidist_case_gr_character}
         \vecnorm{q_r\psi_r\circ g_r}_{C^\infty[N_r]}\ll\delta^{-O_{m,d}(c_1)}.
      \end{equation}
      
      Recall from above (with large enough $C$), that this
      holds for $\gg R\delta^{O_{D}(c)}\geq R\delta^{O_{D}(c_1)}$ values of $r$ in
      $(R/2,R]$. If $R\ll \delta^{-O_{D}(c_1)}$,
      \eqref{eq:equidist_case_gr_character} for any such $r$ implies via
      another application of Lemma \ref{lem:polynomial_extrapolation} that
      $\vecnorm{\psi\circ g}_{\Cinf[N]}\ll\delta^{-O_{m,d,D}(c_1)}$ for some
      nontrivial horizontal character $\psi$ with $\abs{\psi}\ll \delta^{-O_{m,d,D}(c_1)}$. For
      sufficiently small $c_1$, this implies that the sequence $(g(n)\Gamma)_{n\in[N]}$ is not
      totally $\delta$-equidistributed by Lemma \ref{lem:characters_obstruct},
      a contradiction.

      Hence, we may assume that $R\delta^{O_D(c_1)}\gg 1$ and, replacing $R$ by $2\lfloor R/2\rfloor$, that $R$ is
      even. Now the arguments in \cite[pp.550--551]{MR2877066}
      apply verbatim with $K = R/2$ in their notation, leading to
      the same contradiction.
      
The type II case can be handled by analogous modifications to the arguments in
\cite{MR2877066}. Thus, we only provide a sketch of its proof. In this case of
Proposition \ref{prop:inverse_theorem_mu}, there are integers $R,B$ with
$N^{1/3}/2\leq R\leq 2 N^{2/3}$, $\frac{N}{2}\leq RB\leq 2N$, such that 
\begin{equation*}
    \left|\sum_{B/2<\norm\bbb\leq B}
    \one_P(r\norm \bbb)\one_P(r'\norm \bbb)F(g(r\norm \bbb)\Gamma)F(g(r'\norm \bbb)\Gamma)\right|\gg_K B(C\delta)^{O_{D}(c)}
\end{equation*}
for $\gg_K R^2(C\delta)^{O_{D}(c)}$ pairs of integers $(r,r')$ with $R/2<r,r'\leq R$.

As before, let $l$ be the common difference of $P$. Then, we can find a congruence class $b\bmod l$ so that 
\begin{equation*}
    \left|\sum_{\substack{B/2<\norm\bbb\leq B\\\norm\bbb \equiv b\bmod l}}
    \one_P(r\norm \bbb)\one_P(r'\norm \bbb)F(g(r\norm \bbb)\Gamma)F(g(r'\norm \bbb)\Gamma)\right|\gg_K \frac{B}{l}(C\delta)^{O_{D}(c)}.
\end{equation*}
The conditions $n\in(B/2,B]$, $n\equiv b\bmod l$, $\one_P(rn)=1$ and $\one_P(r'n)=1$ give an arithmetic progression $P_{r,r'}\subseteq (B/2,B]$ of length $\asymp \frac{B}{Q}\geq \delta^{c}B$, for which
\begin{equation*}
    \left|\sum_{\norm \bbb \in P_{r,r'}} F(g(r\norm \bbb)\Gamma)F(g(r'\norm \bbb)\Gamma)\right|\gg_K \frac{B}{l}(C\delta)^{O_{D}(c)}=B(C\delta)^{O_{D}(c)}.
\end{equation*}
Put $c_1=\sqrt{c}$, and consider the polynomial sequence
$g_{r,r'}(n)=(g(rn),g(r'n))\in\poly(\ZZ,G_\bullet\times G_\bullet)$. Using
Proposition \ref{prop:norms_vs_equidist_nilsequences} similarly as before, with
$\xi=1$ and some appropriate $1\leq \tilde Q\ll Q$ in place of $Q$, we see that
$(g_{r,r'}(n)(\Gamma\times\Gamma))_{n\in [B]}$ cannot be totally
$\delta^{c_1}$-equidistributed in $G/\Gamma\times G/\Gamma$, if only $c$ is
sufficiently small and $C$ sufficiently large.

We can therefore find an arithmetic progression
$\tilde{P}_{r,r'}=\{a+q,\ldots,a+N_{r,r'}q\}\subseteq[B]$ with length
$N_{r,r'}\geq \delta^{c_1}B$ so that
$(g_{r,r'}(a+nq)(\Gamma\times\Gamma))_{n\in [N_{r,r'}]}$ fails to be totally
$\delta^{c_1}$-equidistributed. With $\tilde{g}_{r,r'}(n):=g_{r,r'}(a+nq)$, the
      quantitative Leibmann theorem \cite[Theorem
      2.9]{MR2877065} yields the existence of a nontrivial horizontal character $\psi_{r,r'}$ with
      $|\psi_{r,r'}|\ll\delta^{-O_{m,d}(c_1)}$, such that
      \begin{equation*}
      \vecnorm{\psi_{r,r'}\circ\tilde
        g_{r,r'}}_{C^\infty[N_{r,r'}]}\ll\delta^{-O_{m,d}(c_1)}.
    \end{equation*}
       As $\tilde P_{r,r'}\subseteq [B]$ and $N_{r,r'}\geq \delta^{c_1}B$, we
       get $q\ll \delta^{-c_1}$ and $|a|\ll \delta^{-c_1}N_{r,r'}$. Hence, Lemma \ref{lem:polynomial_extrapolation} with $D=1$ implies
      the existence of a positive integer $q_{r,r'}\ll\delta^{-O_{m,d}(c_1)}$, such
      that 
      \begin{equation*}
         \vecnorm{q_{r,r'}\psi_{r,r'}\circ g_{r,r'}}_{C^\infty[N_{r,r'}]}\ll\delta^{-O_{m,d}(c_1)}.
      \end{equation*}
      If $C$ is large enough, this holds for
      $\gg R^2\delta^{O_{D}(c)}\geq R^2\delta^{O_{D}(c_1)}$ values of $(r,r')$
      with $R/2<r,r'\leq R$. As $R\gg N^{1/3}$, we have $R^2\delta^{O_D(c_1)}\gg
      R$, and hence we may assume that $R$ is even. From here on, the proof in \cite[pp.553]{MR2877066}
      applies verbatim with $K = R/2$.
    \end{proof}

    Let us now deduce Theorem
    \ref{thm:ideal_von_mangoldt_equidist_nilsequences} from Lemma
    \ref{lem:prop2.1analog}.  We may make the same assumptions without loss of
    generality as at the start of the proof of Lemma \ref{lem:prop2.1analog},
    in particular we may assume that $\vecnorm{F}_{\Lip}=1$ and thus
    $\vecnorm{F}_\infty\leq 1$, as well as $\delta> N^{-1/4}$. Write
    $s_\aaa= \Lambda_K(\aaa)\xi(\aaa)1_P(\norm \aaa)F(g(\norm \aaa)\Gamma)$,
    $x_l:=2^{-l}N$ and $N_l:=\lceil x_l\rceil$. Then
      \begin{align}
        \sum_{\norm\aaa\leq N}s_\aaa &= \sum_{l=0}^{\lfloor -(1/2)\log_2\delta\rfloor-1}\left|\sum_{x_l/2<\norm\aaa\leq x_l}s_\aaa\right| + O\left(\sum_{\norm\aaa\leq 2\delta^{1/2}N}|s_\aaa|\right)\nonumber\\
        &=\sum_{l=0}^{\lfloor -(1/2)\log_2\delta\rfloor-1}\left|\sum_{N_l/2<\norm\aaa\leq N_l}s_\aaa\right| + O_K\left((\log N)|\log\delta|+\delta^{1/2}N\log N\right).\label{eq:dyadic_splitting_plus_error}
      \end{align}
      Let $0\leq l\leq \lfloor -(1/2)\log_2\delta\rfloor-1$, let $R\subseteq [N_l]\subseteq [N]$ be an arithmetic progression of length at least $\delta^{1/2}N_l$. As $\delta^{1/2}N_l \geq \delta^{1/2}x_l\geq \delta N$, the total $\delta$-equidistribution of $(g(n)\Gamma)_{n\in[N]}$ yields
      \begin{equation*}
        \left|\sum_{n\in R}H(g(n)\Gamma)-\int_{G/\Gamma}H\right|\leq \delta\vecnorm{H}_{\Lip}\leq\delta^{1/2}\vecnorm{H}_{\Lip}
      \end{equation*}
      for any Lipschitz function $H:G/\Gamma\to\CC$. Hence, the sequence $(g(n)\Gamma)_{n\in [N_l]}$ is totally $\delta^{1/2}$-equidistributed.

      Let $c=c(m,d,D)$ be the constant from Lemma \ref{lem:prop2.1analog}. For our given progression $P\subseteq [N]$, we distinguish between two cases.

      Firstly, if $|P\cap [N_l]|< \delta^{c/4}N_l/Q$, we may trivially estimate
      \begin{equation*}
        \sum_{N_l/2<\norm\aaa\leq N_l}s_\aaa \leq (\log N)\sum_{\norm\aaa\leq N_l}1_P(\norm\aaa)\ll_K \delta^{c/4}N_l(\log N)/Q \leq \delta^{c/4}QN_l(\log N)^2.
      \end{equation*}

      Secondly, if $|P\cap [N_l]|\geq \delta^{c/4}N_l/Q$, then we apply Lemma \ref{lem:prop2.1analog} with $\delta^{1/2}$ in place of $\delta$ and $\delta^{-c/4}Q$ instead of $Q$ to obtain
     \begin{equation*}
        \sum_{N_l/2<\norm\aaa\leq N_l}s_\aaa \ll_{m,d,K,\mmm} \delta^{c/2-c/4}Q N_l(\log N)^2 = \delta^{c/4}Q N_l(\log N)^2.
      \end{equation*}
      Hence, summing over all $l$ we may estimate
      \begin{equation*}
        \sum_{l=0}^{\lfloor -(1/2)\log_2\delta\rfloor-1}\left|\sum_{N_l/2<\norm\aaa\leq N_l}s_\aaa\right|\ll_{m,d,K,\mmm}\delta^{c/4}Q N (\log N)^2\sum_{l=0}^{\infty}2^{-l} \ll \delta^{c/4}Q N (\log N)^2.
      \end{equation*}
      Together with \eqref{eq:dyadic_splitting_plus_error}, the ideal theorem, and our assumptions $\vecnorm{F}_{\Lip}=1$ and $\delta> N^{-1/4}$ made at the start of this proof, this shows the bound stated in Theorem \ref{thm:ideal_von_mangoldt_equidist_nilsequences}, once we replace $c$ by $c/4$.

\section{Von Mangoldt model: proof of Theorem \ref{thm:W_tricked_chebotarev_mangoldt_with_nilsequences_nonconst_avg}}\label{sec:von_mangoldt_proxy}
In this section, we prove Theorem \ref{thm:W_tricked_chebotarev_mangoldt_with_nilsequences_nonconst_avg}.
As in \cite{MR2877066}, we deduce the general result from the following version in the case of equidistributed nilsequences and test functions with mean zero. This result is analogous to \cite[Proposition 2.1]{MR2877066}, with the M\"obius function replaced by our $W$-tricked Chebotarev-von Mangoldt function $\Lambda_{K,C,b,W}(n)$. 

\begin{proposition}[Non-correlation with equidistributed
  nilsequences]\label{prop:equidist_nilsequences}
  For all integers $m\geq 0$, $d,D\geq 1$, there is a constant
  $c(m,d,D)>0$ such that the following holds.

  Let $N\in\NN$ be
  sufficiently large in terms of $m,d$. Let $\delta\in(0,1)$ and $Q\geq 2$. Let $G/\Gamma$ be an $m$-dimensional nilmanifold with a filtration $G_\bullet$ of degree $d$ and a $Q$-rational Mal'cev basis $\cX$ adapted to
  $G_\bullet$. Let $g\in\poly(\ZZ,G_\bullet)$, and suppose that
  $(g(n)\Gamma)_{n\in [N]}$ is totally $\delta$-equidistributed. Let
  $F:G/\Gamma\to [-1,1]$ with $\int_{G/\Gamma}F=0$ and $P\subseteq [N]$ an
  arithmetic progression of size $|P|\geq N/Q$. Let $K$ be a Galois number field of degree $[K:\QQ]=D$ with $\Phi_{K^\ab}\mid W$, and $C\subseteq\Gal(K/\QQ)$ a conjugacy class. Then 
  \begin{equation*}
    \left|\EE_{n\in [N]}\Lambda_{K,C,b,W}(n)\one_P(n)F(g(n)\Gamma)\right|\ll_{K,m,d,\cD}\delta^{c(m,d,D)}Q\vecnorm{F}_{\Lip}(\log N)^4. 
  \end{equation*}
\end{proposition}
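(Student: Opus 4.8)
The plan is to prove this by reducing it, via class field theory, to the cancellation estimate for the \emph{ideal} von Mangoldt function already obtained in Theorem~\ref{thm:ideal_von_mangoldt_equidist_nilsequences}, with the crucial point being that the $W$-dependence gets absorbed into a shifted polynomial nilsequence instead of into the conductor of an auxiliary Hecke character. First I would dispose of degenerate cases. Since $\vecnorm{F}_\infty\le\vecnorm{F}_{\Lip}$ and $\EE_{n\in[N]}\Lambda_{K,C,b,W}(n)\ll\log N$, the left-hand side is always $\ll_K\vecnorm{F}_{\Lip}\log N$, so the bound is trivial whenever $\delta\ge 2^{-1/c}$ or $Q\ge\delta^{-c}$ for the (small) target constant $c=c(m,d,D)$; hence I may assume $Q\le\delta^{-c}$ (so $\cX$ is $\delta^{-c}$-rational) and $\delta<2^{-1/c}$. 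If $\gcd(b,W)>1$ then $b+nW$ is composite for all but $O(1)$ values of $n\in[N]$, so assume $\gcd(b,W)=1$. If $\delta\le 1/N$, total $\delta$-equidistribution applied to singleton progressions together with $\int_{G/\Gamma}F=0$ forces $|F(g(n)\Gamma)|\le\delta\vecnorm{F}_{\Lip}$ for all $n\in[N]$, again giving a trivial bound; thus assume $\delta>1/N$. Finally, proving the statement for $\delta^{\sigma}$ in place of $\delta$ and replacing $c$ by $\sigma c$, I may assume $\delta>N^{-\sigma}$ for a small $\sigma=\sigma(m,d,D)$; combined with $W\ll(\log N)^{2/3}$ from \eqref{eq:cheb_def_W}, $|b|<W$ and $N$ large, this guarantees $W\le\delta^{-c}$, $|b|\le\delta^{-c}N$ and $\tilde N^{-1/2}<\delta<2^{-1/c}$, where $\tilde N:=WN+b$.

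Next I would carry out the class field theory step. Fix $\sigma\in C$ and put $L:=K^{\langle\sigma\rangle}$, so $K/L$ is cyclic with Galois group $H=\langle\sigma\rangle$, $[L:\QQ]\le D$. As in Lemma~\ref{lem:passing_to_ideals}, for every prime $p$ unramified in $K$ one has
\[
\one_{[K/\QQ,p]=C}=\frac{1}{\kappa}\,\#\{\qqq\ :\ \qqq\mid p\text{ prime of }L,\ \norm\qqq=p,\ [K/L,\qqq]=\sigma\}
\]
for a constant $\kappa=\kappa_{K,C}>0$, and the residual condition $[K/L,\qqq]=\sigma$ is detected by the orthogonality relation $\one_{[K/L,\qqq]=\sigma}=\tfrac{1}{|H|}\sum_{\xi}\overline{\xi(\sigma)}\xi(\qqq)$ over the $|H|\ll_K 1$ Dirichlet characters $\xi$ of $L$ cutting out $K/L$, whose modulus $\mmm=\mmm_{K,C}$ depends only on $K$ and $C$. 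The primes $p$ ramified in $K$ or dividing $\mmm$, the prime powers $b+nW=p^e$ with $e\ge 2$, and the ideals of $L$ which are not degree-one primes together contribute $O_K(N^{3/4}\vecnorm{F}_{\Lip})$ to the sums below. Since $b+nW=\norm\qqq$ forces $\norm\qqq\equiv b\pmod W$ and $n=(\norm\qqq-b)/W$, collecting terms yields, with $\tilde P:=\{b+Wn\where n\in P\}$,
\[
\sum_{n\in[N]}\Lambda_{K,C}(b+nW)\one_P(n)F(g(n)\Gamma)=\frac{1}{\kappa|H|}\sum_{\xi}\overline{\xi(\sigma)}\sum_{\aaa}\Lambda_L(\aaa)\xi(\aaa)\one_{\tilde P}(\norm\aaa)F\!\Big(g\big(\tfrac{\norm\aaa-b}{W}\big)\Gamma\Big)+O_K\big(N^{3/4}\vecnorm{F}_{\Lip}\big).
\]
Here $\tilde P$ is an arithmetic progression inside $[\tilde N]$, of common difference $W$ times that of $P$ and of length $|P|\ge N/Q\ge\tilde N/(2WQ)$, so $|\tilde P|\ge\tilde N/\tilde Q$ with $\tilde Q:=2WQ$, and every element of $\tilde P$ is $\equiv b\bmod W$.

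Then I would spread the nilsequence and conclude. Proposition~\ref{prop:spread_nilsequence} applied with $q=W$ produces $\tilde g\in\poly(\ZZ,G_\bullet)$ with $\tilde g(m)=g((m-b)/W)$ whenever $m\equiv b\bmod W$; by its part~(2) and the bounds on $W$, $|b|$, $\delta$ and the rationality of $\cX$ secured above, the sequence $(\tilde g(m)\Gamma)_{m\in[\tilde N]}$ is totally $\delta^{c_0}$-equidistributed for a constant $c_0=c_0(m,d)$. As every $\aaa$ in the inner sum has $\norm\aaa\in\tilde P$, hence $\norm\aaa\equiv b\bmod W$, I may replace $F(g((\norm\aaa-b)/W)\Gamma)$ by $F(\tilde g(\norm\aaa)\Gamma)$. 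For each of the $\ll_K 1$ characters $\xi$, Theorem~\ref{thm:ideal_von_mangoldt_equidist_nilsequences} applied with $L$, $\xi$ (of modulus $\mmm_{K,C}$), $\tilde g$, equidistribution parameter $\delta^{c_0}$, progression $\tilde P$ and $\tilde Q$ in place of $Q$, together with the ideal theorem $\#\{\aaa\where\norm\aaa\le\tilde N\}\ll_L\tilde N$, gives a bound $\ll_{m,d,K}\tilde N\,\delta^{c_0c(m,d,[L:\QQ])}\,\tilde Q\,\vecnorm{F}_{\Lip}(\log\tilde N)^{2}$ for the inner ideal sum. Dividing the displayed identity by $N$, recalling $\Lambda_{K,C,b,W}(n)=\tfrac{\phi(W)}{W}\Lambda_{K,C}(b+nW)$ with $\phi(W)/W\le1$, and using $\tilde N\ll WN$, $\tilde Q=2WQ$ and $\log\tilde N\ll\log N$, the main term is $\ll_{m,d,K}WQ\,\delta^{c_0c(m,d,[L:\QQ])}(\log N)^{2}\vecnorm{F}_{\Lip}$, which since $W\ll(\log N)^{2/3}$ is $\ll_{m,d,K}\delta^{c_0c(m,d,[L:\QQ])}Q(\log N)^{4}\vecnorm{F}_{\Lip}$, while the error is $\ll_K N^{-1/4}\vecnorm{F}_{\Lip}$; both are $\ll\delta^{c(m,d,D)}Q(\log N)^{4}\vecnorm{F}_{\Lip}$ once $c(m,d,D)$ is chosen small enough in terms of $c_0(m,d)$, $c(m,d,[L:\QQ])$ and $\sigma$, say with $\sigma c(m,d,D)<1/4$. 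This completes the argument.

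The step I expect to be the main obstacle is organising the class field theory reduction so that the conductor $\mmm$ of the auxiliary characters stays bounded purely in terms of $K$: if one detected the congruence $p\equiv b\bmod W$ by additional Dirichlet characters, their conductor would be divisible by $W$, which grows with $N$, rendering Theorem~\ref{thm:ideal_von_mangoldt_equidist_nilsequences} (whose implied constant depends on the modulus) useless. Folding that congruence into the shifted nilsequence $\tilde g$ via Proposition~\ref{prop:spread_nilsequence} circumvents the difficulty. The remainder is bookkeeping: verifying the simultaneous constraints on $\delta$, $W$, $b$ required by Proposition~\ref{prop:spread_nilsequence}, tracking the $\log N$ and $Q$ losses incurred in passing from $[N]$ to $[\tilde N]$, and checking that the ramified-prime, prime-power and higher-degree-ideal discrepancies are negligible after division by $N$.
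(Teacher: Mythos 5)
Your proposal follows essentially the same route as the paper's proof: the same degenerate-case reductions, the passage to ideals of the fixed field $L=K^{\langle c\rangle}$ via characters of $\langle c\rangle$ of conductor bounded in terms of $K$ alone (this is exactly Lemma \ref{lem:passing_to_ideals}), the use of Proposition \ref{prop:spread_nilsequence} with $q=W$ to fold the congruence modulo $W$ into a shifted polynomial sequence $\tilde g$ on $[\,WN+b\,]$, and a final application of Theorem \ref{thm:ideal_von_mangoldt_equidist_nilsequences} with the loss $W\ll(\log N)^{2/3}$ absorbed into the $(\log N)^4$. The only blemishes are cosmetic: the inner bound should read $W^2Q\delta^{c_0c}(\log N)^2$ rather than $WQ\delta^{c_0c}(\log N)^2$ (still comfortably within $(\log N)^4$), and the inequality $W\le\delta^{-c}$ should be justified, as in the paper, by noting that the target bound is trivial unless $\delta^{-c}\ge Q\log N$, rather than by the assumptions $\delta<2^{-1/c}$ and $\delta>N^{-\sigma}$, which do not by themselves imply it.
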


We will deduce Proposition \ref{prop:equidist_nilsequences} from Theorem
\ref{thm:ideal_von_mangoldt_equidist_nilsequences}. In order to do so, we
require some preparation.

\subsection{Passing to ideals}
  For our Galois number
field $K$ and conjugacy class $C\subseteq\Gal(K/\QQ)$, fix in addition
an element $c\in C$. We consider the fixed field $L=K^c$. Then $K/L$
is cyclic with Galois group $\Gal(L/K)=\langle c\rangle$. We consider
every character $\xi:\langle c\rangle\to S^1$ as a Hecke character of
$L$ via $\xi(\ppp)=\xi([K/L, \ppp])$ for all prime ideals $\ppp$ of
$L$ unramified in $K/L$. The following lemma and its proof are inspired by \cite[Proposition 6]{MR3060874}.

  \begin{lemma}\label{lem:passing_to_ideals}
    Let $F:\NN\to\CC$ be any $1$-bounded function. For any $q,N\in\NN$ and $b\in\ZZ$ we have 
  \begin{equation*}
    \sum_{\substack{n\leq N\\n\equiv b\bmod q}}\Lambda_{K,C}(n)F(n) = \frac{|C|}{\phi(q)[K:\QQ]}\sum_{\chi\bmod q}\sum_{\xi\in\widehat{\langle c\rangle}}\overline{\chi(b)\xi(c)}\sum_{\substack{\aaa\in\idealsL\\\norm\aaa\leq N}}\xi(\aaa)\chi(\norm\aaa)\Lambda_L(\aaa)F(\norm\aaa) + O_K(\sqrt{N}), 
  \end{equation*}
  where $\chi$ runs through all Dirichlet characters modulo $q$ and $\xi$ through all characters of $\langle c\rangle$.
\end{lemma}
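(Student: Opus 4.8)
The plan is to expand the right‑hand side and match it, up to an admissible error $O_K(\sqrt N)$, with the left, using orthogonality of Dirichlet characters and of the characters of $\langle c\rangle$, together with a short computation with decomposition groups. Write $G=\Gal(K/\QQ)$, $H=\Gal(K/L)=\langle c\rangle$, let $n_c$ be the order of $c$ (so $|H|=n_c$), and let $Z(c)\subseteq G$ be the centraliser of $c$, so that $|C|=[G:Z(c)]$, $\langle c\rangle\subseteq Z(c)$, and hence $|C|\,n_c\le[K:\QQ]$.

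\textbf{Step 1 (reduction to degree‑one primes).} Since $\vecnorm{F}_\infty\le 1$, the contribution to the left‑hand side of prime powers $p^e$ with $e\ge 2$ is $O(\sqrt N)$ by Chebyshev's bound. On the right, the contribution of ideals $\aaa=\ppp^e$ with $e\ge 2$, of prime ideals $\ppp$ of $L$ with $\norm\ppp=p^f$, $f\ge 2$, and of the finitely many primes ramified in $K/L$ is $O_L(\sqrt N)$ by the ideal theorem for $L$ and Chebyshev, \emph{before} the prefactor; the double sum over $\chi,\xi$ has $\phi(q)n_c$ summands, which the prefactor turns into a factor $\frac{|C|n_c}{[K:\QQ]}\le 1$, so this part is $O_K(\sqrt N)$. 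Using $\xi(\ppp)=\xi([K/L,\ppp])$, $\Lambda_L(\ppp)=\log\norm\ppp$, and writing the left side as $\sum_{n\le N,\,n\equiv b\bmod q}\Lambda_{K,C}(n)F(n)$ up to $O(\sqrt N)$, it thus suffices to show, up to $O_K(\sqrt N)$,
\[
\sum_{\substack{p\le N,\ p\equiv b\bmod q\\ [K/\QQ,p]=C}}(\log p)F(p)=\frac{|C|}{\phi(q)[K:\QQ]}\sum_{\chi\bmod q}\sum_{\xi\in\widehat{\langle c\rangle}}\overline{\chi(b)\xi(c)}\sum_{\substack{\ppp\ \text{degree one, unramified}\\ \text{in }K/L,\ \norm\ppp\le N}}\xi([K/L,\ppp])\,\chi(\norm\ppp)(\log\norm\ppp)F(\norm\ppp).
\]

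\textbf{Step 2 (character orthogonality).} On the right $\norm\ppp=p$ is prime. Rearranging, the right‑hand side equals $\frac{|C|}{\phi(q)[K:\QQ]}\sum_\ppp(\log p)F(p)\big(\sum_\chi\overline{\chi(b)}\chi(p)\big)\big(\sum_\xi\overline{\xi(c)}\xi([K/L,\ppp])\big)$; here $\sum_\chi\overline{\chi(b)}\chi(p)=\phi(q)\,\one_{p\equiv b\bmod q}$ (which also disposes of $p\mid q$ and, if $\gcd(b,q)>1$, of the whole right‑hand side — in which case the left‑hand side is itself $O(\sqrt N)$, being supported on prime powers of primes dividing $q$), and $\sum_\xi\overline{\xi(c)}\xi([K/L,\ppp])=n_c\,\one_{[K/L,\ppp]=c}$. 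So the right‑hand side becomes $\frac{|C|\,n_c}{[K:\QQ]}\sum_{p\le N,\ p\equiv b\bmod q}(\log p)F(p)\,\nu(p)+O_K(\sqrt N)$, the sum over $p$ unramified in $K$, where $\nu(p):=\#\{\ppp\mid p\text{ in }L:\ \deg\ppp=1,\ [K/L,\ppp]=c\}$.

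\textbf{Step 3 (the local count).} It remains to prove $\nu(p)=[Z(c):\langle c\rangle]=[K:\QQ]/(|C|\,n_c)$ when $[K/\QQ,p]=C$, and $\nu(p)=0$ otherwise. Fix a prime $\mathfrak{P}_0$ of $K$ over $p$, put $\sigma=\Frob_{\mathfrak{P}_0}$, so $D_{\mathfrak{P}_0}=\langle\sigma\rangle$. The primes of $L$ over $p$ correspond to the double cosets $H\tau\langle\sigma\rangle$ in $H\backslash G/\langle\sigma\rangle$; the prime $\ppp_\tau$ attached to $\tau$ has $f(\ppp_\tau/p)=[\langle\tau\sigma\tau^{-1}\rangle:\langle\tau\sigma\tau^{-1}\rangle\cap H]$, and, when $f(\ppp_\tau/p)=1$, the Frobenius is $[K/L,\ppp_\tau]=\tau\sigma\tau^{-1}$. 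Hence ``$\deg\ppp_\tau=1$ and $[K/L,\ppp_\tau]=c$'' holds precisely when $\tau\sigma\tau^{-1}=c$. If no such $\tau$ exists then $\sigma\notin C$, i.e. $[K/\QQ,p]\ne C$, and $\nu(p)=0$. Otherwise $[K/\QQ,p]=C$ and $S:=\{\tau\in G:\tau\sigma\tau^{-1}=c\}$ is a coset of $Z(\sigma)$, so $|S|=[K:\QQ]/|C|$; moreover $S$ is stable under the action $(h,d)\cdot\tau=h\tau d^{-1}$ of $H\times\langle\sigma\rangle$ whose orbits are exactly the double cosets. For $\tau\in S$ one has $\tau^{-1}H\tau=\langle\tau^{-1}c\tau\rangle=\langle\sigma\rangle$, so the stabiliser of $\tau$ is $\{(\tau d\tau^{-1},d):d\in\langle\sigma\rangle\}\cong\langle\sigma\rangle$, which (since $\sigma$ is conjugate to $c$) has order $n_c$; thus every orbit in $S$ has size $|H|\,|\langle\sigma\rangle|/n_c=n_c$. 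Therefore $S$ splits into $|S|/n_c=[K:\QQ]/(|C|\,n_c)$ double cosets, which is $\nu(p)$. Substituting back cancels the prefactor $\frac{|C|\,n_c}{[K:\QQ]}$ and recovers $\sum_{p\le N,\ p\equiv b\bmod q,\ [K/\QQ,p]=C}(\log p)F(p)$, finishing the proof.

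\textbf{Main obstacle.} The delicate point is Step 3, specifically pinning down the constant $[K:\QQ]/(|C|\,n_c)$: it rests on the identity $\tau^{-1}H\tau=\langle\sigma\rangle$ for $\tau\in S$, which forces all orbits of the $H\times\langle\sigma\rangle$‑action on $S$ to have the common size $n_c$. The rest — the decomposition‑group parameterisation of the primes of $L$, the identification of the Frobenius at a degree‑one $\ppp_\tau$, and the Chebyshev/ideal‑theorem estimates for the tail terms — is routine bookkeeping.
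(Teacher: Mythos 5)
Your proof is correct and follows essentially the same route as the paper's: both reduce to degree-one primes with $O_K(\sqrt N)$ tail errors, use orthogonality in both $\chi \bmod q$ and $\xi\in\widehat{\langle c\rangle}$, and hinge on the count that each unramified $p$ with $[K/\QQ,p]=C$ lies under exactly $[K:\QQ]/(|C|\,|c|)$ degree-one primes of $L$ with Frobenius $c$. The only (cosmetic) difference is in how that count is obtained: you parameterise the primes of $L$ above $p$ by double cosets in $H\backslash G/\langle\sigma\rangle$ and apply orbit–stabiliser to the solution set of $\tau\sigma\tau^{-1}=c$, whereas the paper counts the primes $\qqq$ of $K$ above $p$ with Frobenius exactly $c$ via the transitive $G$-action and decomposition groups, and then descends each such $\qqq$ to the degree-one prime $\qqq\cap\OO_L$ of $L$ below it.
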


\begin{proof}
    The claim clearly holds if $(b,q)\neq 1$, hence we assume now that
  $(b,q)=1$. Using character orthogonality and the fact that the contribution
  of proper prime powers is negligible, we see that
  \begin{align*}
     \sum_{\substack{n\leq N\\n\equiv b\bmod q}}\Lambda_{K,C}(n)F(n) &=
     \frac{1}{\phi(q)}\sum_{\chi\bmod q}\overline{\chi(b)}\sum_{n\leq
                                                                   N}\chi(n)\Lambda_{K,C}(n)F(n)\\
    &=     \frac{1}{\phi(q)}\sum_{\chi\bmod
      q}\overline{\chi(b)}\sum_{\substack{p\leq N\\ p\nmid\Delta_K\\ [K/\QQ,p]=C}}\chi(p)(\log p) F(p)
      + O(\sqrt{N}).                                                                                       
  \end{align*}
  Let $p$ be a prime unramified in $K$ with
  $[K/\QQ,p]=C$. The Galois group $\Gal(K/\QQ)$ acts transitively on the prime
  ideals of $\OO_K$ lying above $p$, the stabilisers being the
  decomposition groups, which have size $|c|$. Hence, there are exactly $[K:\QQ]/|c|$ prime
  ideals $\qqq$ of $\OO_K$ above $p$, of which exactly $[K:\QQ]/(|c|\cdot |C|)$ satisfy
  $[K/\QQ, \qqq]=c$.

  Each such $\qqq$ has decomposition group
  $\langle c\rangle =\Gal(K/L)\subseteq\Gal(K/\QQ)$, so it is the
  only prime ideal of $\OO_K$ lying above $\ppp=\qqq\cap\OO_L$. Hence, the
  inertia degrees satisfy $f(\qqq/p)=|c|=[K:L]=f(\qqq/\ppp)$, and thus
  $f(\ppp/p)=1$. Therefore,
  \begin{equation*}
    c=[K/\QQ,\qqq] = [K/\QQ,\qqq]^{f(\ppp/p)} = [K/L,\qqq] = [K/L,\ppp].
  \end{equation*}
  
  In summary, passing from primes $p$ to prime ideals $\ppp$ of $\OO_L$, we get
  for each Dirichlet character  $\chi$ modulo $q$,
  \begin{equation}\label{eq:siegel_walfisz_passed_to_L}
    \sum_{\substack{p\leq N\\ p\nmid\Delta_K\\ [K/\QQ,p]=C}}\chi(p)(\log p)F(p)
    =\frac{|C|\cdot|c|}{[K:\QQ]}\sum_{\substack{\ppp\in\idealsL\\\norm\ppp\leq
    N\\ [K/L,\ppp]=c\\ \norm\ppp\text{ prime}}}\chi(\norm\ppp)(\log\norm\ppp)F(\norm\ppp).
  \end{equation}
  Using character orthogonality for $\langle c\rangle$ and
  the facts that prime ideals of higher inertia degree and higher powers of
  prime ideals are irrelevant when counting by norm, the right-hand side of
  \eqref{eq:siegel_walfisz_passed_to_L} becomes
  \begin{align*}
    &\frac{|C|}{[K:\QQ]}\sum_{\substack{\ppp\in\idealsL\\\norm\ppp\leq
    N\\ \norm\ppp\text{ prime}}}\sum_{\xi\in\widehat{\langle c\rangle}}\overline{\xi(c)}\xi(\ppp)\chi(\norm\ppp)(\log\norm\ppp)F(\norm\ppp)\\ &=\frac{|C|}{[K:\QQ]}\sum_{\xi\in\widehat{\langle c\rangle}}\overline{\xi(c)}\sum_{\substack{\aaa\in\idealsL\\\norm\aaa\leq
    N}}\xi(\aaa)\chi(\norm\aaa)\Lambda_{L}(\aaa)F(\norm\aaa)+O_K(\sqrt{N}).\qedhere
  \end{align*}
\end{proof}

\subsection{Proof of Proposition \ref{prop:equidist_nilsequences}}
Without loss of generality, we may assume that
$\vecnorm{F}_{\textnormal{Lip}}=1$, so in particular
$\vecnorm{F}_\infty\leq 1$. If $\delta\leq 1/N$ then the total
$\delta$-equidistribution of $(g(n)\Gamma)_{n\in[N]}$ implies
$|F(g(n)\Gamma))|\leq \delta$ for all $n\in[N]$, in which case
the bound in Proposition \ref{prop:equidist_nilsequences} holds trivially for any $c(m,d,D)\leq 1$. Hence, we may
  assume that $\delta>1/N$. This allows us to assume that, in fact,
  $\delta>N^{-1/4}$: to deduce from this the result for
  arbitrary $1>\delta>1/N$, take the result for $\delta^{1/4}$ and replace $c$
  by $c/4$.

Using $\Lambda_{K,C,b,W}(n)=\frac{\phi(W)}{W}\Lambda_{K,C}(Wn+b)$, we obtain
\begin{equation}\label{eq:proof_equidist_sum}
\sum_{n\in [N]}\Lambda_{K,C,b,W}(n)\one_P(n)F(g(n)\Gamma)=\frac{\phi(W)}{W}\sum_{n\in [WN+b]}\Lambda_{K,C}(n)\one_{\tilde{P}}(n)F\left(g\left(\frac{n-b}{W}\right)\Gamma\right)
\end{equation}
where $\tilde{P}=WP+b$ is a progression of size at least
$\frac{N}{Q}=\frac{WN+b}{\tilde{Q}}$ with $\tilde{Q}=QW+Qb/N\leq
2QW$.

Write $\tilde N=WN+b$ and let $c(m,d)$ and $\tilde g\in\poly(\ZZ,G_\bullet)$ be the
constant and polynomial sequence from Proposition \ref{prop:spread_nilsequence}
with $q=W$, so that 
\begin{equation*}
\tilde g(n)=g\left(\frac{n-b}{W}\right)\quad \text{ whenever }\quad n\equiv b\bmod W.
\end{equation*}
In the statement of Proposition \ref{prop:equidist_nilsequences}, we may assume that  
$\delta^{-c(m,d)}\geq Q\log N\geq 2$, as otherwise the
conclusion is trivial. Hence, the Mal'cev basis is also
$\delta^{-c(m,d)}$-rational and moreover we have $1\leq b\leq W\leq \log N\leq
\delta^{-c(m,d)}$. As moreover $\delta>N^{-1/4}>\tilde N^{-1/2}$,  we conclude from Proposition
\ref{prop:spread_nilsequence} that the sequence
$(\tilde g(n))_{n\in\tilde N}$ is totally $\delta^{c(m,d)}$-equidistributed.

 Using Lemma \ref{lem:passing_to_ideals} with $q=1$, we can
write the sum in \eqref{eq:proof_equidist_sum} as
\begin{multline*}
\frac{\phi(W)}{W}\sum_{n\in[\tilde N]}\Lambda_{K,C}(n)\one_{\tilde P}(n)F(\tilde
g(n)\Gamma)\\=\frac{\phi(W)|C|}{W[K:\QQ]}\sum_{\xi\in\widehat{\langle
    c\rangle}}\overline{\xi(c)}\sum_{\substack{\aaa\in\idealsL\\\norm\aaa\leq
    \tilde N}}\xi(\aaa)\Lambda_L(\aaa)\one_{\tilde{P}}(\norm \aaa)F(\tilde
g(\norm \aaa)\Gamma) + O_K(\phi(W)\sqrt{ N}).
\end{multline*}
Recall that here $L=K^c$ for some fixed $c\in C$. By Theorem
\ref{thm:ideal_von_mangoldt_equidist_nilsequences}, with $K=L$, $g=\tilde g$, $P=\tilde P$, $N=\tilde N$,
$Q=\tilde Q$ and $\delta^{c(m,d)}$ instead of $\delta$, the sum over ideals $\aaa$ with $\norm
\aaa\leq \tilde N$ can be bounded by
\begin{equation*}
\ll_{m,d,L}
\delta^{c(m,d)c(m,d,D_L)}QW\vecnorm{F}_{\Lip}W(N+1)\log(W(N+1))^2\ll_\cD
\delta^{c(m,d)c(m,d,D_L)}Q\vecnorm{F}_{\Lip} N(\log N)^4,
\end{equation*}
where $D_L=[L:\QQ]$. This is enough to conclude our proof of Proposition \ref{prop:equidist_nilsequences}. 
\subsection{Equidistribution in progressions}

In order to show that the $W$-tricked classical von Mangoldt function $\Lambda_{b,W}$ provides a good model of our $\Lambda_{K,C,b,W}$ in arithmetic progressions, we need the following version of Siegel-Walfisz for $\Lambda_{K,C}$.

\begin{proposition}\label{prop:better_than_chebotarev}
  Let $K$ be a Galois number field and $C\subseteq\Gal(K/\QQ)$ a
  conjugacy class. Let $N,A>0$, $q\in\NN$ and $b\in\ZZ$. Then
  \begin{equation*}
    \sum_{\substack{n\leq N\\n\equiv b\bmod q}}\Lambda_{K,C}(n) = \eta_{K,C}(b,q)N + O_{K,A}\left(N(\log N)^{-A}\right). 
  \end{equation*}
  The implied constant is ineffective.
\end{proposition}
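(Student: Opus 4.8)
The plan is to reduce the Chebotarev–Siegel–Walfisz statement for $\Lambda_{K,C}$ to the classical Siegel–Walfisz theorem for Dirichlet $L$-functions, passing through Hecke characters of the intermediate field $L = K^c$ exactly as set up in Lemma~\ref{lem:passing_to_ideals}. First I would apply Lemma~\ref{lem:passing_to_ideals} with $F\equiv 1$, which rewrites
\[
\sum_{\substack{n\leq N\\ n\equiv b\bmod q}}\Lambda_{K,C}(n)
= \frac{|C|}{\phi(q)[K:\QQ]}\sum_{\chi\bmod q}\sum_{\xi\in\widehat{\langle c\rangle}}\overline{\chi(b)\xi(c)}\;\Psi(\chi,\xi;N) + O_K(\sqrt N),
\]
where $\Psi(\chi,\xi;N):=\sum_{\norm\aaa\leq N}\xi(\aaa)\chi(\norm\aaa)\Lambda_L(\aaa)$. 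Here $\chi\circ\norm_{L/\QQ}$ is a Hecke character of $L$, and the product $\xi\cdot(\chi\circ\norm)$ is again a finite-order Hecke character of $L$; so $\Psi(\chi,\xi;N)$ is a sum of the von Mangoldt function of $L$ twisted by a single Hecke character. The main term comes precisely from the pairs where this twisting character is trivial.

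The next step is to identify the main term. The character $\xi\cdot(\chi\circ\norm)$ is trivial exactly when $\xi$ is trivial and $\chi\circ\norm_{L/\QQ}$ is trivial on $I(\mathfrak m)/P_\mathfrak m$; analysing when this happens in terms of the decomposition of $\chi$ and the field $L$ recovers the indicator that $\sigma_b$ restricts correctly onto $K\cap\QQ(\mu_q)$, and counting the number of such $\chi$ (together with the prime-ideal theorem for $L$, $\#\{\aaa:\norm\aaa\le N\}\sim \rho_L N$ and its Siegel–Walfisz refinement) produces the density $\eta_{K,C}(b,q)$. This is the bookkeeping part: one must check that the combinatorial factor $|C|/(\phi(q)[K:\QQ])$ times the number of principal $\chi$ times $[K:L]$-type inertia contributions matches the definition \eqref{eq:def_eta_K_C}. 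A clean way to organise this is to note that by Chebotarev's theorem applied to $K(\mu_q)$ (equation \eqref{eq:cheb}) the correct main term is forced, so one only needs the \emph{error term} to be $O_{K,A}(N(\log N)^{-A})$ uniformly, and then the leading coefficient must be $\eta_{K,C}(b,q)$ by comparison.

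For the error term, for every nontrivial finite-order Hecke character $\lambda$ of $L$ the Hecke $L$-function $L(s,\lambda)$ has a classical zero-free region of Siegel–Walfisz type: $L(s,\lambda)\neq 0$ for $\sigma > 1 - c/\log(\mathfrak q(\lambda)(|t|+2))$ with at most one real exceptional (Siegel) zero, and Siegel's theorem gives the ineffective bound $1-\beta \gg_{A,\varepsilon} \mathfrak q(\lambda)^{-\varepsilon}$. Since the conductors of all the characters $\xi\cdot(\chi\circ\norm)$ appearing here are bounded in terms of $K$ and $q$ only (the ramification of $K/L$ is fixed, and $\chi\circ\norm$ has conductor dividing a power of $q\mathcal D_K$), a standard contour-integration / Perron argument — identical to the proof of the prime ideal theorem with error term, see e.g. the treatment in \cite{MR1282723} — yields $\Psi(\chi,\xi;N) = (\text{main term})\cdot N + O_{K,q,A}(N(\log N)^{-A})$. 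Summing over the finitely many $\chi,\xi$ and absorbing the $\sqrt N$ error gives the claim; the ineffectivity is inherited from Siegel's theorem. The main obstacle is purely expository: tracking the conductor dependence carefully enough to see that all implied constants depend only on $K$ (and $A$) and not on $q$ in an uncontrolled way — but since $q$ is fixed in the statement this is harmless, and in any case one only needs the explicit density identification to come out right, which the comparison with \eqref{eq:cheb} handles for free.
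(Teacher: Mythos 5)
Your reduction is the same one the paper uses: apply Lemma \ref{lem:passing_to_ideals} with $F\equiv 1$ and treat each twisted sum $\sum_{\norm\aaa\le N}\xi(\aaa)\chi(\norm\aaa)\Lambda_L(\aaa)$ by a Siegel--Walfisz-type theorem for Hecke $L$-functions (the paper invokes \cite[Proposition 14]{MR3060874} for exactly this, then evaluates the main term via Lemma \ref{lem:kane_cors} and character orthogonality). Your idea of identifying the leading constant by comparison with the qualitative Chebotarev asymptotic \eqref{eq:cheb} is legitimate and would indeed spare the orthogonality computation --- but only if you first extract a main term $r_{\chi\xi}N$ from \emph{every} pair $(\chi,\xi)$ for which the Hecke character $(\chi\circ\norm)\xi$ is principal. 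Your claim that this happens ``exactly when $\xi$ is trivial and $\chi\circ\norm$ is trivial'' is false: $(\chi\circ\norm)\xi$ can be principal with $\xi$ nontrivial, precisely when $\xi$ is trivial on the kernel of $\Gal(K/L)\to\Gal(K\cap\QQ(\mu_q)/\QQ)$ (this is the content of Lemma \ref{lem:kane_cors}; for instance it happens for every $\xi$ when $K\subseteq\QQ(\mu_q)$). If those pairs are classified as error terms, the associated $L$-function is essentially $\zeta_L$, the sum is $\asymp N$, and your claimed bound fails; the comparison trick rescues the identification of the constant only after the pole/no-pole classification is done correctly.

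The more serious gap is uniformity in $q$. The proposition asserts an error term $O_{K,A}(N(\log N)^{-A})$ with the implied constant independent of $q$ and $b$; this is the entire point of the statement --- it is applied in Lemma \ref{lem:proxy_in_progressions} with modulus $qW$, where $W\asymp_\cD(\log N)^{2/3}$ grows with $N$, and the paper stresses that effective Chebotarev in $K(\mu_q)$ does not suffice because $[K(\mu_q):\QQ]$ grows. Your error terms are $O_{K,q,A}$, and you explicitly dismiss the $q$-dependence as harmless ``since $q$ is fixed''; that proves a strictly weaker statement which would not support the downstream application. To close the gap one needs (i) a trivial treatment of large moduli, say $q>(\log N)^{A+1}$, using $\phi(q)\eta_{K,C}(b,q)\le 1$ from \eqref{eq:phi_eta_bound} together with a trivial bound on the sum, and (ii) for $q\le(\log N)^{A+1}$, a Siegel--Walfisz theorem for $\Lambda_L$ twisted by $\xi$ and by Dirichlet characters modulo $q$ that is uniform in $q$ in this range, with (ineffective) constants depending only on $L,\xi,A$ --- which is exactly what \cite[Proposition 14]{MR3060874} provides and what the paper's proof invokes. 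Note also that the comparison with \eqref{eq:cheb} identifies the constant only for each fixed $q$ (which is fine), but the required uniformity of the error must come from the $L$-function input, not from that comparison.
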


This does not follow directly from effective versions of the Chebotarev density theorem (\cite{MR447191,MR3981313}) applied to $K(\mu_q)$, as the degree $[K(\mu_q):\QQ]$ can be too large in terms of $N$. Instead, we adapt techniques of Kane to our situation. We will use the following lemma, inspired by \cite[Corollary 4 and Corollary 5]{MR3060874}, to identify the main term. We identify $(\ZZ/q\ZZ)^\times\cong\Gal(\QQ(\mu_q)/\QQ)$ via the Artin symbol $\sigma_a=[\QQ(\mu_q)/\QQ,a]$ and may thus identify characters $\chi$ with characters $\chi'$ of $\Gal(\QQ(\mu_q)/\QQ)$ via
\begin{equation*}
\chi(a+q\ZZ) = \chi'([\QQ(\mu_q)/\QQ,a]).
\end{equation*}
\begin{lemma}\label{lem:kane_cors}
  Let $K/\QQ$ be a Galois number field and $L\subseteq K$ a subfield such that $\Gal(K/L)$ is abelian. Let $\xi\in\Gal(K/L)^\wedge$ be a character. Let $q\in\NN$. Then the following assertions are equivalent.
  \begin{enumerate}
  \item $\xi$ is trivial on the kernel of the restriction
      \begin{equation*}
         \Gal(K/L)\to\Gal(K\cap\QQ(\mu_q)/\QQ).
       \end{equation*}
  \item $\xi$ extends to a character of $\Gal(K/\QQ)$ that is trivial on $\Gal(K/K\cap\QQ(\mu_q))$.
  \item There is a Dirichlet character $\psi\bmod q$ with $\psi'$ trivial on $\Gal(\QQ(\mu_q)/K\cap\QQ(\mu_q))$, such that
    $\psi(p)=\xi([K/\QQ, p])$ for all primes $p$ not dividing $q\Delta_K$.
  \item There are Dirichlet characters $\chi\bmod q$ such that $\xi([K/L,\ppp])=\chi(\norm\ppp)$ for all prime ideals $\ppp$ of $\OO_L$ not dividing $q\Delta_K$.
  \end{enumerate}
  If these conditions hold, then the Dirichlet characters $\chi\bmod q$ in \emph{(4)} are exactly the characters of the form $\chi=\psi\rho$, with $\psi$ the character from \emph{(3)} and $\rho'$ trivial on $\Gal(\QQ(\mu_q)/L\cap\QQ(\mu_q))$.
\end{lemma}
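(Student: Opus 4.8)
This lemma is a purely group-theoretic/class-field-theoretic statement translating "$\xi$ descends through the cyclotomic quotient" into four equivalent guises, and the cleanest route is a cycle of implications $(1)\Rightarrow(2)\Rightarrow(3)\Rightarrow(4)\Rightarrow(1)$, followed by the final parametrisation claim. Throughout I would set $F := K\cap\QQ(\mu_q)$ and denote by $r:\Gal(K/L)\to\Gal(F/\QQ)$ the restriction map (well defined since $F/\QQ$ is abelian, so $F$ is stable under all of $\Gal(K/\QQ)$, and $r$ is just restriction followed by the isomorphism $\Gal(K/F)\backslash\Gal(K/\QQ)\cong\Gal(F/\QQ)$ restricted to $\Gal(K/L)$). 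Note $r$ is surjective: $\Gal(K/L)\Gal(K/F) = \Gal(K/L\cap F)$ and $L\cap F\subseteq F$ with $F/\QQ$ abelian forces... actually surjectivity of $r$ onto $\Gal(F/\QQ)$ need not hold, so I would be careful and instead phrase (1) as "$\xi$ is trivial on $\ker r$", which only involves the image. Let me reconsider: the target in (1) is written as $\Gal(K\cap\QQ(\mu_q)/\QQ)$, so the map is $\Gal(K/L)\to\Gal(F/\QQ)$, $\sigma\mapsto\sigma|_F$; its kernel is $\Gal(K/L)\cap\Gal(K/F) = \Gal(K/LF)$.

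\textbf{The implications.} For $(1)\Rightarrow(2)$: since $\Gal(K/L)$ is abelian and $\xi$ kills $\Gal(K/LF)$, $\xi$ factors through $\Gal(K/L)/\Gal(K/LF)\cong\Gal(LF/L)\cong\Gal(F/L\cap F)$, a subquotient of the abelian group $\Gal(F/\QQ)$; composing with the natural surjection $\Gal(F/\QQ)\twoheadrightarrow\Gal(F/L\cap F)$ gives a character $\tilde\xi$ of $\Gal(F/\QQ)$, and then I inflate $\tilde\xi$ along $\Gal(K/\QQ)\twoheadrightarrow\Gal(F/\QQ)$ to get an extension of $\xi$ to $\Gal(K/\QQ)$ trivial on $\Gal(K/F)$ — one checks the restriction of this inflation to $\Gal(K/L)$ really is $\xi$ by unwinding the identifications. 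For $(2)\Rightarrow(3)$: given an extension $\Xi$ of $\xi$ to $\Gal(K/\QQ)$ trivial on $\Gal(K/F)$, it factors through $\Gal(F/\QQ)$; pulling back along $\Gal(\QQ(\mu_q)/\QQ)\twoheadrightarrow\Gal(F/\QQ)$ and using $(\ZZ/q\ZZ)^\times\cong\Gal(\QQ(\mu_q)/\QQ)$ gives a Dirichlet character $\psi\bmod q$ with $\psi'$ trivial on $\Gal(\QQ(\mu_q)/F)$; for $p\nmid q\Delta_K$ the Frobenius $[K/\QQ,p]$ restricts to $[F/\QQ,p]$ which corresponds to $p\bmod q$ modulo $\Gal(\QQ(\mu_q)/F)$, so $\psi(p) = \psi'([\QQ(\mu_q)/\QQ,p]) = \Xi([K/\QQ,p]) = \xi([K/\QQ,p])$ (the last equality because $\xi$ and $\Xi$ agree on $\Gal(K/L)$, and $[K/\QQ,p]$ for unramified $p$... here I need $[K/\QQ,p]\in\Gal(K/L)$, which is \emph{not} automatic — so actually (3) should read $\xi([K/\QQ,p])$ with $\xi$ meaning the \emph{extension}, or the statement tacitly uses that $\xi$ is defined as a Hecke character of $L$ via $\xi(\ppp)=\xi([K/L,\ppp])$ as set up just before the lemma; I would reconcile this by noting $[K/L,p] := [K/L,\ppp]$ for $\ppp\mid p$ is what is meant and agrees with the relevant restriction). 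For $(3)\Rightarrow(4)$: given $\psi$ as in (3), for a prime ideal $\ppp$ of $\OO_L$ with $\ppp\nmid q\Delta_K$ lying over $p$, the Frobenius $[K/L,\ppp]$ is a power of $[K/\QQ,p]$, namely $[K/\QQ,\ppp]^{f(\ppp/p)}$ where $f(\ppp/p) = \log_p\norm\ppp$... I would use $\xi([K/L,\ppp]) = \xi([K/\QQ,\ppp])$ when $[K/\QQ,\ppp]\in\Gal(K/L)$ and relate $\norm\ppp = p^{f(\ppp/p)}$; taking $\chi := \psi$ works because $\psi$ is completely multiplicative and $\psi(\norm\ppp) = \psi(p)^{f(\ppp/p)} = \xi([K/\QQ,p])^{f} = \xi([K/\QQ,\ppp]^f)$, matching $[K/L,\ppp]$ after checking the decomposition-group identities (exactly the computation done in the proof of Lemma~\ref{lem:passing_to_ideals}). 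For $(4)\Rightarrow(1)$: given $\chi$ with $\xi([K/L,\ppp]) = \chi(\norm\ppp)$ for $\ppp\nmid q\Delta_K$, use Chebotarev: elements of $\ker r = \Gal(K/LF)$ are realised as $[K/L,\ppp]$ for a positive density of $\ppp$ (split completely in $F$, hence $\norm\ppp\equiv 1\bmod q$ since $\QQ(\mu_q)\cap K = F$ forces $f(\ppp/p)\cdot(\text{order of }p\bmod q)$ divisibility... more precisely $\norm\ppp\bmod q$ lands in $\Gal(\QQ(\mu_q)/F)\cap(\text{image})$), so $\chi(\norm\ppp)=1$ on such $\ppp$, forcing $\xi$ trivial on $\ker r$. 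I'd need to be slightly careful that "$\norm\ppp\equiv 1\bmod q$" is exactly what split-in-$F$ gives; the relation $\QQ(\mu_q)\cap K=F$ and standard Frobenius functoriality handle this.

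\textbf{The parametrisation.} For the last sentence: fix the $\psi$ from (3). Any $\chi$ as in (4) satisfies $\chi(\norm\ppp) = \psi(\norm\ppp)$ for all $\ppp\nmid q\Delta_K$ with $f(\ppp/p)=1$ (i.e. $\ppp$ of degree one over $\QQ$), equivalently $\chi(p)=\psi(p)$ for all primes $p$ that are norms of degree-one primes of $L$ coprime to $q\Delta_K$ — by Chebotarev these $p$ hit precisely the classes in the image of $\Gal(\QQ(\mu_q)/\QQ)\to\ldots$ corresponding to $\Gal(\QQ(\mu_q)/L\cap\QQ(\mu_q))$ (since a prime $p$ is the norm of a degree-one prime of $L$ iff $p$ splits in ... iff $[L\cap\QQ(\mu_q)/\QQ,p]=1$... careful: degree-one prime of $L$ over $p$ exists iff $p$ has a degree-one factor in $L$ iff Frobenius in $\Gal(K'/\QQ)$ for Galois closure fixes ... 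I would instead argue directly: $\rho := \chi\bar\psi$ is a character mod $q$ that is trivial on all $p$ coprime to $q\Delta_K$ that split completely in $L\cap\QQ(\mu_q)$, i.e. $\rho'$ is trivial on $\Gal(\QQ(\mu_q)/L\cap\QQ(\mu_q))$, and conversely any such $\rho$ yields a valid $\chi=\psi\rho$ by checking (4) — the check reduces to $\rho'([K/L,\ppp]\text{-image}) = 1$ which holds because $[K/L,\ppp]$ restricted to $L\cap\QQ(\mu_q)$ is trivial as $K/L$ fixes $L\supseteq L\cap\QQ(\mu_q)$, wait that's automatic, so I need $\rho(\norm\ppp)=1$: $\norm\ppp\bmod q$ corresponds to $[\QQ(\mu_q)/\QQ,\norm\ppp]$ which equals $[\QQ(\mu_q)/\QQ,p]^{f(\ppp/p)}$, and restricting to $L\cap\QQ(\mu_q)$... the key point is that $[L\cap\QQ(\mu_q)/\QQ, p]^{f(\ppp/p)}$ fixes $L\cap\QQ(\mu_q)$ because $\ppp$ has residue field containing $\FF_p$-rationally the image of $\OO_{L\cap\QQ(\mu_q)}$, i.e. $f(\ppp/p)$ is a multiple of $f(\ppp\cap\OO_{L\cap\QQ(\mu_q)}/p)$... this is exactly the standard fact that the norm from $L$ lands in the group of elements fixing $L\cap\QQ(\mu_q)$. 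I'd spell this out via the tower $\QQ\subseteq L\cap\QQ(\mu_q)\subseteq L$.)

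\textbf{Main obstacle.} The genuine difficulty is bookkeeping: keeping straight the four Galois groups $\Gal(K/L), \Gal(K/\QQ), \Gal(\QQ(\mu_q)/\QQ)\cong(\ZZ/q\ZZ)^\times$, and the two intersection fields $K\cap\QQ(\mu_q)$ and $L\cap\QQ(\mu_q)$, and the precise sense in which $\xi$ (originally a character of $\Gal(K/L)$) is evaluated at Frobenius classes $[K/\QQ,p]$ — this only makes sense when those classes lie in $\Gal(K/L)$, so one must either restrict attention to $p$ that are degree-one norms from $L$ or consistently use the convention $\xi(p):=\xi([K/L,\ppp])$ from the paragraph preceding the lemma. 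Getting the residue-degree / norm-vs-Frobenius compatibility right in $(3)\!\Leftrightarrow\!(4)$ and in the final parametrisation is where a careless proof would go wrong; I would lean on the decomposition-group computation already carried out in the proof of Lemma~\ref{lem:passing_to_ideals}, since it establishes exactly the identity $[K/L,\ppp] = [K/\QQ,\ppp]^{f(\ppp/p)}$-type relations needed here. Everything else is routine inflation/restriction of characters plus one invocation of Chebotarev to make the density argument in $(4)\Rightarrow(1)$ rigorous.
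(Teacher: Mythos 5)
The main problem is your step $(4)\Rightarrow(1)$, which is precisely the implication the paper has to work hardest for (it proves $(4)\Rightarrow(2)$ by taking $M\subseteq\QQ(\mu_q)$ to be the fixed field of $\ker\chi'$, passing to the compositum $KM$, and using Chebotarev plus the injectivity of $\chi'$ on $\Gal(M/\QQ)$ to force $M\subseteq K$). You take $\sigma\in\ker r=\Gal(K/LF)$ with $F=K\cap\QQ(\mu_q)$, realise $\sigma=[K/L,\ppp]$ by Chebotarev, and want $\chi(\norm\ppp)=1$. But splitness over $F$ only gives $\sigma_{\norm\ppp}|_F=\sigma|_F=\ident$, i.e.\ $\norm\ppp\bmod q$ lies in the subgroup $H_F\leq(\ZZ/q\ZZ)^\times$ of classes fixing $F$; your first claim ``$\norm\ppp\equiv 1\bmod q$'' is false unless $F=\QQ(\mu_q)$, and the corrected weaker claim does not rescue the conclusion, because at this stage you know nothing about $\chi$ on $H_F$: hypothesis (4) constrains $\chi$ only on the values $\norm\ppp$, and the fact that every admissible $\chi$ is trivial on $H_F$ is a consequence of the parametrisation you have not yet proved, so invoking it would be circular. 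The gap is repairable: apply (4) first to primes $\ppp$ split completely in $K/L$, so that the left-hand side is $\xi(\ident)=1$, and observe that by Chebotarev in $K(\mu_q)/L$, combined with the isomorphism $\Gal(K(\mu_q)/K)\cong\Gal(\QQ(\mu_q)/F)$ given by restriction, the classes $\norm\ppp\bmod q$ of such $\ppp$ exhaust $H_F$; this yields $\chi|_{H_F}=1$, after which your argument closes. As written, however, the cycle is not closed and (4) is not shown to imply the other statements.

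Two further points. In $(1)\Rightarrow(2)$ there is no ``natural surjection $\Gal(F/\QQ)\twoheadrightarrow\Gal(F/L\cap F)$''---that is a subgroup of $\Gal(F/\QQ)$, not a quotient; the correct (and standard) tool, which is what the paper uses, is that a character of a subgroup of a finite abelian group extends to the whole group. With that fix, your $(1)\Rightarrow(2)$, $(2)\Rightarrow(3)$ and $(3)\Rightarrow(4)$ coincide with the paper's arguments, including the interpretation of $\xi([K/\QQ,p])$ via the extension trivial on $\Gal(K/F)$ and the identity $\xi([K/\QQ,p])^{f(\ppp/p)}=\xi([K/L,\ppp])$. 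For the final parametrisation, your reduction to the statement ``$\rho(\norm\ppp)=1$ for all $\ppp\nmid q\Delta_K$ if and only if $\rho'$ is trivial on $\Gal(\QQ(\mu_q)/L\cap\QQ(\mu_q))$'' is the right one (the paper cites it from Kane), and your ``if'' direction is essentially correct, but the ``only if'' direction requires the same Chebotarev surjectivity you are missing above, namely that via $\Gal(L(\mu_q)/L)\cong\Gal(\QQ(\mu_q)/L\cap\QQ(\mu_q))$ the classes $\norm\ppp\bmod q$ fill out the entire subgroup fixing $L\cap\QQ(\mu_q)$; your sketch gestures at this but never states or proves it.
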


\begin{proof}
  We write $K_q^\ab:=K\cap\QQ(\mu_q)$ and start with $\emph{(1)}\Rightarrow\emph{(2)}$. 
  If \emph{(1)} holds, we can consider $\xi$ as a character on the image of $\Gal(K/L)$ in $\Gal(K_q^\ab/\QQ)$, which is abelian, and thus extend it to a character on $\Gal(K_q^\ab/\QQ)$. Via restriction $\Gal(K/\QQ)\to \Gal(K_q^\ab/\QQ)$, we may view this extension as a character on $\Gal(K/\QQ)$ trivial on $\Gal(K/K_q^\ab)$, which shows \emph{(2)}.
  
 The implication $\emph{(2)}\Rightarrow\emph{(1)}$ is trivial. To prove $\emph{(2)}\Rightarrow\emph{(3)}$, we consider $\xi$ as a character of $\Gal(K_q^\ab/\QQ)$, so by restriction it induces a character $\psi'$ of $\Gal(\QQ(\mu_q)/\QQ)$ that is trivial on $\Gal(\QQ(\mu_q)/K_q^\ab)$. For the corresponding Dirichlet character $\psi$ and every prime $p\nmid q\Delta_K$, we then have
  \begin{equation*}
    \psi(p) = \psi'([\QQ(\mu_q)/\QQ,p]) = \xi([K_q^\ab/\QQ,p])=\xi([K/\QQ,p]),
  \end{equation*}
  which shows \emph{(3)}.

  For $\emph{(3)}\Rightarrow\emph{(4)}$, we show that $\psi$ is such a Dirichlet character. Let $\ppp$ be a prime ideal of $\OO_L$ not dividing $q\Delta_K$, and write $\norm\ppp=p^f$ for some prime $p\nmid q\Delta_K$ and $f\in \NN$. By the formalism for the Artin symbol,
  \begin{equation*}
    \psi(\norm\ppp)=\psi(p)^f=\xi([K/\QQ,p])^f=\xi([K/\QQ,p]^f)=\xi([K/L,\ppp]),
  \end{equation*}
  as desired.

  For $\emph{(4)}\Rightarrow\emph{(2)}$, let $\chi$ be a Dirichlet character as in \emph{(3)}. Let $M\subseteq\QQ(\mu_q)$ be the fixed field of the kernel of $\chi'$, so we can consider $\chi'$ as an injective character of $\Gal(M/\QQ)$. Consider the compositum $F=KM$. Via restriction $\Gal(F/\QQ)\to\Gal(M/\QQ)$, we may now consider $\chi'$ as a character of $\Gal(F/\QQ)$ that is trivial on $\Gal(F/M)$. For any prime ideal $\ppp$ of $L$ not dividing $q\Delta_K$ with $\norm\ppp=p^f$, we have
  \begin{equation*}
    \xi([K/L,\ppp])=\chi(p)^f=\chi'([M/\QQ,p])^f=\chi'([F/\QQ,p])^f=\chi'([F/L,\ppp]).
  \end{equation*}
  As $[F/L,\ppp]$ hits all conjugacy classes of $\Gal(F/L)$ by Chebotarev, we see that $\chi'(\sigma)=\chi(\sigma|_K)$ for all $\sigma\in\Gal(F/L)$,
  so $\chi'$ is trivial on $\Gal(F/K)=\Gal(M/K\cap M)$. As $\chi'$ is injective on $\Gal(M/\QQ)$, this shows that $K\cap M=M$, and thus $F=K$. Therefore, $\chi'$ extends $\xi$ to $\Gal(K/\QQ)$. As $\chi'$ is trivial on $\Gal(K/M)$, it is in particular trivial on $\Gal(K/K\cap\QQ(\mu_q))$, which shows \emph{(2)}.

  Now suppose that \emph{(1)}--\emph{(4)} hold. For the additional assertion, let $\rho$ be a Dirichlet character modulo $q$. We need to show that $\rho(\norm\ppp)=1$ for all $\ppp\nmid q\Delta_K$ if and only if $\rho'$ is trivial on $\Gal(\QQ(\mu_q)/L\cap\QQ(\mu_q))$. This is similar to the proof of $\emph{(4)}\Rightarrow\emph{(2)}$ above and is shown in \cite[Corollary 5]{MR3060874}.
\end{proof}

\begin{proof}[Proof of Proposition \ref{prop:better_than_chebotarev}]
  If $q>(\log N)^{A+1}$, the statement is trivial due to \eqref{eq:phi_eta_bound}. Hence, let us assume that
  $q\leq (\log N)^{A+1}$. We start from the formula in Lemma \ref{lem:passing_to_ideals} with
  $F=1$. The inner sum over ideals $\aaa$ of $\OO_L$ on the right-hand
  side is just $F_{L,\chi\xi,N}(0)$ in the notation of
  \cite[Definition 4]{MR3060874}. Hence, \cite[Proposition 14]{MR3060874} yields the
  estimate
  \begin{equation}\label{eq:s_w_estimate_with_siegel}
    \sum_{\norm\aaa\leq N}\xi(\aaa)\chi(\norm\aaa)\Lambda_{L}(\aaa) = r_{\chi\xi} N + O(X\exp(-c_{L,\xi,A}\sqrt{\log N})),
  \end{equation}
  where $r_{\chi\xi}$ is the order of the pole at $1$ of the Hecke $L$-function of the Hecke character given by $(\chi\circ\norm)\xi$ and $c_{L,\xi,A}$ is ineffective. We crudely estimate
  \begin{equation*}
    \exp(-c_{L,\xi,A}\sqrt{\log N}) \ll_{K,A}(\log N)^{-A},
  \end{equation*}
  and thus
  \begin{equation}\label{eq:siegel_walfisz_main_term}
    \frac{1}{N}\sum_{\substack{n\leq N\\n\equiv b\bmod q}}\Lambda_{K,C}(n) =
    \frac{|C|}{\phi(q)[K:\QQ]}\sum_{\chi\bmod q}\sum_{\xi\in\widehat{\langle c\rangle}}r_{\chi\xi}\overline{\chi(b)\xi(c)} + O_{K,A}\left((\log N)^{-A}\right).
  \end{equation}

  We apply Lemma \ref{lem:kane_cors} to analyse the main term. To this end, we write $H:=\langle c\rangle=\Gal(K/L)$ and $K_q^\ab:=K\cap \QQ(\mu_q)$. We have $r_{\chi\xi}=1$ exactly when $(\chi\circ\norm)=\xi^{-1}$ as Hecke characters of $L$, and $r_{\chi\xi}=0$ otherwise. Hence, only characters $\xi$ trivial on the Kernel $J$ of the map in Lemma \ref{lem:kane_cors}, \emph{(1)}, contribute. For each such $\xi$, let $\psi_\xi$ be the Dirichlet character $\bmod\  q$ from \emph{(3)}. Then the double sum over $\chi$ and $\xi$  in \eqref{eq:siegel_walfisz_main_term} becomes
  \begin{align*}
    \left(\sum_{\rho'\in\Gal(L\cap \QQ(\mu_q)/\QQ)}\overline{\rho'(\sigma_b|_{L\cap\QQ(\mu_q)})}\right)\left(\sum_{\xi\in\widehat{H/J}}\psi'_\xi(\sigma_b)\overline{\xi(c)}\right).
  \end{align*}
  By character orthogonality,
  \begin{equation*}
    \sum_{\rho'\in\Gal(L\cap \QQ(\mu_q)/\QQ)}\overline{\rho'(\sigma_b|_{L\cap\QQ(\mu_q)})}=
    \begin{cases}
      |\Gal(L\cap \QQ(\mu_q) /\QQ)|,&\text{ if }\sigma_b|_{L\cap \QQ(\mu_q)}=1,\\
      0,&\text{otherwise.}
    \end{cases}
  \end{equation*}
  For the second sum, we consider $\psi_\xi'$ and $\xi$ as characters on
  $\Gal(K_q^\ab/\QQ)$, satisfying $\psi_\xi([K_q^\ab/\QQ, p])=\xi([K_q^\ab/\QQ, p])$ for all primes $p\nmid q\Delta_K$, so by Chebotarev they agree on all of $\Gal(K_q^\ab/\QQ)$. Observe that the image of the map in Lemma \ref{lem:kane_cors}, \emph{(1)}, is equal to $\Gal(K_q^\ab/L\cap \QQ(\mu_q))$.
If $\sigma_b|_{\QQ(\mu_q)\cap L}=1$, then both $c|_{K_q^\ab}$ and
$\sigma_b|_{K_q^\ab}$ are contained in this image, and by character
orthogonality, the inner sum over $\xi$ becomes
  \begin{equation*}
    \sum_{\xi\in\Gal(K_q^\ab/L\cap\QQ(\mu_q))^{\wedge}}\xi(\sigma_b|_{K_q^\ab}c|_{K_q^\ab}^{-1})=
    \begin{cases}
      |\Gal(K_q^\ab/L\cap\QQ(\mu_q))|,&\text{ if }\sigma_b|_{K_q^\ab}=c|_{K_q^\ab},\\
      0,&\text{otherwise.}
    \end{cases}
  \end{equation*}
  In total, the main term in \eqref{eq:siegel_walfisz_main_term} is zero, if $\sigma_b|_{K_q^\ab}\neq c|_{K_q^\ab}$, and equal to
  \begin{equation*}
    \frac{|C||\Gal(L\cap \QQ(\mu_q)/\QQ)||\Gal(K_q^\ab/L\cap\QQ(\mu_q))|}{\phi(q)[K:\QQ]}=\frac{|C|[K_q^\ab:\QQ]}{\phi(q)[K:\QQ]}=\frac{|C|}{[K(\mu_q):\QQ]},
  \end{equation*}
  otherwise. This is exactly $\eta_{K,C}(b,q)$.
\end{proof}

\begin{lemma}[Model in progressions]\label{lem:proxy_in_progressions}
  Let $K$ be a Galois number field and $C\subseteq\Gal(K/\QQ)$ a conjugacy
  class. Let $A>0$ and $N\in\NN$. Assume that $\Phi_{K^\ab}\mid W$ and let $b\in\{0,\ldots,W-1\}$. For
  every arithmetic progression $P\subseteq [N]$,
  we have
  \begin{equation*}
    \EE_{n\in [N]}\one_P(n)\left(\Lambda_{K,C,b,W}(n)-\phi(W)\eta_{K,C}(b,W)\Lambda_{b,W}(n)\right) \ll_{K,A,\cD} (\log N)^{-A}.
  \end{equation*}
\end{lemma}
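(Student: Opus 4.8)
The plan is to unfold the $W$-trick and reduce the statement to two Siegel--Walfisz-type asymptotics in arithmetic progressions, one for $\Lambda_{K,C}$ supplied by Proposition~\ref{prop:better_than_chebotarev} and one for the classical $\Lambda$, and then to check that their main terms cancel exactly. We may assume $N$ is large and that $P$ has at least two elements, the remaining cases being trivial after enlarging the implied constant; it suffices to prove the estimate with $A+1$ in place of $A$. Write $P=\{n\in\ZZ:n\equiv a\bmod q\}\cap(N_1,N_2]$ with $q\in\NN$, $0\le N_1<N_2\le N$. Using $\Lambda_{K,C,b,W}(n)=\tfrac{\phi(W)}{W}\Lambda_{K,C}(b+nW)$ and $\Lambda_{b,W}(n)=\tfrac{\phi(W)}{W}\Lambda(b+nW)$, the substitution $m=b+nW$ — which maps $P$ bijectively onto $\{m\equiv b'\bmod q'\}\cap(M_1,M_2]$ for $q'=qW$, $b'=b+aW$, $M_i=WN_i+b$ — gives
\[
\EE_{n\in[N]}\one_P(n)\bigl(\Lambda_{K,C,b,W}(n)-\phi(W)\eta_{K,C}(b,W)\Lambda_{b,W}(n)\bigr)=\frac{\phi(W)}{NW}\bigl(S(M_2)-S(M_1)\bigr),
\]
where $S(M):=\sum_{m\le M,\;m\equiv b'\bmod q'}\bigl(\Lambda_{K,C}(m)-\phi(W)\eta_{K,C}(b,W)\Lambda(m)\bigr)$.

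Next I would evaluate $S(M)$ for $0\le M\le WN+b$. Proposition~\ref{prop:better_than_chebotarev}, applied once to $K$ and once in the trivial case $K=\QQ$, $C=\{\ident\}$ (for which $\Lambda_{K,C}=\Lambda$ and $\eta_{K,C}(b',q')=\one_{\gcd(b',q')=1}/\phi(q')$, so that this case is precisely the Siegel--Walfisz theorem uniform in the modulus), yields for $M>\sqrt{WN}$ the asymptotic
\[
S(M)=\Bigl(\eta_{K,C}(b',q')-\phi(W)\eta_{K,C}(b,W)\tfrac{\one_{\gcd(b',q')=1}}{\phi(q')}\Bigr)M+O_{K,A}\bigl(M(\log M)^{-A-1}\bigr),
\]
where \eqref{eq:phi_eta_bound} controls the coefficient $\phi(W)\eta_{K,C}(b,W)\le1$ appearing in the second error. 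The crucial point is that the coefficient of $M$ vanishes identically. Indeed, since $\Phi_{K^\ab}\mid W$, Lemma~\ref{lem:equidistribution_cheb} with $Q=W$ (and modulus $q$, so $Qq=q'$) gives $\eta_{K,C}(b',q')=\one_{\gcd(b',q)=1}\tfrac{\phi(W)}{\phi(q')}\eta_{K,C}(b',W)$; moreover $\eta_{K,C}(b',W)=\eta_{K,C}(b,W)$ because $b'\equiv b\bmod W$ and by \eqref{eq:def_eta_K_C} the quantity $\eta_{K,C}(\cdot,W)$ depends only on the residue class modulo $W$; and finally $\one_{\gcd(b',q)=1}\,\eta_{K,C}(b,W)=\one_{\gcd(b',q')=1}\,\eta_{K,C}(b,W)$, since $\eta_{K,C}(b,W)=0$ whenever $\gcd(b',W)=\gcd(b,W)\ne1$, while a prime divides $q'=qW$ precisely when it divides $q$ or $W$. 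Combining these identities shows $\eta_{K,C}(b',q')=\phi(W)\eta_{K,C}(b,W)\,\one_{\gcd(b',q')=1}/\phi(q')$, as claimed.

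Finally I would assemble the error terms. For $M\le\sqrt{WN}$ one estimates $S(M)$ trivially via $\sum_{m\le M}\Lambda_{K,C}(m)\le\sum_{m\le M}\Lambda(m)\ll M$ and $\phi(W)\eta_{K,C}(b,W)\le1$, so $|S(M)|\ll M\le\sqrt{WN}$; for $M>\sqrt{WN}$ the error term in the displayed asymptotic is $O_{K,A}(M(\log M)^{-A-1})\ll_{K,A}WN(\log N)^{-A-1}$, since then $\log M\gg\log N$ (using $W\ll(\log N)^{2/3}$ from \eqref{eq:cheb_def_W}). In either case $S(M_2)-S(M_1)\ll_{K,A}WN(\log N)^{-A-1}$, and multiplying by $\phi(W)/(NW)$ gives a bound $\ll_{K,A}\phi(W)(\log N)^{-A-1}\ll_{K,A}(\log N)^{2/3-A-1}\le(\log N)^{-A}$ for $N$ large. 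I expect no serious obstacle: the only genuinely non-formal input — a Siegel--Walfisz estimate for $\Lambda_{K,C}$ uniform in the modulus — is exactly Proposition~\ref{prop:better_than_chebotarev}, and what remains is the slightly fiddly bookkeeping with the cut-offs $M_1,M_2$ and the harmless extra factor $\phi(W)$, handled by the standard device of working with $A+1$; the one thing that must be got right is the exact cancellation of the two main terms, and that reduces to Lemma~\ref{lem:equidistribution_cheb} together with the definition \eqref{eq:def_eta_K_C}.
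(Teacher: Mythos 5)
Your proof is correct and follows essentially the same route as the paper: unfold the $W$-trick to a progression modulo $qW$, apply Proposition \ref{prop:better_than_chebotarev} twice (for $K$ and for $K=\QQ$, i.e.\ Siegel--Walfisz), and cancel the main terms via Lemma \ref{lem:equidistribution_cheb} together with the periodicity of $\eta_{K,C}(\cdot,W)$. Your treatment is in fact slightly more explicit than the paper's on the endpoint bookkeeping and on absorbing the extra factor $\phi(W)\ll(\log N)^{2/3}$ by working with $A+1$, but this is a presentational refinement, not a different argument.
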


\begin{proof}
  Write $P=\{l+mq\mid 1\leq m\leq \tilde{N}\}$, so $l+\tilde N q \leq N$. 
  Then 
  \begin{align*}
    \EE_{n\in [N]}\one_P(n)\Lambda_{K,C,b,W}(n) &=\frac{\phi(W)}{NW}\sum_{m=1}^{\tilde{N}}\Lambda_{K,C}((b+lW)+mqW)=\frac{\phi(W)}{NW}\hspace{-0.5cm}\sum_{\substack{b+lW<n\leq b+lW+q\tilde{N}W\\n\equiv b+lW\bmod qW}}\hspace{-0.3cm}\Lambda_{K,C}(n).
  \end{align*}  
  We apply Proposition \ref{prop:better_than_chebotarev} twice and use \eqref{eq:cheb_def_W} to obtain the estimate
  \begin{align}
   \EE_{n\in [N]}\one_P(n)\Lambda_{K,C,b,W}(n)&=\frac{\phi(W)q\tilde{N}}{N}\eta_{K,C}(b+lW,qW) +
                        O_{K,A,\cD}\left((\log N)^{-A}\right).\label{eq:proxy_first}
  \end{align}
  Anaolgous applications of Proposition
  \ref{prop:better_than_chebotarev} with $K=\QQ$ (i.e. the
  Siegel-Walfisz theorem) yield
  \begin{align*}
    \EE_{n\in [N]}\one_P(n)\Lambda_{b,W}(n) &=\frac{\phi(W)}{NW}\sum_{\substack{b+lW<n\leq b+lW+q\tilde{N}W\\n\equiv b+lW\bmod qW}}\Lambda(n)\\
    &=\frac{\phi(W)q\tilde{N}}{N}\frac{\one_{(b+lW,qW)=1}}{\phi(qW)} +
      O_{A,\cD}((\log N)^{-A}).
  \end{align*}
    By Lemma \ref{lem:equidistribution_cheb}, the main terms of $\EE_{n\in
    P}\Lambda_{K,C,b,W}(n)$ and $\phi(W)\eta_{K,C}(b,W)\EE_{n\in
    P}\Lambda_{b,W}(n)$ match up.
\end{proof}

\subsection{Proof of Theorem \ref{thm:W_tricked_chebotarev_mangoldt_with_nilsequences_nonconst_avg}}

\begin{lemma}\label{lem:non_constant_proxy_equidist_nilsequences}
  With the setup of Proposition \ref{prop:equidist_nilsequences} we have
   \begin{equation*}
    \left|\EE_{n\in [N]}(\Lambda_{K,C,b,W}(n)-\phi(W)\eta_{K,C}(b,W)\Lambda_{b,W}(n))\one_P(n)F(g(n)\Gamma)\right| \ll_{K,m,d} \delta^{c(m,d,D)}Q\vecnorm{F}_{\Lip}(\log N)^4. 
  \end{equation*}
\end{lemma}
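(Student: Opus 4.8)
The plan is to derive Lemma~\ref{lem:non_constant_proxy_equidist_nilsequences} directly from Proposition~\ref{prop:equidist_nilsequences}, applied twice — once to the Chebotarev--von Mangoldt term and once to the classical von Mangoldt term — with no additional ingredients. Since the present setup is precisely that of Proposition~\ref{prop:equidist_nilsequences}, in particular $F$ has mean zero and $(g(n)\Gamma)_{n\in[N]}$ is totally $\delta$-equidistributed, we are already in exactly the regime that Proposition~\ref{prop:equidist_nilsequences} addresses, so Lemma~\ref{lem:proxy_in_progressions} (which handles the complementary, ``nearly constant'' regime of test functions) is not needed here.

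First I would apply Proposition~\ref{prop:equidist_nilsequences} to the Galois number field $K$ and conjugacy class $C$ together with the given data $g,\delta,Q,F,P,b$, obtaining
\[
\left|\EE_{n\in[N]}\Lambda_{K,C,b,W}(n)\one_P(n)F(g(n)\Gamma)\right|\ll_{K,m,d}\delta^{c(m,d,D)}Q\vecnorm{F}_{\Lip}(\log N)^4 .
\]
Next I would observe that $\Lambda_{b,W}$ is itself a $W$-tricked Chebotarev--von Mangoldt function: taking $K=\QQ$ and $C=\{\ident\}$ in \eqref{def:lambda_K_C}, every prime is unramified with trivial Artin symbol, so $\Lambda_{\QQ,\{\ident\}}=\Lambda$ and hence $\Lambda_{\QQ,\{\ident\},b,W}=\Lambda_{b,W}$; moreover $\QQ^\ab=\QQ\subseteq\QQ(\mu_1)$, so $\Phi_{\QQ^\ab}=1\mid W$ and the hypotheses of Proposition~\ref{prop:equidist_nilsequences} hold for this degenerate choice as well. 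Applying it therefore gives
\[
\left|\EE_{n\in[N]}\Lambda_{b,W}(n)\one_P(n)F(g(n)\Gamma)\right|\ll_{m,d}\delta^{c(m,d,1)}Q\vecnorm{F}_{\Lip}(\log N)^4 .
\]

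Finally I would combine the two displays by the triangle inequality, using $0\le\phi(W)\eta_{K,C}(b,W)\le 1$ from \eqref{eq:phi_eta_bound} to control the coefficient of $\Lambda_{b,W}$; this yields the claimed bound once the constant $c(m,d,D)$ in the statement is taken to be $\min\{c(m,d,D),c(m,d,1)\}$ (harmless, since $\delta<1$). I do not expect any real obstacle: all the work has already been absorbed into Proposition~\ref{prop:equidist_nilsequences} (and, through it, into Theorem~\ref{thm:ideal_von_mangoldt_equidist_nilsequences}). The only point requiring a moment's care is recognising that $\Lambda_{b,W}$ genuinely lies in the scope of Proposition~\ref{prop:equidist_nilsequences}, so that subtracting the multiple of $\Lambda_{b,W}$ cannot spoil the estimate.
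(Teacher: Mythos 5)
Your proposal is correct and matches the paper's own proof: the paper also derives the lemma by applying Proposition \ref{prop:equidist_nilsequences} twice, once for $K$ and once for $\QQ$ (i.e.\ to $\Lambda_{b,W}=\Lambda_{\QQ,\{\ident\},b,W}$), and combining via the triangle inequality with $\phi(W)\eta_{K,C}(b,W)\le 1$. Your extra care about the exponent (taking the minimum of $c(m,d,D)$ and $c(m,d,1)$) is a harmless refinement the paper leaves implicit.
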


\begin{proof}
  Using the triangle inequality and Proposition
  \ref{prop:equidist_nilsequences}, once for $K$ and once for $\QQ$, we may
  estimate the average to be bounded in the lemma by
  \begin{align*}
    &\left|\EE_{n\in[N]}\Lambda_{K,C,b,W}(n)\one_P(n)F(g(n)\Gamma)\right|+\phi(W)\eta_{K,C}(b,W)\left|\EE_{n\in[N]}\Lambda_{b,W}(n)\one_P(n)F(g(n)\Gamma)\right|\\ &\ll_{K,m,d} \delta^{c(m,d,D)}Q\vecnorm{F}_{\Lip}(\log N)^4. 
  \end{align*}
\end{proof}

  The proof that Lemma \ref{lem:non_constant_proxy_equidist_nilsequences}  implies Theorem \ref{thm:W_tricked_chebotarev_mangoldt_with_nilsequences_nonconst_avg} is almost the same as the proof, given on
  \cite[pp.544--547]{MR2877066}, that \cite[Proposition 2.1]{MR2877066} implies
  \cite[Theorem 1.1]{MR2877066}, with $\mu(n)$ replaced by our function
  $\Lambda_{K,C,b,W}(n)-\phi(W)\eta_{K,C}(b,W)\Lambda_{b,W}(n)$. The key
  differences are:
  \begin{itemize}
  \item $B$ needs to be chosen sufficiently large depending on $d,[K:\QQ],m,A$,
    as the power $c$ of $\delta$ in Lemma
    \ref{lem:non_constant_proxy_equidist_nilsequences} depends on $d,[K:\QQ],m$. Hence also $N$
    needs to be sufficiently large in terms of these parameters. Smaller $N$
    are captured by adapting the implied constant depending on $K,d,m,A$.
  \item Instead of $|\mu(n)|\leq 1$, we have the weaker trivial bound
    \begin{equation*}
      \Lambda_{K,C,b,W}(n)-\phi(W)\eta_{K,C}(b,W)\Lambda_{b,W}(n)\ll \log N.
    \end{equation*}
    This is relevant in the deduction of \cite[(2.6)]{MR2877066}, but
    the difference can be absorbed immediately by adapting the value of $A$.
  \item Instead of \cite[Proposition A.2]{MR2877066} for $\mu(n)$, we use Lemma
    \ref{lem:proxy_in_progressions} to bound correlations as in
    \cite[(2.9)]{MR2877066} when $F_{j,k}$ is constant.
  \item Instead of the application of \cite[Proposition 2.1]{MR2877066} at the
    very end of the proof, we apply Lemma \ref{lem:non_constant_proxy_equidist_nilsequences}.  
  \end{itemize}

  \section{Hooley's method: proof of Theorem \ref{thm:hooley}}\label{sec:hooley}

  \subsection{The fields $F(q,k,a)$}
  We recall here some results also used in \cite{MR4201547}.
  
\begin{lemma}(\cite[Lemma 2.3]{MR2490093},\cite[Lemma 2.2]{MR4201547})\label{lem:Fqka_degree}
For $k$  square-free 
let $k'= k/\gcd(k, h_a)$. Then 
$
[F(q,k,a) : \QQ] = 
k'
\phi(\lcm(q, k))/
\epsilon(q, k)
$,
where 
\begin{equation*}
\epsilon(q, k)=
  \begin{cases}
    2, &\text{ if } 2 \mid k \ \text{ and } \ \Delta_a\mid \lcm(q, k),\\
    1, &\text{ otherwise.}
  \end{cases}
\end{equation*}
\end{lemma}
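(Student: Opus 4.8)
The plan is to reduce the statement to a computation of a Kummer degree. First I would set $m:=\lcm(q,k)$ and rewrite $F(q,k,a)=\QQ(\mu_q,\mu_k,\sqrt[k]{a})=\QQ(\mu_m,\sqrt[k]{a})$, so that $[F(q,k,a):\QQ]=\phi(m)\,[\QQ(\mu_m,\sqrt[k]{a}):\QQ(\mu_m)]$; everything then comes down to the relative degree $[\QQ(\mu_m,\sqrt[k]{a}):\QQ(\mu_m)]$. Since $\mu_k\subseteq\QQ(\mu_m)$ this is a Kummer extension of exponent dividing $k$, and because $k$ is squarefree I would factor it prime-by-prime: writing $\sqrt[p]{a}:=(\sqrt[k]{a})^{k/p}$, a B\'ezout argument on the exponents $k/p$ shows $\QQ(\mu_m,\sqrt[k]{a})$ is the compositum of the fields $\QQ(\mu_m,\sqrt[p]{a})$ over $p\mid k$; each of these has degree $1$ or $p$ over $\QQ(\mu_m)$, so the degrees are pairwise coprime and the degree of the compositum is their product.

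Next I would evaluate each local factor $[\QQ(\mu_m,\sqrt[p]{a}):\QQ(\mu_m)]$, using that $p\mid k\mid m$ forces $\mu_p\subseteq\QQ(\mu_m)$, so the factor equals $p$ precisely when $a$ is not a $p$-th power in $\QQ(\mu_m)$ and equals $1$ otherwise. There are three cases. If $p\mid h_a$, then $a$ is already a $p$-th power in $\QQ$ and the factor is $1$. If $p$ is odd and $p\nmid h_a$, I would first note that $a$ is not a $p$-th power in $\QQ$ (otherwise, comparing prime factorizations with $a=a_0^{h_a}$, with $a_0$ not a perfect power, forces $p\mid h_a$), so $\QQ(\sqrt[p]{a})/\QQ$ has degree $p$; were $\sqrt[p]{a}\in\QQ(\mu_m)$, this field would be a subextension of an abelian extension, hence Galois over $\QQ$, hence would contain $\mu_p$, giving $p-1\mid p$ and the contradiction $p=2$ --- so the factor is $p$. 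If $p=2$ (and then necessarily $2\nmid h_a$, since $a$ is not a perfect square), then $\sqrt{a}\in\QQ(\mu_m)$ iff $\QQ(\sqrt a)\subseteq\QQ(\mu_m)$, which by the conductor--discriminant relation happens iff $|\Delta_a|$, the conductor of $\QQ(\sqrt a)$, divides $m=\lcm(q,k)$; so the factor is $1$ or $2$ according to whether $\Delta_a\mid\lcm(q,k)$ or not.

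Finally I would assemble the answer: since $k$ is squarefree, $\gcd(k,h_a)=\prod_{p\mid k,\,p\mid h_a}p$, so $k'=k/\gcd(k,h_a)=\prod_{p\mid k,\,p\nmid h_a}p$, and the product of the local factors above equals $k'$ unless $2\mid k$ and $\Delta_a\mid\lcm(q,k)$, in which case it equals $k'/2$; in both cases it equals $k'/\epsilon(q,k)$, and multiplying by $\phi(m)=\phi(\lcm(q,k))$ yields the stated formula. (Alternatively, one may simply cite \cite[Lemma 2.3]{MR2490093} and \cite[Lemma 2.2]{MR4201547}.) The only genuinely delicate points are the coprimality/compositum argument that makes the relative degree multiplicative over the primes $p\mid k$, and the identification of the conductor of $\QQ(\sqrt a)$ with $|\Delta_a|$ in the case $p=2$; the odd-prime case is a clean Galois-theoretic obstruction and the remainder is bookkeeping.
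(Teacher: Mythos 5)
Your proof is correct. The paper itself does not prove this lemma at all — it simply cites Moree \cite[Lemma 2.3]{MR2490093} and Frei--Koymans--Sofos \cite[Lemma 2.2]{MR4201547} — so there is no in-paper argument to compare against; your write-up is essentially the standard Kummer-theoretic degree computation that underlies those references. The three delicate points are all handled correctly: the reduction to the relative degree $[\QQ(\mu_m,\sqrt[k]{a}):\QQ(\mu_m)]$ with $m=\lcm(q,k)$ and its prime-by-prime factorisation (the B\'ezout identity on the exponents $k/p$ is valid because $k$ is squarefree, and pairwise coprime degrees make the compositum degree multiplicative); the odd-prime case, where the normality of any subextension of the abelian field $\QQ(\mu_m)$ forces $\mu_p\subseteq\QQ(\sqrt[p]{a})$ and hence the contradiction $p-1\mid p$ (note the equivalence between ``$\sqrt[p]{a}\in\QQ(\mu_m)$'' and ``$a$ is a $p$-th power in $\QQ(\mu_m)$'' is exactly where $\mu_p\subseteq\QQ(\mu_m)$ is used, which you have since $p\mid k\mid m$); and the case $p=2$, where $2\nmid h_a$ because $a$ is not a square, and $\QQ(\sqrt{a})\subseteq\QQ(\mu_m)$ iff the conductor $|\Delta_a|$ divides $m$, which is precisely the $\epsilon(q,k)$ dichotomy in the statement. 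The final bookkeeping identifying the product of local factors with $k'/\epsilon(q,k)$ is also right.
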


\begin{lemma}(\cite[Lemma 2.3]{MR4201547})\label{lem:Fqka_disc}
Let $k' = k/\gcd(k, h_a)$ and $a = g_1^{\gcd(k,h_a)} g_2^{k}$, with $g_1$ free of $k'$-th powers. Then 
\begin{equation*}
\frac{
\log\abs{\Disc(F(q,k,a))}
}{
[F(q,k,a):\QQ]
} 
\leq \log k' + \log(\lcm(q, k)) + 2\log \abs{g_1}.
\end{equation*}
\end{lemma}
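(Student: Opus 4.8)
The plan is to bound the logarithmic root discriminant of $F := F(q,k,a) = \QQ(\mu_m,\sqrt[k]{a})$, where $m := \lcm(q,k)$, by working through the tower $\QQ\subseteq L\subseteq F$ with $L := \QQ(\mu_m)$. The tower formula for discriminants gives
\begin{equation*}
  |\Disc(F)| = |\Disc(L)|^{[F:L]}\cdot\bigl|N_{L/\QQ}(\Disc(F/L))\bigr|,
\end{equation*}
where $\Disc(F/L)$ denotes the relative discriminant ideal. Since $[F:\QQ]=[F:L]\,\phi(m)$, dividing through by $[F:\QQ]$ reduces the claim to the two estimates
\begin{equation*}
  \frac{\log|\Disc(L)|}{\phi(m)}\leq \log m
  \qquad\text{and}\qquad
  \frac{\log\bigl|N_{L/\QQ}(\Disc(F/L))\bigr|}{[F:\QQ]}\leq \log k'+2\log|g_1|,
\end{equation*}
and the first of these is classical, since $\Disc(\QQ(\mu_m))$ divides $m^{\phi(m)}$.

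For the relative part, the first step is to choose an economical generator. Taking $\beta:=\sqrt[k']{g_1}$, one has $(g_2\beta)^k = g_2^{k}\,(\beta^{k'})^{\gcd(k,h_a)}= g_2^{k}g_1^{\gcd(k,h_a)}=a$, so we may take $\sqrt[k]{a}=g_2\beta$; then $F=L(\beta)$ (as $g_2\in\QQ^\times\subseteq L$), and $\beta$ is an algebraic integer with $\beta^{k'}=g_1\in\ZZ\smallsetminus\{0\}$. Let $P\in\OO_L[x]$ be the minimal polynomial of $\beta$ over $L$; by Gauss's lemma $x^{k'}-g_1=P(x)H(x)$ with $H$ monic in $\OO_L[x]$, and differentiating at $\beta$ gives $P'(\beta)H(\beta)=k'\beta^{k'-1}$, so $P'(\beta)\mid k'\beta^{k'-1}$ in $\OO_F$. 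Since the different of $F/L$ divides $P'(\beta)\OO_F$, applying $N_{F/L}$ shows that $\Disc(F/L)$ divides $(k')^{[F:L]}\,N_{F/L}(\beta)^{k'-1}\OO_L$; comparing constant terms in $P(x)H(x)=x^{k'}-g_1$ shows $N_{F/L}(\beta)=\pm P(0)$ with $P(0)\mid g_1$ in $\OO_L$. Applying $N_{L/\QQ}$ and using $|N_{L/\QQ}(g_1)|=|g_1|^{\phi(m)}$, I obtain that $\bigl|N_{L/\QQ}(\Disc(F/L))\bigr|$ divides $(k')^{[F:\QQ]}|g_1|^{(k'-1)\phi(m)}$.

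Dividing by $[F:\QQ]=[F:L]\,\phi(m)$ then yields
\begin{equation*}
  \frac{\log\bigl|N_{L/\QQ}(\Disc(F/L))\bigr|}{[F:\QQ]}\leq \log k'+\frac{k'-1}{[F:L]}\log|g_1|,
\end{equation*}
and at this point I would invoke Lemma~\ref{lem:Fqka_degree}, which gives $[F:L]=k'/\epsilon(q,k)$ with $\epsilon(q,k)\in\{1,2\}$, hence $[F:L]\geq k'/2$ and $(k'-1)/[F:L]\leq 2$. Combined with the cyclotomic bound, this completes the proof. The one genuinely substantive point is this last normalisation: to make the coefficient of $\log|g_1|$ an absolute constant rather than something growing with $k'$, one must both strip off the $g_2^{k}$ factor (passing from $\sqrt[k]{a}$ to $\sqrt[k']{g_1}$) and use the lower bound $[F:L]\geq k'/2$ supplied by the degree formula; the remaining ingredients are the standard tower formula and the relative-different estimate for the order $\OO_L[\beta]\subseteq\OO_F$.
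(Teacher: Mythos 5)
Your argument is correct and complete: the tower formula through $L=\QQ(\mu_{\lcm(q,k)})$, the bound $|\Disc(L)|\mid \lcm(q,k)^{\phi(\lcm(q,k))}$, the estimate of the relative different via $P'(\beta)\mid k'\beta^{k'-1}$ for the order $\OO_L[\beta]$ with $\beta^{k'}=g_1$, and the normalisation $[F:L]\geq k'/2$ from Lemma \ref{lem:Fqka_degree} all check out, and together they give exactly the stated bound. Note that the paper itself offers no proof here (it simply quotes \cite[Lemma 2.3]{MR4201547}), so your write-up is a self-contained derivation along the standard lines of that reference; the only thing worth making explicit is that the appeal to Lemma \ref{lem:Fqka_degree} presupposes $k$ squarefree, which is the only case in which these lemmas are used in the paper.
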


\begin{lemma}(\cite[Lemma 2.4]{MR2490093},\cite[Lemma 2.4]{MR4201547})
\label{lem:intersection}
We have
\begin{equation*}
[\QQ(\zeta_q) \cap G(k,a) : \QQ(\zeta_{\gcd(q, k)})] =
      \begin{cases}
        2 &\text{ if } 2\mid k, \ \Delta_a \nmid k \text{ and } \Delta_a\mid \lcm(q, k),\\
        1 &\text{ otherwise.}
      \end{cases}
\end{equation*}
In the first case, the integer $\beta_a(q)$ defined in \eqref{eq:beta_q_def} is a
fundamental discriminant and we have
$\QQ(\zeta_q) \cap G(k,a) = \QQ(\zeta_{\gcd(q, k)},\sqrt{\beta_a(q)})$.
\end{lemma}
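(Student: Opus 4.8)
The plan is to reduce, via Lemma~\ref{lem:max_abelian_Gka}, to an intersection of abelian number fields, and then read off both the degree and the generator from the Galois theory of cyclotomic fields.

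First I would note that, since $\QQ(\zeta_q)/\QQ$ is abelian, the field $E:=\QQ(\zeta_q)\cap G(k,a)$ is abelian over $\QQ$ and contained in $G(k,a)$, hence in $G(k,a)^{\ab}$. By Lemma~\ref{lem:max_abelian_Gka}, $G(k,a)^{\ab}=\QQ(\mu_k)$ unless $2\mid k$ and $\sqrt a\notin\QQ(\mu_k)$, in which case $G(k,a)^{\ab}=\QQ(\mu_k,\sqrt a)$ with $[\QQ(\mu_k,\sqrt a):\QQ(\mu_k)]=2$; using $\QQ(\sqrt a)=\QQ(\sqrt{\Delta_a})$ and that $\QQ(\sqrt{\Delta_a})$ has conductor $|\Delta_a|$, the condition $\sqrt a\in\QQ(\mu_k)$ is exactly $\Delta_a\mid k$. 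In the first case $E=\QQ(\zeta_q)\cap\QQ(\mu_k)=\QQ(\zeta_{\gcd(q,k)})$ by the standard formula for intersections of cyclotomic fields, so the index is $1$; this is the ``otherwise'' branch. From now on assume $2\mid k$ and $\Delta_a\nmid k$, put $d=\gcd(q,k)$ and $F=\QQ(\mu_k,\sqrt a)$.

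Next I would show the index is $1$ or $2$. From the identity $\phi(q)\phi(k)=\phi(d)\phi(\lcm(q,k))$ one gets $[\QQ(\zeta_q):\QQ(\zeta_d)]\,[\QQ(\mu_k):\QQ(\zeta_d)]=[\QQ(\zeta_{\lcm(q,k)}):\QQ(\zeta_d)]$, so $\QQ(\zeta_q)$ and $\QQ(\mu_k)$ are linearly disjoint over $\QQ(\zeta_d)$; hence so are $E\subseteq\QQ(\zeta_q)$ and $\QQ(\mu_k)$, whence $[E:\QQ(\zeta_d)]=[E\cdot\QQ(\mu_k):\QQ(\mu_k)]$, a divisor of $[F:\QQ(\mu_k)]=2$, and the index is $2$ exactly when $E\cdot\QQ(\mu_k)=F$. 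To determine when this holds I would argue: if the index is $2$ then $\sqrt a\in F=E\cdot\QQ(\mu_k)\subseteq\QQ(\zeta_q)\QQ(\mu_k)=\QQ(\zeta_{\lcm(q,k)})$, forcing $\Delta_a\mid\lcm(q,k)$; conversely, if $\Delta_a\mid\lcm(q,k)$ then $F\subseteq L:=\QQ(\zeta_{\lcm(q,k)})$, and writing $H_q=\Gal(L/\QQ(\zeta_q))$, $H_k=\Gal(L/\QQ(\mu_k))$, $H_F=\Gal(L/F)\subseteq H_k$ of index $2$, the relation $H_q\cap H_k=\{1\}$ (because $\QQ(\zeta_q)\QQ(\mu_k)=L$) together with Dedekind's modular law gives $(H_qH_F)\cap H_k=(H_q\cap H_k)H_F=H_F$; since $H_qH_F$ and $(H_qH_F)\cap H_k$ are the subgroups fixing $E$ and $E\cdot\QQ(\mu_k)$, this forces $E\cdot\QQ(\mu_k)=F$, i.e.\ index $2$. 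This establishes the degree formula.

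Finally, in the index-$2$ case I would identify $E$. Factor $\Delta_a$ into prime discriminants and split $\Delta_a=\Delta_1\Delta_2$, where $\Delta_1$ collects the prime-discriminant factors whose quadratic field lies in $\QQ(\mu_k)$; then $F=\QQ(\mu_k,\sqrt{\Delta_2})$, and the hypotheses $\Delta_a\mid\lcm(q,k)$, $\Delta_a\nmid k$ translate into $|\Delta_2|\mid q$, $\Delta_2\neq1$, and (when $\Delta_a$ is even) the $2$-part of $\Delta_a$ dividing $q$ — which is why $\Delta_a/\gcd(q,\Delta_a)$ is odd in this case. Since $|\Delta_2|\mid q$ we have $\sqrt{\Delta_2}\in\QQ(\zeta_q)$, so $\QQ(\zeta_d,\sqrt{\Delta_2})\subseteq E$; composing with $\QQ(\mu_k)$ recovers $F$, so the degree count gives $E=\QQ(\zeta_d,\sqrt{\Delta_2})$. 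Absorbing the prime-discriminant factors of $\Delta_1$ supported on primes dividing $q$ (which already lie in $\QQ(\zeta_d)$) one checks that $\QQ(\sqrt{\Delta_2})$ generates over $\QQ(\zeta_d)$ the same extension as $\QQ(\sqrt{\beta_a(q)})$ with $\beta_a(q)$ as in~\eqref{eq:beta_q_def}, and that $\beta_a(q)$ is a product of distinct prime discriminants, hence a fundamental discriminant — the sign $(-1)^{(\Delta_a/\gcd(q,\Delta_a)-1)/2}$ being exactly what makes this so. The main obstacle is this last step: the bookkeeping with prime discriminants and the ramified prime $2$ (this is where the hypothesis that $k$ is squarefree, the only case used in the paper, is convenient), and in particular verifying that the explicit sign in the definition of $\beta_a(q)$ yields the correct fundamental discriminant; everything else is routine Galois theory of abelian number fields.
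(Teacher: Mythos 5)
The paper itself contains no proof of this lemma: it is quoted from Moree \cite{MR2490093} and Frei--Koymans--Sofos \cite{MR4201547}, so there is no internal argument to compare yours with; judged on its own, your proof is correct. The reduction $\QQ(\zeta_q)\cap G(k,a)=\QQ(\zeta_q)\cap G(k,a)^{\ab}$ via Lemma \ref{lem:max_abelian_Gka}, the linear-disjointness argument showing the index divides $[\QQ(\mu_k,\sqrt a):\QQ(\mu_k)]=2$, the Dedekind modular-law criterion showing it equals $2$ precisely when $\Delta_a\mid\lcm(q,k)$ (given $2\mid k$, $\Delta_a\nmid k$), and the identification $E=\QQ(\zeta_d,\sqrt{\Delta_2})$ in the index-two case are all sound. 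The one step you leave as ``one checks'' does go through exactly as you describe: writing $\Delta_a=\epsilon\prod_{p\in S}p^*$ for its factorisation into prime discriminants, with $S$ the set of odd primes dividing $\Delta_a$, $p^*=(-1)^{(p-1)/2}p$ and $\epsilon\in\{1,-4,\pm 8\}$, squarefreeness of $k$ together with $\Delta_a\mid\lcm(q,k)$ gives $\gcd(q,\Delta_a)=|\epsilon|\prod_{p\in T}p$ with $T=\{p\in S\,:\,p\mid q\}$ (in particular $\Delta_a/\gcd(q,\Delta_a)$ is odd, so the first case of the definition of $\beta_a(q)$ applies), and complete multiplicativity of $m\mapsto(-1)^{(m-1)/2}$ on odd integers then yields $\beta_a(q)=\epsilon\prod_{p\in T}p^*$. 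This is a product of prime discriminants at pairwise distinct primes, hence a fundamental discriminant, and it differs from your $\Delta_2$ exactly by the factors $p^*$ with $p\mid\gcd(q,k)$, each of which is a square in $\QQ(\zeta_{\gcd(q,k)})$; hence $\QQ(\zeta_d,\sqrt{\beta_a(q)})=\QQ(\zeta_d,\sqrt{\Delta_2})=E$, which is the remaining assertion of the lemma.
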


\subsection{Proof of Theorem \ref{thm:hooley}}
In this proof, $p,q$ will always denote primes.
We start with some simple reductions.
At the cost of a harmless error
$\ll C(\log N)^{1/3}/\sqrt{N}$, we may 
replace the functios $\Lambda_{a,b,W}(\cdot)$ and $\Lambda_{G(k,a),\{\ident\},b,W}(\cdot)$ by
$\Lambda_{a,b,W}(\cdot)'$ and $\Lambda_{G(k,a),\{\ident\},b,W}'(\cdot)$, respectively. Moreover, we let $f:\NN\to\CC$ be an arbitrary function that satisfies $f(b+nW)=F(n)$
for all $n\in\NN$. Hence, our goal is to prove that
  \begin{equation}\label{eq:hooley_proof_goal}
   \EE_{n\leq N}\Lambda_{a,b,W}'(n)f(b+nW) =
    \hspace{-0.3cm}\sum_{\substack{k\in\NN\\p_+(k)\leq w(N)}}\hspace{-0.3cm}\mu(k)\EE_{n\leq
    N}\Lambda_{G(k,a),\{\ident\},b,W}'(n)f(b+nW) +
    O_{a,C,\cD}\left(\frac{1}{w(N)}\right).
  \end{equation}
If $\gcd(b,W)\neq 1$, then the only prime $p\equiv b\bmod W$ can be
$p=\gcd(b,W)\leq b<W$, which is not counted by $\Lambda_{a,b,W}'$ and
$\Lambda_{G(k,a),b,W}'$.  is at most $1$, hence the
expectations on both sides of the estimate in Theorem \ref{thm:hooley} are zero.
Therefore, we assume from now on that
$\gcd(b,W)=1$.

  Let $\spl(G(k,a))$ denote the set of rational primes $p$ splitting completely in $G(k,a)$. For primes $p,q$,
  we let $R_a(q,p)$ be the property that $p\in\spl(G_{q,a})$.
  Then
\begin{equation*}
  R_{a}(q,p)
  \quad\Longleftrightarrow\quad \QQ_p(\mu_q,\sqrt[q]{a})=\QQ_p
  \quad\Longleftrightarrow\quad q\mid p-1\text{ and }a\in\FF_p^{\times q},
\end{equation*}
and therefore
\begin{equation}\label{eq:primitive_root_splitting_cond}
  \FF_p^\times=\langle a\rangle\quad\Longleftrightarrow\quad p\nmid a\text{ and }\forall q\mid p-1, a\notin
  \FF_p^{\times q}
  \quad\Longleftrightarrow\quad p\nmid a\text{ and }R_a(q,p)\text{ fails for
  all }q.
\end{equation}
For any $1\leq\eta_1\leq\eta_2\leq WN-1$, let
  \begin{equation*}
    M(\eta_1,\eta_2;N):=\#\{p\leq W(N+1)\where p\equiv b\bmod W,\ R_a(q,p)\text{ holds for some
  }q\in (\eta_1,\eta_2]\}.
\end{equation*}
  Using \eqref{eq:primitive_root_splitting_cond}, we see that the
  expression on the left-hand side of \eqref{eq:hooley_proof_goal} is equal to
  \begin{align}
    &\frac{\phi(W)}{WN}\sum_{\substack{b<m\leq WN+b\\m\equiv b\bmod
    W}}\Lambda_a'(m)f(m) = \frac{\phi(W)}{WN}\sum_{\substack{b<p\leq
    WN+b\\p\equiv b\bmod W\\p\nmid a}}(\log p)\prod_{q}(1-\one_{R_a(q,p)})f(p)\nonumber\\
    &= \frac{\phi(W)}{WN}\sum_{\substack{b<p\leq WN+b\\p\equiv b\bmod W\\p\nmid
    a}}(\log p)\prod_{q\leq w(N)}(1-\one_{R_a(q,p)})f(p) + O_C\left(\frac{\phi(W)\log N}{WN}M(w(N),W(N+1);N)+1\right).\label{eq:hooley_asympt}
  \end{align}
  Expanding the product over $q$,
  we see that the main term is equal to
  \begin{align*}
    \frac{\phi(W)}{W}\sum_{\substack{k\in\NN\\p_+(k)\leq
    w(N)}}\frac{\mu(k)}{N}\sum_{\substack{b<p\leq WN+b\\p\equiv b\bmod
    W\\p\nmid a}}(\log p)\prod_{q\mid k}\one_{p\in\spl(G_{q,a})}f(p).
  \end{align*}
  As $k$ is squarefree, the field $G(k,a)$ is the compositum of the fields
  $G_{q,a}$ for all prime divisors $q$ of $a$, and therefore $\prod_{q\mid
    k}\one_{p\in\spl(G_{q,a})}=\one_{p\in\spl(G(k,a))}$. Hence, the expression
  in the latter displayed formula is equal to the main term on the right-hand
  side of \eqref{eq:hooley_proof_goal}, up to a harmless error $\ll_a 1/N$ coming from
  $p\mid a$ and $k=1$. Next, we deal with the error term by analysing the
  quantity $M(w(N),W(N+1);N)$ in a fashion analogous to \cite{MR207630}. The
  main difference is that we have to deal with progressions modulo $W$. Let
  \begin{equation*}
    \xi_2:=\frac{\sqrt{N}}{\sqrt{W}(\log N)^2}\quad\text{ and }\quad\xi_3:=W\sqrt{N}(\log N),
  \end{equation*}
  then
  \begin{equation*}
    M(w(N),W(N+1);N) = M(w(N),\xi_2;N) + M(\xi_2,\xi_3;N) + M(\xi_3,W(N+1);N).
  \end{equation*}
  Let us estimate all three summands individually, writing, for a prime $q$,
  \begin{equation*}
    P(W,q;x):=\#\{p\leq x\where p\equiv b\bmod W\text{ and }R_a(q,p)\text{ holds}\}.
  \end{equation*}
  Let $q>w(N)$, so $q\nmid W$. Then, similarly as in Hooley's proof of \cite[(2)]{MR207630}, we see by Brun-Titchmarsh that
  \begin{equation*}
    P(W,q;W(N+1))\leq \sum_{\substack{p\leq W(N+1)\\p\equiv 1\bmod q\\p\equiv b\bmod W}}1\ll \frac{WN}{\phi(qW)\log(WN/qW)}\ll \frac{W}{\phi(W)}\frac{N}{q\log(N/q)}.
  \end{equation*}
  Therefore,
  \begin{align}
    M(\xi_2,\xi_3;N)&\ll \frac{WN}{\phi(W)\log
                      N}\sum_{\xi_2<q\leq\xi_3}\frac{1}{q}\ll\frac{WN}{\phi(W)(\log
                      N)^2}\sum_{\xi_2<q\leq \xi_3}\frac{\log
                      q}{q}\ll\frac{WN}{\phi(W)(\log N)^2}\log(\xi_3/\xi_2)\nonumber\\
    &\ll\frac{WN\log\log N}{\phi(W)(\log N)^2}.\label{eq:hooley_M_1_est}
  \end{align}
  Our estimation of $M(\xi_3,W(N+1);N)$ is analogous to \cite[(3)]{MR207630}. The condition $R_a(q,p)$ implies in particular that $p\mid a^{(p-1)/q}-1$, and therefore $p\mid a^{2(p-1)/q}-1$. For $q>\xi_3$ and $p\leq W(N+1)$, we have $p/q\leq 2\sqrt{N}(\log N)^{-1}$, hence every prime $p$ counted in $M(\xi_3,W(N+1);N)$ satisfies
  \begin{equation*}
    p\mid\prod_{m<2\sqrt{N}(\log N)^{-1}}(a^{2m}-1).
  \end{equation*}
  The same simple argument as in \cite[(3)]{MR207630} now shows that
  \begin{equation}\label{eq:hooley_M_2_est}
  M(\xi_3,W(N+1);N)\ll_aN(\log N)^{-2}.
\end{equation}
  It remains to estimate $M(w(N),\xi_2;N)$, for which we have to use
  $\HRH(a)$. Again, our arguments are analogous to the ones leading up to
  \cite[(33)]{MR207630}.
  Write
\begin{equation*}
W_0:=\prod_{p\leq w(N)}p,
\end{equation*}
so $W_0$ is squarefree and  $W \asymp_\cD W_0\leq (\log N)^{2/3}$. We start, similarly as above, with the estimate
  \begin{equation*}
    M(w(N),\xi_2;N)\leq \sum_{w(N)<q\leq \xi_2}P(W_0,q;W(N+1)).
  \end{equation*}
  The
  quantity $P(W_0,q;W(N+1))$ is the number of primes up to $W(N+1)$ unramified in $F(W_0,q,a)$, whose Frobenius class $\Frob_p$ in $\Gal(F(W_0,q,a)/\QQ)$ satisfies
  \begin{equation*}
    \Frob_p|_{\QQ(\mu_{W_0})}=\sigma_b:\zeta\mapsto\zeta^b\quad\text{ and }\quad \Frob_p|_{G(q,a)}=\ident_{G(q,a)}.
  \end{equation*}
  Next, we observe that $\HRH(a)$ implies that the Riemann hypothesis holds for the Dedekind zeta function of $F(W_0,q,a)$ for every prime $q>w(N)$. Indeed, the number $qW_0$ is squarefree and $F(qW_0,qW_0,a)/F(W_0,q,a)$ is a Kummer, hence abelian extension. Therefore, the Dedekind zeta function of $F(qW_0,qW_0,a)$  is the product of the Dedekind zeta function of $F(W_0,q,a)$ and some entire Hecke $L$-functions. Thus, every root of the latter zeta function is also a root of the former, which satisfies the Riemann hypothesis by $\HRH(a)$.

  Thus, the conditional  effective Chebotarev
  theorem \cite[Th\'eor\`eme 4]{MR0644559} for
  $F(W_0,q,a)$ yields the asymptotic
  \begin{align*}
    P(W_0,q;W(N+1)) &= \eta_{G(q,a),\{\ident\}}(b,W_0)\Li(W(N+1))\\ &+ O\left(\frac{(WN)^{1/2}\log(|\Disc(F(W_0,q,a))|)}{[F(W_0,q,a):\QQ]}+(WN)^{1/2}\log(WN)\right).
  \end{align*}
  Thus, using $W=\cD W_0$ and using Lemma \ref{lem:Fqka_degree}, Lemma \ref{lem:Fqka_disc} and the
  fact that $\gcd(q,W)=1$, we get
  \begin{equation*}
    P(W_0,q;W(N+1))\ll_\cD \frac{WN}{\log(WN)q\phi(q)\phi(W)} + (WN)^{1/2}\log(WN).
  \end{equation*}
  Therefore, as in \cite[(33)]{MR207630},
  \begin{align}
    M(w(N),\xi_2;N)&\ll\cD\frac{WN}{\log(N)\phi(W)}\sum_{q> w(N)}\frac{1}{q(q-1)}+\frac{\xi_2}{\log\xi_2}(WN)^{1/2}\log(N)\nonumber\\
    &\ll \frac{W}{\phi(W)}\frac{N}{w(N)\log(N)} + \frac{N}{(\log N)^2}.\label{eq:hooley_M_3_est}
  \end{align}
  From \eqref{eq:hooley_M_1_est}, \eqref{eq:hooley_M_2_est} and \eqref{eq:hooley_M_3_est}, we conclude that the error term in \eqref{eq:hooley_asympt} satisfies the desired bound. This concludes our proof of Theorem \ref{thm:hooley}.

\section{\texorpdfstring{$W$}{W}-tricked Artin-von Mangoldt  and nilsequences: proof of Theorem \ref{thm:W_tricked_artin_mangoldt_with_nilsequences_nonconst_avg}}\label{sec:W_tricked_lambda_a_with_nilsequences}
  We use Theorem \ref{thm:hooley} to estimate
  \begin{align*}
    \EE_{n\in [N]}\Lambda_{a,b,W}(n)F(g(n)\Gamma)
                                            &= \sum_{\substack{p_+(k)\leq w(N)}}\mu(k)\EE_{n\in [N]}\Lambda_{G(k,a),b,W}(n)F(g(n)\Gamma)+O_{a,\cD}\left(\frac{1}{w(N)}\right).
  \end{align*}
  Each $k$ with $p_+(k)\leq w(N)$ satisfies $\lcm(\Delta_a,k)\mid W$,
  as $\Delta_a\mid \cD$. As
  $G(k,a)^{\ab}\subseteq\QQ(\mu_k,\sqrt{a})$ by Lemma
  \ref{lem:max_abelian_Gka}, we obtain $\Phi_{G(k,a)^\ab}\mid \lcm(k,|\Delta_a|) \mid
  W$. Hence, we may apply
  Theorem
  \ref{thm:W_tricked_chebotarev_mangoldt_with_nilsequences_nonconst_avg} with $A=2$
  for every field $G(a,k)$ with $p_+(k)\leq w(N)$ to
  obtain, using \eqref{eq:eta_special_case},
  \begin{align}
    &\left|\EE_{n\in [N]}\Lambda_{a,b,W}(n)F(g(n)\Gamma) - \left(\sum_{p_+(k)\leq
      w(N)}\mu(k)\phi(W)\eta(a,b,k,W)\right)\EE_{n \in
      [N]}\Lambda_{b,W}(n)F(g(n)\Gamma)\right|\label{eq:hooley_application_asymptotic}\\
    & \ll_{a,\cD} 
      \sum_{p_+(k)\leq w(N)}|\mu(k)|\frac{C_k}{(\log N)^2} + \frac{1}{w(N)},\nonumber
  \end{align}
  where, for each squarefree $k$, the positive constant $C_k = C_{k,a,m,d,Q,M}$
  arises from the bound in Theorem
  \ref{thm:W_tricked_chebotarev_mangoldt_with_nilsequences_nonconst_avg} with
  $A=2$ and
  depends only on the indicated quantities.   To bound the sum over $k$ in the error term, we now need to restrict  our function $w(N)$ to grow sufficiently slowly.
  We
  define for $t\geq 0$ the function
  \begin{equation*}
    h(t):=\log(2+t)+\sum_{\substack{p_+(k)\leq t}}|\mu(k)|C_k,
  \end{equation*}
  which is positive, strictly increasing and growing to infinity, and therefore
  has a strictly increasing inverse function $h^{-1}:[0,\infty)\to [0,\infty)$ that also grows to
  infinity. We now require that the function $w(N)$ grows to infinity with $N$
  sufficiently slowly to satisfy, in addtion to $w(N)\leq \frac{1}{3}\log\log N$, the condition
  \begin{equation*}
    w(N)\leq h^{-1}(\log N),
  \end{equation*}
  which clearly depends only on $a,m,d,Q,M$. With this restriction on the
  growth of $w(\cdot)$, we obtain the bound
  \begin{align}
    \sum_{p_+(k)\leq
    w(N)}|\mu(k)|\frac{C_k}{(\log N)^2} &\leq \frac{h(w(N))}{(\log N)^2}\leq
                                          \frac{1}{\log N} \leq \frac{1}{w(N)}\label{eq:hooley_application_super_weak_bound}.
  \end{align}
  Finally,
  we estimate, using Lemma \ref{lem:Fqka_degree},
  \begin{align}
    &\sum_{p_+(k)\leq
      w(N)}\mu(k)\phi(W)\eta(a,b,k,W)-\phi(W)\delta(a,b,W)
      \ll_a \sum_{p_+(k)>w(N)}\frac{|\mu(k)|}{k\phi(k/(W,k))}\nonumber\\
    &=\sum_{k_1\mid W}\frac{1}{k_1}\sum_{\substack{(k_2,W)=1\\k_2>1}}\frac{|\mu(k_2)|}{k_2\phi(k_2)}\ll\sum_{k_1\mid W}\frac{1}{w(N)\log w(N)k_1}\ll\frac{1}{w(N)}. \label{eq:hooley_application_complete_summation}
  \end{align}
  Here, we used the estimates
  \begin{align*}
    \sum_{\substack{(k,W)=1\\k>1}}\frac{|\mu(k)|}{k\phi(k)} &= \prod_{p> w(N)}\left(1+\frac{1}{p(p-1)}\right)-1\ll e^{O(1/(w(N)\log w(N)))}-1 \ll \frac{1}{w(N)\log w(N)},\\
    \sum_{k\mid W}\frac{1}{k}&=\prod_{p\leq w(N)}\left(1+\frac{1}{p}\right)\ll e^{\log\log w(N)}\ll\log w(N).
  \end{align*}

  From the $1$-boundedness of $F$, we moreover see that
  \begin{align*}
    \left|\EE_{n \in
      [N]}\Lambda_{b,W}(n)F(g(n)\Gamma)\right|\leq\EE_{n\in [N]}\Lambda_{b,W}(n)\ll 1.
  \end{align*}
  Together with the estimates
  \eqref{eq:hooley_application_super_weak_bound} and
  \eqref{eq:hooley_application_complete_summation} applied to
  \eqref{eq:hooley_application_asymptotic}, this yields the desired bound in Theorem \ref{thm:W_tricked_artin_mangoldt_with_nilsequences_nonconst_avg}.

\section{Constellations with prescribed primitive roots: proof of Theorem \ref{thm:main_artin}}\label{sec:proof_main_artin}
In this section, we will deduce Proposition \ref{prop:gowers_estimate_artin} from Theorem \ref{thm:W_tricked_artin_mangoldt_with_nilsequences_nonconst_avg} and the inverse theorem for Gowers norms. Then we deduce Theorem \ref{thm:main_artin} from Proposition \ref{prop:gowers_estimate_artin} using the generalised von Neumann theorem. For both of these steps, we require a pseudorandom majorant for our functions $\Lambda_{a,b,W}'(n)$.

\subsection{Pseudorandom majorant for $\Lambda_{a,b,W}'(n)$}
\begin{lemma}[Pseudorandom majorant]\label{lem:pseudorandom_majorant}
  Let $t\geq 1$ and $D>1$. Then there is a constant $C_0=C_0(D)$, such that the
  following holds. Let $C\geq C_0$ and let $N'\in[CN,2CN]$ be prime. Let
  $a_1,\ldots,a_t\in\ZZ$, such that no $a_i$ is equal to $-1$ or a perfect
  square. Let $b_0,\ldots,b_t\in\{0,\ldots,W-1\}$ be coprime to $W$. Then there
  exists a function $\nu:\ZZ/N'\ZZ\to (0,\infty)$
  that has the following properties:
  \begin{description}
  \item[Domination] We have
    \begin{equation}\label{eq:pseudorandom_majorant_domination}
      1+\Lambda_{b_0,W}'(n)+\Lambda_{a_1,b_1,W}'(n)+\cdots+\Lambda_{a_t,b_t,W}'(n) \ll_{t,D,C} \nu(n)
    \end{equation}
    for all $n\in[N^{3/5},N]$, where $[N]$ is embedded in $\ZZ/N'\ZZ$ via $n\mapsto n+N'\ZZ$.
  \item[$D$-linear forms condition] For all $\tilde s,\tilde t\in [D]$ and finite complexity systems $\tilde\Psi=(\tilde\psi_1,\ldots,\tilde\psi_{\tilde t})$ of affine-linear forms on $\ZZ^{\tilde s}$ with the coefficients of all linear terms bounded in absolute value by $D$, we have
    \begin{equation*}
      \EE_{n \in (\ZZ/N'\ZZ)^{\tilde s}}\prod_{i\in [\tilde t]}\nu(\tilde\psi_i(n)) = 1 + o_{t,D,C,\cD,w(\cdot)}(1).
    \end{equation*}
  \end{description}
\end{lemma}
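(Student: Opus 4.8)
The plan is to reuse the pseudorandom majorant that Green and Tao construct in \cite[Proposition 6.4]{MR2680398} essentially verbatim, checking only that it also dominates our Artin-von Mangoldt functions $\Lambda_{a_i,b_i,W}'$. The key observation is that every prime $p$ with $\FF_p^\times = \langle a_i\rangle$ is in particular a prime, so $\Lambda_{a_i,b_i,W}'(n) \leq \Lambda_{b_i,W}'(n)$ pointwise; similarly, since the $b_i$ are coprime to $W$, a $W$-tricked majorant for the $W$-tricked von Mangoldt function with shift $b_i$ is already essentially a majorant for all of these simultaneously. Concretely, I would first reduce to the shift-free situation: for each $i\in\{0,\ldots,t\}$ one has $\Lambda_{a_i,b_i,W}'(n) = \tfrac{\phi(W)}{W}\Lambda'(b_i+nW)\one_{\FF_{b_i+nW}^\times = \langle a_i\rangle} \leq \tfrac{\phi(W)}{W}\Lambda'(b_i+nW)$, which is exactly the $W$-tricked classical von Mangoldt function $\Lambda_{b_i,W}'$ from \S\ref{sec:W_trick}. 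Hence the left-hand side of \eqref{eq:pseudorandom_majorant_domination} is bounded by $1 + \sum_{i=0}^t \Lambda_{b_i,W}'(n)$, with no reference to the $a_i$ at all.

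Second, I would invoke the construction of Green and Tao. Given the $t+1$ coprime residues $b_0,\ldots,b_t$ modulo $W$, define $\nu_i$ to be the Green-Tao pseudorandom majorant of \cite[Proposition 6.4]{MR2680398} attached to the shift $b_i$ and the scaling parameter $C$ (this is the truncated-divisor-sum majorant $\Lambda_\chi$ evaluated on $b_i+nW$, suitably normalised), and set
\begin{equation*}
  \nu := \frac{1}{t+2}\Bigl(1 + \sum_{i=0}^{t}\nu_i\Bigr).
\end{equation*}
Then $\nu:\ZZ/N'\ZZ\to(0,\infty)$, and \cite[Proposition 6.4]{MR2680398} gives $\Lambda_{b_i,W}'(n)\ll_{t,D,C}\nu_i(n)$ for $n\in[N^{3/5},N]$, whence the Domination property \eqref{eq:pseudorandom_majorant_domination} follows immediately by summing. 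For the $D$-linear forms condition, I would expand $\prod_{i\in[\tilde t]}\nu(\tilde\psi_i(n))$ multilinearly into $O_{t,D}(1)$ terms, each of which is an average of a product of at most $\tilde t\leq D$ factors drawn from $\{1,\nu_0,\ldots,\nu_t\}$; each such average is $1+o(1)$ by the linear forms condition part of \cite[Proposition 6.4]{MR2680398} (after noting that composing the $\tilde\psi_i$ with the affine map $n\mapsto b_i + Wn$ produces a finite-complexity system whose linear coefficients are bounded in terms of $D$ and $W\ll_\cD(\log N)^{2/3}$, which is admissible since the error in the linear forms condition is allowed to depend on $w(\cdot)$ and $\cD$). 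Collecting the $O_{t,D}(1)$ contributions, each equal to $1+o_{t,D,C,\cD,w(\cdot)}(1)$, and using that the weights $1/(t+2)$ sum appropriately, yields $\EE\prod_i\nu(\tilde\psi_i(n)) = 1 + o_{t,D,C,\cD,w(\cdot)}(1)$.

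The only genuine subtlety — and the part I would write out with care — is the bookkeeping in the multilinear expansion of the linear forms condition, specifically making sure that the normalisation constant $1/(t+2)$ is reconciled with the fact that the linear forms condition as stated in the lemma has main term exactly $1$ rather than $(1+\tfrac{1}{t+2}(\text{number of }\nu_i\text{ appearing}))^{\tilde t}$ or similar; in other words I should either normalise $\nu$ differently (e.g.\ by a convex combination with coefficients summing to $1$, which is what $\tfrac{1}{t+2}(1+\sum\nu_i)$ already is) and verify that a convex combination of functions each satisfying the linear forms condition with main term $1$ again satisfies it with main term $1$ — this is true because each mixed term $\EE\prod_{j}\nu_{i_j}(\tilde\psi_j(n))$ is still $1+o(1)$ by \cite[Proposition 6.4]{MR2680398}, and $\sum$ over the convex weights of $1$ is $1$. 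This is routine but must be done honestly; there is no real analytic obstacle, since all the hard work is already contained in the Green-Tao majorant and its correlation estimates. I expect the proof to occupy at most a page.
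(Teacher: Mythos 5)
Your key reduction is exactly the paper's: the pointwise bound $\Lambda_{a_i,b_i,W}'(n)\leq\Lambda_{b_i,W}'(n)$ removes the primitive-root condition entirely, after which everything is outsourced to the Green--Tao majorant of \cite[Proposition 6.4]{MR2680398}. Note, however, that Proposition 6.4 of \cite{MR2680398} is already stated for a whole tuple of residues coprime to $W$ and produces a \emph{single} measure $\nu$ dominating $1+\sum_{i}\Lambda_{b_i,W}'$; applying it once with the tuple $(b_0,\ldots,b_t)$ is all that is needed, and this is what the paper does. Your superstructure of per-residue majorants $\nu_i$ glued by the convex combination $\tfrac{1}{t+2}(1+\sum_i\nu_i)$ is therefore redundant, and it is precisely this superstructure that introduces the one genuine gap in your write-up.

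The gap is in the verification of the linear forms condition. After the multilinear expansion, the mixed terms are of the form $\EE_{n}\prod_{j}\nu_{i_j}(\tilde\psi_j(n))$ with \emph{different} measures $\nu_{i_j}$ attached to different forms, and you justify each of these as ``$1+o(1)$ by the linear forms condition part of \cite[Proposition 6.4]{MR2680398}.'' That statement, applied to a single residue $b_i$, only controls correlations of $\nu_i$ with itself; joint pseudorandomness of several measures is not a formal consequence of each being pseudorandom individually (already for systems with more forms than variables the average does not factor, so there is no soft argument). What makes the mixed correlations true for these specific $\nu_i$ is their truncated-divisor-sum structure: the Goldston--Y{\i}ld{\i}r{\i}m correlation estimates in \cite[Appendix D]{MR2680398} handle products of divisor sums with different shifts $b_{i_j}$ placed at different forms, and indeed the Green--Tao majorant is itself built as such a convex combination. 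So your construction is correct, but the justification must appeal to the proof in Appendix D (or simply to one application of Proposition 6.4 with all residues at once), not to the statement applied residue by residue. Relatedly, since the definition of $W$ here differs from that in \cite{MR2680398} (the factor $\cD$ and an arbitrarily slowly growing $w(\cdot)$ instead of $w(N)=\log\log\log N$), one cannot cite the statement of Proposition 6.4 verbatim in any case: one has to note, as the paper does, that the arguments of \cite[Appendix D]{MR2680398} go through with obvious modifications, which is exactly what produces the dependence of the $o(1)$ term on $\cD$ and $w(\cdot)$; you gesture at this but it should be made the explicit reason the appeal is to the proof rather than the statement.
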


\begin{proof}
  As $\Lambda'_{a_i,b_i,W}(n)\leq \Lambda'_{b_i,W}(n)$, it is enough to produce a function $\nu(\cdot)$ in which the domination property \eqref{eq:pseudorandom_majorant_domination} is replaced by
  \begin{equation}\label{eq:pseudorandom_majorant_domination_GT}
    1+\Lambda_{b_0,W}'(n)+\Lambda_{b_1,W}'(n)+\cdots+\Lambda_{b_t,W}'(n) \ll_{t,D,C} \nu(n).
  \end{equation}
  A function $\nu$ satisfying this
  is constructed, essentially, in the proof of
  \cite[Proposition 6.4]{MR2680398} in \cite[Appendix
  D]{MR2680398}. The only difference to our situation is a minor
  change in the definition of $W$. In \cite{MR2680398}, the function
  $w(N)=\log\log\log N$ is fixed and $W=\prod_{p\leq w(N)}p$, whereas
  for us $w(N)$ can be any function that grows to infinity with $N$
  sufficiently slowly, and $W$ is defined as in \eqref{eq:cheb_def_W}.

  The proof in \cite[Appendix D]{MR2680398}, with the obvious necessary
  modifications, goes through in our situation as well. See also
  \cite{MR3656911} for a similar situation. Note that this leads to the
  dependence of the error term in the linear forms condition on $\cD$ and
  $w(\cdot)$. Note, moreover, that $C_0=C_0(D)$ is independent of $\cD,W$, and
  the same applies to the bound in \eqref{eq:pseudorandom_majorant_domination}.

  The majorant constructed in \cite[Appendix D]{MR2680398} also satisfies a
  so-called \emph{correlation condition}. This condition was used in
  \cite{MR2680398} for transferring the inverse theorem for Gowers norms, but
  is no longer required due to recent advances.
\end{proof}

\subsection{Gowers-norm estimate:
  proof of Proposition \ref{prop:gowers_estimate_artin}}

\begin{lemma}\label{lem:gowers_estimate_classical}
    Let $u,N\in\NN$ and $b\in\{0,\ldots,W-1\}$. Then  
  \begin{equation*}
    \vecnorm{\Lambda_{b,W}'(\cdot)-1_{\gcd(b,W)=1}}_{U^{u+1}[N]}= o_{u,\cD,w(\cdot)}(1).
  \end{equation*}
\end{lemma}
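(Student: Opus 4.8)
The plan is to deduce Lemma~\ref{lem:gowers_estimate_classical} from the already-cited estimate \eqref{eq:gowers_estimate_classical}, namely \cite[Theorem~7.2]{MR2680398}, whose only restriction is the specific choice $\cD=1$ and $w(N)=\log\log\log N$ used in \cite{MR2680398}. So the content of the lemma is purely that the same estimate survives when $\cD$ is an arbitrary fixed integer and $w(\cdot)$ is allowed to be any function tending to infinity sufficiently slowly (still with $1\leq w(N)\leq\frac13\log\log N$), with the error term now permitted to depend on $\cD$ and on the function $w(\cdot)$. The point is that the proof of \cite[Theorem~7.2]{MR2680398} never uses the precise form of $w$ or the value of $\cD$; it only uses that $W=W(N)\to\infty$, that $W\ll(\log N)^{O(1)}$ (which holds by \eqref{eq:cheb_def_W}), and that the pseudorandom majorant for $\Lambda'_{b,W}$ from \cite[Proposition~6.4]{MR2680398} is available. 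The latter is exactly the $t=0$ case of our Lemma~\ref{lem:pseudorandom_majorant}, where the linear-forms condition holds with an error term depending on $\cD$ and $w(\cdot)$.

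Concretely, I would argue as follows. First, reduce to showing the analogue of \cite[Proposition~10.2]{MR2680398} in our setting: orthogonality of $\Lambda'_{b,W}(n)-\one_{\gcd(b,W)=1}$ with $(d/2)$-step nilsequences of bounded Lipschitz constant, with an error term $o_{u,\cD,w(\cdot)}(1)$. Granting this, the deduction of the $U^{u+1}[N]$-norm bound is verbatim the argument of \cite[\S11--\S12]{MR2680398}, using the transferred inverse theorem of Dodos--Kanellopoulos \cite{MR4436255} in place of the original transference argument of \cite{MR2680398}, together with the majorant of Lemma~\ref{lem:pseudorandom_majorant} (case $t=0$): the only change is bookkeeping of the dependence of error terms on $\cD$ and $w(\cdot)$, which propagates harmlessly. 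Second, for the orthogonality statement itself, I would invoke the identity \eqref{eq:Lambda_mu_identity} for the classical von Mangoldt function: writing $\Lambda(b+nW)=-\sum_{e\mid b+nW}\mu(e)\log e$ and splitting the divisors $e$ into ranges, one reduces, exactly as in \cite[\S12]{MR2680398}, to the sieve estimate \cite[(12.5)]{MR2680398} and the M\"obius orthogonality theorem \cite[Theorem~1.1]{MR2877066}. Both of these are completely insensitive to the value of $\cD$ and to the growth rate of $w$, since $W\ll(\log N)^{O(1)}$ is all that enters. Alternatively, and perhaps more cleanly, one can observe that the classical case is literally the special case $K=\QQ$, $C=\{\ident\}$ of Proposition~\ref{prop:gowers_estimate_cheb}: indeed $\Lambda_{\QQ,\{\ident\}}=\Lambda$, $\eta_{\QQ,\{\ident\}}(b,W)=\one_{\gcd(b,W)=1}/\phi(W)$, and the condition $\Phi_{\QQ^\ab}\mid W$ is trivially satisfied since $\Phi_{\QQ^\ab}=1$.

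This second route is the one I would actually take, as it avoids reproving anything: Proposition~\ref{prop:gowers_estimate_cheb} is deduced from Theorem~\ref{thm:W_tricked_chebotarev_mangoldt_with_nilsequences_nonconst_avg}, which in the case $K=\QQ$ asserts precisely that $\EE_{n\leq N}(\Lambda_{b,W}(n)-\one_{\gcd(b,W)=1}\Lambda_{b,W}(n))F(g(n)\Gamma)=0$ trivially (the two terms cancel identically, since $\phi(W)\eta_{\QQ,\{\ident\}}(b,W)=\one_{\gcd(b,W)=1}$), and hence the orthogonality is automatic. Then the deduction of the Gowers-norm bound from orthogonality proceeds exactly as in \S\ref{sec:proof_main_artin}/\S\ref{sec:proof_main_cheb}, using the $t=0$ majorant of Lemma~\ref{lem:pseudorandom_majorant}. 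I would therefore state the proof as: apply Proposition~\ref{prop:gowers_estimate_cheb} (or rather, run its proof) with $K=\QQ$ and $C=\{\ident\}$, noting $\Phi_{\QQ^\ab}=1\mid W$ and that higher prime powers contribute negligibly so that $\Lambda_{b,W}$ and $\Lambda'_{b,W}$ are interchangeable in $U^{u+1}$-norm (a standard estimate, cf. the passage between primed and unprimed von Mangoldt functions used throughout \S\ref{sec:W_trick}).

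The main obstacle is essentially cosmetic rather than mathematical: one must be careful that the deduction of a Gowers-norm bound from a nilsequence-orthogonality statement, as carried out in \cite{MR2680398} and re-run in \S\ref{sec:proof_main_artin}, does not secretly require the majorant's linear-forms condition to have an error term independent of $w(\cdot)$. It does not --- the inverse theorem is applied at a fixed $\delta$ and a fixed bounded nilmanifold complexity, after which $N$ is taken large (depending on $w(\cdot)$ and $\cD$), so the $w(\cdot)$-dependent error is swallowed. Verifying this careful ordering of quantifiers is the only point that needs attention; since the present paper in any case writes out that ordering explicitly in \S\ref{sec:proof_main_artin} for the harder Artin case, the classical case is strictly easier and the bound $o_{u,\cD,w(\cdot)}(1)$ follows.
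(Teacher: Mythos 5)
Your preferred route is circular within the paper's architecture, and the key claim that ``the orthogonality is automatic'' is not correct. Proposition \ref{prop:gowers_estimate_cheb} is deduced in \S\ref{sec:proof_main_cheb} from the Chebotarev analogue of Lemma \ref{lem:gowers_estimate_artin_nonconst_avg} \emph{together with Lemma \ref{lem:gowers_estimate_classical} itself} and the triangle inequality; so invoking that proposition, or running its proof, with $K=\QQ$ presupposes exactly the statement you are trying to prove. More precisely, what Theorem \ref{thm:W_tricked_chebotarev_mangoldt_with_nilsequences_nonconst_avg} gives for $K=\QQ$ is non-correlation of $\Lambda_{b,W}-\one_{\gcd(b,W)=1}\Lambda_{b,W}$ with nilsequences, which is indeed identically zero --- but that is orthogonality against the \emph{von Mangoldt model} and carries no information. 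What the Gowers-norm bound requires (after the inverse theorem) is non-correlation of $\Lambda_{b,W}'-\one_{\gcd(b,W)=1}$ with nilsequences, i.e.\ against the \emph{constant model}; the gap between the two is precisely the classical Green--Tao orthogonality of $\Lambda_{b,W}-1$ with nilsequences, which is the entire content of the lemma and is nowhere automatic. The whole reason the paper introduces the model $\phi(W)\eta_{K,C}(b,W)\Lambda_{b,W}$ is that its nilsequence estimates only control correlations \emph{relative to} $\Lambda_{b,W}$; passing from that model to the constant $1$ is exactly the role of Lemma \ref{lem:gowers_estimate_classical}, so it must come from an independent input.

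Your fallback route (re-running \cite[\S\S 10--12]{MR2680398} with the modified $W$, via \eqref{eq:Lambda_mu_identity}, the sieve estimate \cite[(12.5)]{MR2680398} and M\"obius--nilsequence orthogonality) is the assertion the paper itself makes (``their proof also works in our situation'') but deliberately declines to carry out, precisely because verifying uniformity in $\cD$ and $w(\cdot)$ throughout that long argument is a burden. The paper instead gives a short self-contained derivation: assuming $\gcd(b,W)=1$, it expands $\vecnorm{\Lambda_{b,W}'-1}_{U^{u+1}[N]}^{2^{u+1}}$ multilinearly into the $2^{u+1}$ averages $\EE_{x,\vh}\prod_{\vw\in A}\Lambda\big(b+Wx+\sum_{\omega_i\neq 0}Wh_i\big)$, $A\subseteq\{0,1\}^{u+1}$, and evaluates each by Bienvenu's theorem \cite[Theorem 1.3]{MR3656911} on linear correlations of $\Lambda$ with coefficients allowed to grow like a power of $\log N$ (which covers $W\ll_{\cD}(\log N)^{2/3}$); the local factors $(p/\phi(p))^{|A|}$ at $p\mid W$ cancel the normalisation $(\phi(W)/W)^{|A|}$, each average is $1+o_{u,\cD,w(\cdot)}(1)$, and the alternating sum over $A$ collapses. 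To make your argument complete you would have to either carry out the uniform re-verification of the Green--Tao proof in detail, or supply a direct correlation input of this kind; neither is provided by the sketch as it stands.
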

\begin{proof}
  The case where $w(N)=\log\log\log N$ and $\cD=1$ was proved by Green and Tao in \cite[Theorem
  7.2]{MR2680398}, and their proof also works in our situation. As a thorough
  verification of this claim places quite a burden on the
  reader, we will now also derive the lemma rigorously from Bienvenu's result
  \cite[Theorem 1.3]{MR3656911},
  concerning affine-linear correlations of the von Mangoldt function in which the
  linear coefficients are allowed to grow logarithmically in $N$. Let us assume $\gcd(b,W)=1$, as otherwise the result is obvious. Then
  \begin{align*}
    \vecnorm{\Lambda_{b,W}'(\cdot)-1}_{U^{u+1}[N]}^{2^{u+1}}&=\EE_{(x,\vh)}\prod_{\vw\in\{0,1\}^{u+1}}\left(\Lambda_{b,W}'(x+\vw\cdot\vh)-1\right)\\
                                                            &= \sum_{\substack{A\subseteq \{0,1\}^{u+1}}}(-1)^{2^{u+1}-|A|}\left(\frac{\phi(W)}{W}\right)^{|A|}\EE_{x,\vh}\prod_{\vw\in A}\Lambda'\bigg(b+W x+\sum_{\omega_i\neq 0} W h_i\bigg)\\
    &=\sum_{\substack{A\subseteq \{0,1\}^{u+1}}}(-1)^{2^{u+1}-|A|}\left(1+o_{u,\cD,w(\cdot)}(1)\right) = o_{u,\cD,w(\cdot)}(1),
  \end{align*}
  where the averages are taken over all $(x,\vh)\in\ZZ^{u+2}$ that satisfy $x+\vw\cdot\vh\in[N]$ for all $\vw\in\{0,1\}^{u+1}$. Here we have applied \cite[Theorem 1.3]{MR3656911} to compute for each $A\subseteq\{0,1\}^{u+1}$ the average
  \begin{equation*}
    \EE_{x,\vh}\prod_{\vw\in A}\Lambda\bigg(b+W x+\sum_{\omega_i\neq 0} W h_i\bigg) = \left(\frac{W}{\phi(W)}\right)^{|A|}\prod_{p\nmid W}(1+o_{u}(1))=\left(\frac{W}{\phi(W)}\right)^{|A|}(1+o_{u,\cD,w(\cdot)}(1)).
  \end{equation*}
  Indeed, one easily sees that the local factors $\beta_p$ in \cite[Theorem 1.3]{MR3656911} for the system of affine-linear forms under consideration above satisfy $\beta_p=(p/\phi(p))^{|A|}$ when $p\mid W$ and $\beta_p=1+o_u(p^{-2})$ otherwise. As those values of $x,\vh$ where one of the forms takes a prime power value are easily seen to be negligible in the above expectation, the same estimate holds with $\Lambda'$ in place of $\Lambda$.
\end{proof}

\begin{lemma}\label{lem:gowers_estimate_artin_nonconst_avg}
  Under the same hypotheses as in Proposition
  \ref{prop:gowers_estimate_artin}, we have 
  \begin{equation*}
    \vecnorm{\Lambda_{a,b,W}'(\cdot)-\phi(W)\delta(a,b,W)\Lambda_{b,W}'(\cdot)}_{U^{u+1}[N]}\leq \delta. 
  \end{equation*}
\end{lemma}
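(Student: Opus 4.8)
The plan is to argue by contradiction, combining the transferred inverse theorem for the Gowers $U^{u+1}$-norm with the non-correlation estimate of Theorem~\ref{thm:W_tricked_artin_mangoldt_with_nilsequences_nonconst_avg}, exactly as in the deduction of the Gowers estimate in \cite{MR2680398}. Put $f := \Lambda_{a,b,W}'(\cdot) - \phi(W)\delta(a,b,W)\Lambda_{b,W}'(\cdot)$. One may assume $\gcd(b,W)=1$, since otherwise $\Lambda_{a,b,W}'$ and $\Lambda_{b,W}'$ are each supported on at most one $n\in[N]$ (the one with $b+nW=\gcd(b,W)$ prime), so $\vecnorm{f}_{U^{u+1}[N]}=o_u(1)$ trivially. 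Since $\phi(W)\delta(a,b,W)\le 1$ by \eqref{eq:delta_phi_bound} and both summands of $f$ are nonnegative, $|f(n)|\le\Lambda_{a,b,W}'(n)+\Lambda_{b,W}'(n)$ for all $n$. Let $D=D(u)$ be a large enough integer (the number of linear forms required by the inverse theorem), $C=C_0(D)$, and $N'\in[CN,2CN]$ a prime, and let $\nu\colon\ZZ/N'\ZZ\to(0,\infty)$ be the majorant produced by Lemma~\ref{lem:pseudorandom_majorant} for $t=1$, $a_1=a$, $b_0=b_1=b$; then $|f(n)|\ll_{D,C}\nu(n)$ for $n\in[N^{3/5},N]$. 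Writing $\tilde f:=f\cdot\one_{[N^{3/5},N]}$, the function $f-\tilde f$ is $\ll\log N$ in sup-norm and supported on $\ll N^{3/5}$ integers of $[N]$, so a crude estimate of the defining Gowers inner product gives $\vecnorm{f-\tilde f}_{U^{u+1}[N]}\ll(\log N)N^{-(2/5)2^{-(u+1)}}=o_u(1)$, while $\tilde f$ is dominated by $C_1(u)\nu$ on all of $\ZZ/N'\ZZ$.

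Now suppose, for contradiction, that $\vecnorm{f}_{U^{u+1}[N]}>\delta$ for arbitrarily large $N$; then $\vecnorm{\tilde f}_{U^{u+1}[N]}>\delta/2$ for $N$ large. Since $\nu$ satisfies the $D$-linear-forms condition with error $o_{D,C,\cD,w(\cdot)}(1)$, this error drops below the threshold required by the inverse theorem once $N$ is large in terms of $\cD,w(\cdot),u$; the transferred inverse theorem for Gowers norms of Dodos and Kanellopoulos~\cite{MR4436255} (the pseudorandom-measure version of \cite{MR2950773}), applied to $\tilde f/C_1(u)$, then yields a nilmanifold $G/\Gamma$ of dimension $\le M$ with a filtration $G_\bullet$ of degree $\le u$ and a $Q$-rational Mal'cev basis with $Q\le M$, a polynomial sequence $g\in\poly(\ZZ,G_\bullet)$, and a Lipschitz function $F\colon G/\Gamma\to[-1,1]$ with $\vecnorm{F}_{\Lip}\le M$ — where the complexity $M=M(\delta,u)$ depends only on $\delta$ and $u$ (after taking real parts and rescaling $F$ into $[-1,1]$; crucially, since the majorant $\nu$ does not involve $a$, neither does $M$) — such that
\[
\left|\EE_{n\le N}\tilde f(n)F(g(n)\Gamma)\right|\gg_{\delta,u}1.
\]

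Replacing $\tilde f$ by $f$ costs $\ll_M(\log N)N^{-2/5}$, and replacing the prime-supported $\Lambda_{a,b,W}',\Lambda_{b,W}'$ by $\Lambda_{a,b,W},\Lambda_{b,W}$ costs $\ll_M(\log N)N^{-1/2}$ (the proper prime powers contribute $\ll\sqrt N$ terms of size $\ll\log N$, and $\phi(W)\delta(a,b,W)\le1$), so
\[
\left|\EE_{n\le N}\bigl(\Lambda_{a,b,W}(n)-\phi(W)\delta(a,b,W)\Lambda_{b,W}(n)\bigr)F(g(n)\Gamma)\right|\gg_{\delta,u}1-o_M(1).
\]
On the other hand, $\HRH(a)$ holds, $|\Delta_a|\mid\cD$, and by hypothesis $w(\cdot)$ grows to infinity slowly enough in terms of $\delta,u,a$ — in particular slowly enough in terms of $m=M$, $d=u$, $Q=M$, $M$ and $a$, which is a legitimate requirement precisely because $M$ depends only on $\delta$ and $u$ — so Theorem~\ref{thm:W_tricked_artin_mangoldt_with_nilsequences_nonconst_avg} bounds the left-hand side by $\ll_{a,\cD,w(\cdot)}1/w(N)=o(1)$. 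For $N$ sufficiently large in terms of $\delta,u,a,\cD,w(\cdot)$ this contradicts the previous display, so $\vecnorm{f}_{U^{u+1}[N]}\le\delta$, as claimed. The only delicate point is the order of quantifiers rather than any individual estimate: the inverse theorem must return a nilsequence of complexity $M$ depending only on $\delta$ and $u$, with no dependence on $a$, $\cD$, $W$ or $N$, so that one may first fix $M$, then constrain the growth of $w(\cdot)$ so that Theorem~\ref{thm:W_tricked_artin_mangoldt_with_nilsequences_nonconst_avg} applies with parameters bounded by $M$, and only then let $N\to\infty$ — which is exactly why Lemma~\ref{lem:pseudorandom_majorant} is arranged so that $\nu$ is independent of $a$, with only the \emph{rate} of convergence in its linear-forms condition, harmless here, depending on $\cD$ and $w(\cdot)$.
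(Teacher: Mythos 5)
Your proposal is correct and follows essentially the same route as the paper: the same truncation $\tilde f=\one_{[N^{3/5},N]}f$ dominated (after rescaling) by the Green--Tao majorant of Lemma \ref{lem:pseudorandom_majorant}, the same application of the transferred inverse theorem of Dodos--Kanellopoulos with complexity depending only on $\delta,u$, and the same invocation of Theorem \ref{thm:W_tricked_artin_mangoldt_with_nilsequences_nonconst_avg} after constraining $w(\cdot)$, with the quantifier order handled exactly as in the paper. The only cosmetic difference is that the paper's inverse theorem returns a linear nilsequence $F(g^nx)$ which it then equips with the lower central series filtration and an $O_{u,\delta}(1)$-rational Mal'cev basis, whereas you state the polynomial-sequence form directly; the content is identical.
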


\begin{proof}
  If $\gcd(b,W)\neq 1$, then $\delta(a,b,W)=0$ and $\Lambda_{a,b,W}'(n)=0$ for all
  $n\in [N]$. Hence, we may assume that $\gcd(b,W)=1$. Write, for brevity,
  $f(n):=\Lambda_{a,b,W}'(n)-\phi(W)\delta(a,b,W)\Lambda_{b,W}'(n)$ and
  \begin{equation*}
    \tilde f(n):=\one_{[N^{3/5},N]}(n)f(n).
  \end{equation*}
  Take $t=1$ and $D=4^u$ in Lemma \ref{lem:pseudorandom_majorant}, choose $C=\max\{20,C_0(D)\}$ and any prime $N'\in [CN,2CN]$. Let $\nu:\ZZ/N'\ZZ\to (0,\infty)$ be the pseudorandom majorant constructed in Lemma \ref{lem:pseudorandom_majorant} satisfying the $D$-linear forms condition and
    $|f(n)| \leq 1 + \Lambda_{b,W}'(n)+\Lambda_{a,b,W}'(n) \ll_{u} \nu(n)$ for all $n\in [N^{3/5},N]$. Then there is a small positive constant $c=c(u)$, such that
  \begin{equation}\label{eq:gowers_estimate_majorant}
    |c\tilde f|\leq \nu(n)\quad   \text{ holds for all } n\in[N].
  \end{equation}
  Note, moreover, that the $D$-linear forms condition for $\nu$ implies that 
  \begin{equation}\label{eq:gowers_estimate_pseudorandomness}
    \vecnorm{\nu - 1}_{U^{2u}(\ZZ/N'\ZZ)}=o_{u,\cD,w(\cdot)}(1).
  \end{equation}
  Suppose the conclusion of the Lemma to be wrong, so $\vecnorm{f}_{U^{u+1}[N]}\geq \delta$ for arbitrarily large values of $N$. One easily sees directly from the definition of $\vecnorm{\cdot}_{U^{u+1}[N]}$ and the fact that $|f(n)|\leq \log n$ that $\vecnorm{f-\tilde f}_{U^{u+1}[N]}=o_u(1)$. Hence, for sufficiently large $N$, we have
  \begin{equation}\label{eq:gowers_estimate_f_tilde}
    \vecnorm{c\tilde f}_{U^{u+1}[N]}\geq c\delta/2.
  \end{equation}

  We are now in a position to apply \cite[Theorem 5.1]{MR4436255}, a version of
  the transferred inverse theorem for Gowers norms. Hence, there are a constant
  $M>0$, an $u$-step nilmanifold $(G/\Gamma,d_{G/\Gamma})$ with smooth
  Riemannian metric and a constant $\epsilon>0$, all depending only on $u$ and
  $\delta$, with the following property: if $N$ is sufficiently large in terms
  of $u,\cD,w(\cdot)$, then there is a $1$-bounded linear nilsequence $F(g^nx)$
  on $G/\Gamma$ with Lipschitz constant $\leq M$, such that
  \begin{equation*}
    \left|\EE_{n\in [N]}\tilde f(n) F(g^nx) \right|\geq \epsilon.
  \end{equation*}
  Clearly, as $F$ is $1$-bounded and $|f(n)|\leq \log n$, this implies for large enough $N$, depending only on $\epsilon$ and thus only on $u,\delta$, that
  \begin{equation*}
    \left|\EE_{n\in [N]}f(n) F(g^nx) \right|\geq \epsilon/2.
  \end{equation*}
  The linear nilsequence $n\mapsto g^nx$
  is in $\poly(\ZZ,G_\bullet)$, with $G_\bullet$ the lower central series
  filtration on $G$, which is rational and of degree at most $u$. Hence, $G/\Gamma$ has a $Q$-rational Mal'cev
  basis $\cX$ adapted to $G_\bullet$, for some $Q\ll_{u,\delta} 1$. As $G/\Gamma$ is
  compact, the metric $d_{G/\Gamma}$ is comparable to the metric induced by
  $\cX$, and thus $\vecnorm{F}_{\Lip}\ll_{u,\delta}M$.  
  Therefore, if the function $w(\cdot)$ grows sufficiently slowly in terms of
  $a,u,\delta$ for the conclusion of
  Theorem \ref{thm:W_tricked_artin_mangoldt_with_nilsequences_nonconst_avg} to apply,
  we obtain
  \begin{equation*}
    \EE_{n\in[N]}\left(\Lambda_{a,b,W}(n)-\phi(W)\delta(a,b,W)\Lambda_{b,W}(n)\right)F(g^nx)=o_{a,\cD,w(\cdot)}(1).
  \end{equation*}
  As the contribution of prime powers is negligible, this gives
   \begin{equation*}
    \EE_{n\in[N]}f(n)F(g^nx)=o_{a,\cD,w(\cdot)}(1),
  \end{equation*}
and thus a contradiction as long as $N$ is large enough in terms of
  $\delta,u,a,\cD,w(\cdot)$.
\end{proof}

With the two previous lemmata in place, we can now easily deduce Proposition
\ref{prop:gowers_estimate_artin}. Indeed, the triangle inequality for
$\vecnorm{\cdot}_{U^{u+1}[N]}$ gives
\begin{align*}
  \vecnorm{\Lambda_{a,b,W}'(\cdot)-\phi(W)\delta(a,b,W)}_{U^{u+1}[N]}&\leq
  \vecnorm{\Lambda_{a,b,W}'(\cdot)-\phi(W)\delta(a,b,W)\Lambda_{b,W}'(\cdot)}_{U^{u+1}[N]}\\
  &+ \phi(W)\delta(a,b,W)\vecnorm{\Lambda_{b,W}'(\cdot)-1}_{U^{u+1}[N]}\leq \delta,
\end{align*}
if only $w(\cdot)$ grows sufficiently slowly and $N$ is sufficiently large for
Lemma \ref{lem:gowers_estimate_artin_nonconst_avg} to apply with $\delta/2$ in
place of $\delta$, and such that the $o$-term in Lemma
\ref{lem:gowers_estimate_classical} is $\leq \delta/2$.

\subsection{Application of the generalised von Neumann theorem}\label{sec:von_neumann}
The following lemma is essentially \cite[Proposition 7.1]{MR2680398}. By $\kappa_{a}(\delta)$, we denote a quantity that goes to zero with $\delta$, i.e. a quantity that is smaller in absolute value than any $\epsilon>0$ if only $\delta$ is sufficiently small in terms of $\epsilon$ and $a$.

\begin{lemma}[Generalised von Neumann theorem]\label{lem:neumann}
  Let $s,t,u,L\in\NN$. Then there exist constants $C_1,D$, depending only on
  $s,t,u,L$, such that the following holds. Let $N\in\NN$, let $C_1\leq C\ll_{s,t,u,L} 1$, and
  let $N'\in[CN,2CN]$ be prime. Let $\nu:\ZZ/N'\ZZ\to [0,\infty)$ be a function
  that satisfies the $D$-linear forms condition
  \begin{equation*}
    \left|\EE_{n\in(\ZZ/N'\ZZ)^{\tilde s}}\prod_{i\in [\tilde
        t]}\nu(\tilde\psi_i(n))-1\right|\leq z(N)
  \end{equation*}
 for all $\tilde s,\tilde t\in [D]$ and all finite complexity systems
 $\tilde\Psi=(\tilde\psi_1,\ldots,\tilde\psi_{\tilde t})$ of affine-linear
 forms on $\ZZ^{\tilde s}$ in which the coefficients of all linear terms are
 bounded in absolute value by $D$, and where $z:\NN\to(0,\infty)$ is a function
 satisfying $\lim_{N\to\infty}z(N)=0$.
 
 Let $f_1,\ldots,f_t:[N]\to \RR$ be functions with $|f_i(n)|\leq
 \nu(n)$ for all $i\in[t]$ and $n\in[N]$. Suppose that
 $\Psi=(\psi_1,\ldots,\psi_t)$ is a system of affine-linear forms in $u$-normal
 form with $\vecnorm{\Psi}_{N}\leq L$. Let $X\subseteq [-N,N]^s$ be a convex
 body such that $\Psi(X)\subseteq [N]^t$. If, for some $\delta>0$,
 \begin{equation*}
   \min_{1\leq j\leq t}\vecnorm{f_j}_{U^{u+1}[N]}\leq \delta,
 \end{equation*}
 then
 \begin{equation*}
   \sum_{n\in X}\prod_{i\in [t]}f_i(\psi_i(n))=o_{C,s,t,u,L,\delta,z(\cdot)}(N^s)+\kappa_{C,s,t,u,L}(\delta)N^s.
 \end{equation*}
 \end{lemma}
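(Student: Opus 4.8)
The plan is to follow almost verbatim the proof of the generalised von Neumann theorem of Green and Tao \cite[Proposition 7.1]{MR2680398} (carried out in \cite[\S 7, \S 8]{MR2680398}), since the only features of the functions $f_i$ that enter are the pointwise domination $|f_i|\leq\nu$ and the $D$-linear forms condition for $\nu$; the arithmetic meaning of the $f_i$ is irrelevant. After relabelling the forms we may assume $\vecnorm{f_t}_{U^{u+1}[N]}\leq\delta$. As in \cite{MR2680398} we embed $[N]$ into $\ZZ/N'\ZZ$ via $n\mapsto n+N'\ZZ$; choosing $C_1=C_1(s,t,u,L)$ large enough guarantees that $N'$ exceeds the wrap-around modulus of every affine-linear form that will appear below, so that all Cauchy--Schwarz manipulations can be performed on $\ZZ/N'\ZZ$ without boundary effects, and the convex body $X$ enters only through the cutoff $\one_X$ and the trivial bound $\vol(X)\ll_s N^s$.

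First I would run the Cauchy--Schwarz--Gowers iteration as in \cite[\S 7]{MR2680398}. Because $\Psi$ is in $u$-normal form, the linear part of $\psi_t$ has, in a suitable basis, a coordinate distinguishing it from all other $\psi_i$; applying Cauchy--Schwarz $u+1$ times in the directions singled out by the normal form, and at each step using $|f_i|\leq\nu$ for $i\neq t$ to replace the other factors by copies of $\nu$, one bounds $\big|\sum_{n\in X}\prod_{i\in[t]}f_i(\psi_i(n))\big|^{2^{u+1}}$ by $N^{(2^{u+1}-1)s}$ times an average of the shape
\[
 \EE\Big(\prod_{\vw\in\{0,1\}^{u+1}}f_t(x+\vw\cdot\vh)\Big)\,w(x,\vh),
\]
where $w$ is a product of boundedly many translates of $\nu$ evaluated at affine-linear forms in $(x,\vh)$ with coefficients $\ll_L 1$. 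A final Cauchy--Schwarz against the weight $w$, whose relevant moments are $1+o_{C,s,t,u,L,z(\cdot)}(1)$ by the $D$-linear forms condition, then bounds this by $\vecnorm{f_t}_{U^{u+1}[N]}^{2^{u+1}}+o_{C,s,t,u,L,z(\cdot)}(1)$ up to normalisation. This is the one genuinely technical part; the only change relative to \cite[\S 7, \S 8]{MR2680398} is bookkeeping of the dependence of the error term on $C$ and on the function $z(\cdot)$, together with the observation that the number of invocations of the linear forms condition, and the parameters $\tilde s,\tilde t$ of the systems to which it is applied, are all controlled by a single constant $D=D(s,t,u,L)$.

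Taking $2^{u+1}$-th roots of the resulting estimate
\[
 \Big|\sum_{n\in X}\prod_{i\in[t]}f_i(\psi_i(n))\Big|^{2^{u+1}} \ll_{s,t,u,L} N^{2^{u+1}s}\Big(\vecnorm{f_t}_{U^{u+1}[N]}^{2^{u+1}} + o_{C,s,t,u,L,z(\cdot)}(1)\Big)
\]
and using $\vecnorm{f_t}_{U^{u+1}[N]}\leq\delta$ yields $\big|\sum_{n\in X}\prod_{i\in[t]}f_i(\psi_i(n))\big| \ll_{s,t,u,L} N^s\big(\delta + o_{C,s,t,u,L,z(\cdot)}(1)\big)$, which is of the claimed form $\kappa_{C,s,t,u,L}(\delta)N^s + o_{C,s,t,u,L,\delta,z(\cdot)}(N^s)$. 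The main obstacle is purely organisational: ensuring that the degrees and number of auxiliary systems arising in the Cauchy--Schwarz iteration stay bounded in terms of $s,t,u,L$, so that one fixed choice of $D$ suffices, and that the final $o$-term depends only on the quantities permitted in the statement, with the normalisation constant $C$ affecting the implied constants but not the structure of the bound.
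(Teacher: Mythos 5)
Your proposal is correct and follows essentially the same route as the paper: the paper's proof simply defers to the argument of \cite[Appendix C]{MR2680398}, observing exactly the two points you make, namely that the Cauchy--Schwarz--Gowers iteration uses only the pointwise domination $|f_i|\leq\nu$ and the $D$-linear forms condition (so the correlation condition can be dropped), and that tracking the error term shows the $\kappa_{C,s,t,u,L}(\delta)$ contribution is independent of $z(\cdot)$ while the $o$-term absorbs the dependence on $z(\cdot)$. Your sketch of the internals of that iteration matches the cited argument, so there is nothing to add.
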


\begin{proof}
  The statement is almost the same as in \cite[Proposition 7.1]{MR2680398},
  except for two differences. Firstly, we do not suppose that the majorant
  $\nu$ satisfy a $D$-correlation condition. This is irrelevant, as the
  proof of \cite[Proposition 7.1]{MR2680398} given in \cite[Appendix
  C]{MR2680398} uses only the $D$-linear forms condition. Secondly, we state 
  more explicitly the dependence of the estimate on the error term $z(N)$ in
  the linear forms condition, in particular that the term $\kappa_{C,s,t,u,L}(\delta)$ in
  the final estimate does not depend on this error term. One can see this
  by following carefully the proof in \cite[Appendix C]{MR2680398}.
\end{proof}

\begin{lemma}[$W$-tricked Artin primes in lattices in normal form]\label{lem:neumann_application}
  Let $\epsilon>0$, $s,t,u,L,N\in\NN$, $a_1,\ldots,a_t\in\ZZ\smallsetminus\{-1\}$
  such that no $a_i$ is a perfect square, and
  $\Psi=(\psi_1,\ldots,\psi_t):\ZZ^s\to\ZZ^t$ a system of affine-linear forms
  in $u$-normal form with $\vecnorm{\Psi}_N\leq L$. Assume $\HRH(a_i)$ for all $1\leq i\leq t$. Let
  $X\subseteq [-N,N]^s$ be a convex body on which $\psi_1,\ldots,\psi_t>N^{7/10}$. Assume that the function $w(\cdot)$
  grows sufficiently slowly in terms of $\epsilon,u,s,t,L,a_1,\ldots,a_t$, and that
  $\cD_\va$ divides $\cD$. Then, for all
  $b_1,\ldots,b_t\in\{0,\ldots,W-1\}$, we have
  \begin{equation*}
    \left|\sum_{n\in
        X\cap\ZZ^s}\prod_{i\in[t]}\left(\Lambda_{a_i,b_i,W}'(\psi_i(n))-\phi(W)\delta(a_i,b_i,W)\right)\right|\leq
    \epsilon N^s,
  \end{equation*}
  if only $N$ is sufficiently large in terms of $\epsilon,s,t,u,L,\cD,w(\cdot)$.
\end{lemma}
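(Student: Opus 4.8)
The plan is to deduce this exactly as in \cite{MR2680398}: expand nothing, but write the product directly in terms of the "centred" functions $f_i(n):=\Lambda_{a_i,b_i,W}'(n)-\phi(W)\delta(a_i,b_i,W)$ and apply the generalised von Neumann theorem (Lemma \ref{lem:neumann}), fed by the pseudorandom majorant of Lemma \ref{lem:pseudorandom_majorant} and the Gowers-norm estimate of Proposition \ref{prop:gowers_estimate_artin}. First I would dispose of the trivial case: if $\gcd(b_i,W)>1$ for some $i$, then $\Lambda_{a_i,b_i,W}'\equiv 0$ (the only prime $\equiv b_i\bmod W$ is $\gcd(b_i,W)<W$, which is not counted), and $\delta(a_i,b_i,W)=0$ by the closed formula \eqref{eq:delta_closed_formula} together with \eqref{eq:cA_a(b,q)}, so the $i$-th factor vanishes identically and the whole sum is $0$. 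Hence I may assume $\gcd(b_i,W)=1$ for all $i$, and the sum in question is $\sum_{n\in X\cap\ZZ^s}\prod_{i\in[t]}f_i(\psi_i(n))$.

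Next I would arrange the scaling and the range of summation. Since $X\subseteq[-N,N]^s$ and $\vecnorm{\Psi}_N\leq L$, every form satisfies $0<\psi_i(n)\leq 2LN$ on $X$. Setting $\tilde N:=\lceil 2LN\rceil$, we have $X\subseteq[-\tilde N,\tilde N]^s$, $\Psi(X)\subseteq[\tilde N]^t$, and $\vecnorm{\Psi}_{\tilde N}\leq\vecnorm{\Psi}_N\leq L$; moreover, because $7/10>3/5$, for $N$ large in terms of $L$ we get $N^{7/10}\geq\tilde N^{3/5}$, so $\psi_i(X)\subseteq[\tilde N^{3/5},\tilde N]$ for every $i$. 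Consequently, replacing each $f_i$ by its truncation $\tilde f_i:=\one_{[\tilde N^{3/5},\tilde N]}f_i$ leaves the sum unchanged while bringing it into the range where the majorant dominates.

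Then I would build the majorant. Let $D=D(s,t,u,L)$ and $C_1=C_1(s,t,u,L)$ be the constants of Lemma \ref{lem:neumann}, let $C_0=C_0(D)$ be the constant of Lemma \ref{lem:pseudorandom_majorant}, put $C:=\max\{C_1,C_0,2L\}\ll_{s,t,u,L}1$, and choose a prime $N'\in[C\tilde N,2C\tilde N]$. Applying Lemma \ref{lem:pseudorandom_majorant} with this $D$ and $C$, with $\tilde N$ in place of $N$, with the given $a_1,\dots,a_t$, $b_1,\dots,b_t$ (and any $b_0$ coprime to $W$), produces $\nu\colon\ZZ/N'\ZZ\to(0,\infty)$ satisfying the $D$-linear forms condition with error $z(\tilde N)=o_{t,D,C,\cD,w(\cdot)}(1)$ and dominating $1+\Lambda_{b_0,W}'(n)+\sum_{i\in[t]}\Lambda_{a_i,b_i,W}'(n)$ for $n\in[\tilde N^{3/5},\tilde N]$. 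Since $\phi(W)\delta(a_i,b_i,W)\leq 1$ by \eqref{eq:delta_phi_bound}, we get $|\tilde f_i(n)|\leq 1+\Lambda_{a_i,b_i,W}'(n)\ll_{t,D,C}\nu(n)$ for all $n\in[\tilde N]$, so $|c\tilde f_i|\leq\nu$ on $[\tilde N]$ for a suitable constant $c=c(s,t,u,L)\in(0,1)$.

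Finally I would invoke the Gowers bound and conclude. For a parameter $\delta_0>0$, Proposition \ref{prop:gowers_estimate_artin} (with $\tilde N$ for $N$) gives $\vecnorm{f_i}_{U^{u+1}[\tilde N]}\leq\delta_0$ once $w(\cdot)$ grows slowly enough in terms of $\delta_0,u,a_1,\dots,a_t$ and $\tilde N$ is large; and since $f_i-\tilde f_i$ is supported on $[1,\tilde N^{3/5}]$ with $|f_i-\tilde f_i|\ll\log n$, one checks directly (as in the proof of Lemma \ref{lem:gowers_estimate_artin_nonconst_avg}) that $\vecnorm{f_i-\tilde f_i}_{U^{u+1}[\tilde N]}=o_u(1)$, so $\min_i\vecnorm{c\tilde f_i}_{U^{u+1}[\tilde N]}\leq 2c\delta_0=:\delta$ for $\tilde N$ large. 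Applying Lemma \ref{lem:neumann} to $c\tilde f_1,\dots,c\tilde f_t$ with the $u$-normal-form system $\Psi$, the body $X$ and this $\delta$ yields a bound $o_{C,s,t,u,L,\delta,z(\cdot)}(\tilde N^s)+\kappa_{C,s,t,u,L}(\delta)\tilde N^s$; dividing by $c^t$ and using $\tilde N\ll_L N$, then choosing $\delta_0$ (hence $\delta$) small in terms of $\epsilon,s,t,u,L$ so that the $\kappa$-term is $<\frac{\epsilon}{2}N^s$, then demanding $w(\cdot)$ grow slowly enough (in terms of $\epsilon,s,t,u,L,a_1,\dots,a_t$) for the Gowers estimate to hold with this $\delta_0$, and finally taking $N$ large enough (in terms of $\epsilon,s,t,u,L,\cD,w(\cdot)$) that the $o$-term is $<\frac{\epsilon}{2}N^s$, gives the asserted bound $\leq\epsilon N^s$. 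I expect the only genuinely delicate point to be organisational rather than mathematical: one must order the quantifiers as "$\delta$, then $w(\cdot)$, then $N$", which crucially relies on the fact recorded in Lemma \ref{lem:neumann} that the term $\kappa_{C,s,t,u,L}(\delta)$ — the one that alone has to beat $\epsilon$ — does not depend on the linear-forms-condition error $z(\cdot)$ (and hence not on $\cD$ or $w(\cdot)$); the truncation bookkeeping and the verification that passing to $[\tilde N^{3/5},\tilde N]$ affects neither the sum nor the Gowers norm are routine and identical to arguments already appearing in \cite{MR2680398}.
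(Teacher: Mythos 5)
Your proposal is correct and follows essentially the same route as the paper's own proof: truncate to $\tilde f_i=\one_{[N^{3/5},N]}f_i$, dominate by the majorant of Lemma \ref{lem:pseudorandom_majorant}, feed the Gowers-norm bound of Proposition \ref{prop:gowers_estimate_artin} into the generalised von Neumann theorem (Lemma \ref{lem:neumann}), and crucially order the quantifiers as $\delta$, then $w(\cdot)$, then $N$, using that $\kappa_{C,s,t,u,L}(\delta)$ is independent of the linear-forms error $z(\cdot)$ — exactly the point the paper's proof also hinges on. Your minor deviations (arguing directly rather than by contradiction, rescaling explicitly via $\tilde N$, disposing of the case $\gcd(b_i,W)>1$, and making the truncation's effect on the Gowers norm explicit instead of absorbing it into a factor $2$) are cosmetic and, if anything, slightly more careful than the paper's write-up.
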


\begin{proof}
  Multiplying $N$ by a constant factor $\ll_{s,L} 1$ and adapting $\epsilon$
  accordingly, we may assume that $\Psi(X)\subseteq [N]^t$. For
  $i\in[t]$, we write
  $f_i(n):=\Lambda_{a_i,b_i,W}'(n)-\phi(W)\delta(a_i,b_i,W)$ and
  \begin{equation*}
    \tilde f_i(n):=\one_{[N^{3/5},N]}(n)f_i(n).
  \end{equation*}
  Let $C_1,D$ be as in Lemma \ref{lem:neumann}, let $C_0(D)$ be as in Lemma
  \ref{lem:pseudorandom_majorant}, and $C:=\max\{C_1,C_0(D)\}$. Let $N'$ be any
  prime in $[CN,2CN]$.
  
  Let $\nu:\ZZ/N'\ZZ\to (0,\infty)$ be the pseudorandom majorant constructed in Lemma \ref{lem:pseudorandom_majorant} satisfying the $D$-linear forms condition and
    $|f(n)| \leq 1 + \Lambda_{a_1,b_1,W}'(n)+\cdots+\Lambda_{a_t,b_t,W}'(n) \ll_{s,t,u,L} \nu(n)$ for all $n\in [N^{3/5},N]$. Then there is a small positive constant $c=c(s,t,u,L)$, such that
  \begin{equation}\label{eq:von_neumann_majorant}
    |c\tilde f_i|\leq \nu(n)\quad   \text{ holds for all } i\in[t]\text{ and }n\in[N].
  \end{equation}
  Note that due to our conditions on $\psi$, we have $\tilde
  f_i(\psi_i(n))=f_i(\psi_i(n))$ for all $n\in X\cap\ZZ^s$. 
  Suppose that the conclusion of the lemma does not hold, then
   \begin{equation}\label{eq:neumann_application_contrapositive}
    \left|\sum_{n\in X\cap\ZZ^s}\prod_{i\in[t]} c\tilde f_i(\psi_i(n))\right|> c\epsilon N^s.
    \end{equation}
  We apply Lemma \ref{lem:neumann} with the functions $c\tilde f_i$ in place of
  $f_i$. Let $\delta=\delta(\epsilon,s,t,u,L)>0$ be small enough so that the term
  $\kappa_{C,s,t,u,L}(\delta)$ in the conclusion of Lemma \ref{lem:neumann} is
  $\leq c\epsilon/2$.

  Now assume that the function $w(\cdot)$ grows sufficiently slowly in terms of
  $c,\delta,u,a_1,\ldots,a_t$, and thus in terms of $\epsilon,u,s,t,L,a_1,\ldots,a_t$,
  so that Proposition \ref{prop:gowers_estimate_artin} can be applied with $\delta/(2c)$
  instead of $\delta$ for each of the
  functions $\Lambda'_{a_i,b_i,W}(\cdot)$. Then
  \begin{equation*}
    \vecnorm{\Lambda'_{a_i,b_i,W}(\cdot)-\phi(W)\delta(a,b,W)}_{U^{u+1}[N]}\leq \delta/(2c),
  \end{equation*}
  and thus also
  \begin{equation*}
    \vecnorm{\tilde f_i}_{U^{u+1}[N]}\leq \delta\quad\text{ for all }\quad i\in [t].
  \end{equation*}
  Hence, Lemma \ref{lem:neumann} shows that
  \begin{equation*}
    \sum_{n\in X\cap\ZZ^s}\prod_{i\in[t]} c\tilde f_i(\psi_i(n))=o_{s,t,u,L,\epsilon,z(\cdot)}(N^s)+\frac{c\epsilon}{2}N^s,
  \end{equation*}
  where $z(\cdot)$ is the error term in the $D$-correlation condition in Lemma
  \ref{lem:pseudorandom_majorant}, and thus depends only on $t,D,C,\cD,w(\cdot)$,
  and therefore only on $s,t,u,L,\cD,w(\cdot)$. This contradicts
  \eqref{eq:neumann_application_contrapositive} if $N$ is sufficiently large in
  terms of $\epsilon,s,t,u,L,\cD,w(\cdot)$.
\end{proof}

\subsection{W-trick}

The following lemma is a simple reduction completely analogous to the deduction of \cite[Theorem 5.1]{MR2680398} from \cite[Theorem 5.2]{MR2680398}.
\begin{lemma}\label{lem:telescoping}
  Let $\epsilon>0$, $s,t,u,L,N\in\NN$, $a_1,\ldots,a_t\in\ZZ\smallsetminus\{-1\}$
  such that no $a_i$ is a perfect square, and
  $\Psi=(\psi_1,\ldots,\psi_t):\ZZ^s\to\ZZ^t$ a system of affine-linear forms
  in $u$-normal form with $\vecnorm{\Psi}_N\leq L$. Assume $\HRH(a_i)$ for all $1\leq i\leq t$. Let
  $X\subseteq [-N,N]^s$ be a convex body on which
  $\psi_1,\ldots,\psi_t>N^{7/10}$. Assume that the function $w(\cdot)$ grows
  sufficiently slowly in terms of $\epsilon,u,s,t,L,a_1,\ldots,a_t$. Then, for all $b_1,\ldots,b_t\in\{0,\ldots,W-1\}$, we have
  \begin{equation*}
    \left|\sum_{n\in
        X\cap\ZZ^s}\left(\prod_{i\in[t]}\Lambda_{a_i,b_i,W}'(\psi_i(n))-\prod_{i\in[t]}\phi(W)\delta(a_i,b_i,W)\right)\right|\leq \epsilon N^s,
  \end{equation*}
  if only $N$ is sufficiently large in terms of $\epsilon,s,t,u,L,\cD,w(\cdot)$.
\end{lemma}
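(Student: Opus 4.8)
The plan is to deduce Lemma~\ref{lem:telescoping} from Lemma~\ref{lem:neumann_application} by a telescoping expansion, in exactly the same way that \cite[Theorem 5.1]{MR2680398} is deduced from \cite[Theorem 5.2]{MR2680398}. Set $c_i:=\phi(W)\delta(a_i,b_i,W)$, so $0\le c_i\le 1$ by \eqref{eq:delta_phi_bound}, and $f_i:=\Lambda_{a_i,b_i,W}'(\cdot)-c_i$. Expanding the product factor by factor and isolating the term $S=\emptyset$ gives
\begin{equation*}
  \prod_{i\in[t]}\Lambda_{a_i,b_i,W}'(\psi_i(n))-\prod_{i\in[t]}c_i
  =\sum_{\emptyset\neq S\subseteq[t]}\Big(\prod_{i\notin S}c_i\Big)\prod_{i\in S}f_i(\psi_i(n)),
\end{equation*}
so that, using $\prod_{i\notin S}c_i\le 1$, it is enough to show $\big|\sum_{n\in X\cap\ZZ^s}\prod_{i\in S}f_i(\psi_i(n))\big|\le 2^{-t}\epsilon N^s$ for every non-empty $S\subseteq[t]$ and then sum over the fewer than $2^t$ such subsets.

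For a fixed non-empty $S$, the idea is to apply Lemma~\ref{lem:neumann_application} to the subsystem $\Psi_S:=(\psi_i)_{i\in S}$, with the corresponding $(a_i)_{i\in S}$ and $(b_i)_{i\in S}$, and with $2^{-t}\epsilon$ in place of $\epsilon$. To do so I would check that the hypotheses survive restriction to $S$: $\Psi_S$ has finite complexity; it is still in $u$-normal form, because each $\psi_i$ with $i\in S$ is separated from all the other forms of $\Psi$ — in particular from $\psi_{i'}$, $i'\in S\smallsetminus\{i\}$ — by the same coordinate monomial witnessing the $u$-normal form of $\Psi$; one has $\vecnorm{\Psi_S}_N\le\vecnorm{\Psi}_N\le L$; the forms $\psi_i$, $i\in S$, are still $>N^{7/10}$ on $X$; $\HRH(a_i)$ holds for $i\in S$; and $\lcm(|\Delta_{a_i}|:i\in S)\mid\cD_\va\mid\cD$. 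Since $|S|\le t$, the requirement that $w(\cdot)$ grow slowly enough for this application is implied by the hypothesis of the lemma, as is the lower bound required of $N$. Lemma~\ref{lem:neumann_application} then delivers the bound $2^{-t}\epsilon N^s$.

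Summing over all non-empty $S$ and applying the triangle inequality gives $\big|\sum_{n\in X\cap\ZZ^s}\big(\prod_i\Lambda_{a_i,b_i,W}'(\psi_i(n))-\prod_i c_i\big)\big|\le\epsilon N^s$, which is precisely the claim. The argument is a formal reduction with no serious obstacle; the one point deserving an explicit (and routine) verification is the stability of the $u$-normal form property under passing to a subsystem, since that is what makes the repeated invocation of Lemma~\ref{lem:neumann_application} legitimate. (Equivalently, one can simply rerun the proof of Lemma~\ref{lem:neumann_application} with $\Psi_S$ in place of $\Psi$, the small Gowers-norm input $\min_{i\in S}\vecnorm{f_i}_{U^{u+1}[N]}\le\delta$ again coming from Proposition~\ref{prop:gowers_estimate_artin}.)
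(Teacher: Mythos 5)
Your proposal is correct and coincides with the paper's own proof: the same telescoping decomposition over non-empty subsets $S\subseteq[t]$, the same observation that the subsystem $(\psi_i)_{i\in S}$ remains in $u$-normal form (so Lemma \ref{lem:neumann_application} applies with $2^{-t}\epsilon$), and the same use of $\phi(W)\delta(a_i,b_i,W)\leq 1$ from \eqref{eq:delta_phi_bound} together with the triangle inequality.
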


\begin{proof}
  Write $\Lambda_{a_i,b_i,W}'(\cdot) = (\Lambda_{a_i,b_i,W}'(\cdot)-\phi(W)\delta(a_i,b_i,W))+\phi(W)\delta(a_i,b_i,W)$ and thus
  \begin{align*}
    &\prod_{i\in[t]}\Lambda_{a_i,b_i,W}'(\psi_i(n))-\prod_{i\in[t]}\phi(W)\delta(a_i,b_i,W)\\
    = &\sum_{\emptyset\neq J\subseteq [t]}\prod_{i\in
      J}\left(\Lambda_{a_i,b_i,W}'(\psi_i(n))-\phi(W)\delta(a_i,b_i,W)\right)\prod_{i\in
    [t]\smallsetminus J}\phi(W)\delta(a_i,b_i,W).
  \end{align*}
  For any $\emptyset\neq J\subseteq [t]$, we observe that the system
  $(\psi_i)_{i\in J}$ is still in $u$-normal form. Hence, we may apply Lemma
  \ref{lem:neumann_application} to show that, for large enough $N$,
  \begin{equation*}
    \left|\sum_{n\in
        X\cap\ZZ^s}\prod_{i\in J}\left(\Lambda_{a_i,b_i,W}'(\psi_i(n))-\phi(W)\delta(a_i,b_i,W)\right)\right|\leq 2^{-t}\epsilon N^s.
  \end{equation*}
  Applying the triangle inequality to the sum over $J$ and
  \eqref{eq:delta_phi_bound}, the conclusion of the lemma follows.
\end{proof}

We need the following estimate in our application of the $W$-trick in Lemma \ref{lem:artin_primes_normal_form}. Its proof is, essentially, contained in the proof of \cite[Lemma 1.3]{MR2680398}.
\begin{lemma}\label{lem:prime_residues}
  Let $s,t,L\in\NN$. Let $\Psi=(\psi_1,\ldots,\psi_t):\ZZ^s\to\ZZ^t$ be a finite-complexity system
  of affine-linear forms in which the coefficients of all linear terms are
  bounded in absoute value by $L$. For $q\in\NN$, let
  \begin{equation*}
    A(q):=\left\{n\in (\ZZ/q\ZZ)^s\where \gcd(\psi_i(n),q)=1\text{ for all }i\in[t]\right\}.
  \end{equation*}
  Then
  \begin{equation*}
    |A(q)|\ll_{s,t,L}\left(\frac{\phi(q)}{q}\right)^tq^s.
  \end{equation*}
\end{lemma}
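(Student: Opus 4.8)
The plan is to reduce the estimate to a statement about single primes and then apply truncated inclusion--exclusion, following the strategy of \cite[Lemma 1.3]{MR2680398}. Since each $\psi_i$ has integer coefficients, the condition $\gcd(\psi_i(n),q)=1$ depends only on $n$ modulo $\prod_{p\mid q}p$; so by the Chinese remainder theorem one has $|A(q)|=\prod_{p^k\|q}|A(p^k)|$, and moreover $|A(p^k)|=p^{(k-1)s}|A(p)|$ because membership in $A(p^k)$ only constrains $n$ modulo $p$. As $(\phi(q)/q)^tq^s=\prod_{p^k\|q}(1-1/p)^tp^{ks}$ factors the same way, it suffices to produce constants $C_p\geq 1$ with $|A(p)|\leq C_p(1-1/p)^tp^s$ for every prime $p$ and with $\prod_{p}C_p\ll_{s,t,L}1$. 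We may also assume that no $\psi_i$ is constant: this is automatic from finite complexity when $t\geq 2$, and the argument below covers the case $t=1$ with $\psi_1$ non-constant.

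For the large primes I would argue as follows. Because every linear coefficient has absolute value at most $L$, for $p>2L^2$ each linear part $\dot\psi_i$ is nonzero modulo $p$; and for any $i\neq j$ (which can only occur when $s\geq 2$, since finite complexity forces $t\leq 1$ when $s=1$) some $2\times2$ minor of the coefficient matrix of $(\dot\psi_i,\dot\psi_j)$ is a nonzero integer of absolute value $\leq 2L^2$, hence nonzero modulo $p$, so $\dot\psi_i$ and $\dot\psi_j$ remain linearly independent over $\FF_p$. Consequently $\#\{n\in(\ZZ/p\ZZ)^s:p\mid\psi_i(n)\}=p^{s-1}$ and the pairwise intersections have size $p^{s-2}$. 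Truncated inclusion--exclusion (Bonferroni) then gives
\begin{equation*}
  |A(p)|\leq p^s-tp^{s-1}+\binom{t}{2}p^{s-2}=(1-1/p)^tp^s\bigl(1+O_t(p^{-3})\bigr),
\end{equation*}
so we may take $C_p=1+O_t(p^{-3})$, and $\prod_{p>2L^2}C_p$ converges to a quantity depending only on $t$.

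For the finitely many primes $p\leq 2L^2$ I would just use the trivial bound $|A(p)|\leq p^s$ together with $(1-1/p)^t\geq 2^{-t}$, which permits $C_p=2^t$; as there are only $\pi(2L^2)=O_L(1)$ such primes, their contribution to $\prod_pC_p$ is at most $2^{t\pi(2L^2)}=O_{t,L}(1)$. Multiplying the bounds $|A(p^k)|\leq C_p(1-1/p)^tp^{ks}$ over all prime powers $p^k\|q$ then yields $|A(q)|\leq\bigl(\prod_{p\mid q}C_p\bigr)(\phi(q)/q)^tq^s\ll_{s,t,L}(\phi(q)/q)^tq^s$. The only genuinely delicate point is that the trivial per-prime bound costs a factor $\gg_t 1$ at every prime and hence would blow up over a $q$ with many distinct prime divisors; this is exactly why one needs the sharper Bonferroni estimate $C_p=1+O_t(p^{-3})$ away from the small primes, and that estimate is the heart of the argument.
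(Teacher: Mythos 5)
Your proof is correct and follows essentially the same route as the paper's: Bonferroni (truncated inclusion--exclusion) at primes large enough that the linear parts stay nonzero and pairwise independent mod $p$, the trivial bound at the $O_L(1)$ small primes, lifting from $p$ to $p^e$ via $|A(p^e)|=p^{(e-1)s}|A(p)|$, and the Chinese remainder theorem to assemble the bound for general $q$. The only differences are cosmetic — you make the threshold $p>2L^2$ and the small-prime bookkeeping explicit, and you note the degenerate cases ($t=1$, $s=1$) that the paper passes over silently.
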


\begin{proof}
  Let us show first that, for any prime $p$ that is sufficiently large in terms
  of $s,t,L$, we have
  \begin{equation}\label{eq:W-trick_A_p_bound}
    |A(p)|\leq \left(\frac{p-1}{p}\right)^tp^s\left(1+O_t\left(\frac{1}{p^2}\right)\right).    
  \end{equation}
  To this end, we define for $i\in[t]$ the set
  \begin{equation*}
    A_i(p):=\{n\in\FF_p^s\where \psi_i(n)=0\}. 
  \end{equation*}
  As $p$ is sufficiently large in terms of $s,t,L$, none of the forms $\psi_i$ will be constant and no two of their linear parts will be linearly dependent over $\FF_p$, which implies that
  \begin{align*}
    |A_i(p)|=p^{s-1}\quad\text{ and }\quad |A_i(p)\cap A_j(p)| = p^{s-2}.
  \end{align*}
  Truncating the inclusion-exclusion formula, we see that
  \begin{align*}
    |A(p)|&\leq |\FF_p^s|-\sum_{i=1}^t|A_i(p)| + \sum_{1\leq i<j\leq
    t}|A_i(p)\cap A_j(p)|\\
          &=p^s-tp^{s-1}+\frac{t(t-1)}{2}p^{s-2}\\
          &= p^s\left(1-\frac{t}{p}+O_t\left(\frac{1}{p^2}\right)\right) =
            p^s\left(\left(\frac{p-1}{p}\right)^t+O_t\left(\frac{1}{p^2}\right)\right),
  \end{align*}
  which shows \eqref{eq:W-trick_A_p_bound}. For any $e\geq 1$, this implies that
  \begin{equation*}
    |A(p^e)| = p^{(e-1)s}|A(p)| \leq
    \left(\frac{p-1}{p}\right)^tp^{es}\left(1+O_t\left(\frac{1}{p^2}\right)\right)
    = \left(\frac{\phi(p^e)}{p^e}\right)^tp^{es}\left(1+O_t\left(\frac{1}{p^2}\right)\right).
  \end{equation*}
  Write $p^e\mid\mid q$ if $e\geq 1$ is the exact exponent with which $p$ divides $q$.
  By the Chinese remainder theorem, we conclude that
  \begin{align*}
    |A(q)|&\ll_{L}\prod_{p^e\mid\mid
    q}\left(\frac{\phi(p^e)}{p^e}\right)^tp^{es}\left(1+O_t\left(\frac{1}{p^2}\right)\right)\\
    &= \left(\frac{\phi(q)}{q}\right)^tq^{s}\prod_{p\mid
    q}\left(1+O_t\left(\frac{1}{p^2}\right)\right) \ll_{t}\left(\frac{\phi(q)}{q}\right)^tq^{s}.\qedhere
  \end{align*}
\end{proof}

\begin{corollary} \label{cor:local_density_bound}
  Let $s,t,L\in\NN$. Let $\Psi=(\psi_1,\ldots,\psi_t):\ZZ^s\to\ZZ^t$ be a finite-complexity system
  of affine-linear forms in which the coefficients of all linear terms are
  bounded in absoute value by $L$. Let $a_1,\ldots,a_t\in\ZZ\smallsetminus\{-1\}$ such that
  no $a_i$ is a perfect square. Then, the densities
  $\sigma_{\va,\Psi}(q)$ defined in \eqref{eq:def_sigma} satisfy
  \begin{equation*}
    \Big(\prod_{i\in[t]}\delta(a_i,0,1)\Big)\sigma_{\va,\Psi}(\cD_{\va})\prod_{p\nmid\cD_{\va}}\sigma_{\va,\Psi}(p)\ll_{\va,s,t,L} \sigma_{\va,\Psi}(\cD_\va)\ll_{\va,s,t,L}1.
  \end{equation*}
\end{corollary}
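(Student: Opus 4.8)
The plan is to prove the two inequalities separately, reducing both to the elementary count in Lemma~\ref{lem:prime_residues} together with the observation that $\delta(a,b,q)$ is supported on residues coprime to~$q$.

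First I would record the pointwise bound
\[
0\le\delta(a_i,\psi_i(n),q)\le\frac{\one_{\gcd(\psi_i(n),q)=1}}{\phi(q)}\qquad(1\le i\le t,\ n\in(\ZZ/q\ZZ)^s),
\]
valid for every $q\in\NN$: the lower bound is the fact (Moree~\cite{MR2490093}, or \eqref{eq:artin_prime_asymptotic} under $\HRH$) that $\delta(a,b,q)$ is a genuine density, the right-hand inequality is \eqref{eq:delta_phi_bound}, and the vanishing of $\delta(a_i,\psi_i(n),q)$ when $\gcd(\psi_i(n),q)\ne1$ is immediate from \eqref{eq:cA_a(b,q)} and \eqref{eq:delta_closed_formula}. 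Inserting this into \eqref{eq:def_sigma}, and using that each $\delta(a_i,0,1)$ is a positive real depending only on $a_i$ (its positivity, for $a_i\ne-1$ not a perfect square, being read off Hooley's formula \eqref{eq:hooley_density} and in any case presupposed by the very definition of $\sigma_{\va,\Psi}$), I obtain for every $q\in\NN$
\begin{align*}
0\le\sigma_{\va,\Psi}(q)&=q^{t-s}\sum_{n\in(\ZZ/q\ZZ)^s}\prod_{i\in[t]}\frac{\delta(a_i,\psi_i(n),q)}{\delta(a_i,0,1)}\\
&\le\frac{q^{t-s}}{\phi(q)^t\prod_{i\in[t]}\delta(a_i,0,1)}\,|A(q)|,
\end{align*}
where $A(q)=\{n\in(\ZZ/q\ZZ)^s\where\gcd(\psi_i(n),q)=1\text{ for all }i\}$ is as in Lemma~\ref{lem:prime_residues} (whose hypotheses are met since the linear coefficients of the $\psi_i$ are bounded by $L$). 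By that lemma $|A(q)|\ll_{s,t,L}(\phi(q)/q)^tq^s$, and the powers of $q$ and $\phi(q)$ cancel exactly, leaving $\sigma_{\va,\Psi}(q)\ll_{\va,s,t,L}1$ uniformly in $q$; specialising to $q=\cD_\va$ gives the second claimed inequality.

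For the first inequality I would observe that $\prod_{i\in[t]}\delta(a_i,0,1)\le1$, since each factor is $\le1/\phi(1)=1$ by \eqref{eq:delta_phi_bound}, and that $\prod_{p\nmid\cD_\va}\sigma_{\va,\Psi}(p)\ll_{\va,s,t,L}1$: by Lemma~\ref{lem:euler_factor_bounds} one has $\sigma_{\va,\Psi}(p)=1+O_{\va,s,t,L}(p^{-2})$ for every prime $p$, so the infinite product converges absolutely with a bound depending only on $\va,s,t,L$. Since $\sigma_{\va,\Psi}(\cD_\va)\ge0$, multiplying these estimates yields
\[
\Bigl(\prod_{i\in[t]}\delta(a_i,0,1)\Bigr)\sigma_{\va,\Psi}(\cD_\va)\prod_{p\nmid\cD_\va}\sigma_{\va,\Psi}(p)\ll_{\va,s,t,L}\sigma_{\va,\Psi}(\cD_\va),
\]
completing the proof. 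The only point requiring any care is the strict positivity of the global factors $\delta(a_i,0,1)$; this is already built into the setup, so no separate argument is needed, and everything else is bookkeeping with \eqref{eq:delta_phi_bound}, \eqref{eq:cA_a(b,q)} and Lemma~\ref{lem:prime_residues}.
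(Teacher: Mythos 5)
Your proof is correct and follows essentially the same route as the paper: the paper's argument likewise bounds the left-hand side by $\sigma_{\va,\Psi}(\cD_\va)$ via Lemma \ref{lem:euler_factor_bounds} and the bound $\delta(a_i,0,1)\le 1$ from \eqref{eq:delta_phi_bound}, and then bounds $\sigma_{\va,\Psi}(\cD_\va)$ by combining \eqref{eq:delta_phi_bound} (with the vanishing of $\delta$ on non-coprime residues) with the count $|A(\cD_\va)|$ from Lemma \ref{lem:prime_residues}. Your write-up only spells out the same bookkeeping in more detail (and notes the slightly more general uniform bound $\sigma_{\va,\Psi}(q)\ll 1$), so no further comment is needed.
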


\begin{proof}
  Using Lemma \ref{lem:euler_factor_bounds}, \eqref{eq:delta_phi_bound} and Lemma \ref{lem:prime_residues},
  we see that the expression on the left-hand side is
  \begin{align*}
    \ll_{\va,s,t,L}\sigma_{\va,\Psi}(\cD_{\va})=\cD_\va^{t-s}\sum_{n\in(\ZZ/\cD_\va\ZZ)^s}\prod_{i\in[t]}\frac{\delta(a_i,\psi_i(n),\cD_\va)}{\delta(a_i,0,1)}\ll_{\va,t}
    \frac{\cD_\va^{t-s}}{\phi(\cD_\va)^t}|A(\cD_\va)|\ll_{s,t,L} 1.
  \end{align*}
\end{proof}

\begin{lemma}[Artin primes in lattices in normal form]\label{lem:artin_primes_normal_form}
  Let $s,t,u,L,N\in\NN$, $a_1,\ldots,a_t\in\ZZ\smallsetminus\{-1\}$ such that
  no $a_i$ is a perfect square, and $\Psi=(\psi_1,\ldots,\psi_t):\ZZ^s\to\ZZ^t$
  a system of affine-linear forms in $u$-normal form with
  $\vecnorm{\Psi}_N\leq L$. Assume $\HRH(a_i)$ for all $1\leq i\leq t$. Let
  $X\subseteq [-N,N]^s$ be a convex body on which
  $\psi_1,\ldots,\psi_t>N^{8/10}$. Then
  \begin{equation*}
    \sum_{n\in
        X\cap\ZZ^s}\left(\prod_{i\in[t]}\Lambda_{a_i}(\psi_i(n))-\Big(\prod_{i\in[t]}\delta(a_i,0,1)\Big)\sigma_{\va,\Psi}(\cD_{\va})\prod_{p\nmid\cD_{\va}}\sigma_{\va,\Psi}(p)\right)=o_{s,t,u,L,\va}(N^s).
  \end{equation*}
\end{lemma}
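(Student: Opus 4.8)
The plan is to derive this from the telescoping estimate of Lemma~\ref{lem:telescoping} by the $W$-trick, with the $W$-trick modulus $\cD:=\cD_\va$ (so that $\cD_\va\mid W$, as demanded by Lemmas~\ref{lem:almost_multiplicativity} and~\ref{lem:euler_factor_bounds}) and with $w(\cdot)$ growing to infinity slowly enough, the admissible rate depending only on $s,t,u,L,\va$ and on the $\epsilon>0$ implicit in the $o$-notation. First I would discard the prime-power contributions, replacing each $\Lambda_{a_i}$ by $\Lambda'_{a_i}$: for each $i$ there are $\ll N^{1/2}$ proper prime powers $\leq\vecnorm{\Psi}_N\cdot N$, and each fibre $\{\psi_i(n)=m\}$ meets $X\cap\ZZ^s$ in $\ll_s N^{s-1}$ points, so this replacement costs only $\ll_t N^{s-1/2}(\log N)^t=o(N^s)$.

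Next comes the $W$-trick. Partitioning $X\cap\ZZ^s$ according to the residue class $\vc\in(\ZZ/W\ZZ)^s$ of $n$ and writing $n=\vc+Wm$, one has $\psi_i(n)=b_i+W\tilde\psi_i(m)$ with $b_i:=\psi_i(\vc)\bmod W\in\{0,\dots,W-1\}$ and $\tilde\psi_i(m):=(\psi_i(\vc)-b_i)/W+\dot\psi_i(m)\in\ZZ[m]$; the system $\tilde\Psi=(\tilde\psi_1,\dots,\tilde\psi_t)$ is again in $u$-normal form (the linear parts are unchanged) and has size $\ll_{t,L}1$ relative to $\tilde N:=\lceil 2N/W\rceil$. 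Since $W\ll_\va(\log N)^{2/3}$, we have $W<N^{1/3}$ for $N$ large, and the hypothesis $\psi_i>N^{8/10}$ on $X$ then forces $\tilde\psi_i>\tilde N^{7/10}$ on the rescaled convex body $X_\vc:=W^{-1}(X-\vc)\subseteq[-\tilde N,\tilde N]^s$ — this is precisely why the stronger exponent $8/10$ is imposed, leaving room to lose a factor $W$. Using $\Lambda'_{a_i}(\psi_i(n))=\tfrac{W}{\phi(W)}\Lambda'_{a_i,b_i,W}(\tilde\psi_i(m))$, the sum becomes $\big(\tfrac{W}{\phi(W)}\big)^t\sum_{\vc}\sum_{m\in X_\vc\cap\ZZ^s}\prod_{i\in[t]}\Lambda'_{a_i,b_i,W}(\tilde\psi_i(m))$. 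For the \emph{bad} classes $\vc$, i.e.\ those with $\gcd(b_i,W)>1$ for some $i$, the value $\psi_i(n)$ is divisible by $\gcd(b_i,W)>1$ and so can be prime only if it equals this gcd, which pins $m$ to a hyperplane; hence the bad classes contribute $\ll WN^{s-1}(\log N)^{t}=o(N^s)$ in total, while the corresponding main term $\prod_i\phi(W)\delta(a_i,b_i,W)$ vanishes by the definition~\eqref{eq:cA_a(b,q)} of $\delta$. To each \emph{good} class I would apply Lemma~\ref{lem:telescoping} to $\tilde\Psi$, $X_\vc$ and the residues $b_1,\dots,b_t$, obtaining an error $\leq\epsilon'\tilde N^{s}$; by Lemma~\ref{lem:prime_residues} there are only $\ll_{s,t,L}(\phi(W)/W)^tW^s$ good classes, so after multiplying back by $\big(\tfrac{W}{\phi(W)}\big)^t$ the cumulative error from this step is $\ll_{s,t,L}\epsilon' N^s$ — the potential $(W/\phi(W))^t$ blow-up being exactly absorbed by the sparsity of the good classes.

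It then remains to evaluate the main term $\big(\tfrac{W}{\phi(W)}\big)^t\sum_{\vc\ \mathrm{good}}\big(\prod_i\phi(W)\delta(a_i,b_i,W)\big)\,|X_\vc\cap\ZZ^s|$. Replacing $|X_\vc\cap\ZZ^s|$ by $\vol(X)/W^s$ costs $O((N/W)^{s-1})$ per class, hence $o(N^s)$ after summing over the $\leq W^s$ classes; extending the sum to all $\vc$ (the bad ones contribute nothing), and using the rearrangement of~\eqref{eq:def_sigma} that $\sum_{\vc\in(\ZZ/W\ZZ)^s}\prod_i\delta(a_i,\psi_i(\vc),W)=\big(\prod_i\delta(a_i,0,1)\big)W^{s-t}\sigma_{\va,\Psi}(W)$, collapses the main term to $\vol(X)\big(\prod_i\delta(a_i,0,1)\big)\sigma_{\va,\Psi}(W)$. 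Finally, Lemma~\ref{lem:almost_multiplicativity}(2) gives $\sigma_{\va,\Psi}(W)=\sigma_{\va,\Psi}(\cD_\va)\prod_{p<w(N),\,p\nmid\cD_\va}\sigma_{\va,\Psi}(p)$, and since $w(N)\to\infty$ and $\sigma_{\va,\Psi}(p)=1+O_{s,t,L,\va}(p^{-2})$ by Lemma~\ref{lem:euler_factor_bounds}, this converges to $\sigma_{\va,\Psi}(\cD_\va)\prod_{p\nmid\cD_\va}\sigma_{\va,\Psi}(p)$; together with $\vol(X)=|X\cap\ZZ^s|+O(N^{s-1})$ and the bound $\fS(\va,\Psi)\ll_{\va,s,t,L}1$ from Corollary~\ref{cor:local_density_bound}, the main term equals $\fS(\va,\Psi)\,|X\cap\ZZ^s|+o(N^s)$.

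Assembling the pieces, the left-hand side of the lemma is $O_{s,t,L}(\epsilon'N^s)$ plus a quantity that is $o(N^s)$ once $N$ is large in terms of $\epsilon',s,t,u,L,\cD_\va,w(\cdot)$; choosing $\epsilon'$ small in terms of $\epsilon,s,t,L$ and then letting $w(\cdot)$ grow slowly enough (in terms of $\epsilon,s,t,u,L,\va$) yields the claimed bound. The main difficulty here is not conceptual but a matter of careful bookkeeping: one must keep every error term uniform in the class $\vc$ and in the slowly-growing modulus $W$, and — as stressed in~\S\ref{sec:W_trick} — make sure that the permissible growth rate of $w(\cdot)$ is governed solely by $\epsilon,s,t,u,L,\va$ and never circularly by $N$ itself, which is exactly what the extra $N^{8/10}$ margin in the hypothesis is there to guarantee.
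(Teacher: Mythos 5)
Your proposal is correct and follows essentially the same route as the paper: the $W$-trick with $\cD=\cD_\va$, splitting $X\cap\ZZ^s$ into residue classes modulo $W$, discarding the classes with $\gcd(\psi_i(c),W)>1$, applying Lemma \ref{lem:telescoping} to each good class with Lemma \ref{lem:prime_residues} absorbing the $(W/\phi(W))^t$ factor, and then identifying the main term through \eqref{eq:def_sigma}, Lemma \ref{lem:almost_multiplicativity}, Lemma \ref{lem:euler_factor_bounds} and Corollary \ref{cor:local_density_bound}. The only deviation is cosmetic: you truncate the Euler product at the end (comparing $\sigma_{\va,\Psi}(W)$ with the infinite product) whereas the paper does this reduction at the outset, which is an equivalent reordering.
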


\begin{proof}
   To prove the lemma, we fix $\epsilon>0$ and show that
    \begin{equation*}
    \left|\sum_{n\in
        X\cap\ZZ^s}\left(\prod_{i\in[t]}\Lambda_{a_i}(\psi_i(n))-\Big(\prod_{i\in[t]}\delta(a_i,0,1)\Big)\sigma_{\va,\Psi}(\cD_{\va})\prod_{p\nmid\cD_{\va}}\sigma_{\va,\Psi}(p)\right)\right|\leq \epsilon N^s
  \end{equation*}
  holds if $N$ is sufficiently large in terms of
  $\epsilon,s,t,u,L,a_1,\ldots,a_t$. We define $W$ as in \S\ref{sec:W_trick}
  with $\cD=\cD_\va\asymp_\va 1$ and a fixed function $w(\cdot)$. The choice of
  this function will be made more specific later in this proof and depend only
  on $\epsilon,s,t,L,u,\va$. Let
  \begin{equation*}
    M:=\vol(X)\Big(\prod_{i\in[t]}\delta(a_i,0,1)\Big)\sigma_{\va,\Psi}(\cD_{\va})\prod_{p\nmid\cD_{\va}}\sigma_{\va,\Psi}(p).
  \end{equation*}
  Using Corollary \ref{cor:local_density_bound} and the fact that $|X\cap
  \ZZ^s| = \vol(X)+o_s(N^s)$, we obtain the estimate
  \begin{equation*}
    \sum_{n\in
      X\cap\ZZ^s}\Big(\prod_{i\in[t]}\delta(a_i,0,1)\Big)\sigma_{\va,\Psi}(\cD_{\va})\prod_{p\nmid\cD_{\va}}\sigma_{\va,\Psi}(p) = M + o_{\va,s,t,L}(N^s).
  \end{equation*}
  Hence, using the triangle inequality it suffices to show that
  \begin{equation}\label{eq:W_trick_triangle_inequality}
     \left|\sum_{n\in X\cap\ZZ^s}\prod_{i\in [t]}\Lambda_{a_i}(\psi_i(n))-M\right|<\frac{\epsilon}{2}N^s
   \end{equation}
  for all large enough $N$. Using Lemma \ref{lem:euler_factor_bounds}, we see
  that
  \begin{align*}
    \prod_{p\nmid\cD_\va}\sigma_{\va,\Psi}(p) &=
    \prod_{\substack{p\nmid\cD_\va\\p\leq
    w(N)}}\sigma_{\va,\Psi}(p)\prod_{\substack{p\nmid\cD_\va\\p>
    w(N)}}\sigma_{\va,\Psi}(p)=\Big(\prod_{\substack{p\nmid\cD_\va\\p\leq
    w(N)}}\sigma_{\va,\Psi}(p)\Big)\left(1+o_{\va,s,t,L,w(\cdot)}(1)\right)\\ &=\prod_{\substack{p\nmid\cD_\va\\p\leq
    w(N)}}\sigma_{\va,\Psi}(p)+o_{s,t,L,\epsilon,u,\va}(1),
  \end{align*}
  as $\prod_{p\nmid\cD_\va}\sigma_{\va,\Psi}(p)\ll_{\va,s,t,L}1$ and the choice of
  our function $w(\cdot)$ will depend only on $\epsilon,t,L,u,\va$. Hence, in
  order to prove \eqref{eq:W_trick_triangle_inequality}, it is enough to show
  that
  \begin{equation*}
        \left|\sum_{n\in X\cap\ZZ^s}\prod_{i\in
            [t]}\Lambda_{a_i}(\psi_i(n))-\vol(X)\Big(\prod_{i\in[t]}\delta(a_i,0,1)\Big)\sigma_{\va,\Psi}(\cD_{\va})\prod_{\substack{p\nmid\cD_{\va}\\p\leq
            w(N)}}\sigma_{\va,\Psi}(p)\right|<\frac{\epsilon}{4}N^s
  \end{equation*}
  holds for large enough $N$. As
  \begin{align*}
    \abs{\sum_{n\in X\cap\ZZ^s}\prod_{i\in
            [t]}\Lambda_{a_i}(\psi_i(n))-\sum_{n\in X\cap\ZZ^s}\prod_{i\in
            [t]}\Lambda_{a_i}'(\psi_i(n))}&\ll_{s,L}(\log
    N)^t\sum_{i=1}^t\sum_{e\geq 2}\sum_{n\in X\cap\ZZ^s}\one_{\psi_i(n)\text{
    is }e\text{-th power}}\\ &\ll_{s,t,L} (\log N)^t\sum_{2\leq e\ll_{s,L}\log N}N^{s-1+1/e}=o_{s,t,L}(N^s),
  \end{align*}
 it is enough to show that, for all large enough $N$,   
  \begin{equation}
    \label{eq:W_trick_truncate_product}
        \left|\sum_{n\in X\cap\ZZ^s}\prod_{i\in
            [t]}\Lambda_{a_i}'(\psi_i(n))-\vol(X)\Big(\prod_{i\in[t]}\delta(a_i,0,1)\Big)\sigma_{\va,\Psi}(\cD_{\va})\prod_{\substack{p\nmid\cD_{\va}\\p\leq
            w(N)}}\sigma_{\va,\Psi}(p)\right|<\frac{\epsilon}{8}N^s
    \end{equation}
    Using Lemma \ref{lem:almost_multiplicativity}, the definition of $W$ in
    \eqref{eq:cheb_def_W} with $\cD=\cD_\va$, and the definition of
    $\sigma_{\va,\Psi}(\cdot)$ in \eqref{eq:def_sigma}, we see that
    \begin{equation}\label{eq:W_trick_splitting_RHS}   
    \begin{aligned}
      &\Big(\prod_{i\in[t]}\delta(a_i,0,1)\Big)\sigma_{\va,\Psi}(\cD_{\va})\prod_{\substack{p\nmid\cD_{\va}\\p\leq
          w(N)}}\sigma_{\va,\Psi}(p) =
      \Big(\prod_{i\in[t]}\delta(a_i,0,1)\Big)\sigma_{\va,\Psi}(W)\\ &=W^{t-s}\sum_{c\in(\ZZ/W\ZZ)^s}\prod_{i\in
      [t]}\delta(a_i,\psi_i(c),W)=W^{t-s}\sum_{c\in A(W)}\prod_{i\in
      [t]}\delta(a_i,\psi_i(c),W),
  \end{aligned}
  \end{equation}
  with $A(W)$ defined as in Lemma \ref{lem:prime_residues}. We also split the left-hand side of the difference in
  \eqref{eq:W_trick_truncate_product} into residue classes modulo $W$ to obtain
  \begin{align*}
    \sum_{n\in X\cap\ZZ^s}\prod_{i\in
            [t]}\Lambda_{a_i}'(\psi_i(n))&=\sum_{c\in[W]^s}\sum_{\substack{n\in\ZZ^s\\Wn+c\in
    X}}\prod_{i\in[t]}\Lambda_{a_i}'(\psi_i(Wn+c))\\ &=\sum_{c\in[W]^s}\sum_{\substack{n\in\ZZ^s\\Wn+c\in
X}}\prod_{i\in[t]}\frac{W}{\phi(W)}\Lambda_{a_i,b_i(c),W}'(\psi_{i,c}(n)),
  \end{align*}
  where, writing $\psi_i(c)=Wh_1+h_0$ with $h_0\in\{0,\ldots,W-1\}$, we set
  \begin{equation*}
    \psi_{i,c}(n) := \dot\psi_i(n)+h_1\quad\text{ and }\quad b_i(c) := h_0,
  \end{equation*}
  so that $\psi_i(Wn+c)=W\psi_{i,c}(n)+b_i(c)$. Note that if
  $\gcd(W,\psi_i(c))>1$, then $\gcd(\psi_i(Wn+c),W)>1$ for any $Wn+c\in X$. As
  $W\ll_\va (\log N)^{2/3}$ and $\psi_i\geq N^{8/10}$ on $X$, this shows that
  $\psi_i(Wn+c)$ can not be prime if $N$ is large enough. Hence, identifying $[W]^s$ with
  $(\ZZ/W\ZZ)^s$ in the obvious way, we see that
    \begin{equation}\label{eq:W_trick_splitting_LHS}
    \sum_{n\in X\cap\ZZ^s}\prod_{i\in
            [t]}\Lambda_{a_i}'(\psi_i(n))=\sum_{c\in A(W)}\sum_{\substack{n\in\ZZ^s\\Wn+c\in
    X}}\prod_{i\in[t]}\frac{W}{\phi(W)}\Lambda_{a_i,b_i(c),W}'(\psi_{i,c}(n)).
    \end{equation}
    Finally, as $\psi_i(c)\equiv b_i(c)\bmod W$, we see that
    $\delta(a_i,\psi_i(c),W)=\delta(a_i,b_i(c),W)$. Using this observation
    together with \eqref{eq:W_trick_splitting_RHS},
    \eqref{eq:W_trick_splitting_LHS} and the triangle inequality, we see that
    the left-hand side of \eqref{eq:W_trick_truncate_product} is bounded from
    above by
    \begin{align*}
      &\sum_{c\in A(W)}\left|\sum_{\substack{n\in\ZZ^s\\Wn+c\in
    X}}\prod_{i\in[t]} \frac{W}{\phi(W)}\Lambda_{a_i,b_i(c),W}'(\psi_{i,c}(n))- W^{t-s}\vol(X)\prod_{i\in
      [t]}\delta(a_i,b_i(c),W) \right|\\ &=\left(\frac{W}{\phi(W)}\right)^t\sum_{c\in
                                           A(W)}\left|\sum_{\substack{n\in\ZZ^s\cap
      \frac{X-c}{W}}}\prod_{i\in[t]}\Lambda_{a_i,b_i(c),W}'(\psi_{i,c}(n))- \vol\left(\frac{X-c}{W}\right)\prod_{i\in
      [t]}\phi(W)\delta(a_i,b_i(c),W) \right|.
    \end{align*}
    Hence, in the light of Lemma \ref{lem:prime_residues}, in order to prove
    \eqref{eq:W_trick_truncate_product}, and thus the lemma, it is enough to
    show that, for every $c\in A(W)$, 
    \begin{equation}\label{eq:W_trick_final_application}
      \left|\sum_{\substack{n\in\ZZ^s\cap
      \frac{X-c}{W}}}\prod_{i\in[t]}\Lambda_{a_i,b_i(c),W}'(\psi_{i,c}(n))- \vol\left(\frac{X-c}{W}\right)\prod_{i\in
      [t]}\phi(W)\delta(a_i,b_i(c),W) \right| < \frac{\epsilon}{8C(s,t,L)}\left(\frac{N}{W}\right)^s,
\end{equation}
where $C(s,t,L)$ is the implied constant in Lemma \ref{lem:prime_residues},
    if only $N$ (and thus $N/W$ by \eqref{eq:cheb_def_W}) is large enough.
    Note that $\dot\psi_{i,c}=\dot\psi_i$ and
    \begin{equation*}
      \abs{\psi_{i,c}(0)}=\abs{h_1}\leq \frac{\psi_i(c)}{W}\leq
      \frac{\dot\psi_i(c)}{W}+\frac{\psi_i(0)}{W}\ll_{s,L}1+\frac{N}{W}\leq \frac{N}{W},
    \end{equation*}
    so the forms are still in $u$-normal form and satisfy
    $\vecnorm{\psi_i(c)}_{N/W}\ll_{s,L}1$.

    Hence, \eqref{eq:W_trick_final_application} follows from follows from Lemma
    \ref{lem:telescoping} applied with $2N/W$, $(X-c)/W,\psi_{i,c}$ and
    some $\tilde L\ll_{s,L}1$
    instead of $N,X,\psi_i,L$, and with $\epsilon/C(t,L)$ instead of $\epsilon$.
    The remaining hypotheses of Lemma \ref{lem:telescoping} are
    satisfied, if only $w(\cdot)$ was chosen to grow sufficiently slowly in terms of
    $\epsilon,s,t,L,u,\va$.
  \end{proof}

  \subsection{Completion of the proof of Theorem \ref{thm:main_artin}}\label{sec:completion_of_proof_artin}
  In \cite[\S4]{MR2680398}, the main theorem of \cite{MR2680398} is
  first reduced to \cite[Theorem 4.1]{MR2680398}, removing the archimedean
  factor, and then futher to \cite[Theorem 4.5]{MR2680398}, extending $\Psi$ to a system in normal form.

  Analogous arguments reduce our Theorem \ref{thm:main_artin} to our Lemma
  \ref{lem:artin_primes_normal_form}. For removing the archimedean factor, we may of course also bound
  $\Lambda_{a_i}(n)$ by $\log N$, and the non-archimedean part of our leading
  constant is also bounded by Corollary \ref{cor:local_density_bound}. Of
  course this would also follow directly from the boundedness of the product
  $\prod_p\beta_p$ in \cite{MR2680398}.

  For the reduction from $\Psi$ to the system $\Psi'$ in normal form, note that
  our densities, defined in \eqref{eq:def_sigma}, obviously also satisfy
  $\sigma_{\va,\Psi'}(q)=\sigma_{\va,\Psi}(q)$.

\section{Constellations  with prescribed Artin symbols: proof of Proposition \ref{prop:gowers_estimate_cheb}, Theorem \ref{thm:main_cheb}}\label{sec:proof_main_cheb}

Here we indicate how to prove the remaining results of this paper
concerning the functions $\Lambda_{K,C}$. As
most of the proofs are analogous, but simpler, to what we have already
done for $\Lambda_a$ in the previous sections, we will be very
brief.

\subsection{Gowers-norm estimate: proof of Proposition \ref{prop:gowers_estimate_cheb}}

Clearly, the proof of Lemma \ref{lem:pseudorandom_majorant} also
yields an analogous version of the majorant where the functions $\Lambda_{a_i,b_i,W}$
are replaced by functions $\Lambda_{K_i,C_i,b_i,W}$ with Galois number
fields $K_i$ and conjugacy classes $C_i$ in their respective Galois
groups over $\QQ$. Hence, we may use Theorem
\ref{thm:W_tricked_chebotarev_mangoldt_with_nilsequences_nonconst_avg}
and the inverse theorem for Gowers norms to obtain a version of Lemma
\ref{lem:gowers_estimate_artin_nonconst_avg}, following the proof
of Lemma \ref{lem:gowers_estimate_artin_nonconst_avg}. Our
situation here is even slightly simpler, as we can fix
$w(N)=\log\log\log N$ once and for all, so in particular our choice of
$w(N)$ does not need to depend on $\delta$.

Proposition \ref{prop:gowers_estimate_cheb} then follows from this version of Lemma \ref{lem:gowers_estimate_artin_nonconst_avg}, together with Lemma \ref{lem:gowers_estimate_classical} and the triangle inequality. 

\subsection{Proof of Theorem \ref{thm:main_cheb}}
As we have a pseudorandom majorant already, the deduction of Theorem \ref{thm:main_cheb} from Proposition
\ref{prop:gowers_estimate_cheb} follows the same steps as in \S\S \ref{sec:von_neumann}--\ref{sec:completion_of_proof_artin} with the obvious modifications. For example, we rely on the results concerning $\tau_{\vK,\vC,\Psi}(q)$ established in \S\ref{sec:local_densities_cheb} instead of the analogous properties of $\sigma_{\va,\Psi}(q)$ established in \S\ref{sec:delta}. Again, the situation is somewhat simpler, as the function $w(\cdot)$ is fixed once and for all and does not depend on any of the other parameters. The argument is essentially the same as in \cite[\S\S 4-7]{MR2680398}.

\subsection{Proof of Corollary \ref{cor:existence_cheb}}
For $N\geq 1$, the convex set $X_N:=X\cap [-N,N]^s$ satisfies that $\vol(X\cap\Psi^{-1}(\RR_+^t))\asymp_{X,\Psi} N^s$ by condition \emph{(3)}. By condition \emph{(1)}, we may apply Theorem \ref{thm:main_cheb} with the system $\Psi$ and this convex set. In the formula for $\fS(\vK,\vC,\Psi)$ given in \eqref{eq:cheb_leading_term}, the factors $\beta_p$ are positive and independent of $N$ due to condition \emph{(2)}. The factor
\begin{equation*}
  \EE_{n\in(\ZZ/\cD_\vK\ZZ)^s}\prod_{i\in[t]}\cD_{\vK}\eta_{K_i,C_i}(\psi_i(n),\cD_\vK)
\end{equation*}
is positive, as condition \emph{(4)} implies the existence of at least one $n\in(\ZZ/\cD_{\vK}\ZZ)^s$ for which the corresponding summand is positive. It is clearly also independent of $N$. Hence, the main term has size $\asymp_{X,\Psi,\vK}(N^s)$. The contribution of those $n\in ([-N,N]\cap\ZZ)^s$ for which one of the forms $\psi_i(n)$ takes a proper prime power value, or where two of the forms take the same value, is easily seen to be $o_{\Psi}(N^s)$. Hence, the sum over those $n\in X_N\cap \ZZ^s$ for which the $\psi_i(n)$ take distinct prime values is still $\asymp_{X,\Psi,\vK}(N^s)$. Letting $N\to\infty$, we obtain infinitely many such $n\in X\cap\ZZ^s$.

\section{Previous results and examples} \label{sec:examples}  

\subsection{One linear equation in primes with prescribed Artin symbols} \label{subsec:comparison_Kane}

Here we show that the main term Kane's result \cite[Theorem 2]{MR3060874} for
$t\geq 3$ matches up with a
special case or our Theorem \ref{thm:main_artin}. We may formulate Kane's asymptotic as follows: for $t\geq 3$ and $\vK,\vC$ as in
Theorem \ref{thm:main_cheb}, $\vc=(c_1,\ldots,c_t)\in\ZZ^t$ with
$\gcd(c_1,\ldots,c_t)=1$ and $N,M\in\NN$, we have
\begin{equation*}
     \sum_{\substack{x_i\in\ZZ\cap[0, N] \\ c_1 x_1+\cdots+c_t x_t = M}} \prod_{i=1}^t
     \Lambda_{K_i,C_i}(x_i)=\left(\prod_{i=1}^t\frac{|C_i|}{[K_i:\QQ]}\right)\tau_\infty
     \tau_{\Delta_{\vK}}\prod_{p\nmid \Delta_{\vK}}\tau_p + O_{\vK,\vc,A}(N^{t-1}(\log N)^{-A}),
\end{equation*}
where $\Delta_\vK=\lcm(|\Delta_{K_i^\ab}|\ :\ 1\leq i\leq t)$ is the least
common multiple of the discriminants of the maximal abelian subfields $K_i^\ab$
of $K_i$ and
\begin{align*}
\tau_\infty &= \int_{\substack{x_i\in [0,N] \\ \sum_i c_i x_i = M}} \left(\sum_{i=1}^t \frac{c_i}{\vecnorm{\vc}_2^2} \frac{\partial}{\partial x_i} \right)dx_1 \wedge dx_2 \wedge \ldots \wedge dx_t,\\
\tau_{\Delta_{\vK}} &= \frac{\Delta_{\vK}\prod_{i=1}^t[K_i^\ab:\QQ]}{\phi(\Delta_\vK)^t}\left|\left\{\vx\in
                      \left((\ZZ/\Delta_{\vK}\ZZ)^{\times}\right)^t \where
                      \sigma_{x_i}|_{K_i^\ab}\in C_i|_{K_i^\ab},\  \sum_{i=1}^t c_i x_i \equiv M \bmod \Delta_{\vK} \right\}\right|,\\
\tau_p &= \frac{p}{\phi(p)^t}\left|\left\{\vx\in \left((\ZZ/p\ZZ)^{\times}\right)^t\where
\sum_{i=1}^t c_i x_i \equiv M \bmod p \right\}\right|.
\end{align*}
Here, we have identified $\left(\ZZ/\Delta_\vK\ZZ\right)^\times\simeq\Gal(\QQ(\mu_{\Delta_\vK})/\QQ)$ via $a\mapsto
\sigma_a$.

We may assume that $M\ll N$. Following the proof of \cite[Theorem 1.8]{MR2680398}, we construct a system of
affine-linear forms $\Psi:\ZZ^{t-1}\to \ZZ^t$ of complexity $\leq 1$ and size
$\vecnorm{\Psi}\ll 1$, such that $\Psi$ is injective
with image exactly the full affine sublattice of $\ZZ^t$ defined by $\vc\cdot
\vx=M$. Indeed, we can find an element $\vy$ of this affine sublattice of size $\abs{\vy}\ll N$, and a $t\times(t-1)$-matrix $A$ of rank $t-1$ with entries in $\ZZ$ bounded in absolute value by $O(1)$, such that $A\cdot \ZZ^{t-1}$ is the kernel of $\vx\mapsto\vc\cdot\vx$. Then the affine-linear system
\begin{align*}
  \Psi:\ZZ^{t-1}&\to\ZZ^t\\
  n&\mapsto A\cdot n + \vy
\end{align*}
does what we want. Let us compare the main term in Theorem \ref{thm:main_cheb} for this system
$\Psi$ and the convex set $X=\Psi^{-1}([0,N]^t)$ to the above. Let $V\subseteq \RR^t$ be the affine subspace defined by $\vc\cdot\vx=M$. The $(n-1)$-form in the definition of $\tau_\infty$, let us call it $\omega$, was chosen in the proof of \cite[Proposition 25]{MR3060874} so that $V\cap\ZZ^t$ has covolume $1$ in $V$. Hence, $\int_{[0,1)^{t-1}}\Psi^*(\omega)=\int_{\Psi([0,1)^{t-1})}\omega = 1$, and therefore $\tau_\infty=\int_{\Psi(X)}\omega=\vol(X)$.

Note that Kane's $\Delta_{\vK}$ and our $\cD_{\vK}$ have the same prime factors.  
We claim that for any $q\in\NN$ the $\ZZ/q\ZZ$-affine-linear map
\begin{equation}\label{eq:kane_p_map}
  (\ZZ/q\ZZ)^{t-1}\to \{\vx\in\left(\ZZ/q\ZZ\right)^t\where \vc\cdot\vx\equiv M\bmod q\}
\end{equation}
induced by $\Psi$ is bijective. Indeed, the sequence
\begin{equation*}
  0\to \ZZ^{t-1} \xrightarrow{A\cdot} \ZZ^t \xrightarrow{\vc\cdot} \ZZ \to 0
\end{equation*}
is exact, as $\gcd(c_1,\ldots,c_t)=1$. Tensoring with $\ZZ/q\ZZ$, we get an exact sequence
\begin{equation*}
  (\ZZ/q\ZZ)^{t-1}\xrightarrow{A\cdot}(\ZZ/q\ZZ)^t\xrightarrow{\vc\cdot}\ZZ/q\ZZ\to 0,
\end{equation*}
which shows that the map in \eqref{eq:kane_p_map} is surjective and its image has cardinality $q^{t-1}$. Hence, it is bijective.
Taking $q=p$ for any prime $p\nmid\Delta_{\vK}$, the bijectivity of \eqref{eq:kane_p_map} shows that
\begin{equation*}
  \tau_p= \frac{p}{\phi(p)^t}\left|\left\{n\in(\ZZ/p\ZZ)^{t-1}\ :\ \gcd(\psi_i(n),p)=1\text{ for all }1\leq i\leq t \right\}\right|=\beta_p.
\end{equation*}
With \eqref{eq:cheb_leading_term}, it remains to show that
\begin{equation}\label{eq:kane_tau_eq} \left(\prod_{i=1}^t\frac{|C_i|}{[K_i:\QQ]}\right)\tau_{\Delta_{\vK}}=\EE_{n\in(\ZZ/\cD_\vK\ZZ)^s}\prod_{i=1}^t\cD_{\vK}\eta_{K_i,C_i}(\psi_i(n),\cD_\vK).
\end{equation}
As $\cD_{\vK}\mid \Delta_{\vK}$ and they have the same prime divisors, we see for any $x_i\in\ZZ/\Delta_\vK\ZZ$ that 
\begin{equation*}
  \eta_{K_i,C_i}(x_i,\cD_{\vK}) = \frac{|C_i|}{[K_i(\mu_{\cD_{\vK}}):\QQ]}=\frac{|C_i|}{[K_i:\QQ]}\frac{[\QQ(\mu_{\Delta_{\vK}}):\QQ]}{[\QQ(\mu_{\cD_{\vK}}):\QQ]}
 \frac{[K_i^{ab}:\QQ]}{[\QQ(\mu_{\Delta_{\vK}}):\QQ]}=\frac{|C_i|}{[K_i:\QQ]}\frac{\Delta_{\vK}}{\cD_{\vK}}\frac{[K_i^\ab:\QQ]}{\phi(\Delta_\vK)}
\end{equation*}
if $\gcd(x_i,\Delta_{\vK})=1$ and $x_i|_{K_i^\ab}\in C_i|_{K_i^\ab}$, and $\eta_{K_i,C_i}(x_i,\cD_{\vK})=0$ otherwise. Hence, using again the bijectivity of \eqref{eq:kane_p_map}, we obtain
\begin{align*}
  \left(\prod_{i=1}^t\frac{|C_i|}{[K_i:\QQ]}\right)\tau_{\Delta_\vK}=\frac{1}{\Delta_\vK^{t-1}}\sum_{n\in(\ZZ/\Delta_\vK\ZZ)^{t-1}}\prod_{i=1}^t\cD_\vK\eta_{K_i,C_i}(\psi_i(n),\cD_\vK),
\end{align*}
which shows \eqref{eq:kane_tau_eq}, as desired.

\subsection{Three primes theorem with prescribed primitive roots}\label{subsec:comparison_FKS}
Now, we will show that \cite[Theorem 1.1]{MR4201547} agrees with a special case of Theorem \ref{thm:main_artin}. We may formulate the this result as follows: let $\va=(a_1,a_2,a_3)$ where $a_{i}\neq -1$ is not a perfect square and assume $\HRH(a_i)$ for all $1\leq i\leq 3$. Then, for $N\in\NN$,
\begin{equation} \label{eq:FKS}
\sum_{\substack{x_i\in\NN\\x_1+x_2+x_3=N}}\prod_{i=1}^{3}\Lambda_{a_i}(x_i)=\frac{N^2}{2}\left(\prod_{i=1}^3\delta(a_i,0,1)\right)\tilde{\sigma}_{\va,N}(\tilde{\cD}_{\va})\prod_{p\nmid \tilde{\cD}_{\va}} \tilde{\sigma}_{\va,N}(p)+o(N^2),
\end{equation}
where
\[
\tilde{\cD}_{\va}=2^{\min\{\nu_2(\Delta_{a_i}): 1\leq i\leq 3\}-\max\{\nu_2(\Delta_{a_i}):1\leq i\leq 3\}}\lcm(\Delta_{a_1},\Delta_{a_2},\Delta_{a_3}),
\]
and
\[
\tilde{\sigma}_{\va,N}(q)=q\left(\sum_{\substack{x_1,x_2,x_3\bmod q\\x_1+x_2+x_3\equiv N\bmod q}}\prod_{i=1}^3\frac{\delta(a_i,x_i,q)}{\delta(a_i,0,1)}\right).
\]

Similarly as in \S\ref{subsec:comparison_Kane}, we compare this to Theorem \ref{thm:main_artin} with the same $\va$, the affine-linear system $\Psi(n_1,n_2)=(n_1,n_2,N-n_1-n_2)$ and the convex subset $\{x_1,x_2\geq 0,\ x_1+x_2\leq N\}\subseteq\RR^2$ of volume $N^2/2$. For every $q\in\NN$, the bijection
\begin{align*}
  (\ZZ/q\ZZ)^2&\to \{(x_1,x_2,x_3)\in(\ZZ/q\ZZ)^3\ :\ x_1+x_2+x_3=N\}\\
   (n_1,n_2)&\mapsto (n_1,n_2,N-n_1-n_2)
\end{align*}
shows that $\tilde\sigma_{\va,N}(q)=\sigma_{\va,\Psi}(q)$ for all $q\in\NN$. For even $N$, this quantity is clearly equal to $0$ whenever $2\mid q$, hence, we may assume that $N$ is odd. The only significant difference between \eqref{eq:FKS} and the asymptotic from Theorem \ref{thm:main_artin} is that between $\tilde\cD_\va$ and $\cD_\va$. Let $m:=\min\{\nu_2(a_i):1\leq i\leq 3\}$ and $M:=\max\{\nu_2(a_i):1\leq i\leq 3\}$, so $\tilde\cD_{\va}=2^mq$ and $\cD_\va=2^Mq$ for some odd $q$. We need to show that
\begin{align}
  \tilde\sigma_{\va,N}(2^Mq)&=\tilde\sigma_{\va,N}(2^mq), & &\text{ if }m\geq 2\text{ or }m=M,\label{eq:FKS_goal_1}\\
  \tilde\sigma_{\va,N}(2^Mq)&=\tilde\sigma_{\va,N}(2)\tilde\sigma_{\va,N}(q), & &\text{ if } m=0\text{ and }M\geq 1.\label{eq:FKS_goal_2}
\end{align}
This is clear if $m=M$, so assume from now that $m<M$. Using that $2^{-v_2(\Delta_{a_i})}\Delta_{a_i}\mid q$ for all $1\leq i\leq 3$, one can see from Moree's explicit description in \S\ref{sec:delta} that, for any $l\geq 1$, $a\in\{a_1,a_2,a_3\}$ and $b\in\ZZ$, we have 
\begin{equation*}
  \delta(a,b,2^lq) =
  \begin{cases}
    0& \text{ if }b\equiv 0\bmod 2,\\
        2^{-(l-1)}\delta(a,b,q)\left(1-\left(\frac{\Delta_a}{b}\right)\right)& \text{ if }b\not\equiv 0\bmod 2\text{ and }1\leq v_2(a)\leq l,\\
    2^{-(l-1)}\delta(a,b,q)& \text{ if }b\not\equiv 0\bmod 2\text{ otherwise.}
  \end{cases}
\end{equation*}
Moreover, as $N$ is odd, from the same description we compute $\tilde\sigma_{\va,N}(2)=2$. Without loss of generality, we assume that $\nu_2(\Delta_{a_1})=m$ and $\nu_2(\Delta_{a_3})=M$. Then for any $l\geq 1$,
\begin{align}
  \tilde\sigma_{\va,N}(2^lq)&=2^lq\sum_{\substack{c_1,c_2,c_3\bmod q\\c_1+c_2+c_3\equiv N\bmod q}}\prod_{i=1}^3\frac{\delta(a_i,c_i,q)}{2^{l-1}\delta(a_i,0,1)}\sum_{\substack{x_1,x_2,x_3\bmod 2^l q\\x_1+x_2+x_3\equiv N\bmod 2^l\\\forall i:\ x_i\equiv c_i\bmod q\\\forall i:\ x_i\equiv 1\bmod 2}}\prod_{\substack{1\leq i\leq 3\\1\leq v_2(\Delta_{a_i})\leq l}}\left(1-\left(\frac{\Delta_{a_1}}{x_i}\right)\right).\label{eq:sigma_tilde}
\end{align}
If $m=0$, then $v_2(\Delta_{a_1})=0$ and thus the inner sum is equal to
\begin{align}\label{eq:sigma_tilde_inner}
  \prod_{\substack{i\in\{2,3\}}}\sum_{\substack{x_i\bmod 2^lq\\x_i\equiv c_i\bmod q\\x_i\equiv 1\bmod 2}}\left(1-\one_{0<v_2(\Delta_{a_i})\leq l}\left(\frac{\Delta_{a_i}}{x_i}\right)\right)=2^{2(l-1)}.
\end{align}
Here, we have used that $\left(\frac{\Delta_{a_i}}{\cdot}\right)$ is a primitive character modulo $\Delta_{a_i}$ and $\Delta_{a_i}\nmid 2q$ if $\nu_{2}(\Delta_{a_i})>0$, so that then, for $b_i\in\ZZ$ satisfying $b_i\equiv c_i\bmod q$ and $b_i\equiv 1\bmod 2$,
\begin{equation*}
  \sum_{\substack{x_i\bmod 2^lq\\x_i\equiv c_i\bmod q\\x_i\equiv 1\bmod 2}}\left(\frac{\Delta_{a_i}}{x_i}\right) =\hspace{-0.5cm}\sum_{\substack{y_i\bmod \Delta_{a_i}\\y_i\equiv b_i\bmod (2q,\Delta_{a_i})}}\hspace{-0.5cm}\left(\frac{\Delta_{a_i}}{x_i}\right)\sum_{\substack{x_i\bmod 2^lq\\x_i\equiv y_i\bmod \Delta_{a_i}\\x_i\equiv b_i\bmod 2q}}\hspace{-0.5cm}1= \frac{2^lq}{\lcm(\Delta_{a_i},2q)}\hspace{-0.5cm}\sum_{\substack{y_i\bmod \Delta_{a_i}\\y_i\equiv b_i\bmod (2q,\Delta_{a_i})}}\left(\frac{\Delta_{a_i}}{x_i}\right)=0.
\end{equation*}
Inserting \eqref{eq:sigma_tilde_inner} back into \eqref{eq:sigma_tilde}, we obtain in case $m=0$ that
\begin{equation}\label{eq:sigma_tilde_factor}
  \tilde\sigma_{\va,N}(2^lq)=2q \sum_{\substack{c_1,c_2,c_3\bmod q\\c_1+c_2+c_3\equiv N\bmod q}}\prod_{i=1}^3\frac{\delta(a_i,c_i,q)}{\delta(a_i,0,1)}=\tilde\sigma_{\va,N}(2)\tilde\sigma_{\va,N}(q).
\end{equation}
With $l=M$, this shows \eqref{eq:FKS_goal_2}. Now we assume that $1< m \leq l\leq M$. Then the inner sum in \eqref{eq:sigma_tilde} is equal to
\begin{align*}
  \sum_{\substack{y_1,y_2,y_3\bmod 2^m q\\y_1+y_2+y_3\equiv N\bmod 2^m\\\forall i:\ y_i\equiv c_i\bmod q\\\forall i:\ y_i\equiv 1\bmod 2}}\prod_{i=1}^2\left(1-\one_{\nu_2(\Delta_{a_i})=m}\left(\frac{\Delta_{a_i}}{y_i}\right)\right)\prod_{i=2}^3\sum_{\substack{x_i\bmod 2^l q\\x_i\equiv y_i\bmod 2^m q}}\left(1-\one_{m<\nu_2(\Delta_{a_i})\leq l}\left(\frac{\Delta_{a_i}}{x_i}\right)\right).
\end{align*}
As before, the inner sum in this expression is equal to $2^{l-m}$, as the Kronecker symbol is a primitive character modulo $\Delta_{a_i}$ and $\Delta_{a_i}$ does not divide $2^mq$ if $\nu_2(\Delta_{a_i})>m$. Hence, the expression above becomes
\begin{align*}
  2^{2(l-m)}\prod_{i=1}^2\sum_{\substack{y_i\bmod 2^m q\\y_i\equiv c_i\bmod q\\y_i\equiv 1\bmod 2}}\left(1-\one_{\nu_2(\Delta_{a_i})=m}\left(\frac{\Delta_{a_i}}{y_i}\right)\right)=2^{2(l-1)},
\end{align*}
for analogous reasons as before. Inserting this into \eqref{eq:sigma_tilde}, this shows \eqref{eq:sigma_tilde_factor} also in our current situation, and thus \eqref{eq:FKS_goal_1}.

\subsection{Three-term arithmetic progressions in primes with prescribed
  primitive roots}\label{sec:prog_example}

Here we compute an explicit form for the leading constant $\fS(\va,\Psi)$ in
Theorem \ref{thm:main_artin} 
for the system of affine-linear forms
\begin{equation*}
\Psi(n_1,n_2)=(n_1,n_1+n_2,n_1+2n_2)
\end{equation*}
and with $a_1=a_2=a_3=2$. In this case, we have $\cD_{\va}=\Delta_2=8$, $h_2=1$. From Hooley's formula
\eqref{eq:hooley_density}, we see that $\delta(2,0,1)=\cA_2$. Using Moree's
expression from \S\ref{sec:delta} to evaluate all $\delta(2,\psi_i(n),8)$, we find that
\begin{equation*}
  \sigma_{(2,2,2),\Psi}(8)=8\sum_{n\in(\ZZ/8\ZZ)^2}\prod_{i=1}^3\frac{\delta(2,\psi_i(n),8)}{\cA_2}=2.
\end{equation*}
It remains to discuss $\sigma_{(2,2,2),\Psi}(p)$ for $p> 2$, given by
\begin{equation} \label{eq:ex2_p}
\sigma_{(2,2,2),\Psi}(p)=p\sum_{n\in(\ZZ/p\ZZ)^2}\prod_{i=1}^3\frac{\delta(2,\psi_i(n),p)}{\cA_2},
\end{equation}
with
\[
\delta(2,\psi_i(n),p)=\cA_2(\psi_i(n),p)=\frac{\cA_2}{p-1}\left(1-\frac{1}{p(p-1)}\right)^{-1}\cdot\begin{cases}\left(1-\frac{1}{p}\right)&\text{if
}p|\psi_i(n)-1,\\1 &\text{if }p\nmid
\psi_i(n)(\psi_i(n)-1),\\0&\text{otherwise}.\end{cases}
\]
To get a non-zero summand in (\ref{eq:ex2_p}), we need to have
\begin{equation}
p\nmid n_1(n_1+n_2)(n_1+2n_2),\label{eq:progressions_example_gcd}
\end{equation}
which is true for $(p-1)(p-2)$ vectors
$(n_1,n_2)\in (\ZZ/p\ZZ)^2$.  For such $(n_1,n_2)$, we consider the congruences
\begin{align} n_1\equiv 1\bmod p, \label{al_ex2:1}\\ n_1+n_2\equiv 1\bmod
p, \label{al_ex2:2}\\ n_1+2n_2\equiv 1\bmod p, \label{al_ex2:3}
\end{align}
and obtain the following three cases:
\begin{enumerate}
\item All three congruences are satisfied if and only if $(n_1,n_2)=(1,0)$.
\item There are no $(n_1,n_2)$ for which exactly two of the congruences are satisfied.
\item There are $3(p-3)$ pairs $(n_1,n_2)$ satisfying
  \eqref{eq:progressions_example_gcd} for which exactly one of the congruences
  is satisfied. Indeed, if this congruence is \eqref{al_ex2:1}, then
  $n_1\equiv 1\bmod p$ and $n_2\not \equiv 0\bmod p$. Together with
  \eqref{eq:progressions_example_gcd}, this leaves $p-3$ choices for $n_2$. The
  other cases are similar.
\item For the remaining $(p-1)(p-2)-1-3(p-3)=p^2-6p+10$ values of $(n_1,n_2)$ satisfying
  \eqref{eq:progressions_example_gcd}, none of the above congruences are satisfied.
\end{enumerate}
Therefore, from \eqref{eq:ex2_p} we get
\begin{align*}
\sigma_{(2,2,2),\Psi}(p)&=\frac{p}{(p-1)^3}\left(1-\frac{1}{p(p-1)}\right)^{-3}\Bigg((p^2-6p+10+3(p-3)\left(1-\frac{1}{p}\right)+\left(1-\frac{1}{p}\right)^3\Bigg).\\
                        &=1-\frac{p^4-p^3-3p^2-2p-1}{(p^2-p-1)^3}.
\end{align*}
Putting everything together, we obtain the formula stated in Example \ref{ex:2}.

If we fix the system of affine-linear forms
$\Psi(n_1,n_2)=(n_1,n_1+n_2,n_1+2n_2)$ and consider other values of $a_i$
with $h_{a_i}=1$, the computations are similar as in the previous proof. In particular, if $p\nmid \cD_{\va}$, we
obtain
\[
\frac{\delta(a_i,\psi_i(n),p)}{\delta(a_i,0,1)}=\frac{\cA_{a_i}(\psi_i(n),p)\left(1+\mu(2|\Delta_{a_i}|)\left(\frac{1}{\psi_i(n)}\right)f_a^{\ddagger}(|\Delta_{a_i}|)\right)}{\cA_{a_i}(1+\mu(2|\Delta_{a_i}|)f_{a_i}^{\ddagger}(|\Delta_{a_i}|))}=\frac{\cA_{a_i}(\psi_i(n),p)}{\cA_{a_i}},
\]
and thus the same analysis as above shows that
\[
\prod_{p\nmid\cD_{\va}}\sigma_{\va,\Psi}(p)=\left(1-\frac{p^4-p^3-3p^2-2p-1}{(p^2-p-1)^3}\right).
\]
Table \ref{tab:examples1} below summarizes the value of the remaining part of
$\fS(\va,\Psi)$ for some choices of $\va$. Note that in case $\va=(2,3,6)$,
even though the main term is zero, there are integers $(n_1,n_2)$ such that
$(n_1,n_1+n_2,n_1+2n_2)$ are primes with primitive roots $2,3,6$, respectively,
such as $(3,4), (3,28), (3,40)$. Inserting $n_1=3$ and taking $p=3+n_2$
turns $\Psi$ into the system $(p,2p-3)$ of infinite complexity, about which we
can not say anything. In particular, even under GRH we don't know whether there are infinitely
many pairs $(n_1,n_2)$ as above.

\begin{table} \centering \renewcommand{\arraystretch}{1.5}
\begin{tabular}{|c|c|c|c|} \hline $a_1$ & $a_2$ & $a_3$ &
$\prod_{i=1}^3\delta(a_i,0,1)\sigma_{\va,\Psi}(\cD_{\va})$\\ \hline $2$ & $2$ &
$3$ & $\frac{42}{25}\cA_2^2\cA_3$\\ \hline $3$ & $3$ & $3$ &
$\frac{84}{25}\cA_3^3$\\ \hline $2$ & $2$ & $5$ &
$\frac{13500}{6859}\cA_2^2\cA_5$\\ \hline $3$ & $3$ & $5$ &
$\frac{11340}{6859}\cA_3^2\cA_5$\\ \hline $5$ & $5$ & $5$ &
$\frac{10000}{6859}\cA_5^3$\\ \hline $2$ & $3$ & $5$ &
$\frac{11340}{6859}\cA_2\cA_3\cA_5$\\ \hline $2$ & $2$ & $6$ &
$\frac{42}{25}\cA_2^2\cA_6$\\ \hline $2$ & $3$ & $6$ & $0$\\ \hline $6$ & $6$ &
$6$ & $\frac{42}{25}\cA_6^3$\\ \hline $2$ & $5$ & $6$ &
$\frac{11760}{6859}\cA_2\cA_5\cA_6$\\ \hline
\end{tabular} \hspace{0.5cm}
\begin{tabular}{|c|c|c|c|} \hline $a_1$ & $a_2$ & $a_3$ &
$\prod_{i=1}^3\delta(a_i,0,1)\sigma_{\va,\Psi}(\cD_{\va})$\\ \hline $7$ & $7$ &
$7$ & $\frac{161084}{68921}\cA_7^3$\\ \hline $2$ & $2$ & $7$ &
$\frac{134050}{68921}\cA_2^2\cA_7$\\ \hline $2$ & $3$ & $7$ &
$\frac{112602}{68921}\cA_2\cA_3\cA_7$\\ \hline $2$ & $5$ & $7$ &
$\frac{938350000}{472729139}\cA_2\cA_5\cA_7$\\ \hline $3$ & $3$ & $7$ &
$\frac{570192}{344605}\cA_3^2\cA_7$\\ \hline $10$ & $10$ & $10$ &
$\frac{12890}{6859}\cA_{10}^3$\\ \hline $2$ & $3$ & $10$ &
$\frac{54138}{34295}\cA_2\cA_3\cA_{10}$\\ \hline $2$ & $5$ & $10$ &
$\frac{14000}{6859}\cA_2\cA_5\cA_{10}$\\ \hline $5$ & $5$ & $10$ &
$\frac{1000}{361}\cA_5^2\cA_7$\\ \hline $11$ & $11$ & $11$ &
$\frac{2849748}{1295029}\cA_{11}^3$\\ \hline
\end{tabular}
\caption{Values of part of $\fS(\va,\Psi)$ for $\Psi(n_1,n_2)=(n_1,n_1+n_2,n_1+2n_2)$.} \label{tab:examples1}
\end{table}

\subsection{Three-term progressions whose common difference is one less than a
  prime, all with prescribed primitive roots.}\label{sec:example_intro}

Now let us investigate the leading constant $\fS(\va,\Psi)$ in Theorem
\ref{thm:main_artin} for the system
\[\Psi(n_1,n_2)=(n_1,n_2,n_1+n_2-1,n_1+2n_2-2)\]
with various values of $\va$ with $h_{a_i}=1$ for all $i$. In this case, for $p\nmid 6\cD_\va$, we have
\[
\sigma_{\va,\Psi}(p)=p^2\sum_{n\in(\ZZ/p\ZZ)^2}\prod_{i=1}^4\frac{\cA_{a_i}(\psi_i(n),p)}{\cA_{a_i}}.
\]
Similarly, as in the previous example, we have
\[
\cA_{a_i}(\psi_i(n),p)=\frac{\cA_{a_i}}{p-1}\left(1-\frac{1}{p(p-1)}\right)^{-1}\cdot\begin{cases}\left(1-\frac{1}{p}\right)&\text{if }p|\psi_i(n)-1,\\1 &\text{if }p\nmid \psi_i(n)(\psi_i(n)-1),\\0&\text{otherwise}.\end{cases}
\]
It is not hard to see that
\begin{equation}\label{eq:ex3_p_nonzero}
  p\nmid n_1n_2(n_1+n_2-1)(n_1+2n_2-2),
\end{equation}
holds for 
$(p-2)(p-3)+p-1$ values of $n$. As before, we need to discuss the congruences
\begin{align}
n_1\equiv 1\bmod p, \label{al_ex13:1}\\ 
n_2\equiv 1\bmod p,\label{al_ex13:2}\\
n_1+n_2\equiv 2\bmod p,\label{al_ex13:3}\\
n_1+2n_2\equiv 3\bmod p.\label{al_ex13:4}
\end{align}
Assuming \eqref{eq:ex3_p_nonzero}
we obtain:
\begin{enumerate}
\item All congruences are satisfied if and only if $n=(1,1)$.
\item There are no $(n_1,n_2)$ for which exactly two or three of the congruences are satisfied.
\item For exactly $4(p-3)$ values of $n$ satisfying \eqref{eq:ex3_p_nonzero}, exactly one of
  the congruences is satisfied. More concretely, we obtain $p-3$ cases if we
  suppose (\ref{al_ex13:1}), $p-2$ cases for (\ref{al_ex13:2}), $p-3$ cases for
  (\ref{al_ex13:3}), and $p-4$ cases for (\ref{al_ex13:4}).
\item For the remaining $(p-2)(p-3)+p-1-1-4(p-3)=(p-4)^2$ values of $n$, none of the
  congruences are satisfied.
\end{enumerate} 
Summing these cases up, we obtain
\begin{align*}
  \sigma_{\va,\Psi}(p)&=\frac{p^2}{(p-1)^4}\left(1-\frac{1}{p(p-1)}\right)^{-4}\Bigg((p-4)^2+4(p-3)\left(1-\frac{1}{p}\right)+\left(1-\frac{1}{p}\right)^4\Bigg)\\
  &=1-\frac{p^6-11p^4-4p^3+p^2+4p+1}{(p^2-p-1)^4},
\end{align*}
and therefore
\begin{equation*}
  \prod_{p\nmid6\cD_{\va}}\sigma_{\va,\Psi}(p)=\prod_{p\nmid6\cD_{\va}}\left(1-\frac{p^6-11p^4-4p^3+p^2+4p+1}{(p^2-p-1)^4}\right).
\end{equation*}
Thus, it remains to compute
\begin{equation*}
  \cC_\va:=\prod_{i=1}^3\delta(a_i,0,1)\sigma_{\va,\Psi}(\cD_\va)\prod_{\substack{p\mid 6\\p\nmid\cD_{\va}}}\sigma_{\va,\Psi}(p).
\end{equation*}
For given $\va=(a_1,a_2,a_3,a_4)$, this is a finite problem. Table
\ref{tab:example2a} shows the results of a computer program for some values of
$a_1=a_2=a_3=a_4=a$, and Table \ref{tab:example2b} does so for some 
choices with distinct $a_i$.

\begin{table}[ht]
\centering
\renewcommand{\arraystretch}{1.5}
\begin{tabular}{|c|c|}
\hline
$a$ & $\mathcal{C}_{\va}$\\
\hline
$2$ & $0$\\
\hline
$3$ & $0$\\
\hline 
$5$ & $0$\\
\hline
$6$ & $0$\\
\hline
$7$ & $\frac{914838624}{353220125}\cA_{7}^4$\\
\hline
$10$ & $\frac{315000}{130321}\cA_{10}^4$\\
\hline
$11$ & $\frac{12662473824}{3528954025}\cA_{11}^4$\\
\hline
$12$ & $0$\\
\hline
$13$ & $\frac{233540326656}{72150078125}\cA_{13}^4$\\
\hline
$14$ & $\frac{1008016632}{353220125}\cA_{14}^4$\\
\hline
\end{tabular}
\hspace{0.5cm}
\begin{tabular}{|c|c|}
\hline
$a$ & $\mathcal{C}_{\va}$\\
\hline
$15$ & $0$\\
\hline
$17$ & $\frac{1557162851328}{674197560125}\cA_{17}^4$\\
\hline 
$18$ & $0$\\
\hline
$19$ & $\frac{6434450287776}{1690158870125}\cA_{19}^4$\\
\hline
$20$ & $0$\\
\hline
$21$ & $0$\\
\hline
$22$ & $\frac{62825350896}{17644770125}\cA_{22}^4$\\
\hline
$23$ & $\frac{30998869628832}{8129718828125}\cA_{23}^4$\\
\hline
$24$ & $0$\\
\hline
$26$ & $\frac{263481394176}{72150078125}\cA_{26}^4$\\
\hline
\end{tabular}
\caption{Value of $\mathcal{C}_{\va}$ for several values of $\va=(a,a,a,a)$} \label{tab:example2a}
\end{table}

\begin{table}[ht]
\centering
\renewcommand{\arraystretch}{1.5}
\begin{tabular}{|c|c|c|c|c|}
\hline
$a_1$ & $a_2$ & $a_3$ & $a_4$ & $\mathcal{C}_{\va}$\\
\hline
$2$ & $2$ & $2$ & $3$ & $\frac{72576}{78125}\cA_2^3\cA_3$\\
\hline
$2$ & $2$ & $2$ & $5$ & $\frac{252000}{130321}\cA_2^3\cA_5$\\
\hline
$2$ & $2$ & $3$ & $3$ & $\frac{63504}{15625}\cA_2^2\cA_3^2$\\
\hline
$2$ & $2$ & $3$ & $5$ & $\frac{508032}{130321}\cA_2^2\cA_3\cA_5$\\
\hline
$2$ & $2$ & $3$ & $6$ & $0$\\
\hline
$2$ & $2$ & $5$ & $5$ & $\frac{604800}{130321}\cA_2^2\cA_5^2$\\
\hline
$2$ & $2$ & $5$ & $6$ & $\frac{2159136}{651605}\cA_2^2\cA_5\cA_6$\\
\hline
$2$ & $2$ & $6$ & $6$ & $\frac{72576}{78125}\cA_2^2\cA_6^2$\\
\hline
$2$ & $3$ & $3$ & $3$ & $0$\\
\hline
$2$ &$3$ & $3$ & $5$ & $\frac{508032}{130321}\cA_2\cA_3^2\cA_5$\\
\hline
\end{tabular}
\hspace{0.5cm}
\begin{tabular}{|c|c|c|c|c|}
\hline
$a_1$ & $a_2$ & $a_3$ & $a_4$ & $\mathcal{C}_{\va}$\\
\hline
$2$ & $3$ & $3$ & $6$ & $0$\\
\hline
$2$ & $3$ & $6$ & $6$ & $\frac{63504}{15625}\cA_2\cA_3\cA_6^2$\\
\hline
$2$ & $5$ & $5$ & $5$ & $\frac{403200}{130321}\cA_2\cA_5^3$\\
\hline
$2$ & $5$ & $5$ & $6$ & $\frac{2286144}{651605}\cA_2\cA_5^2\cA_6$\\
\hline
$2$ & $6$ & $6$ & $6$ & $\frac{244944}{78125}\cA_2\cA_6^3$\\
\hline
$3$ & $3$ & $3$ & $5$ & $0$\\
\hline
$3$ & $3$ & $3$ & $6$ & $0$\\
\hline
$3$ & $3$ & $5$ & $5$ & $\frac{3048192}{651605}\cA_3^2\cA_5^2$\\
\hline
$3$ & $3$ & $5$ & $6$ & $\frac{2159136}{651605}\cA_3^2\cA_5\cA_6$\\
\hline
$3$ & $3$ & $6$ & $6$ & $\frac{63504}{15625}\cA_{3}^2\cA_{6}^2$\\
\hline
\end{tabular}
\caption{Value of $\mathcal{C}_{\va}$ for several values of $\va=(a_1,a_2,a_3,a_4)$} \label{tab:example2b}
\end{table}

\markleft{\textsc{Christopher Frei, Joachim K\"onig and Magdal\'ena Tinkov\'a}}
\newpage
\markright{\textsc{Appendix A: Locally cyclic $\Sn_n$-extensions}}

\appendix
\section{Locally cyclic \texorpdfstring{$\Sn_n$}{Sn}-extensions.}\label{sec:appendix}

\begin{center}
{\sc by Christopher Frei, Joachim K\"onig and Magdal\'ena Tinkov\'a}
\end{center}

In this appendix, we apply Corollary \ref{cor:existence_cheb} of the main part
of the paper to prove the following result on locally cyclic normal $\Sn_n$-extensions of $\QQ$ with prescribed Artin symbols at finitely many primes. A Galois number field is \emph{locally cyclic}, if all decomposition groups are cyclic.   

\begin{theorem}\label{thm:locally_cyclic_new}
  Let $n\geq 2$ be an integer. Let $M_1$ be a finite set of primes and $M_2$ a finite set of
  sufficiently large primes in terms of $n$. For each $p\in M_2$, let
  $C_p$ be a conjugacy class of $\Sn_n$. Then there are infinitely many
  linearly disjoint Galois-extensions $K/\QQ$, such that:
  \begin{enumerate}
  \item $\Gal(K/\QQ)\simeq \Sn_n$,
  \item $K/\QQ$ is locally cyclic,
  \item All $p\in M_1\cup M_2$ are unramified in $K$,
  \item For each $p\in M_2$, we have $[K/\QQ,p]=C_p$.
  \end{enumerate}
\end{theorem}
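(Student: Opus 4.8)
The plan is to deduce Theorem~\ref{thm:locally_cyclic_new} from Corollary~\ref{cor:existence_cheb}, following and generalising to arbitrary $n$ the degree-$5$ construction of \cite[\S5]{MR4177544}. One works with the explicit parametric polynomial $P_{s,t}\in\ZZ[x]$ that is the analogue for general $n$ of the family in \cite[\S5]{MR4177544} (built from the trinomial underlying $f_{n-1}$: note that $f_{n-1}(x)=(x-1)^{-2}(x^n-nx+(n-1))$, so $F_{n-1}$ is the field generated by the roots $\neq 1$ of $x^n-nx+(n-1)$). Its splitting field $K_{s,t}$ has the property that if $|t|$, $|s|$, $|(n-1)^{n-1}t-n^ns|$ are \emph{distinct primes} with $t\equiv 1\bmod n$, $s\equiv 1\bmod(n-1)$ and $(n-1)^{n-1}t-n^ns$ splitting completely in $F_{n-1}$, then $\Gal(K_{s,t}/\QQ)\cong\Sn_n$ and $K_{s,t}/\QQ$ is locally cyclic. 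Establishing this last implication for general $n$ is the algebraic core: it uses the Osada-type fact that $\Gal(f_{n-1}/\QQ)\cong\Sn_{n-2}$ for all large $n$ (so that $F_{n-1}^{\ab}=\QQ(\sqrt{\disc f_{n-1}})$), together with a ramification analysis showing that the ramified primes are exactly the prime divisors of $|(n-1)^{n-1}t-n^ns|$ (up to a bounded exceptional set), each contributing a single transposition to inertia and a cyclic decomposition group. The finitely many small $n$ not covered would be handled by classical constructions.

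\textbf{Encoding the conditions at $M_1$ and $M_2$.} The remaining primes in $M_1\cup M_2$ are dealt with by congruences on $(s,t)$: for $p\in M_1$ one imposes $(s,t)\bmod p$ so that $P_{s,t}\bmod p$ is separable (making $p$ unramified), and for $p\in M_2$ so that $P_{s,t}\bmod p$ is separable with factorisation type equal to the cycle type of $C_p$ (making $[K_{s,t}/\QQ,p]=C_p$). The hypothesis ``$M_2$ sufficiently large in terms of $n$'' enters precisely here: a Lang--Weil / function-field Chebotarev estimate for the varieties cutting out each factorisation type shows that for $p\gg_n 1$ every cycle type of $\Sn_n$ is realised by some $P_{s_0,t_0}\bmod p$. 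Folding in these finitely many congruences amounts to replacing the system $\Psi=(\psi_1,\psi_2,\psi_3)$, $\psi_1(s,t)=t$, $\psi_2(s,t)=s$, $\psi_3(s,t)=(n-1)^{n-1}t-n^ns$, by $\Psi'(\mathbf m):=\Psi(M\mathbf m+\vc)$ for a suitable modulus $M=\prod_{p\in M_1\cup M_2}p$ and offset $\vc$; we set $\vK=(\QQ(\mu_n),\QQ(\mu_{n-1}),F_{n-1})$ and $\vC=(\{\ident\},\{\ident\},\{\ident\})$, and note that $\cD_\vK$ --- an explicit integer depending only on $n$, through $F_{n-1}^{\ab}=\QQ(\sqrt{\disc f_{n-1}})$ --- is unchanged by this substitution.

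\textbf{Verifying the hypotheses of Corollary~\ref{cor:existence_cheb} and concluding.} I would check (1)--(4) for $\Psi'$ and the open convex cone $X=\{(s,t):s>0,\ (n-1)^{n-1}t>n^ns\}$ (pulled back along $\mathbf m\mapsto M\mathbf m+\vc$). Hypothesis (1) is immediate, as $t$, $s$, $(n-1)^{n-1}t-n^ns$ are pairwise linearly independent and the substitution only rescales linear parts. Hypothesis (3) holds since $X$ is a nonempty open cone on which all three linear parts are positive (because $(n-1)^{n-1}<n^n$ for $n\geq2$). For (2) one checks prime by prime that $\prod_i\psi_i'$ has no fixed prime divisor: for $p\nmid M$ this reduces to the elementary statement for $\prod_i\psi_i$, using $\gcd(n,n-1)=1$ to evade the primes dividing $n(n-1)$, and for $p\mid M$ it becomes $p\nmid\prod_i\psi_i(\vc)$, which is built into the choice of $\vc$. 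Hypothesis (4) unwinds to the simultaneous solvability of $t\equiv1\bmod n$, $s\equiv1\bmod(n-1)$ and $(n-1)^{n-1}t-n^ns$ lying in the split square class modulo the conductor of $\QQ(\sqrt{\disc f_{n-1}})$, consistently with the coprimality and $M_2$-factorisation-type congruences already fixed by $\vc$; solvability follows from $\gcd(n,n-1)=1$ (at every relevant prime at least one of $s,t$ is free), plus a finite compatibility check at primes dividing both $M$ and $\cD_\vK$. Corollary~\ref{cor:existence_cheb} then gives infinitely many $(s,t)\in X\cap\ZZ^2$ with $t$, $s$, $(n-1)^{n-1}t-n^ns$ distinct primes satisfying all the required conditions, hence infinitely many fields $K=K_{s,t}$ with $\Gal(K/\QQ)\cong\Sn_n$, locally cyclic, unramified at all $p\in M_1\cup M_2$, and with $[K/\QQ,p]=C_p$ for $p\in M_2$. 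To get \emph{pairwise linearly disjoint} ones, I would diagonalise on growing ramification: the primes $\psi_i(s,t)$ are unbounded over the family, so one inductively picks $(s_{j+1},t_{j+1})$ with all $\psi_i(s_{j+1},t_{j+1})$ exceeding every prime ramified in $K_{s_1,t_1},\dots,K_{s_j,t_j}$; since a nontrivial normal subextension of an $\Sn_n$-extension is the quadratic resolvent $\QQ(\sqrt{\disc P_{s,t}})$ (ramified at the large prime $\psi_3$), the whole field, or --- when $n=4$ --- the $\Sn_3$-resolvent (which one also forces to vary), comparing ramification yields $K_{s_i,t_i}\cap K_{s_j,t_j}=\QQ$ for $i\neq j$.

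\textbf{Main obstacle.} I expect the difficulty to be two-fold. On the arithmetic side, proving that every cycle type of $\Sn_n$ occurs as a factorisation type of $P_{s_0,t_0}\bmod p$ for all $p\gg_n1$ --- the mechanism behind ``$M_2$ sufficiently large'' --- requires a uniform Lang--Weil estimate for the relevant resolvent varieties, after which all the congruence conditions (splitting at $\cD_\vK$, factorisation types at $M_2$, separability at $M_1$, non-vanishing) must be shown to be simultaneously solvable, a delicate but finite bookkeeping task. On the algebraic side, promoting the degree-$5$ Galois-group and local-cyclicity analysis of \cite[\S5]{MR4177544} to general $n$ via $f_{n-1}$ and $F_{n-1}$ --- pinning down $F_{n-1}^{\ab}$ and controlling inertia and decomposition groups at the ramified primes --- is where the substantive work lies.
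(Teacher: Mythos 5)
Your overall architecture is the same as the paper's: specialise the trinomial family $X^n-t(X-1)$ (whose branch points $t\mapsto 0,\infty,n^n/(n-1)^{n-1}$ have residue fields $\QQ(\mu_n)$, $\QQ(\mu_{n-1})$, $F_{n-1}$), apply Corollary \ref{cor:existence_cheb} to the three forms $t$, $s$, $(n-1)^{n-1}t-n^ns$ together with congruence conditions modulo the primes of $M_1\cup M_2$ to force unramifiedness and the prescribed factorisation types (with ``$M_2$ sufficiently large'' entering exactly where you place it), and obtain linear disjointness from the fact that the ramified primes grow along the family. The verification of hypotheses (1)--(4) of Corollary \ref{cor:existence_cheb} that you sketch is essentially the paper's Proposition \ref{prop:koenig_problem_plus} and Lemma \ref{lem:app_local}.

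The genuine gap is in what you call the algebraic core. Your ramification picture for the specialisations is wrong: the ramified primes are not only the divisors of $(n-1)^{n-1}t_0-n^ns_0$. The primes $|t_0|$ and $|s_0|$ also ramify (they specialise the branch points $t\mapsto 0$ and $t\mapsto\infty$), with inertia generated by an $n$-cycle and an $(n-1)$-cycle respectively; only the third prime carries a transposition. The congruences $|t|\equiv 1\bmod n$ and $|s|\equiv 1\bmod n-1$ exist precisely to make $|t|$ split completely in the residue field $\QQ(\mu_n)$ and $|s|$ in $\QQ(\mu_{n-1})$, so that the decomposition groups at these two \emph{ramified} primes are cyclic; in your picture these conditions would be pointless, local cyclicity at $|t|,|s|$ would be unverified, and $\Gal\simeq\Sn_n$ would not follow from transposition inertia alone (in the correct picture it follows from the inertia containing an $n$-cycle, an $(n-1)$-cycle and a transposition, or, as the paper arranges, from realising every conjugacy class of $\Sn_n$ as a Frobenius at $M_2$). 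Moreover, none of this needs to be re-derived: \cite[Theorems 3.1 and 3.2]{MR4177544} are stated for general $n$ and give exactly the ramification and cyclic-decomposition statement; the only input missing for general $n$ was the analytic one, namely Proposition \ref{prop:koenig_problem_plus}. Relatedly, your verification of hypothesis (4) leans on $F_{n-1}^\ab=\QQ(\sqrt{\disc f_{n-1}})$ via an Osada-type theorem ``for all large $n$'', with small $n$ deferred to unspecified classical constructions -- a gap for the stated range $n\geq 2$. The paper's Lemma \ref{lem:app_local} avoids this entirely by forcing $(n-1)^{n-1}t-n^ns\equiv 1\bmod\cD$ with $\Phi_{F_{n-1}^\ab}\mid\cD$, so that the identity class restricts correctly to $F_{n-1}^\ab$ whatever that field is; you could repair your argument the same way.
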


This result answers a question concerning the existence of locally cyclic $\Sn_n$-extensions posed in \cite[p.479]{MR3556829} and 
generalises the case $n=5$ proved in \cite[Theorem 5.5]{MR4177544}. The proof follows the same strategy, except that we need to
replace an application of the theorem of Green and Tao in the case $n=5$ by
Corollary \ref{cor:existence_cheb}, as for larger $n$ it becomes necessary to
deal with splitting conditions in nonabelian fields. This yields the following
new ingredient in the proof of Theorem \ref{thm:locally_cyclic_new}, which in
 case $R=1$ gives a positive answer to the question of Kim and K\"onig
mentioned in the introduction of the main part. For $k\in\NN$, we
denote the splitting field of the polynomial
\begin{equation}\label{eq:def_f_k}
  f_k := x^{k-1}+2x^{k-2}+3x^{k-3}+\cdots+(k-1)x+k
\end{equation}
over $\QQ$ by $F_k$. As in the main part of the paper, $\Phi_{F_{k}^\ab}$
denotes the finite part of the conductor of the maximal abelian subsetension
$F_{k}^\ab/\QQ$ of $F_{k}/\QQ$.

\begin{proposition}\label{prop:koenig_problem_plus}
  Let $n\geq 2$ and define $\cD:=\lcm(n,n-1,\Phi_{F_{n-1}^\ab})$.
  Let $R\in\NN$ with $\gcd(R,\cD)=1$ and $l,k\in\NN$ with $\gcd(lk((n-1)^{n-1}k-n^nl),R)=1$. Then there are infinitely
  many pairs $(s,t)\in\ZZ^2$ with $|s|\neq |t|$ and $(s,t)\equiv(l,k)\bmod R$, such that
 \begin{align}
    &|t|& &\text{ is a prime congruent to } 1\bmod n,\label{eq:app_cond_1}\\
    &|s|& &\text{ is a prime congruent to } 1\bmod n-1,\label{eq:app_cond_2}\\
    &|(n-1)^{n-1}t-n^ns|& &\text{ is a prime splitting completely in } F_{n-1}.\label{eq:app_cond_3}
\end{align}
\end{proposition}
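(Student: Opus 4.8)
The plan is to deduce Proposition~\ref{prop:koenig_problem_plus} from Corollary~\ref{cor:existence_cheb} after encoding the congruence $(s,t)\equiv(l,k)\bmod R$ directly into the linear system. Concretely, substitute $s=l+Ru$, $t=k+Rv$ and apply Corollary~\ref{cor:existence_cheb} in the variables $(u,v)\in\ZZ^2$ to the system $\Psi=(\psi_1,\psi_2,\psi_3)$ given by
\[
\psi_1(u,v)=l+Ru,\qquad \psi_2(u,v)=k+Rv,\qquad \psi_3(u,v)=(n-1)^{n-1}\psi_2(u,v)-n^n\psi_1(u,v),
\]
with fields $K_1=\QQ(\mu_{n-1})$, $K_2=\QQ(\mu_n)$, $K_3=F_{n-1}$, trivial conjugacy classes $C_1=C_2=C_3=\{\ident\}$, and the open convex cone $X=\{(u,v)\in\RR^2: u>0,\ v>0,\ (n-1)^{n-1}v-n^nu>0\}$. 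Since $K_1,K_2$ are abelian with conductors dividing $n-1$ and $n$, and $\Phi_{K_3^\ab}=\Phi_{F_{n-1}^\ab}$, we have $\cD_{\vK}\mid\cD$. On $X$ the three forms are eventually positive, so the conclusion of Corollary~\ref{cor:existence_cheb}---that $\psi_1(u,v),\psi_2(u,v),\psi_3(u,v)$ are \emph{distinct} primes splitting completely in the respective $K_i$---translates, for all but finitely many of the resulting $(u,v)$, into: $|s|=\psi_1$ is a prime $\equiv1\bmod(n-1)$, $|t|=\psi_2$ is a prime $\equiv1\bmod n$, $|(n-1)^{n-1}t-n^ns|=\psi_3$ is a prime splitting completely in $F_{n-1}$, with $|s|\neq|t|$ and $(s,t)\equiv(l,k)\bmod R$. (One uses here that the primes produced are large, so ``splits completely in $\QQ(\mu_{n-1})$'' does force $\equiv1\bmod(n-1)$ even when $n-1\equiv2\bmod4$.)

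It then remains to verify hypotheses~(1)--(4) of Corollary~\ref{cor:existence_cheb}. Hypothesis~(1) holds because the linear parts $(R,0),(0,R),(-n^nR,(n-1)^{n-1}R)$ are pairwise independent for $n\geq2$, and for~(3) any point of $X$ works. Hypothesis~(2)---no fixed prime divisor of $\psi_1\psi_2\psi_3$---is exactly where the assumption $\gcd\bigl(lk((n-1)^{n-1}k-n^nl),R\bigr)=1$ enters: for $p\mid R$ the three forms are constant modulo $p$, equal to $l,k,(n-1)^{n-1}k-n^nl$, all prime to $p$; for $p\nmid R$ the invertibility of $R$ modulo $p$ lets one choose $\psi_1,\psi_2$ nonzero mod $p$, and a short case check---using that exactly one of $n,n-1$ is even---shows that $\psi_3\not\equiv0\bmod p$ can then be arranged as well.

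Hypothesis~(4) is the real content, and the step I expect to be the main obstacle: I must exhibit a single residue tuple $(u_0,v_0)$ modulo $\cD$ for which every $\psi_i(u_0,v_0)$ is a unit mod $\cD$ and $\sigma_{\psi_i(u_0,v_0)}$ is trivial on $K_i^\ab$. Since $\gcd(R,\cD)=1$, this amounts to choosing $A,B\in(\ZZ/\cD\ZZ)^\times$ with $A\equiv1\bmod(n-1)$ and $B\equiv1\bmod n$ such that $C:=(n-1)^{n-1}B-n^nA$ is again a unit mod $\cD$ with trivial Artin symbol in $F_{n-1}^\ab$. To control $F_{n-1}^\ab$ I would use the identity $(x-1)^2f_{n-1}=x^n-nx+(n-1)$ coming from \eqref{eq:def_f_k}, so that $F_{n-1}$ is the splitting field of $x^n-nx+(n-1)$ after deleting its rational root $1$; this gives $\disc f_{n-1}=\pm\,2\,n^{n-3}(n-1)^{n-4}$ for $n\geq4$ (cleanly from $f_{n-1}'(\alpha)=n(\alpha^{n-1}-1)/(\alpha-1)^2$ at the roots $\alpha$ of $f_{n-1}$, together with $f_{n-1}(1)=\tfrac{n(n-1)}{2}$ and a resultant over the $(n-1)$-st roots of unity). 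Hence $F_{n-1}$ is unramified outside $n(n-1)$, so $\cD$ is supported on primes dividing $n(n-1)$ and~(4) becomes a bounded computation. At each such prime one reads off the constraints that $A\equiv1\bmod(n-1)$, $B\equiv1\bmod n$ impose on $C$; at the ``forced'' primes the forced value of $C$ turns out to be $\pm1$ modulo the relevant prime power, and a quadratic reciprocity check---using the explicit $\disc f_{n-1}$---confirms that it lies in the local class cut out by $F_{n-1}^\ab$, while the remaining primes leave enough freedom to meet the global condition (for instance, when $n=4$ one may take $A=B=1$, giving $C\equiv3\bmod 8$, which indeed splits completely in $F_3^\ab=\QQ(\sqrt{-2})$). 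The small cases $n=2,3$, where $F_{n-1}=\QQ$ and $\cD=\lcm(n,n-1)$, are handled directly. With~(4) in hand, Corollary~\ref{cor:existence_cheb} applies and yields the asserted infinitude of pairs $(s,t)$.
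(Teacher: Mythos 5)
Your overall route is the paper's route: both proofs feed a three-form affine system into Corollary \ref{cor:existence_cheb}, and your way of encoding the congruence $(s,t)\equiv(l,k)\bmod R$ by substituting $s=l+Ru$, $t=k+Rv$ (instead of enlarging the cyclotomic fields to $\QQ(\mu_{Rn})$, $\QQ(\mu_{R(n-1)})$ as the paper does) is a perfectly reasonable variant; your treatment of hypotheses (1)--(3) and of the fixed-divisor condition (2) is fine. The problem is hypothesis (4), which is indeed "the real content", and your handling of it has a genuine gap caused by your choice of the all-positive cone. Once you insist $s,t>0$ and prescribe trivial classes in $\QQ(\mu_{n-1})$ and $\QQ(\mu_n)$, the residues $A\equiv 1\bmod (n-1)$ and $B\equiv 1\bmod n$ force
\begin{equation*}
C=(n-1)^{n-1}B-n^nA\equiv -1 \bmod (n-1),\qquad C\equiv (-1)^{n-1}\bmod n,
\end{equation*}
so for $n\geq 4$ you can never arrange $C\equiv 1\bmod \cD$. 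Hence triviality of $\sigma_C$ on $F_{n-1}^{\ab}$ cannot be obtained "for free"; you must actually know what $F_{n-1}^\ab$ is and check that the attainable coset of $C$ meets the kernel of $(\ZZ/\Phi_{F_{n-1}^\ab}\ZZ)^\times\to\Gal(F_{n-1}^\ab/\QQ)$. Your sketch assumes $F_{n-1}^\ab=\QQ(\sqrt{\disc f_{n-1}})$, i.e.\ essentially that $\Gal(F_{n-1}/\QQ)$ is full symmetric (or at least has abelianization of order two realized by the discriminant field); nothing in the paper or in your argument establishes the Galois group of $f_{n-1}$ for general $n$, and if $F_{n-1}^\ab$ were larger (say, containing a subfield of $\QQ(\mu_{n-1})$), the forced congruence $C\equiv-1\bmod(n-1)$ could make hypothesis (4) fail outright for your system — meaning your positivity restriction could make the statement you are trying to prove false, not merely harder. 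Even granting the $\Sn$-assumption and your discriminant formula, the final "quadratic reciprocity check" is only asserted, with one example ($n=4$), not carried out uniformly in $n$.

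This is exactly why the proposition is stated with absolute values and why the paper's system is $\Psi(s,t)=((-1)^{n-1}t,\,-s,\,(n-1)^{n-1}t-n^ns)$ on a cone where $s<0$ and $t$ has sign $(-1)^{n-1}$: with those signs the two cyclotomic conditions become $(-1)^{n-1}t\equiv 1\bmod n$ and $-s\equiv 1\bmod (n-1)$, and then modulo $n$ and $n-1$ the third form automatically lands in the class of $+1$, so that (paper's Lemma \ref{lem:app_local}, a short CRT argument splitting $\cD$ into the parts supported on $n$, on $n-1$, and the rest) one can choose residues with $(n-1)^{n-1}t-n^ns\equiv 1\bmod\cD$. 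Triviality on $F_{n-1}^\ab$ then follows purely from $\Phi_{F_{n-1}^\ab}\mid\cD$, with no information about $F_{n-1}$, its Galois group, or its discriminant needed. To repair your argument you should either switch to the paper's sign convention (the cheap fix), or supply proofs that $\Gal(F_{n-1}/\QQ)$ has the claimed structure, that $F_{n-1}^\ab$ is the discriminant field with the conductor you compute, and a complete case analysis (in $n$ modulo small integers and the $2$-part of the discriminant) showing the forced coset of $C$ always meets the splitting class — a substantially heavier task whose outcome is not guaranteed in advance.
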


We first prove Proposition \ref{prop:koenig_problem_plus} using Corollary
\ref{cor:existence_cheb}, and then deduce Theorem
\ref{thm:locally_cyclic_new} from it.

\subsection{Proof of Proposition \ref{prop:koenig_problem_plus}}
The following lemma will help with the verification of condition \emph{(4)} in Corollary \ref{cor:existence_cheb}.  

\begin{lemma}\label{lem:app_local}
  There  are $(s,t)\in(\ZZ/\cD\ZZ)^2$ such that
  $\gcd(st,\cD)=1$ and
  \begin{equation*}
    (-1)^{n-1}t\equiv 1\bmod n,\quad -s\equiv 1\bmod n-1,\quad\text{ and }\quad (n-1)^{n-1}t-n^ns\equiv 1\bmod \cD. 
  \end{equation*} 
\end{lemma}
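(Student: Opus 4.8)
The plan is to show that the three displayed congruences reduce to a single congruence modulo $\cD$, and then to solve that congruence separately at each prime power dividing $\cD$ and recombine via the Chinese remainder theorem.

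First I would note that, since $n\mid\cD$ and $n-1\mid\cD$, the first two congruences are automatic consequences of the third. Reducing $(n-1)^{n-1}t-n^ns\equiv 1\bmod\cD$ modulo $n$ and using $n-1\equiv -1\bmod n$ together with $n^n\equiv 0\bmod n$ yields precisely $(-1)^{n-1}t\equiv 1\bmod n$; reducing it modulo $n-1$ and using $n\equiv 1\bmod n-1$ together with $(n-1)^{n-1}\equiv 0\bmod n-1$ yields precisely $-s\equiv 1\bmod n-1$. Hence it suffices to produce $(s,t)\in\bigl((\ZZ/\cD\ZZ)^\times\bigr)^2$ with $(n-1)^{n-1}t-n^ns\equiv 1\bmod\cD$, and since $\gcd(st,\cD)=1$ is equivalent to $s$ and $t$ being units modulo every prime power dividing $\cD$, it is enough to do this one prime power at a time.

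Next I would fix a prime power $p^{e}\,\|\,\cD$ and exhibit units $s,t\bmod p^e$ with $(n-1)^{n-1}t-n^ns\equiv 1\bmod p^e$, splitting into three cases. If $p\mid n$, then $(n-1)^{n-1}$ is a unit mod $p^e$ and $n^n\equiv 0\bmod p$, so for any unit $s$ the quantity $1+n^ns$ is $\equiv 1\bmod p$, hence a unit, and one takes $t:=(n-1)^{-(n-1)}(1+n^ns)$. If $p\mid n-1$, the situation is symmetric: $(n-1)^{n-1}\equiv 0\bmod p$ while $n^n$ is a unit, so for any unit $t$ one has $1-(n-1)^{n-1}t\equiv 1\bmod p$, a unit, and one takes $s:=-n^{-n}\bigl(1-(n-1)^{n-1}t\bigr)$. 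If $p\nmid n(n-1)$, then $p$ must be odd, because $n$ and $n-1$ are consecutive integers, so one of them is even; thus $(\ZZ/p\ZZ)^\times$ has at least two elements, and I can choose a unit $t\bmod p^e$ whose reduction mod $p$ avoids the single bad value making $(n-1)^{n-1}t\equiv 1\bmod p$, so that $s:=n^{-n}\bigl((n-1)^{n-1}t-1\bigr)$ is again a unit. Combining the solutions over all $p^{e}\,\|\,\cD$ by the Chinese remainder theorem produces the required pair $(s,t)\in(\ZZ/\cD\ZZ)^2$.

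I do not expect a genuine obstacle here, as the argument is entirely elementary; the only points demanding a moment of care are the observation that conditions one and two are subsumed by condition three (which relies only on $n\mid\cD$ and $n-1\mid\cD$), and the fact that the case $p\nmid n(n-1)$ never forces $p=2$, so that $(\ZZ/p\ZZ)^\times$ always has room for the needed choice of $t$. The degenerate small value $n=2$, where $\cD=2$ since $F_1=\QQ$, is absorbed into the case $p\mid n$ above.
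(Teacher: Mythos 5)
Your proposal is correct and follows essentially the same route as the paper: a Chinese remainder argument splitting the primes dividing $\cD$ into those dividing $n$, those dividing $n-1$, and the rest, with in each case an explicit unit choice of $(s,t)$ making $(n-1)^{n-1}t-n^ns\equiv 1$ (the paper's choices are special instances of yours, e.g. $s\equiv 1$ when $p\mid n$, $t\equiv 1$ when $p\mid n-1$, and $(n-1)^{n-1}t\equiv 2$ in the remaining case, using $p$ odd exactly as you do). Your preliminary observation that the congruences modulo $n$ and $n-1$ are forced by the congruence modulo $\cD$ is a small streamlining of the verification, not a different method.
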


\begin{proof}
  As $\gcd(n,n-1)=1$, we may write $\cD=q_1q_2q_3$ such that
  \begin{align*}
    q_1 &\text{ is the product of all prime powers $p^e\mid\mid \cD$ with $p\mid n$},\\
    q_2 &\text{ is the product of all prime powers $p^e\mid\mid \cD$ with $p\mid n-1$,}\\
    q_3 &\text{ is the product of the remaining prime powers in $\cD$.}
  \end{align*}
  Then $q_1,q_2,q_3$ are pairwise coprime, $n$ divides $q_1$ and $n-1$ divides $q_2$. In particular, $n$ is invertible modulo $q_2q_3$ and $n-1$ is invertible modulo $q_1q_3$. As either $n$ or $n-1$ is even, we moreover see that $2\nmid q_3$. By the Chinese
  remainder theorem, we find $(s,t)\in(\ZZ/\cD\ZZ)^2$ that satisfy
  \begin{align*}
    t&\equiv (n-1)^{-(n-1)}(1+n^n)& &\bmod q_1, & s&\equiv 1 & &\bmod q_1,\\
    t&\equiv 1 & &\bmod q_2,      & s&\equiv n^{-n}((n-1)^{n-1}-1)& &\bmod q_2,\\
    t&\equiv (n-1)^{-(n-1)}\cdot 2 & &\bmod q_3,      & s&\equiv n^{-n}& &\bmod q_3.
  \end{align*}
As $n\mid q_1$ and they have the same prime factors, this implies that $\gcd(st,q_1)=1$ and $t\equiv (-1)^{n-1}\bmod n$. Similarly, as $(n-1)\mid q_2$ and they have the same prime factors, we see that $\gcd(st,q_2)=1$ and $s\equiv -1\bmod n-1$. Moreover, we have $\gcd(st,q_3)=1$, as $2\nmid q_3$. Finally, for all $i\in\{1,2,3\}$, we have $(n-1)^{n-1}t-n^ns\equiv 1\bmod q_i$, and thus the same congruence holds modulo $\cD$.  
\end{proof}  

\begin{proof}[Proof of Proposition \ref{prop:koenig_problem_plus}]
  By the Chinese remainder theorem, we find $u,v\in\NN$ with $\gcd(uv,\cD R)=1$,
  \begin{align*}
    u&\equiv 1\bmod n, &  v&\equiv 1\bmod (n-1),\\
    u&\equiv (-1)^{n-1}k\bmod R, &  v&\equiv -l\bmod R.
  \end{align*}
  We verify the hypotheses of Corollary \ref{cor:existence_cheb} with parameters
  $s=2, t=3$, $K_1=\QQ(\mu_{Rn})$, $K_2=\QQ(\mu_{R(n-1)})$, $K_3=F_{n-1}$,
  $C_1=\{\zeta_{Rn}\mapsto\zeta_{Rn}^u\}$, $C_2=\{\zeta_{R(n-1)}\mapsto\zeta_{R(n-1)}^v\}$, $C_3=\{\ident\}$, the system of forms
\begin{equation*}
  \Psi(s,t)=((-1)^{n-1}t,-s,(n-1)^{n-1}t-n^ns),
\end{equation*}
and the open convex cone
\begin{equation*}
  X=\left\{(s,t)\in\RR^2\ : \ (-1)^{n-1}t>0\ \text{ and }\ -n^n s > \max\{-(n-1)^{n-1}t,0\}\right\}.
\end{equation*}
Hypothesis \emph{(1)} is obvious and \emph{(2)} is easy to check. For \emph{(3)}, we note that for the point  $x=(-1,(-1)^{n-1})\in X$, we have $\dot\psi_i(x)=\psi_i(x)>0$ for all $i$.

Hence, it remains to deal with \emph{(4)}. We have
$\cD_{\vK}\mid\cD R$, hence we may take $q=\cD R$ in \emph{(4)}.  Let
$s_0,t_0$ be the congruence classes modulo $\cD$ resulting from
Lemma \ref{lem:app_local}. Using the Chinese remainder theorem, we
find $(s,t)\in\ZZ^2$ with $(s,t)\equiv (s_0,t_0)\bmod \cD$ and
$(s,t)\equiv (l,k)\bmod R$. These then satisfy
$\gcd(\psi_1(s,t)\psi_2(s,t)\psi_3(s,t),q)=1$ and
\begin{align*}
  \psi_1(s,t)&=(-1)^{n-1}t & &\equiv u \bmod Rn,\\
  \psi_2(s,t)&= -s& &\equiv v \bmod R(n-1),\\
  \psi_3(s,t)&= (n-1)^{n-1}t-n^ns& &\equiv 1 \bmod \cD.
\end{align*}
Then clearly $\sigma_{\psi_1(s,t)}\in\Gal(\QQ(\mu_q)/\QQ)$ restricts
to $C_1\subseteq \Gal(K_1/\QQ)$ and
$\sigma_{\psi_2(s,t)}\in\Gal(\QQ(\mu_q)/\QQ)$ restricts to
$C_2\subseteq \Gal(K_2/\QQ)$. Moreover,
$\sigma_{\psi_3(s,t)}\in\Gal(\QQ(\mu_q)/\QQ)$ restricts to the
identity on $\QQ(\mu_{\cD})$, and hence also on $K_3^\ab\subseteq\QQ(\mu_\cD)$.

We have verified all hypotheses of Corollary \ref{cor:existence_cheb}. Hence, there are infinitely many pairs $(s,t)\in X\cap\ZZ^2$ for which $\psi_1(s,t),\psi_2(s,t),\psi_3(s,t)$ are pairwise distinct primes with the prescribed Artin symbols. Now the condition $[K_1/\QQ,\psi_1(s,t)]=C_1$ is equivalent to $\psi_1(s,t)\equiv u\bmod Rn$.  
For $(s,t)\in X$, we have $\psi_1(s,t)=(-1)^{n-1}t=|t|$, so these conditions translate to $|t|\equiv 1\bmod n$ and $t\equiv k\bmod R$. Similarly, the condition $[K_1/\QQ,\psi_2(s,t)]=C_2$ translates to $|s|\equiv 1\bmod n-1$ and $s\equiv l\bmod R$, and the condition $[K_3/\QQ,\psi_3(s,t)]=C_3$ means exactly that $|(n-1)^{n-1}t-n^ns|=(n-1)^{n-1}t-n^ns$ splits completely in $F_{n-1}$. 
\end{proof}

\subsection{Proof of Theorem \ref{thm:locally_cyclic_new}}
Apart from the application of the new Proposition \ref{prop:koenig_problem_plus}, this is
essentially the same as the proof of \cite[Theorem 5.5]{MR4177544}, so we will
be brief and focus on what is different.

We consider the splitting field $E/\QQ(t)$ of the polynomial
$f(t,X)=X^n-t(X-1)$, which is ramified over the places
$t\mapsto 0, t\mapsto\infty$ and $t\mapsto n^n/(n-1)^{n-1}$ with homogenised
irreducible polynomials $T,S$ and $(n-1)^{n-1}T-n^n S$. As explained on
\cite[p.283]{MR4177544}, the residue fields at $t\mapsto 0$ and $t\mapsto\infty$
are $\QQ(\mu_n)$ and $\QQ(\mu_{n-1})$, and the residue field at $t\mapsto
n^n/(n-1)^{n-1}$ is the splitting field of the polynomial
\begin{equation*}
f\left(\frac{n^n}{(n-1)^{n-1}},X\right) = X^n-\frac{n^n}{(n-1)^{n-1}}X+\frac{n^n}{(n-1)^{n-1}}.
\end{equation*}
As
\begin{equation*}
\left(\frac{n-1}{n}\right)^nf\left(\frac{n^n}{(n-1)^{n-1}},\frac{n}{n-1}X\right) = (X-1)^2f_{n-1},
\end{equation*}
the latter splitting field is $F_{n-1}$.

Proposition \ref{prop:koenig_problem_plus}, with a choice of $R,l,k$ to be
specified later, produces infinitely many coprime pairs $(s_0,t_0)\in\ZZ^2$ satisfying
\eqref{eq:app_cond_1}--\eqref{eq:app_cond_3}. According to \cite[Theorem 3.1
and Theorem 3.2]{MR4177544}, for large enough $|s_0|,|t_0|$, the only primes
 ramified in the specialisation $E_{\frac{t_0}{s_0}}/\QQ$ are the primes $|s_0|,|t_0|$ and
$|(n-1)^{n-1}t_0-n^ns_0|$, and moreover the decomposition groups at these primes are
cyclic, and more precisely generated (in this order) by an $(n-1)$-cycle, an $n$-cycle and a transposition respectively.

This gives infinitely many extensions $E_{\frac{t_0}{s_0}}/\QQ$ that satisfy
conditions \emph{(2)} and \emph{(3)} of Theorem \ref{thm:locally_cyclic_new}. The
proof of \cite[Theorem 5.5]{MR4177544} provides instructions on how to choose
$R,l,k$ in order to ensure that condition \emph{(4)} holds as well, if only the
primes in $M_2$ are sufficiently large.

Moreover, as explained at the start of the proof of \cite[Theorem
5.5]{MR4177544}, we get condition \emph{(1)} for free by enlarging the set
$M_2$ in order to ensure that all conjugacy classes of $\Sn_n$ occur amongst the $C_p$.

\begin{remark}
\label{rem:2sn}
It is furthermore natural to use extensions such as the ones obtained in the proof of Theorem \ref{thm:locally_cyclic_new} to try to solve certain embedding problems and in particular to construct locally cyclic Galois extensions whose Galois group $G$ is a central extension of $\Sn_n$. This was done in \cite[Theorem 5.6]{MR4177544} for the case $n=5$ and for any  $G$ which is a  central extension of $\Sn_5$ in which the transpositions lift to elements of order $2$. The following observation shows that this approach does at least not generalize to arbitrary $n$ using the $\Sn_n$ extensions obtained here. Denote by $2.\Sn_n^+$ the one of the two stem covers of $\Sn_n$ in which the transpositions lift to elements of order $2$. It is then well-known (see, e.g., \cite{HoffmanHumphreys})  that the involutions of $\Sn_n$ which lift to involutions in $2.\Sn_n^+$ are exactly the products of $4j$ or $4j-3$ disjoint transpositions ($j\in \mathbb{N}$). On the other hand, the extensions constructed in Theorem \ref{thm:locally_cyclic_new} arise via trinomials $X^n-\frac{t_0}{s_0}(X-1)$, and those have at most three real roots, meaning that complex conjugation acts as an involution with at most three fixed points. Whenever $n\equiv 6,7 \pmod{8}$, such an involution hence lifts to elements of order $4$ in $2.\Sn_n^+$, meaning that the $\Sn_n$ extensions thus constructed do not embed into $2.\Sn_n^+$-extensions due to an obstruction at the archimedean prime. 
\end{remark}      

\markleft{}
\markright{}

\bibliographystyle{plain}
\bibliography{bibliography}
\end{document}